\documentclass[11pt]{amsart}

\usepackage{mathrsfs}
\usepackage{amsfonts}
\usepackage{amssymb}
\usepackage{amsxtra}
\usepackage{dsfont}
\usepackage[dvipsnames]{xcolor}
\usepackage[english,polish]{babel}
\usepackage[shortlabels]{enumitem}
\usepackage{todonotes}

\usepackage{tikz}
\usetikzlibrary{arrows}

\usepackage[compress, sort]{cite}

\usepackage[T1]{fontenc}

\usepackage{newtxmath}
\usepackage[type1]{newtxtext}

\usepackage[margin=2.5cm, centering]{geometry}
\usepackage[colorlinks,citecolor=blue,urlcolor=blue,bookmarks=true]{hyperref}
\hypersetup{
pdfpagemode=UseNone,
pdfstartview=FitH,
pdfdisplaydoctitle=true,
pdfborder={0 0 0}, 
pdftitle={Asymptotic zeros' distribution of orthogonal polynomials with unbounded recurrence coefficients},
pdfauthor={Grzegorz Świderski and Bartosz Trojan},
pdflang=en-US
}

\newcommand{\CC}{\mathbb{C}}
\newcommand{\ZZ}{\mathbb{Z}}

\newcommand{\NN}{\mathbb{N}}
\newcommand{\RR}{\mathbb{R}}

\newcommand{\calR}{\mathcal{R}}

\newcommand{\calC}{\mathcal{C}}

\newcommand{\calO}{\mathcal{O}}

\newcommand{\calA}{\mathcal{A}}

\newcommand{\calY}{\mathcal{Y}}
\newcommand{\calD}{\mathcal{D}}

\newcommand{\frakB}{\mathfrak{B}}
\newcommand{\frakX}{\mathfrak{X}}
\newcommand{\frakA}{\mathfrak{A}}

\newcommand{\fraks}{\mathfrak{s}}
\newcommand{\frakr}{\mathfrak{r}}

\newcommand{\fraku}{\mathfrak{u}}

\newcommand{\frakt}{\mathfrak{t}}
\newcommand{\scrD}{\mathscr{D}}

\newcommand{\vphi}{\varphi}

\newcommand{\Id}{\operatorname{Id}}

\newcommand{\Arg}{\operatorname{Arg}}

\newcommand{\sign}[1]{\operatorname{sign}({#1})}

\newcommand{\pl}[1]{\foreignlanguage{polish}{#1}}

\newcommand{\abs}[1]{\lvert {#1} \rvert}

\newcommand{\tr}{\operatorname{tr}}

\DeclareMathOperator{\intr}{int}
\DeclareMathOperator{\Wrk}{Wr}
\DeclareMathOperator{\lin}{span}
\DeclareMathOperator{\Log}{Log}

\newcommand{\cl}[1]{\operatorname{cl}({#1})}

\newcommand{\GL}{\operatorname{GL}}
\newcommand{\SL}{\operatorname{SL}}
\newcommand{\discr}{\operatorname{discr}}
\newcommand{\diag}{\operatorname{diag}}

\newcommand{\Mat}{\operatorname{Mat}}

\newcommand{\sigmaEss}{\sigma_{\mathrm{ess}}}
\newcommand{\sigmaP}{\sigma_{\mathrm{p}}}

\newcommand{\sigmaAC}{\sigma_{\mathrm{ac}}}
\newcommand{\sigmaS}{\sigma_{\mathrm{sing}}}

\newcommand{\ind}[1]{{\mathds{1}_{{#1}}}}

\newcommand{\ud}{{\: \rm d}}
\newcommand{\ue}{\textrm{e}}
\newcommand{\supp}{\operatornamewithlimits{supp}}

\newcommand{\restr}[2]{#1 \!\! \restriction_{#2}}

\newtheorem{theorem}{Theorem}[section]

\newtheorem{proposition}[theorem]{Proposition}
\newtheorem{lemma}[theorem]{Lemma}
\newtheorem{corollary}[theorem]{Corollary}
\newtheorem{claim}[theorem]{Claim}

\theoremstyle{plain}
\newcounter{thm}

\newtheorem{main_theorem}[thm]{Theorem}

\numberwithin{equation}{section}

\theoremstyle{definition}
\newtheorem{example}[theorem]{Example}
\newtheorem{remark}[theorem]{Remark}

\title[Asymptotic zeros' distribution of orthogonal polynomials]
{Asymptotic zeros' distribution of orthogonal polynomials with unbounded recurrence coefficients}


\author{Grzegorz \'{S}widerski}
\address{
	Grzegorz \'{S}widerski\\
	Institute of Mathematics\\
	Polish Academy of Sciences\\
	ul. Śniadeckich 8\\
	00-696 Warsaw, Poland
}
\email{gswiderski@impan.pl}

\author{Bartosz Trojan}
\address{
	\pl{
		Bartosz Trojan\\
		Wydzia\l{} Matematyki,
		Politechnika Wroc\l{}awska\\
		Wyb. Wyspia\'{n}skiego 27\\
		50-370 Wroc\l{}aw\\
		Poland}
}
\email{bartosz.trojan@gmail.com}

\subjclass[2020]{Primary 42C05; Secondary 47B36}
\keywords{Orthogonal polynomials; Jacobi matrix; asymptotics; Christoffel--Darboux kernel}

\begin{document}
\selectlanguage{english}

\begin{abstract}
We study spectrum of finite truncations of unbounded Jacobi matrices with periodically modulated entries. 
In particular, we show that under some hypotheses a sequence of properly normalized eigenvalue counting measures converge 
vaguely to an explicit infinite Radon measure. To do so we link the asymptotic behavior of the 
Christoffel--Darboux kernel on the diagonal with the limiting measure. Finally, we derive strong asymptotics of 
the associated orthogonal polynomials in the complex plane, which allows us to prove that Cauchy transforms of 
the normalized eigenvalue counting measures converge pointwise and which leads to a stronger notion of convergence.
\end{abstract}

\maketitle

\section{Introduction} \label{sec:1}

Consider two sequences $a=(a_n : n \in \NN_0)$ and $b = (b_n : n \in \NN_0)$ of positive and real numbers, respectively. 
The infinite tridiagonal symmetric matrix
\[
	\calA = 
    \begin{pmatrix}
        b_0	& a_0	&		&       \\
        a_0 & b_1	& a_1	&         \\
            & a_1	& b_2 	& \ddots   \\
            &		&		\ddots &  \ddots 
    \end{pmatrix}
\]
is called \emph{Jacobi matrix} and the sequences $a,b$ are called its \emph{Jacobi parameters}. By considering complex valued sequences as column vectors, one can define an operator acting on sequences having finite support by formal multiplication by the matrix $\calA$. Let the \emph{Jacobi operator} $A$ be its closure in $\ell^{2}(\NN_0)$, the Hilbert space of square summable complex valued sequences with the scalar product
\[
	\langle x,y \rangle_{\ell^2(\NN_0)} =
	\sum_{n=0}^\infty x_n \overline{y_n}.
\]
Notice that $A$ is a (possibly unbounded) symmetric operator $A$. It is bounded if and only if its Jacobi parameters are
bounded, i.e.
\begin{equation} 
	\label{eq:int:2}
	\sup_{n \in \NN_0} a_n < \infty \quad \text{and} \quad
	\sup_{n \in \NN_0} |b_n| < \infty.
\end{equation}
A sufficient condition for its self-adjointness is the so-called \emph{Carleman's condition}
\begin{equation} 
	\label{eq:1}
	\sum_{n=0}^\infty \frac{1}{a_n} = \infty.
\end{equation}

The interest in Jacobi operators comes from their intimate connection to the classical moment problem and the theory of 
orthogonal polynomials on the real line, see e.g. \cite{Schmudgen2017}. As every self-adjoint operator having a cyclic vector
is unitary equivalent to a (possibly finite dimensional) Jacobi operator, the Jacobi operators are basic building blocks of 
all self-adjoint operators. Some types of Jacobi operators are related to random walks and birth-death processes, see e.g. 
\cite{Karlin1957, Karlin1959}.

This article is concerned with analysis of finite truncations of $\calA$. More precisely, for any $n \geq 1$ we consider
\[
	\calA_n = 
	 \begin{pmatrix}
     b_0 & a_0 &    &       &\ \\
     a_0 & b_1 & a_1 &        &  \\
        & a_1 & \ddots & \ddots     &  \\
        &    & \ddots & \ddots &  a_{n-2}\\
      &  &   & a_{n-2} & b_{n-1}
  \end{pmatrix}.
\]
The characteristic polynomial $P_n(z) = \det(z \Id - \calA_n)$ and its normalized variant
\[
	\begin{aligned}
		p_0(z) & = P_0(z) \equiv 1\\
		p_n(z) & = \frac{1}{a_0 a_1 \ldots a_{n-1}} P_n(z), \quad n \geq 1,
	\end{aligned}
\]
are crucial in the spectral analysis of $A$. In fact, the symmetric operator $A$ has always self-adjoint extensions acting on 
$\ell^2(\NN_0)$. Let $\breve{A}$ be a self-adjoint extension of $A$, and let $E_{\breve{A}}$ be its projection valued spectral 
measure. We define the Borel measure
\begin{equation} \label{eq:int:24}
	\breve{\mu}(\cdot) = \langle E_{\breve{A}}(\cdot) e_0, e_0 \rangle_{\ell^2(\NN_0)}
\end{equation}
where for any $n \in \NN_0$ the vector $e_n$ is the sequence having $1$ on $n$th position and $0$ elsewhere. Then the operator
$U : \ell^2(\NN_0) \to L^2(\breve{\mu})$ defined on the basis vectors by
\[
	U e_n = p_n,
\]
is a unitary isomorphism, see e.g. \cite[Section 6.1]{Schmudgen2017}. In particular, $(p_n : n \in \NN_0)$ is an orthonormal
basis of $L^2(\breve{\mu})$. Moreover,
\[
	(U A U^{-1} f)(x) = xf(x)
\]
for all $f \in L^2(\breve{\mu})$ such that $xf \in L^2(\breve{\mu})$. If $A$ is self-adjoint, then the only self-adjoint
extension of $A$ is $A$ itself. In such a case $\mu := \breve{\mu}$ is the unique measure such that $(p_n : n \in \NN_0)$
are orthonormal in $L^2(\mu)$, see e.g. \cite[Theorem 6.10]{Schmudgen2017}.

In order to describe asymptotic of $\sigma(\calA_n) = p_n^{-1}(0)$ one defines the \emph{zero counting measure}
\begin{equation} 
	\label{eq:int:3}
	\nu_n = \sum_{y \in \sigma(\calA_{n+1})} \delta_y
\end{equation}
where $\delta_y$ is the Dirac measure at $y$. Then one is interested in the convergence of the sequence of probability measures 
$\big( \tfrac{1}{n+1} \nu_n : n \in \NN_0 \big)$ in some sense. Let us recall that a sequence of positive Borel measures $(\omega_n : n \in \NN_0)$
supported on some open set $U \subset \RR$ \emph{converges vaguely} (on $U$) to $\omega$, $\omega_n \xrightarrow{v} \omega$, 
if we have convergence of integrals 
\begin{equation} \label{eq:int:1}
	\lim_{n \to \infty} \int_\RR f \ud \omega_n =
	\int_\RR f \ud \omega, \quad f \in \calC_c(U).
\end{equation}
Here, $\calC_c(U)$ denotes the space of compactly supported continuous functions on $U$. If the convergence holds for any 
$f \in \calC_b(U)$, a continuous bounded function on $U$, then $(\omega_n : n \in \NN)$ \emph{converges weakly} (on $U$) to 
$\omega$, $\omega_n \xrightarrow{w} \omega$. If the set $U$ is not clear from the context, then we shall understand that 
$U = \RR$.

The case when the operator $A$ is bounded, which is equivalent to the measure $\mu$ being compactly supported,
is thoroughly studied. Here $\big( \tfrac{1}{n+1} \nu_n : n \in \NN_0 \big)$ is a sequence of probability measures with support contained in the convex 
hull of $\sigma(A) = \supp \mu$ being a compact subset of $\RR$. Thus, by the weak sequential compactness, we get the existence
of a strictly increasing subsequence $(n_k : k \in \NN_0)$ and a probability measure $\nu$ such that
$\frac{1}{n_k + 1} \nu_{n_k} \xrightarrow{w} \nu$. Logarithmic potential theory provides a condition for the convergence of the whole
sequence as well as the description of the limiting measure $\nu$ (see e.g. the survey \cite{Simon2007} for details). 
Let us mention some results relevant to our paper. Nevai (see \cite[Theorem 3, p.50]{Nevai1979}) considered the case when
\[
	\lim_{n \to \infty} a_n = \frac{1}{2} \quad \text{and} \quad
	\lim_{n \to \infty} b_n = 0.
\]
Then $\sigmaEss(A) = [-1,1]$ and
\begin{equation} \label{eq:int:11}
	\frac{1}{n+1} \nu_n \xrightarrow{w} 
	\frac{1}{\pi} \frac{1}{\sqrt{1 - x^2}} \mathds{1}_{(-1,1)}(x) \ud x,
\end{equation}
where by $\mathds{1}_X$ we denote the indicator function of the set $X \subset \RR$.
Later, Geronimo and Van Assche in \cite[Theorem 11]{VanAssche1988a}, studied a more general case of Jacobi parameters 
such that for certain $N$-periodic sequences $(\alpha_n : n \in \NN_0), (\beta_n : n \in \NN_0)$ of positive and real numbers, 
respectively, it holds
\[
	\lim_{n \to \infty} |a_n -\alpha_n| = 0 \quad \text{and} \quad
	\lim_{n \to \infty} |b_n -\beta_n| = 0.
\]
For each $n \geq 1$, we set
\begin{equation} 
	\label{eq:int:12}
	\frakX_n(x) = 
	\frakB_{n+N-1}(x) \ldots \frakB_{n+1}(x) \frakB_{n}(x) \quad \text{where} \quad
	\frakB_j(x) = 
	\begin{pmatrix}
		0 & 1 \\
		-\frac{\alpha_{j-1}}{\alpha_j} & \frac{x-\beta_j}{\alpha_j}
	\end{pmatrix}, \quad j \geq 1.
\end{equation}
Then(\footnote{For any set $X \subset \RR$ we denote by $\cl{X}$ its closure.}) $\sigmaEss(A) = \cl{\Lambda}$ where 
$\Lambda := (\tr \frakX_N)^{-1}[(-2,2)]$. It turns out that $\Lambda = \bigcup_{j=1}^N I_j$ where $I_j$ are non-empty open 
disjoint intervals whose closures might touch each other. Moreover(\footnote{For any $X \in \Mat(2,\CC)$, a $2 \times 2$ 
matrix with complex coefficients, its \emph{discriminant} is defined by $\discr X = (\tr X)^2 - 4 \det X$.}), 
\begin{equation} \label{eq:int:14}
	\frac{1}{n+1} \nu_n \xrightarrow{w} 
	\frac{1}{\pi N} \frac{|\tr \frakX_N'(x)|}{\sqrt{-\discr \frakX_N(x)}} \mathds{1}_{\Lambda}(x) \ud x,
\end{equation}
Notice that the formula \eqref{eq:int:14} for $N=1$ and $\alpha_n \equiv \frac{1}{2}, \beta_n \equiv 0$ 
reduces to \eqref{eq:int:11}.

The situation when $A$ is unbounded, or equivalently when \eqref{eq:int:2} is violated, is different. A well-known phenomenon 
is the fact that part of the points from $\sigma(\calA_n)$ is repelled to infinity. So if we consider as previously the sequence
$\big( \tfrac{1}{n+1} \nu_n : n \in \NN_0 \big)$, then the subsequential limits might be only sub-probability measures. 
A popular way to deal with this problem is to consider
\begin{equation} \label{eq:int:5}
	\tilde{\nu}_n = 
	\sum_{y \in \sigma(\calA_{n+1})} \delta_{\frac{y}{c_{n}}}
\end{equation}
for certain positive sequence $(c_n : n \in \NN_0)$ tending to infinity, see e.g. \cite[Section 4]{VanAssche1990}. The r\^ole of 
this sequence is to assure that the supports of $\tilde{\nu}_n$ lie in a compact subset of $\RR$, so every subsequential limit of 
$\big( \tfrac{1}{n+1} \tilde{\nu}_n : n \in \NN_0 \big)$ is a probability measure. In this setup, under some conditions on the measure $\mu$, 
logarithmic potential theory with external fields gives weak convergence of $\big( \tfrac{1}{n+1} \tilde{\nu}_n : n \in \NN_0 \big)$, 
and describes its limit, see e.g. \cite[Theorem 4]{Rakhmanov1984} or \cite[Chapter VII.1]{Saff1997} for details. 

Another approach was proposed by Van Assche in \cite{VanAssche1985} where for any purely absolutely continuous measure $\mu$ 
with density $\mu'$ satisfying $\log \mu' \in L^1_{\mathrm{loc}}(\RR)$ and such that for some $\lambda > 1$,
\[
	\lim_{|x| \to \infty} \frac{-\log \mu'(x)}{|x|^\lambda} = 1,
\]
the following sequence of measures 
\[
	\tilde{\nu}_n =
	\sum_{y \in \sigma(\calA_n)} \frac{\delta_y}{1+y^2}
\]
was studied. It was shown there that $\tfrac{1}{\rho_n} \tilde{\nu}_n \xrightarrow{w} \frac{\ud x}{1+x^2}$, where
$\rho_n = c_\lambda n^{1-\frac{1}{\lambda}}$ for some explicit constant $c_\lambda>0$. Notice that $\rho_n/n \to 0$.
Consequently, for any $f \in \calC_b(\RR)$ satisfying $\sup_{x \in \RR} (1+x^2) |f(x)| < \infty$ we have
\[
	\lim_{n \to \infty} \frac{1}{\rho_n} \int_\RR f(x) \ud \nu_n(x) =
	\int_\RR f(x) \ud x.
\]
In particular, $\tfrac{1}{\rho_n} \nu_n \xrightarrow{v} \ud x$. Let us emphasize that in this case the limiting measure 
is locally finite only. In the proof asymptotics of $\log |p_n(z)|$ for any non-real $z$ obtained in \cite[Theorem 1]{Rakhmanov1984} 
by Rakhmanov (see also \cite{Buyarov1991}) was instrumental.

In this article we shall start a systematic study of the sequences \eqref{eq:int:3} for \emph{unbounded} Jacobi operators, i.e. 
for which the condition \eqref{eq:int:2} is violated. To the best of our knowledge it is the first general study of spectra 
of truncations of unbounded $A$ in terms of its Jacobi parameters which is not of the form \eqref{eq:int:5}. 
Actually, we shall consider a large class of Jacobi matrices with periodically modulated Jacobi parameters. This class has been 
introduced in \cite{JanasNaboko2002}, and it is studied extensively ever since (see e.g. \cite{jordan2} and the references
therein). To be more precise: Jacobi parameters $(a_n : n \in \NN_0), (b_n : n \in \NN_0)$ are \emph{$N$-periodically modulated} 
if for some $N$-periodic sequences $(\alpha_n : n \in \NN_0), (\beta_n : n \in \NN_0)$ of positive and real numbers, 
respectively, it holds
\begin{equation} \label{eq:int:13}
	\lim_{n \to \infty} \bigg| \frac{a_{n-1}}{a_n} - \frac{\alpha_{n-1}}{\alpha_n} \bigg| = 0, \quad
	\lim_{n \to \infty} \bigg| \frac{b_{n}}{a_n} - \frac{\beta_{n}}{\alpha_n} \bigg| = 0, \quad
	\lim_{n \to \infty} a_n = \infty.
\end{equation}
It turns out that spectral properties of Jacobi operators with periodically modulated Jacobi parameters crucially depend on 
the properties of the matrix $\frakX_N(0)$ where $\frakX_N(x)$ is defined in \eqref{eq:int:12}, see Section~\ref{sec:2.5} for 
details. In our results below we shall use the notion of the Stolz class $\calD_1^N$. Let us recall that a sequence
$(x_n : n \in \NN)$ belongs to $\calD_1^N$ if
\[
	\sum_{n=1}^\infty |x_{n+N} - x_n| < \infty.
\]
\begin{main_theorem} \label{thm:A:1}
	Suppose that a Jacobi matrix $\calA$ has $N$-periodically modulated Jacobi parameters satisfying \eqref{eq:int:13}. Suppose 
	that $\tr \frakX_N(0) \in (-2,2)$ where $\frakX_N(x)$ is defined in \eqref{eq:int:12} and 
	\begin{equation} \label{eq:int:18}
		\bigg( \frac{a_{n-1}}{a_n} \bigg), 
		\bigg( \frac{b_n}{a_n} \bigg), 
		\bigg( \frac{1}{a_n} \bigg) \in \calD_1^N.
	\end{equation}
	Set
	\[
		\rho_n = \sum_{k=0}^{n} \frac{\alpha_k}{a_k}.
	\]
	If the Carleman's condition \eqref{eq:1} is satisfied, then for the sequence $(\nu_n : n \in \NN_0)$ defined by 
	\eqref{eq:int:3} and for any $f \in \calC_b(\RR)$ satisfying $\sup_{x \in \RR} (1+x^2) |f(x)| < \infty$ we have
	\[
		\lim_{n \to \infty} \frac{1}{\rho_n} \int_{\RR} f \ud \nu_n =
		\int_{\RR} f \ud \nu
	\]
	where the measure $\nu$ is purely absolutely continuous with the density
	\begin{equation} \label{eq:int:15}
		\frac{\ud \nu}{\ud x} \equiv \frac{1}{\pi N} \frac{|\tr \frakX_N'(0)|}{\sqrt{-\discr \frakX_N(0)}}.
	\end{equation}
\end{main_theorem}
Notice that \eqref{eq:int:15} says that $\nu$ is just the Lebesgue measure on $\RR$ multiplied by the value at $x=0$ 
of the density of the measure appearing on the right-hand side of \eqref{eq:int:14}. If in Theorem~\ref{thm:A:1} we assume 
that the Carleman's condition is violated, then $A$ is not self-adjoint. Thus, by Remark~\ref{rem:5}, we have
$\tfrac{1}{\rho_n} \nu_n \xrightarrow{v} 0$ for every sequence $\rho_n \to \infty$. A similar phenomenon occurs in the case 
when $\tr \frakX_N(0) \in \RR \setminus [-2,2]$.
\begin{main_theorem} \label{thm:A:2}
	Suppose that a Jacobi matrix $\calA$ has $N$-periodically modulated Jacobi parameters satisfying \eqref{eq:int:13}. 
	Suppose that $\tr \frakX_N(0) \in \RR \setminus [-2,2]$ where $\frakX_N(x)$ is defined in \eqref{eq:int:12} and
	\begin{equation} \label{eq:int:19}
		\bigg( \frac{a_{n-1}}{a_n} \bigg), 
		\bigg( \frac{b_n}{a_n} \bigg), 
		\bigg( \frac{1}{a_n} \bigg) \in \calD_1^N.
	\end{equation}
	Take any positive sequence $(\rho_n)$ tending to infinity and define $\nu_n$ by \eqref{eq:int:3}. Then
	$\tfrac{1}{\rho_n} \nu_n \xrightarrow{v} 0$.
\end{main_theorem}
In fact Theorem \ref{thm:A:2} is a consequence of Proposition~\ref{prop:12} which states that for every interval $(a,b)$ 
that is disjoint from the essential spectrum of some $\breve{A}$ we have(\footnote{For any Borel measure $\omega$ and any open 
subset $U \subset \RR$ we define a measure $\restr{\omega}{U}$ on Borel subsets of $U$ by $\restr{\omega}{U}(\cdot) = 
\omega(\cdot \cap U)$.}) $\tfrac{1}{\rho_n} \restr{\nu_n}{(a,b)} \xrightarrow{v} 0$ for every sequence $\rho_n \to \infty$. 
Since \eqref{eq:int:19} implies that $A$ is self-adjoint and $\sigmaEss(A) = \emptyset$ the result easily follows, see 
Example~\ref{ex:III} for more details.

The case when $\tr \frakX_N(0) \in \{-2,2\}$ turns out to be quite subtle. Notice that $\discr \frakX_N(0) = 0$.
Our answer depends strongly on the diagonalizability of the matrix $\frakX_N(0)$. To describe our results we need the concept 
of transfer matrices. More precisely, the sequence of $N$-step transfer matrices is defined by the requirement
\begin{equation} \label{eq:int:25}
	\begin{pmatrix}
		p_{n+N-1}(x) \\
		p_{n+N}(x)
	\end{pmatrix}
	=
	X_n(x)
	\begin{pmatrix}
		p_{n-1}(x) \\
		p_n(x)
	\end{pmatrix}, \quad n \geq 1, x \in \CC.
\end{equation}
It can be expressed by the formula
\begin{equation} \label{eq:int:26}
	X_n(x) = B_{n+N-1}(x) \ldots B_{n+1}(x) B_n(x) \quad \text{where} \quad 
	B_j(x) = 
	\begin{pmatrix}
		0 & 1 \\
		-\frac{a_{j-1}}{a_j} & \frac{x-b_j}{a_j}
	\end{pmatrix}, \quad j \geq 1.
\end{equation}

\begin{main_theorem} \label{thm:A:3a}
	Suppose that a Jacobi matrix $\calA$ has $N$-periodically modulated Jacobi parameters satisfying \eqref{eq:int:13}. 
	Suppose that $\frakX_N(0) = \varepsilon \Id$ for some $\varepsilon \in \{-1,1\}$ where $\frakX_N(x)$ is defined in 
	\eqref{eq:int:12}, and
	\[
		\bigg( a_n \Big( \frac{a_{n-1}}{a_n} - \frac{\alpha_{n-1}}{\alpha_n} \Big) \bigg), 
		\bigg( a_n \Big( \frac{b_n}{a_n} - \frac{\beta_n}{\alpha_n} \Big) \bigg), 
		\bigg( \frac{1}{a_n} \bigg) \in \calD_1^N.
	\]
	Then the limit 
	\[
		h(z) = \lim_{j \to \infty} a_{jN+N-1}^2 \discr X_{jN}(z), \quad z \in \CC,
	\]
	exists and defines a polynomial of degree $2$ with negative leading coefficient. Set $\Lambda = \{ x \in \RR : h(x) < 0 \}$
	and 
	\[
		\rho_n = \sum_{k=0}^{n} \frac{\alpha_k}{a_k}.
	\]
	Then for the sequence $(\nu_n : n \in \NN_0)$ defined by \eqref{eq:int:3} and for any $f \in \calC_b(\RR)$ satisfying 
	$\sup_{x \in \RR} (1+x^2) |f(x)| < \infty$ we have
	\[
		\lim_{n \to \infty} \frac{1}{\rho_n} \int_{\RR} f \ud \nu_n =
		\int_{\RR} f \ud \nu
	\]
	where the measure $\nu$ is purely absolutely continuous with the density
	\[
		\frac{\ud \nu}{\ud x} = \frac{1}{4 \pi N \alpha_{N-1}} \frac{|h'(x)|}{\sqrt{-h(x)}} \mathds{1}_\Lambda(x).
	\]
\end{main_theorem}

Finally, the following result covers the case of non-diagonalizable $\frakX_N(0)$. Let us introduce an auxiliary positive
sequence $\gamma = (\gamma_n : n \in \NN_0)$ tending to infinity. We say that $N$-periodically modulated Jacobi parameters
$(a_n : n \in \NN_0)$ and $(b_n : n \in \NN_0)$ are $\gamma$-tempered if the sequences
\[
	\bigg(\sqrt{\gamma_n} \Big(\frac{\alpha_{n-1}}{\alpha_n} - \frac{a_{n-1}}{a_n} \Big) : n \in \NN \bigg),
	\bigg(\sqrt{\gamma_n} \Big(\frac{\beta_n}{\alpha_n} - \frac{b_n}{a_n}\Big) : n \in \NN \bigg),
	\bigg(\frac{\gamma_n}{a_n} : n \in \NN\bigg)
\]
belongs to $\calD_N^1$. About the sequence $\gamma$ we assume that
\begin{equation}
	\label{eq:83a}
	\bigg(\sqrt{\gamma_n} \Big(\sqrt{\frac{\alpha_{n-1}}{\alpha_n}} - \sqrt{\frac{\gamma_{n-1}}{\gamma_n}}\Big) : n \in \NN\bigg),
	\bigg(\frac{1}{\sqrt{\gamma_n}} : n \in \NN\bigg) \in \calD^1_N,
\end{equation}
and
\begin{equation}
	\label{eq:83b}
	\lim_{n \to \infty} \big(\sqrt{\gamma_{n+N}} - \sqrt{\gamma_n} \big) = 0.
\end{equation}
We also impose that
\begin{equation}
	\label{eq:86}
	\bigg(
	\gamma_n\big(1- \varepsilon [\frakX_n(0)]_{11}\big)\Big(\frac{\alpha_{n-1}}{\alpha_n} - \frac{a_{n-1}}{a_n}\Big)
	-
	\gamma_n \varepsilon \Big(\frac{\beta_n}{\alpha_n} - \frac{b_n}{a_n}\Big)
	:  n \in \NN \bigg) \in \calD_1^N
\end{equation}
for certain $\varepsilon \in \{-1, 1\}$.
\begin{main_theorem} \label{thm:A:3b}
	Let $N$ be a positive integer. Let $\gamma = (\gamma_n : n \in \NN)$ be a sequence of positive numbers tending to infinity 
	and satisfying \eqref{eq:83a} and \eqref{eq:83b}. Let $(a_n : n \in \NN_0)$ and $(b_n : n \in \NN_0)$ be $\gamma$-tempered
	$N$-periodically modulated Jacobi parameters such that $\frakX_0(0)$ is a non-trivial parabolic element. 
	Suppose that \eqref{eq:86} holds true with $\varepsilon = \sign{\tr\frakX_0(0)}$. Then the limit 
	\[
		h(z) = \lim_{j \to \infty} \gamma_{jN+N-1} \discr X_n(z), \quad z \in \CC,
	\]
	exists and defines a polynomial of degree at most one. Let $\Lambda = \{ x \in \RR : h(x) < 0 \}$ and 
	\begin{equation} \label{eq:202} 
		\rho_n = \sum_{k=0}^{n} \frac{\sqrt{\alpha_k \gamma_k}}{a_k}.
	\end{equation}
	If $\rho_n \to \infty$ and $\Lambda \neq \emptyset$,
	then for the sequence $(\nu_n : n \in \NN_0)$ defined by \eqref{eq:int:3} and for any $f \in \calC_b(\RR)$ satisfying
	$\sup_{x \in \RR} (1+x^2) |f(x)| < \infty$ we have
	\[
		\lim_{n \to \infty} \frac{1}{\rho_n} \int_{\RR} f \ud \nu_n =
		\int_{\RR} f \ud \nu
	\]
	where the measure $\nu$ is purely absolutely continuous with the density
	\[
		\frac{\ud \nu}{\ud x} =
		\frac{\sqrt{\alpha_{N-1}}}{\pi N} \frac{|\tr \frakX_0'(0)|}{\sqrt{-h(x)}} \ind{\Lambda}(x).
	\]
\end{main_theorem}
If in Theorem~\ref{thm:A:3b} one has $\Lambda \neq \emptyset$ but the sequence \eqref{eq:202} is bounded, then $A$ is not self-adjoint 
(see \cite[Theorem B]{jordan2}). So then $\tfrac{1}{\rho_n} \nu_n \xrightarrow{v} 0$ for any sequence $\rho_n \to \infty$. 
Further, if $\Lambda = \emptyset$ and $h$ is not a zero polynomial, then either $A$ is self-adjoint and $\sigmaEss(A) = \emptyset$ 
or $A$ is not self-adjoint (see \cite[Theorem A]{jordan2}). Thus, for any sequence $\rho_n \to \infty$ we have $\tfrac{1}{\rho_n} \nu_n \xrightarrow{v} 0$.

The study of the sequence $\big( \tfrac{1}{n+1} \nu_n : n \in \NN_0 \big)$ has natural motivation. In physics the limiting measure $\nu$ is called 
\emph{density of states}, see e.g. \cite[Chapter III]{Pastur1992}. It is usually more accessible for computations than 
the measure $\mu$ but still it is helpful in description of $\sigmaEss(A)$. In the theory of orthogonal polynomials a 
central object of study, which is closely related to $\nu$, is the \emph{Christoffel--Darboux kernel}, i.e.
\[
	K_n(x,y) = \sum_{k=0}^{n} p_k(x) \overline{p_k(y)}, \quad x,y \in \CC.
\]
The Christoffel--Darboux kernel plays an important r\^ole in the theory of polynomial least squares approximation 
(see e.g. the survey \cite{Nevai1986}), spectral theory (see e.g. the survey \cite{Simon2008}), approximation of the 
measure $\mu$ (see e.g. \cite{VanAssche1993, Totik2000}), universality in random matrix theory (see e.g. the survey 
\cite{Lubinsky2016}), and data analysis (see e.g. the monograph \cite{Lasserre2022}). To understand the weak behavior of 
$K_n$ on the diagonal one usually defines the sequence of positive measures
\begin{equation} \label{eq:int:4}
	\ud \eta_n(x) = K_n(x,x) \ud \breve{\mu}(x)
\end{equation}
where $\breve{\mu}$ is defined in \eqref{eq:int:24} for certain self-adjoint extension of $A$.
It turns out that the sequences \eqref{eq:int:3} and \eqref{eq:int:4} satisfy
\begin{equation} \label{eq:int:6}
	\lim_{n \to \infty}
	\bigg|
		\frac{1}{n+1}
		\int_\RR
		f \ud \nu_n -
		\frac{1}{n+1}
		\int_\RR
		f \ud \eta_n
	\bigg| = 0, \quad f \in \calC_0(\RR),
\end{equation}
see \cite[Theorem 2.1]{Geronimo1988}. Here $\calC_0(\RR)$ is the space of continuous functions tending to $0$ at both $-\infty$ and $+\infty$. Thus knowledge of weak limits of $\big( \tfrac{1}{n+1} \nu_n : n \in \NN_0 \big)$ allows us to understand 
the behavior of the Christoffel--Darboux kernel on the diagonal and vice-versa. Since $\big( \tfrac{1}{n+1} \nu_n : n \in \NN_0 \big)$ does not 
depend on the choice of $\breve{\mu}$, the weak behavior of $\big( \tfrac{1}{n+1} \eta_n : n \in \NN_0 \big)$ also does not depend on the choice 
of $\breve{\mu}$.

The following result, which is motivated by \eqref{eq:int:6}, allows to obtain local description of the limit of the 
sequence $\big( \tfrac{1}{\rho_n} \nu_n : n \in \NN_0 \big)$ under the assumption that we have a locally uniform convergence of 
the Christoffel--Darboux kernel.
\begin{main_theorem}[=Lemma~\ref{lem:1}] \label{thm:A}
	Suppose that $(\rho_n : n \in \NN)$ is a positive sequence tending to infinity. If there are an open $U \subset \RR$ 
	and a non-zero function $h : U \to [0, \infty)$ such that
	\[
		\lim_{n \to \infty} \frac{1}{\rho_n} K_n(x,x) = h(x)
	\]
	locally uniformly with respect to $x \in U$, then the operator $A$ is self-adjoint and
	$\tfrac{1}{\rho_n} \restr{\nu_n}{U} \xrightarrow{v} \nu$ where
	\[
		\ud \nu(x) = h(x) \ud \mu(x), \quad x \in U.
	\]
\end{main_theorem}
In Section~\ref{sec:3.2}, using \cite{PeriodicII, ChristoffelI, ChristoffelII, jordan}, we 
show how the result can be applied to a large class of Jacobi operators with periodically modulated Jacobi parameters. 
In particular, we can relax hypotheses \eqref{eq:int:18} to conclude $\tfrac{1}{\rho_n} \nu_n \xrightarrow{v} \nu$. Moreover, by exploiting
our knowledge of $\sigmaEss(A)$ and applying Proposition~\ref{prop:12}, under additional hypothesis $\lim_{n \to \infty} (a_{n+N}
- a_n) = 0$ in Theorem \ref{thm:A:3a} we obtain(\footnote{For any set $X \subset \RR$ by $\partial X$ we denote its boundary.})
$\tfrac{1}{\rho_n} \restr{\nu_n}{\RR \setminus \partial \Lambda} \xrightarrow{v} \nu$. Finally, under much more restrictive assumptions 
than \eqref{eq:83a}, \eqref{eq:83b} and \eqref{eq:86} we get 
$\tfrac{1}{\rho_n} \restr{\nu_n}{\RR \setminus \partial \Lambda} \xrightarrow{v} \nu$.

Since in Theorems~\ref{thm:A:1}, \ref{thm:A:3a} and \ref{thm:A:3b} we study a stronger notion of convergence, we need further tools.
Let us recall that for any finite positive measure $\omega$ its Cauchy transform $\calC[\omega]$, is defined by
\[
	\calC[\omega](z) = \int_\RR \frac{1}{x-z} \ud \omega(x), \quad z \in \CC \setminus \RR.
\]
It is a \emph{Herglotz function}, namely it is a holomorphic function which maps the upper half plane 
$\CC_+ = \{ z \in \CC : \Im z > 0\}$ into $\CC_+ \cup \RR$. The function $\calC[\omega]$ uniquely determines the measure, and under some 
hypotheses, for a sequence $(\omega_n : n \in \NN)$ of positive finite measures satisfying 
$\sup_{n \in \NN} \omega_n(\RR) < \infty$, the pointwise convergence of $\calC[\omega_n]$ implies 
$\omega_n \xrightarrow{w} \omega$ for some finite measure $\omega$, see \cite{Geronimo2003} for details. In Lemma~\ref{lem:3} 
we present a version of this result in the case when the condition $\sup_{n \in \NN} \omega_n(\RR) < \infty$ might be violated. 
The reasoning is based on the idea from the proof of \cite[Theorem 3.1]{VanAssche1985}. The lemma states that the pointwise 
convergence of $\calC[\omega_n]$ on a set with a limit point in $\CC_+$ implies existence of a finite measure $\tilde{\omega}$
such that
\begin{equation} \label{eq:int:28}
	\lim_{n \to \infty} \int_\RR f(x) \frac{\ud \omega_n(x)}{1+x^2} =
	\int_\RR f(x) \ud \tilde{\omega}(x), \quad f \in \calC_0(\RR).
\end{equation}
In particular, $\omega_n \xrightarrow{v} (1+x^2) \ud \tilde{\omega}(x)$. Moreover, one can describe the Cauchy transform of 
$\tilde{\omega}$ explicitly.

Our next result together with Lemma~\ref{lem:3} allows to extend \eqref{eq:int:6} from the case $\rho_n = n+1$ to the general
sequence $\rho_n \to \infty$. Its proof is inspired by \cite[Theorem 2.1]{Geronimo1988} and \cite[Theorem 1.5]{Simon2009}.
\begin{main_theorem}[=Theorem~\ref{thm:4}] \label{thm:C}
	Let $(\rho_n : n \in \NN_0)$ be a positive sequence tending to infinity. Let $(\nu_n : n \in \NN_0)$ and $(\eta_n : n \in \NN_0)$ be defined in 
	\eqref{eq:int:3} and \eqref{eq:int:4}, respectively. Then
	\begin{equation} \label{eq:int:32}
		\lim_{n \to \infty} 
		\big| 
		\calC \big[ \tfrac{1}{\rho_n} \nu_n \big](z) - 
		\calC \big[ \tfrac{1}{\rho_n} \eta_n \big](z) 
		\big| = 0
	\end{equation}
	locally uniformly with respect to $z \in \CC \setminus \RR$ and for any $L \geq 0$,
	\begin{equation} \label{eq:int:33}
		\lim_{n \to \infty} 
		\big| 
		\calC \big[ \tfrac{1}{\rho_{n}} \eta_{n+L} \big](z) - 
		\calC \big[ \tfrac{1}{\rho_n} \eta_n \big](z) 
		\big| = 0
	\end{equation}
	locally uniformly with respect to $z \in \CC \setminus \RR$.
\end{main_theorem}

Again, as in \eqref{eq:int:6}, the formula~\eqref{eq:int:32} implies that the weak behavior of the sequence 
$\big( \tfrac{1}{\rho_n} \eta_n : n \in \NN_0 \big)$ does not depend on the choice of $\breve{\mu}$. 
Hence, in order to understand the vague limit of the sequence $\big( \tfrac{1}{\rho_n} \eta_n : n \in \NN_0 \big)$ one can study Cauchy transforms 
of the sequence $\big( \tfrac{1}{\rho_n} \nu_n : n \in \NN_0 \big)$ and vice-versa. The choice to work with $\big( \tfrac{1}{\rho_n} \nu_n : n \in \NN_0 \big)$ 
might be advantageous if one understands the asymptotic behavior of orthogonal polynomials $(p_n : n \in \NN_0)$ because
\begin{equation} \label{eq:int:27}
	\calC \big[ \tfrac{1}{\rho_n} \nu_n \big](z) = -\frac{1}{\rho_n} \frac{p_{n+1}'(z)}{p_{n+1}(z)}, \quad z \in \CC \setminus \RR,
\end{equation}
see Remark~\ref{rem:2}. 

Motivated by \eqref{eq:int:27} we show that, in the setup of Theorems \ref{thm:A:1}, \ref{thm:A:3a} and \ref{thm:A:3b}, for any 
$i \in \{0, 1, \ldots, N-1\}$ and most compact $K \subset \CC \setminus \RR$ there are: $M \geq 1$, a continuous function $\varphi_i$, which is holomorphic in the interior of $K$, and a sequence of explicit continuous functions $\lambda_j : K \to \CC \setminus \{0\}$, which are holomorphic in the interior of $K$ such that
\begin{equation} \label{eq:int:29}
	\lim_{k \to \infty} \sup_{z \in K} 
	\bigg|
	\frac{p_{kN+i}(z)}{\prod_{j=M}^{k-1} \lambda_j(z)} - \varphi_i(z) 
	\bigg| = 0.
\end{equation}
Moreover, for any $z_0 \in K$ we have $|\varphi_{i}(z_0)| + |\varphi_{i+1}(z_0)| \neq 0$.
Since the asymptotics are uniform, given $z_0 \in \intr(K)$ such that $\varphi_i(z_0) \neq 0$ we can differentiate them in the neighborhood of $z_0$, which reduces the problem to the understanding of the limit
\[
	\lim_{k \to \infty} 
	\frac{1}{\rho_{kN+i}} 
	\sum_{j=M}^{k-1}	\frac{\lambda_j'(z)}{\lambda_j(z)}.
\]
Thanks to explicit form of $\lambda_j$ we can treat such limits via Stolz--Ces\`aro theorem. Having computed the limit of \eqref{eq:int:27} we can use Lemma~\ref{lem:3} to obtain the Cauchy transform of the measure $\tilde{\omega}$ such that \eqref{eq:int:28} holds for $\omega_n = \tfrac{1}{\rho_{nN+i}} \nu_{nN+i}$. Then by using Stieltjes inversion formula we identify the measure $\tilde{\omega}$. In our setup we can prove that $\rho_{nN}/\rho_{nN+i} \to 1$ for any $i \in \{0,1,\ldots,N-1\}$, so in view of \eqref{eq:int:32} and \eqref{eq:int:33} we have \eqref{eq:int:28} for the whole sequence $\omega_{n} = \tfrac{1}{\rho_n} \nu_n$.

By \eqref{eq:int:25} and \eqref{eq:int:26} orthogonal polynomials $(p_n(z) : n \in \NN_0)$ are solutions of the equation
\begin{equation} \label{eq:int:30}
	\begin{pmatrix}
		u_{n+N-1}(z) \\
		u_{n+N}(z)
	\end{pmatrix} 
	=
	X_n(z) 
	\begin{pmatrix}
		u_{n-1}(z) \\
		u_n(z)
	\end{pmatrix}.
\end{equation}
Our discrete Levinson's type theorem (see Theorem~\ref{thm:10}) allows us to understand asymptotic behavior of a basis of 
solutions of \eqref{eq:int:30} provided that we can properly diagonalize $X_{kN+i}(z)$ in a holomorphic way. There is $M \geq 1$ 
such that for any $z \in K$ and any $k \geq M$, this matrix has two eigenvalues: $\lambda_k^+(z), \lambda_k^-(z)$ such that
$|\lambda_k^+(z)| \geq |\lambda_k^-(z)|$. We show that the eigenvalues $(\lambda_k^+(z) : k \geq M)$ dominate 
$(\lambda_k^-(z) : k \geq M)$ in the sense that
\[
	\inf_{z \in K}
	\prod_{j=M}^\infty  
	\bigg| \frac{\lambda_j^+(z)}{\lambda_j^-(z)} \bigg| = \infty.
\]
Thus, for any $z \in K$ we obtain existence of a basis $\{ (u_n^+(z) : n \in \NN_0), (u_n^-(z) : n \in \NN_0) \}$ of the space 
of solutions of \eqref{eq:int:30} such that for any $i \in \{0, 1, \ldots, N-1\}$ the limits
\begin{equation} \label{eq:int:31}
	\lim_{k \to \infty} 
	\frac{1}{\prod_{j=M}^{k-1} \lambda_j^+(z)}
	\begin{pmatrix}
		u_{kN+i}^+(z) \\
		u_{kN+i+1}^+(z)
	\end{pmatrix}, \quad
	\lim_{k \to \infty}
	\frac{1}{\prod_{j=M}^{k-1} \lambda_j^-(z)}
	\begin{pmatrix}
		u_{kN+i}^-(z) \\
		u_{kN+i+1}^-(z)
	\end{pmatrix}
\end{equation}
exist uniformly with respect to $z \in K$ and are non-zero. Moreover, for any $n \in \NN_0$ the functions $u^+_n, u^-_n$ are 
continuous on $K$ and holomorphic in $\intr(K)$. Since $p_n(z) = f(z) u_n^+(z) + g(z) u_n^-(z)$ for some functions 
$f,g : K \to \CC$ it leads to the desired formula~\eqref{eq:int:29}, see Section~\ref{sec:7.3} for more details. Let us only 
mention that the functions $f,g$ are holomorphic in $\intr(K)$, which is a consequence of the formula~\eqref{eq:120} relating 
them to some Wronskians.

The article is organized as follows. In Section~\ref{sec:2} we collect some basic results and fix our notation. In particular, 
in Section~\ref{sec:2.1} we define the general Stolz class $\calD_r^N$ for any $r \geq 1$ and $N \geq 1$. 
In Section~\ref{sec:2.2} we collect the notation on spectrum of $A$. Section~\ref{sec:2.3} is devoted to the definition of 
transfer matrices and generalized eigenvectors of $A$. Sections~\ref{sec:2.4} and \ref{sec:2.5} discuss properties of periodic 
and periodically modulated Jacobi parameters, respectively. In Section~\ref{sec:3} we start our investigations on connections 
between Christoffel--Darboux kernel and essential spectrum of $A$ to the description of $\nu$. In particular, in 
Section~\ref{sec:3.1} we prove Theorem~\ref{thm:A}. In Section~\ref{sec:3.2} we show how the tools from Section~\ref{sec:3.1} 
can be applied to Jacobi matrices discussed in Theorems~\ref{thm:A:1}--\ref{thm:A:3b}. 
In Section~\ref{sec:5} we start our investigations based on the Cauchy transform. In particular, we prove there 
Lemma~\ref{lem:3} and Theorem~\ref{thm:C}. Section~\ref{sec:6} is the central part of this article. In Section~\ref{sec:6.1}, 
by usage of Joukowsky map, we construct and study solutions of the eigenvalue equation for the transfer matrices. 
In Section~\ref{sec:6.2} we explicitly diagonalize transfer matrices depending on their form. In Section~\ref{sec:7.1} we 
formulate our discrete Levinson's type theorem. We use it in Section~\ref{sec:7.2} to construct a basis of generalized 
eigenvectors of the form \eqref{eq:int:31} and in Section~\ref{sec:7.3} we give applications to the asymptotics of the form 
\eqref{eq:int:29}. Finally, in Section~\ref{sec:8} we prove Theorems~\ref{thm:A:1}, \ref{thm:A:3a} and \ref{thm:A:3b}.

\subsection*{Acknowledgments}
The first author was partially supported by long term structural funding -- Methusalem grant of the Flemish Government. 
Part of this work was done while he was a postdoctoral fellow at KU Leuven. He would like to thank Walter Van Assche for turning 
his attention to the articles \cite{VanAssche1985} and \cite{Geronimo1988}.


\section{Preliminaries} \label{sec:2}

\subsection{Stolz class} \label{sec:2.1}
In this section we define a proper class of slowly oscillating sequences which is motivated by \cite{Stolz1994}, see also \cite[Section 2]{SwiderskiTrojan2019}. 

Let $V$ be a normed space. We say that a sequence $(x_n : n \in \NN)$ of vectors from $V$ belongs to $\calD_r(V)$ for certain $r \in \NN$, if it is \emph{bounded} and for each $j \in \{1,\ldots,r\}$,
\[
	\sum_{n=1}^\infty \| \Delta^j x_n \|^{\tfrac{r}{j}} < \infty
\]
where
\begin{align*}
	\Delta^0 x_n &= x_n, \\
	\Delta^j x_n &= \Delta^{j-1} x_{n+1} - \Delta^{j-1} x_n, \quad j \geq 1.
\end{align*}
Notice that $\calD_1(V)$ consists of sequences of bounded variation. Moreover, let us recall that for any $r \geq 1$ we have $\calD_r(V) \subsetneq \calD_{r+1}(V)$.

If $V$ is the real line with Euclidean norm we abbreviate $\calD_{r} = \calD_{r}(V)$. Given a compact set
$K \subset \CC$ and a normed vector space $R$, we denote by $\calD_{r}(K, R)$ the case when $V$ is the space of all
continuous mappings from $K$ to $R$ equipped with the supremum norm. 
Let us recall that $\calD_r(V)$ is an algebra provided $V$ is a normed algebra.

Let $N$ be a positive integer. We say that a sequence $(x_n : n \in \NN)$ belongs to $\calD_r^N (V)$, if
for any $i \in \{0, 1, \ldots, N-1 \}$,
\[
	(x_{nN+i} : n \in \NN) \in \calD_r(V).
\]
Again, $\calD_r^N(V)$ is an algebra provided $V$ is a normed algebra. 

\subsection{Jacobi parameters} \label{sec:2.2}
Given two sequences $a = (a_n : n \in \NN_0)$ and $b = (b_n : n \in \NN_0)$ of positive and real numbers, respectively, by $A$ we
define the closure in $\ell^2$ of the operator acting on sequences having finite support by the matrix
\[
        \begin{pmatrix}
                b_0 & a_0 & 0   & 0      &\ldots \\
                a_0 & b_1 & a_1 & 0       & \ldots \\
                0   & a_1 & b_2 & a_2     & \ldots \\
                0   & 0   & a_2 & b_3   &  \\
                \vdots & \vdots & \vdots  &  & \ddots
        \end{pmatrix}.
\]
The operator $A$ is called \emph{Jacobi matrix}. If the Carleman's condition \eqref{eq:1}
is satisfied then the operator $A$ is self-adjoint (see e.g. \cite[Corollary 6.19]{Schmudgen2017}).
Let us denote by $E_A$ its spectral resolution of the identity. Then for any Borel subset $B \subset \RR$, we set
\[
        \mu(B) = \langle E_A(B) e_0, e_0 \rangle_{\ell^2}
\]
where $e_0$ is the sequence having $1$ on the $0$th position and $0$ elsewhere. The polynomials $(p_n : n \in \NN_0)$
form an orthonormal basis of $L^2(\RR, \mu)$. By $\sigma(A), \sigmaP(A), \sigmaS(A), \sigmaAC(A)$ and $\sigmaEss(A)$ we denote
the spectrum, the point spectrum, the singular spectrum, the absolutely continuous spectrum and the essential spectrum of $A$,
respectively.

\subsection{Generalized eigenvectors} \label{sec:2.3}
A sequence $(u_n : n \in \NN_0)$ is a \emph{generalized eigenvector} associated to
$x \in \CC$ and corresponding to $\eta \in \RR^2 \setminus \{0\}$, if the sequence of vectors
\begin{align*}
        \vec{u}_0 &= \eta, \\
        \vec{u}_n &=
        \begin{pmatrix}
                u_{n-1} \\
                u_n
        \end{pmatrix}, \quad n \geq 1,
\end{align*}
satisfies
\[
        \vec{u}_{n+1} = B_n(x) \vec{u}_n, \quad n \geq 0,
\]
where $B_n$ is the \emph{transfer matrix} defined as
\begin{equation}
        \label{eq:3}
        \begin{aligned}
        B_0(x) &=
        \begin{pmatrix}
                0 & 1 \\
                -\frac{1}{a_0} & \frac{x-b_0}{a_0}
        \end{pmatrix} \\
        B_n(x) &=
        \begin{pmatrix}
                0 & 1 \\
                -\frac{a_{n-1}}{a_n} & \frac{x - b_n}{a_n}
        \end{pmatrix}
        ,
        \quad n \geq 1.
        \end{aligned}
\end{equation}
Sometimes we write $(u_n(\eta, x) : n \in \NN_0)$ to indicate the dependence on the parameters. In particular, the
sequence of orthogonal polynomials $(p_n(x) : n \in \NN_0)$ is the generalized eigenvector associated to $\eta = e_2$
and $x \in \CC$.

Let $u,v$ be solutions of
\[
	a_{n-1} u_{n-1} + b_n u_n + a_n u_{n+1} = zu_n, \quad n \geq 0.
\]
Their \emph{Wronskian} is defined by
\[
	\Wrk_n(u,v) = 
	a_n
	\det
	\begin{pmatrix}
		u_{n-1} & v_{n-1} \\
		u_n & v_n
	\end{pmatrix}.
\]
Since it is independent of $n$ we usually omit the subscript. Notice that $\Wrk_n(u, v) \neq 0$ if and only if 
$u$ and $v$ are linearly independent.

\subsection{Periodic Jacobi parameters} \label{sec:2.4}
By $(\alpha_n : n \in \ZZ)$ and $(\beta_n : n \in \ZZ)$ we denote
$N$-periodic sequences of real and positive numbers, respectively. For each $k \geq 0$, let us define polynomials
$(\mathfrak{p}^{[k]}_n : n \in \NN_0)$ by relations
\[
        \begin{gathered}
                \mathfrak{p}_0^{[k]}(x) = 1, \qquad \mathfrak{p}_1^{[k]}(x) = \frac{x-\beta_k}{\alpha_k}, \\
                \alpha_{n+k-1} \mathfrak{p}^{[k]}_{n-1}(x) + \beta_{n+k} \mathfrak{p}^{[k]}_n(x)
                + \alpha_{n+k} \mathfrak{p}^{[k]}_{n+1}(x)
                = x \mathfrak{p}^{[k]}_n(x), \qquad n \geq 1.
        \end{gathered}
\]
Let
\[
        \frakB_n(x) =
        \begin{pmatrix}
                0 & 1 \\
                -\frac{\alpha_{n-1}}{\alpha_n} & \frac{x - \beta_n}{\alpha_n}
        \end{pmatrix},
        \qquad\text{and}\qquad
        \frakX_n(x) = \prod_{j = n}^{N+n-1} \mathfrak{B}_j(x), \qquad n \in \ZZ.
\]
By $\frakA$ we denote the Jacobi matrix corresponding to
\begin{equation*}
        \begin{pmatrix}
                \beta_0 & \alpha_0 & 0   & 0      &\ldots \\
                \alpha_0 & \beta_1 & \alpha_1 & 0       & \ldots \\
                0   & \alpha_1 & \beta_2 & \alpha_2     & \ldots \\
                0   & 0   & \alpha_2 & \beta_3   &  \\
                \vdots & \vdots & \vdots  &  & \ddots
        \end{pmatrix}.
\end{equation*}

\subsection{Periodic modulations} \label{sec:2.5}
Given $N \in \NN$, we say that $(a_n : n \in \NN_0)$ and $(b_n : n \in \NN_0)$ are $N$-periodically modulated if there
are two $N$-periodic sequences $(\alpha_n : n \in \ZZ)$ and $(\beta_n  : n \in \ZZ)$ of positive and real numbers, 
respectively, such that
\begin{enumerate}[label=\rm (\alph*), start=1, ref=\alph*]
	\item
	$\begin{aligned}[b]
	\lim_{n \to \infty} a_n = \infty
	\end{aligned},$
	\item
	$\begin{aligned}[b]
	\lim_{n \to \infty} \bigg| \frac{a_{n-1}}{a_n} - \frac{\alpha_{n-1}}{\alpha_n} \bigg| = 0
	\end{aligned},$
	\item
	$\begin{aligned}[b]
	\lim_{n \to \infty} \bigg| \frac{b_n}{a_n} - \frac{\beta_n}{\alpha_n} \bigg| = 0
	\end{aligned}.$
\end{enumerate}
We define the $N$-step transfer matrix by
\begin{equation}
	\label{eq:122}
	X_n = B_{n+N-1} B_{n+N-2} \cdots B_{n+1} B_n,
\end{equation}
where $B_n$ is defined in \eqref{eq:3}. Let us observe that for each $i \in \{0, 1, \ldots, N-1\}$,
\[
	\lim_{j \to \infty} B_{jN+i}(x) = \frakB_i(0)
\]
and
\[
	\lim_{j \to \infty} X_{jN+i}(x) = \frakX_i(0)
\]
locally uniformly with respect to $x \in \CC$. In view of \cite[Proposition 3]{PeriodicIII}, we have
\begin{equation}
	\label{eq:4}
	X_n =
	\begin{pmatrix}
		-\frac{a_{n-1}}{a_n} p^{[n+1]}_{N-2} & p_{N-1}^{[n]} \\
		-\frac{a_{n-1}}{a_n} p^{[n+1]}_{N-1} & p_N^{[n]}
	\end{pmatrix}
\end{equation}
where for each $k \in \NN_0$, $(p^{[k]}_n : n \in \NN_0)$ are $k$th associated \emph{orthonormal} polynomials defined as
\[
        \begin{gathered}
                p^{[k]}_0(x) = 1, \qquad p^{[k]}_1(x) = \frac{x - b_k}{a_k}, \\
                a_{n+k-1} p^{[k]}_{n-1}(x) + b_{n+k} p^{[k]}_n(x) + a_{n+k} p^{[k]}_{n+1}(x) =
                        x p^{[k]}_n(x), \qquad n \geq 1.
        \end{gathered}
\]
We usually omit the superscript if $k = 0$.

It turns out that the properties of the measure $\mu$ corresponding to $N$-periodically modulated
Jacobi parameters depend on the matrix $\frakX_0(0)$.
More specifically, one can distinguish four cases:
\begin{enumerate}[label=\rm (\Roman*), start=1, ref=\Roman*]
\item \label{eq:PI}
if $|\tr \frakX_0(0)|<2$, then, under some regularity assumptions on Jacobi parameters, the measure $\mu$ is purely absolutely continuous on $\RR$ with positive continuous density;

\item 
if $|\tr \frakX_0(0)|=2$, then we have two subcases: 
\begin{enumerate}[label=\rm (\alph*), start=1, ref=II(\alph*)]
\item
\label{eq:PIIa}
if $\frakX_0(0)$ is diagonalizable, then, under some regularity assumptions on Jacobi parameters,  there is a compact interval $I \subset \RR$ such that the measure $\mu$ is purely absolutely continuous on $\RR \setminus I$ with positive continuous density and it is purely discrete in the interior of $I$;

\item
\label{eq:PIIb}
if $\frakX_0(0)$ is \emph{not} diagonalizable, then usually the measure $\mu$ is purely absolutely continuous on a real half-line (or the whole real line) and discrete on its complement;
\end{enumerate}

\item \label{eq:PIII}
if $|\tr \frakX_0(0)|>2$, then, under some regularity assumptions on Jacobi parameters, the measure $\mu$ is purely
discrete with the support having no finite accumulation points. 
\end{enumerate}
One can describe these four cases geometrically. Specifically, we have 
\[
	(\tr \frakX_0)^{-1} \big( (-2, 2) \big) = \bigcup_{j=1}^N I_j
\]
where $I_j$ are disjoint open non-empty bounded intervals whose closures might touch each other. Let us denote
\begin{equation} \label{eq:5}
	I_j = (x_{2j-1}, x_{2j}) \qquad (j=1,2, \ldots, N),
\end{equation}
where the sequence $(x_k : k = 1, 2, \ldots, 2N)$ is increasing. Then we are in the case~\ref{eq:PI} if $0$ belongs to some interval \eqref{eq:5}, in the case~\ref{eq:PIIa} if $0$ lies on the boundary of exactly two intervals, in the case~\ref{eq:PIIb} if $0$ lies on the boundary of exactly one interval and in~\ref{eq:PIII} in the remaining cases. An example for $N=4$ is presented in Figure~\ref{img:1}.
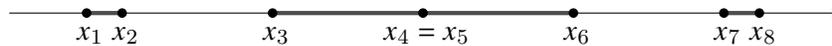
\begin{figure}[h!]
	\centering
	\begin{tikzpicture}
		\draw[->, thin, black] (-5.5,0) -- (5.5, 0);  
		\draw[-,ultra thick, black!70] (-4.472135954,0) -- (-4,0);
		\draw[-,ultra thick, black!70] (-2,0) -- (2,0);
		\draw[-,ultra thick, black!70] (4,0) -- (4.472135954,0);
		\filldraw[black] (0,0) circle (1.6pt);
		\draw (0.05,-0.3) node {$x_4=x_5$};
		\filldraw[black] (-4.472135954,0) circle (1.6pt);
		\draw (-4.422135954,-0.3) node {$x_1$};
		\filldraw[black] (-4,0) circle (1.6pt);
		\draw (-3.95,-0.3) node {$x_2$};		
		\filldraw[black] (-2,0) circle (1.6pt);
		\draw (-1.95,-0.3) node {$x_3$};
		\filldraw[black] (2,0) circle (1.6pt);
		\draw (2.05,-0.3) node {$x_6$};
		\filldraw[black] (4,0) circle (1.6pt);
		\draw (4.05,-0.3) node {$x_7$};
		\filldraw[black] (4.472135954,0) circle (1.6pt);
		\draw (4.522135954,-0.3) node {$x_8$};
	\end{tikzpicture}
	\caption{An example for $N=4$. If $0 = x_4$, then we are in the case~\ref{eq:PIIa}, while $0 \in \{x_1, x_2, x_3, x_6, x_7, x_8 \}$ corresponds to the case~\ref{eq:PIIb}.}
	\label{img:1}
\end{figure}

\section{Vague convergence via the Christoffel--Darboux kernel} \label{sec:3}
\subsection{Density of zeros} \label{sec:3.1}
Let $\calA$ be a Jacobi matrix and let us define a discrete measure
\[
	\nu_{n} = \sum_{y \in \sigma(\calA_{n+1})} \delta_{y}
\]
where $\delta_y$ denotes the Dirac measure at $y$.

\begin{lemma} 
	\label{lem:1}
	Suppose that there are an open subset $U$ of the real line, a positive sequence $(\rho_n : n \in \NN_0)$ tending to infinity, 
	a strictly increasing subsequence $(n_k : k \in \NN)$ of the natural numbers and a non-zero function $h: U \rightarrow \RR$,
	such that
	\begin{equation}
		\label{eq:7}
		\lim_{n \to \infty} \frac{1}{\rho_{n_k}} K_{n_k}(x,x) = h(x)
	\end{equation}
	locally uniformly with respect to $x \in U$. Then for each $f \in \calC_c(U)$,
	\begin{equation}
		\label{eq:8}
		\lim_{n \to \infty} 
		\frac{1}{\rho_{n_k}}
		\int_\RR f(x) \ud \nu_{n_k}(x) = \int_\RR f(x) h(x) \ud \mu(x).
	\end{equation}
\end{lemma}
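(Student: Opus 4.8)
The plan is to realize the zero counting measure through Gaussian quadrature and then feed in the hypothesis on the Christoffel--Darboux kernel. First I would observe that the hypothesis already forces $A$ to be self-adjoint. Indeed, $h$ is non-negative, being a locally uniform limit of the non-negative functions $\tfrac{1}{\rho_{n_k}} K_{n_k}(\cdot,\cdot)$, and it is non-zero, so there is a point $x_0 \in U$ with $h(x_0) > 0$; since $\rho_{n_k} \to \infty$ this gives $K_{n_k}(x_0,x_0) = \sum_{j=0}^{n_k} |p_j(x_0)|^2 \to \infty$, i.e.\ $\sum_j |p_j(x_0)|^2 = \infty$. Because in the indeterminate case the series $\sum_j |p_j(z)|^2$ converges for every $z \in \CC$, divergence at the single real point $x_0$ forces the Hamburger moment problem to be determinate; hence $A$ is self-adjoint and $\mu$ is the unique orthogonality measure, a probability measure.

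Next I would set up the quadrature. For each $n$ the points of $\sigma(\calA_{n+1})$ are the real simple zeros $y_1, \dots, y_{n+1}$ of $p_{n+1}$, and the Gauss--Jacobi quadrature
\[
	\int_\RR q \ud\mu = \sum_{j=1}^{n+1} \frac{q(y_j)}{K_n(y_j,y_j)}, \qquad \deg q \leq 2n+1,
\]
holds, with the Christoffel numbers $1/K_n(y_j,y_j)$ as weights. This lets me introduce the probability measures $\mu_n = \sum_{j=1}^{n+1} K_n(y_j,y_j)^{-1} \delta_{y_j}$, whose moments agree with those of $\mu$ up to order $2n+1$; as $\mu$ is determinate, the method of moments yields $\mu_n \xrightarrow{w} \mu$.

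The heart of the argument is a rewriting that absorbs the kernel into the integrand. For $f \in \calC_c(U)$, writing $y_1,\dots,y_{n_k+1}$ for the zeros of $p_{n_k+1}$ and multiplying and dividing each term by $K_{n_k}(y_j,y_j)$ gives
\[
	\frac{1}{\rho_{n_k}} \int_\RR f \ud\nu_{n_k} = \frac{1}{\rho_{n_k}} \sum_{j=1}^{n_k+1} f(y_j) = \int_\RR g_k \ud\mu_{n_k}, \qquad g_k(x) := f(x)\, \frac{K_{n_k}(x,x)}{\rho_{n_k}}.
\]
Each $g_k$ is continuous and supported in $\supp f \subset U$, and since $\tfrac{1}{\rho_{n_k}} K_{n_k}(\cdot,\cdot) \to h$ uniformly on the compact set $\supp f$, one has $g_k \to fh$ uniformly (here $h$ is continuous, being a locally uniform limit of continuous functions, so $fh \in \calC_c(\RR)$). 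Splitting $\int g_k \ud\mu_{n_k} = \int (g_k - fh) \ud\mu_{n_k} + \int fh \ud\mu_{n_k}$, the first integral is bounded by $\|g_k - fh\|_\infty\, \mu_{n_k}(\RR) = \|g_k - fh\|_\infty \to 0$, while the second converges to $\int fh \ud\mu$ by $\mu_{n_k} \xrightarrow{w} \mu$. This gives \eqref{eq:8}.

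The main obstacle I anticipate lies in the double limit concealed in $\int g_k \ud\mu_{n_k}$: the integrand $g_k$ changes with $k$ through the growing kernel $K_{n_k}$ at the same time as the quadrature measure $\mu_{n_k}$ moves. The compact support of $f$ inside $U$ is exactly what decouples them, converting the kernel hypothesis into uniform control of $g_k$ on a fixed compact set; and the determinacy extracted in the first step is what legitimizes $\mu_{n_k} \xrightarrow{w} \mu$. Verifying the quadrature identity and the Christoffel-number formula is routine, but some care is needed to rule out escape of mass of $\mu_{n_k}$ to infinity, which is guaranteed by the method of moments rather than by mere vague convergence.
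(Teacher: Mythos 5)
Your proposal is correct and follows essentially the same route as the paper: determinacy from the divergence of $K_{n_k}(x_0,x_0)$ at a point where $h(x_0)>0$, Gauss quadrature to get the probability measures $\mu_n$ with $\mu_n \xrightarrow{w} \mu$, and the identity $\tfrac{1}{\rho_{n}}\ud\nu_{n} = \tfrac{1}{\rho_n}K_n(x,x)\ud\mu_{n}$ combined with uniform convergence of the rescaled kernel on $\supp f$. The splitting into $\int (g_k - fh)\ud\mu_{n_k} + \int fh\,\ud\mu_{n_k}$ is exactly the paper's estimate.
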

\begin{proof}
	Since $h(x_0) \neq 0$ for certain $x_0 \in U$, the sequence $(p_n(x_0) : n \in \NN_0)$ cannot belong to $\ell^2$, 
	and consequently, the moment problem for $\mu$ is determinate (see e.g. \cite[Theorem 6.16(v)]{Schmudgen2017}).

	Let us define the sequence of probability measures
	\begin{equation} 
		\label{eq:9}
		\mu_{n} = 
		\sum_{y \in \sigma(\calA_{n+1})} \frac{\delta_{y}}{K_n(y,y)}
	\end{equation}
	By Gauss quadrature formula (see, e.g. \cite[Theorem 9.6]{Schmudgen2017})
	\[
		\int_{\RR} x^\ell \ud \mu_{n}(x) = 
		\int_{\RR} x^{\ell} \ud \mu(x), \qquad \ell = 0, 1, \ldots, 2n+1.
	\]
	In particular, since the moment problem for $\mu$ is determinate, we have $\mu_{n} \xrightarrow{w} \mu$
	(see, e.g. \cite[Theorem 30.2]{Billingsley1995}). Observe that \eqref{eq:9} can be written in the form 
	\[
		\ud \mu_{n}(x) = 
		\frac{1}{K_{n}(x,x)} \ud \nu_{n}(x),
	\]
	thus
	\[
		\frac{1}{\rho_{n}} K_{n}(x,x) \ud \mu_{n}(x) = \frac{1}{\rho_n} \ud \nu_{n}(x).
	\]
	Hence, for $f \in \calC_c(U)$ and $K = \supp(f)$, we have
	\begin{align*}
		\bigg| 
		\frac{1}{\rho_n}
		\int_{\RR} f(x) \ud \nu_{n}(x) - 
		\int_{\RR} f(x) h(x) \ud \mu_{n}(x) 
		\bigg| 
		&=
		\bigg| 
		\int_{\RR} f(x) \Big( \frac{1}{\rho_n} K_n(x,x) - h(x) \Big) \ud \mu_{n}(x) 
		\bigg| \\
		&\leq 
		\int_{K} |f(x)| \Big| \frac{1}{\rho_n} K_n(x,x) - h(x) \Big| \ud \mu_{n}(x) \\
		&\leq
		\mu_{n}(K) \cdot \sup_{x \in K} |f(x)| \cdot
		\sup_{x \in K} \Big| \frac{1}{\rho_n} K_n(x,x) - h(x) \Big|
	\end{align*}
	which by \eqref{eq:7} implies that
	\begin{equation} 
		\label{eq:10}
		\lim_{k \to \infty} \frac{1}{\rho_{n_k}} \int_{\RR} f(x) \ud \nu_{n_k}(x) =
		\lim_{k \to \infty} \int_{\RR} f(x) h(x) \ud \mu_{n_k}(x).
	\end{equation}
	Since $f \cdot h \in \calC_c(U) \subset \calC_c(\RR)$, by the weak convergence of $\mu_{n}$ we conclude that
	\[
		\lim_{k \to \infty} 
		\int_{K} f(x) h(x) \ud \mu_{n_k}(x) = \int_K f(x) h(x) \ud \mu(x),
	\]
	which together with \eqref{eq:10} implies \eqref{eq:8}, and the lemma follows.
\end{proof}

\begin{corollary} 
	\label{cor:1}
	Suppose that the measure $\mu$ is absolutely continuous on $U$ an open subset of the real line with positive density 
	$\mu'$. If there is a positive sequence $(\rho_n : n \in \NN_0)$ tending to infinity and a non-negative function $\upsilon: U \rightarrow \RR$, such that
	\[
		\lim_{n \to \infty}
		\frac{1}{\rho_n} K_n(x,x) = \frac{\upsilon(x)}{\mu'(x)}
	\]
	locally uniformly with respect to $x \in U$, then
	$\restr{\tfrac{1}{\rho_n} \nu_n}{U} \xrightarrow{v} \nu$ where
	\[
		\ud \nu(x) = \upsilon(x) \ud x, \quad x \in U.
	\]
\end{corollary}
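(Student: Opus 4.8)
The plan is to read off Corollary~\ref{cor:1} as a direct application of Lemma~\ref{lem:1}, the only genuine work being the passage from an integral against $\mu$ to one against Lebesgue measure via absolute continuity, together with a short separate argument covering the degenerate case $\upsilon \equiv 0$ (in which Lemma~\ref{lem:1} does not apply, since it requires its limit function to be non-zero).

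First I would set $h = \upsilon/\mu'$ on $U$. Because $\mu'$ is positive and $\upsilon$ is non-negative, $h$ is a well-defined non-negative function, and the hypothesis is precisely that $\tfrac{1}{\rho_n} K_n(x,x) \to h(x)$ locally uniformly on $U$. In the principal case $\upsilon \not\equiv 0$ the function $h$ is non-zero, so Lemma~\ref{lem:1} applies with the trivial subsequence $n_k = k$ and yields
\[
	\lim_{n \to \infty} \frac{1}{\rho_n} \int_\RR f \ud \nu_n = \int_\RR f h \ud \mu, \qquad f \in \calC_c(U).
\]
Since $f$ is supported in $U$ and $\mu$ is absolutely continuous there with density $\mu'$, the right-hand side rewrites as
\[
	\int_\RR f h \ud \mu = \int_U f(x) \frac{\upsilon(x)}{\mu'(x)} \mu'(x) \ud x = \int_U f(x) \upsilon(x) \ud x = \int_\RR f \ud \nu.
\]
As $f \in \calC_c(U)$ vanishes off $U$, we have $\int_\RR f \ud \nu_n = \int_\RR f \ud (\restr{\nu_n}{U})$, so combining the two displays gives exactly $\tfrac{1}{\rho_n} \restr{\nu_n}{U} \xrightarrow{v} \nu$.

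It remains to dispose of the degenerate case $\upsilon \equiv 0$, where $h \equiv 0$ and Lemma~\ref{lem:1} is unavailable; this is the only step needing separate care, though it is not a serious obstacle. Here I would argue directly from the probability measures $\mu_n = \sum_{y \in \sigma(\calA_{n+1})} \delta_y / K_n(y,y)$ from the proof of Lemma~\ref{lem:1}, which satisfy $\ud \nu_n(x) = K_n(x,x) \ud \mu_n(x)$ and $\mu_n(\RR) = 1$ (the latter from the $\ell=0$ Gauss quadrature identity, independently of the self-adjointness of $A$). For $f \in \calC_c(U)$ with $K = \supp f$, the bound
\[
	\bigg| \frac{1}{\rho_n} \int_\RR f \ud \nu_n \bigg| = \bigg| \int_\RR f(x) \frac{K_n(x,x)}{\rho_n} \ud \mu_n(x) \bigg| \leq \mu_n(K) \cdot \sup_{x \in K} |f(x)| \cdot \sup_{x \in K} \frac{K_n(x,x)}{\rho_n}
\]
together with $\mu_n(K) \leq 1$ and the uniform convergence $\tfrac{1}{\rho_n} K_n(x,x) \to 0$ on $K$ forces the left-hand side to $0$, which is precisely $\tfrac{1}{\rho_n} \restr{\nu_n}{U} \xrightarrow{v} 0 = \nu$.
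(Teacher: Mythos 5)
Your proof is correct and matches the paper's intended argument: the corollary is stated without a separate proof precisely because it is the direct specialization of Lemma~\ref{lem:1} with $h = \upsilon/\mu'$, followed by the rewriting $h \ud\mu = \upsilon \ud x$ on $U$ using the absolute continuity and positivity of $\mu'$. Your separate treatment of the degenerate case $\upsilon \equiv 0$ (where the lemma's non-vanishing hypothesis fails) via the Gauss quadrature probability measures is a valid and careful addition that the paper leaves implicit.
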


The following result tells us that the supports of the limit points of $(\nu_n : n \in \NN_0)$ are contained in $\sigmaEss(A)$.

\begin{proposition} \label{prop:12}
Let $(\rho_n : n \in \NN_0)$ be a positive sequence tending to infinity. Let $\breve{A}$ be a self-adjoint extension of $A$ acting on 
$\ell^2(\NN_0)$. If for some interval $(a,b)$ we have $\sigmaEss(\breve{A}) \cap (a,b) = \emptyset$, then 
$\restr{\tfrac{1}{\rho_n} \nu_n}{(a,b)} \xrightarrow{v} 0$.
\end{proposition}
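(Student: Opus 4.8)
The plan is to reduce the assertion to a uniform bound on the number of truncation eigenvalues lying in the gap. By definition $\restr{\tfrac{1}{\rho_n}\nu_n}{(a,b)}\xrightarrow{v}0$ means $\tfrac1{\rho_n}\int_\RR f\ud\nu_n\to0$ for every $f\in\calC_c((a,b))$, and since $\rho_n\to\infty$ while $\big|\int f\ud\nu_n\big|\le\|f\|_\infty\,\nu_n(\supp f)$, it is enough to prove that
\[
	\sup_{n} \nu_n([c,d]) < \infty
\]
for every compact $[c,d]\subset(a,b)$. The essential spectrum is common to all self-adjoint extensions of $A$, so the hypothesis says that $(a,b)$ is a gap of $\sigmaEss(\breve A)$; consequently $\sigma(\breve A)\cap(a,b)$ has no accumulation point in $(a,b)$, is purely discrete, and $\restr{\breve\mu}{(a,b)}$ is a finite sum of point masses on each compact subinterval. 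This lets me work throughout with the fixed extension $\breve A$ and its measure $\breve\mu$, even though $\nu_n$ itself is extension-independent.

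First I would bring in the kernel measure $\eta_n$ of \eqref{eq:int:4} attached to this $\breve\mu$. Writing $\int f\ud\nu_n=\tr f(\calA_{n+1})$ and $\int f\ud\eta_n=\tr\big(P_n f(\breve A)P_n\big)$, where $P_n$ is the orthogonal projection onto $\lin\{e_0,\dots,e_n\}$ and $\calA_{n+1}=P_n\breve AP_n$, the gap structure makes the $\eta_n$ side transparent: if $\lambda_1,\dots,\lambda_m\in\supp f$ are the atoms of $\breve\mu$, then
\[
	\int_\RR f\ud\eta_n=\sum_{j=1}^m f(\lambda_j)\,K_n(\lambda_j,\lambda_j)\,\breve\mu(\{\lambda_j\})\xrightarrow[n\to\infty]{}\sum_{j=1}^m f(\lambda_j),
\]
because each $\lambda_j$ is an eigenvalue of $\breve A$ with $\ell^2$ eigenvector $(p_k(\lambda_j))_{k}$ and $\breve\mu(\{\lambda_j\})=\big(\sum_{k}|p_k(\lambda_j)|^2\big)^{-1}$, whence $K_n(\lambda_j,\lambda_j)\breve\mu(\{\lambda_j\})\to1$. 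In particular $\big(\int f\ud\eta_n\big)_n$ is bounded, and the whole problem reduces to showing that $\int f\ud\nu_n-\int f\ud\eta_n$ stays bounded.

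To control this difference I would pass to resolvents. For $z\in\CC\setminus\RR$ one has $\calC[\nu_n](z)=\tr\big[(\calA_{n+1}-z)^{-1}\big]$ and $\calC[\eta_n](z)=\tr\big[P_n(\breve A-z)^{-1}P_n\big]$, and the Schur complement with respect to $\ell^2(\NN_0)=\lin\{e_0,\dots,e_n\}\oplus\lin\{e_{n+1},\dots\}$ exhibits the compressed resolvent as a rank-one perturbation,
\[
	P_n(\breve A-z)^{-1}P_n=\big[(\calA_{n+1}-z)-w_n(z)\,e_n\otimes e_n\big]^{-1},\qquad w_n(z)=a_n^2\,m_n(z),
\]
where $m_n(z)$ is the Weyl function of the tail Jacobi matrix on $\{n+1,n+2,\dots\}$. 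Hence $\calC[\nu_n](z)-\calC[\eta_n](z)$ is the trace of a single rank-one operator, an explicit scalar built from the corner Green's functions $\langle(\calA_{n+1}-z)^{-1}e_n,e_n\rangle$, $\langle(\calA_{n+1}-z)^{-2}e_n,e_n\rangle$ and $w_n(z)$. Counting eigenvalues in the gap then amounts to the contour integral $\tfrac1{2\pi i}\oint_\Gamma\big(\calC[\nu_n]-\calC[\eta_n]\big)\ud z$ along a curve $\Gamma$ enclosing $[c,d]$ but otherwise lying in the gap, away from the finitely many $\lambda_j$.

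The step I expect to be the main obstacle is exactly the boundedness of this rank-one correction, and the difficulty is genuinely the unboundedness $a_n\to\infty$, which is what separates this situation from the classical bounded case: the self-energy $w_n(z)=a_n^2 m_n(z)$ itself blows up. The saving fact is that for $z$ in the gap the tail Weyl function decays, $m_n(z)=O(1/a_n)$, so as $w_n(z)\to\infty$ the correction $-w_n\langle(\calA_{n+1}-z)^{-2}e_n,e_n\rangle\big/\big(1-w_n\langle(\calA_{n+1}-z)^{-1}e_n,e_n\rangle\big)$ tends to the bounded ratio $\langle(\calA_{n+1}-z)^{-2}e_n,e_n\rangle/\langle(\calA_{n+1}-z)^{-1}e_n,e_n\rangle$ of corner Green's functions; these stay bounded uniformly in $n$ for $z\in\Gamma$ because the truncations carry no spectrum near $\Gamma$ in the gap. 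This gives $\int f\ud\nu_n-\int f\ud\eta_n=O(1)$, hence $\sup_n\nu_n([c,d])<\infty$ and the proposition. As a cross-check I would note that the same uniform bound is available from relative oscillation theory: for $c,d$ in a gap of $\sigmaEss(\breve A)$ the number of sign changes of the Wronskian $\Wrk_k\big(p(c),p(d)\big)$ is finite and controls $\nu_n([c,d])$ up to a bounded error, so resolvents can be bypassed if the corner Green's function estimates prove cumbersome.
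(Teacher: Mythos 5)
Your reduction is the right one: since $\rho_n\to\infty$, it suffices to prove $\sup_n \sharp\big(\sigma(\calA_{n+1})\cap K\big)<\infty$ for each compact $K\subset(a,b)$, and you correctly observe that $\sigma(\breve A)\cap K$ is finite. But the resolvent machinery you erect on top of this reduction has a genuine gap. To extract the eigenvalue count from $\tfrac{1}{2\pi i}\oint_\Gamma\big(\calC[\nu_n]-\calC[\eta_n]\big)\,\ud z$ you need the rank-one correction to be bounded \emph{uniformly in $n$} on $\Gamma$, and your justification --- that the corner Green's functions $\langle(\calA_{n+1}-z)^{-1}e_n,e_n\rangle$ stay bounded ``because the truncations carry no spectrum near $\Gamma$ in the gap'' --- is circular: the truncations $\calA_{n+1}$ can and do have eigenvalues in the gap (that is precisely the quantity being estimated), and nothing prevents an eigenvalue of $\calA_{n+1}$ from sitting arbitrarily close to the points where $\Gamma$ crosses the real axis for infinitely many $n$. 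Likewise the decay $m_n(z)=O(1/a_n)$ of the tail Weyl function in the gap is asserted rather than proved; the tail operators may acquire discrete spectrum inside $(a,b)$, so $m_n$ can have real poles there and need not be small. As written, the key uniform bound is not established.

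The paper's proof bypasses all of this with a single classical fact, the separation theorem for zeros of orthogonal polynomials (\cite[Theorem II.4.1]{Chihara1978}): an interval containing no point of $\supp\breve\mu=\sigma(\breve A)$ contains at most one zero of $p_{n+1}$, hence $\sharp\big(\sigma(\calA_{n+1})\cap K\big)\le 1+\sharp\big(\sigma(\breve A)\cap K\big)$ for every $n$, and the right-hand side is a finite constant independent of $n$. Your closing aside about counting sign changes of Wronskians is essentially this oscillation-theoretic statement; had you developed that one sentence instead of the Schur-complement analysis, you would have arrived at the paper's two-line argument.
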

\begin{proof}
Let $f \in \calC_c \big( (a,b) \big)$. Set $K = \supp(f)$. Then(\footnote{For a finite set $X$ by $\sharp X$ we denote its cardinality.})
\[
	\bigg|
	\int_\RR f \ud \nu_n
	\bigg| 
	\leq 
	\sup_{x \in K} |f(x)| \cdot \sharp (\sigma(\calA_{n+1}) \cap K)
	\leq 
	\sup_{x \in K} |f(x)| \cdot (1 + \sharp (\sigma(\breve{A}) \cap K))
\]
where the last inequality follows from \cite[Theorem II.4.1]{Chihara1978}. Since the set $\sigma(\breve{A}) \cap K$ is finite, we get
\[
	\lim_{n \to \infty} \frac{1}{\rho_n} \int_\RR f \ud \nu_n = 0,
\]
what we needed to prove.
\end{proof}

\begin{remark} \label{rem:5}
If $A$ is not self-adjoint, then all self-adjoint extensions of $A$ acting on $\ell^2(\NN_0)$ satisfy $\sigmaEss(\breve{A}) = \emptyset$, see e.g. \cite[Theorem 7.7]{Schmudgen2017}. So in Proposition~\ref{prop:12} we then have $\tfrac{1}{\rho_n} \nu_n \xrightarrow{v} 0$.
\end{remark}

\subsection{Examples}  \label{sec:3.2}
\begin{example}[Case \ref{eq:PI}]
	\label{ex:I}
	Let $N \geq 1$. Suppose that $(a_n)$ and $(b_n)$ are $N$-periodically modulated Jacobi parameters such that 
	$|\tr \frakX_0(0)| < 2$ and for some $r \geq 1$
	\[
		\bigg( \frac{a_{n-1}}{a_n} \bigg),
		\bigg( \frac{b_n}{a_n} \bigg),
		\bigg( \frac{1}{a_n} \bigg) \in \calD_r^N.
	\]
	Then according to \cite[Theorem A]{SwiderskiTrojan2019} the moment problem for $\mu$ is determinate if and only if
	$\sum_{n=0}^\infty 1/a_n = \infty$. If it is the case then by \cite[Theorem 1.2]{ChristoffelI} the hypotheses of
	Corollary~\ref{cor:1} for $\Lambda_- = \RR$ are satisfied with
	\[
		\rho_n = \sum_{j=0}^n \frac{\alpha_j}{a_j} \quad \text{and} \quad
		\upsilon(x) \equiv \frac{|\tr \frakX_0'(0)|}{\pi N \sqrt{-\discr \frakX_0(0)}}.
	\]
	Thus, $\tfrac{1}{\rho_n} \nu_n \xrightarrow{v} \upsilon(x) \ud x$.
\end{example}
Let us remark that hypotheses of Example~\ref{ex:I} are satisfied for Hermite, Freud and Meixner--Pollaczek polynomials 
(see \cite[Section 5]{PeriodicII} for details).

\begin{example}[Case \ref{eq:PIIa}]
	\label{ex:IIa}
	Let $N \geq 1$. Suppose that $(a_n)$ and $(b_n)$ are $N$-periodically modulated Jacobi parameters such that
	$|\tr \frakX_0(0)| = 2$, and $\frakX_0(0)$ is diagonalizable. Then $\frakX_0(0) = \varepsilon \Id$ for some
	$\varepsilon \in \{-1,1\}$. Suppose that
	\[
		\bigg( a_n \Big(\frac{a_{n-1}}{a_n} - \frac{\alpha_{n-1}}{\alpha_n} \Big) \bigg),
		\bigg( a_n \Big( \frac{b_n}{a_n} - \frac{\beta_n}{\alpha_n} \Big) \bigg),
		\bigg( \frac{1}{a_n} \bigg) \in \calD_1^N.
	\]
	Then according to \cite[Proposition 4]{ChristoffelII} the moment problem for $\mu$ is determinate. Moreover,
	\cite[Proposition 9]{PeriodicIII} implies that
	\[
		h(x) = \lim_{j \to \infty} a_{jN+N-1}^2 \discr X_{jN}(x)
	\]
	exists. In view of \cite[Corollary 1]{PeriodicIII}, $h$ is a polynomial of degree $2$ with negative leading coefficient. 
	Let $\Lambda_- = h^{-1} \big( (-\infty,0) \big)$ and $\Lambda_+ = h^{-1} \big( (0, +\infty) \big)$. 
	If 
	\begin{equation}
		\label{eq:11}
		\lim_{n \to \infty} (a_{n+N} - a_n) = 0,
	\end{equation}
	then by \cite[Theorem D]{ChristoffelII} the hypotheses of Corollary~\ref{cor:1} for $U = \Lambda_-$ are satisfied with
	\[
		\rho_n = \sum_{j=0}^n \frac{\alpha_j}{a_j} \quad \text{and} \quad
		\upsilon(x) = \frac{1}{4 \pi N \alpha_{N-1}} \frac{|h'(x)|}{\sqrt{-h(x)}}.
	\]
	Thus, $\restr{\tfrac{1}{\rho_n} \nu_n}{\Lambda_-} \xrightarrow{v} \upsilon(x) \ud x$. 
	By \cite[Theorem C]{Discrete} $\sigmaEss(A) = \cl{\Lambda_-}$. Thus, Proposition~\ref{prop:12} 
	implies that $\restr{\tfrac{1}{\rho_n} \nu_n}{\Lambda_+} \xrightarrow{v} 0$. Consequently, 
	$\restr{\tfrac{1}{\rho_n} \nu_n}{\RR \setminus \partial \Lambda_-} \xrightarrow{v} \upsilon(x) \mathds{1}_{\Lambda_-}(x) \ud x$. 
	Notice that $\partial \Lambda_- = \{x : h(x) = 0\}$ is a set containing at most two points.
\end{example}
Let us remark that hypotheses of Example~\ref{ex:IIa} are satisfied for generalized Hermite polynomials 
(see \cite[Section 5]{PeriodicII} for details).

\begin{example}[Case \ref{eq:PIIb}]
	\label{ex:IIb}
	Let $N \geq 1$. Suppose that $(a_n)$ and $(b_n)$ are $N$-periodically modulated Jacobi parameters such that
	$|\tr \frakX_0(0)| = 2$ and $\frakX_0(0)$ is non-diagonalizable. Suppose that
	\[
		\bigg( a_n \Big(\frac{a_{n-1}}{a_n} - \frac{\alpha_{n-1}}{\alpha_n} \Big) \bigg),
		\bigg( a_n \Big( \frac{b_n}{a_n} - \frac{\beta_n}{\alpha_n} \Big) \bigg),
		\bigg( \frac{1}{\sqrt{a_n}} \bigg) \in \calD_1^N.
	\]
	Then according to \cite[Proposition 4]{ChristoffelII} the moment problem for $\mu$ is determinate. Moreover,
	\cite[Corollary 3.4]{jordan} implies that
	\[
		h(x) = \lim_{j \to \infty} a_{jN+N-1} \discr X_{jN}(x)
	\]
	exists; $h$ is a polynomial of degree $1$. Let $\Lambda_- = h^{-1} \big( (-\infty,0) \big)$ and 
	$\Lambda_+ = h^{-1} \big( (0, +\infty) \big)$. If 
	\begin{equation}
		\label{eq:12}
		\lim_{n \to \infty} (a_{n+N} - a_n) = 0,
	\end{equation}
	then by \cite[Theorem D]{jordan} the hypotheses of Corollary~\ref{cor:1} for $U = \Lambda_-$ are satisfied with
	\[
		\rho_n = \sum_{j=0}^n \sqrt{\frac{\alpha_j}{a_j}} \quad \text{and} \quad
		\upsilon(x) = \frac{\sqrt{\alpha_{N-1}}}{\pi N} \frac{|\tr \frakX_0'(0)|}{\sqrt{-h(x)}}.
	\]
	Thus, $\restr{\tfrac{1}{\rho_n} \nu_n}{\Lambda_-} \xrightarrow \upsilon(x) \ud x$.
	By \cite[Theorem A]{jordan} $\sigmaEss(A) = \cl{\Lambda_-}$.
	Thus, Proposition~\ref{prop:12} implies that $\restr{\tfrac{1}{\rho_n} \nu_n}{\Lambda_+} \xrightarrow{v} 0$. Consequently, 
	$\restr{\tfrac{1}{\rho_n} \nu_n}{\RR \setminus \partial \Lambda_-} \xrightarrow{v} \upsilon(x) \mathds{1}_{\Lambda_-}(x) \ud x$. 
	Notice that $\partial \Lambda_- = \{x : h(x) = 0\}$ is a set containing one point.	
\end{example}
Let us remark that hypotheses of Example~\ref{ex:IIb} are satisfied for Laguerre-type polynomials 
(see \cite[Section 10.1.1]{jordan} for details).

For the completeness we present an example for which hypotheses of Corollary~\ref{cor:1} cannot be satisfied.
\begin{example}[Case \ref{eq:PIII}] 
	\label{ex:III}
	Let $N \geq 1$. Suppose that $(a_n)$ and $(b_n)$ are $N$-periodically modulated Jacobi parameters such that
	$|\tr \frakX_0(0)| > 2$, and for some $r \geq 1$,
	\[
		\bigg( \frac{a_{n-1}}{a_n} \bigg),
		\bigg( \frac{b_n}{a_n} \bigg),
		\bigg( \frac{1}{a_n} \bigg) \in \calD_r^N.
	\]
	Then according to \cite[Theorem A]{Discrete} the moment problem for $\mu$ is determinate and the measure $\mu$
	is purely discrete on $\RR$ and $\supp(\mu)' = \emptyset$. Hence, $\mu'$ is zero almost everywhere on $\RR$. 
	However, since $\sigmaEss(A) = \emptyset$, Proposition~\ref{prop:12} implies that $\tfrac{1}{\rho_n} \nu_n \xrightarrow{v} 0$ 
	for any sequence positive sequence $(\rho_n : n \in \NN_0)$ tending to infinity.
\end{example}
Let us remark that Meixner polynomials satisfy the hypotheses of Example~\ref{ex:III}.

\section{Weighted weak convergence via the Cauchy transform} \label{sec:5}

\subsection{The convergence of Cauchy transforms}

Let $\calA$ be a Jacobi matrix and let measure $\breve{\mu}$ be defined in \eqref{eq:int:24} for some self-adjoint extension $\breve{A}$ acting on $\ell^2(\NN_0)$ of the Jacobi operator $A$. Set
\begin{equation} 
	\label{eq:25}
	\ud \eta_n(x) = K_n(x,x) \ud \breve{\mu}(x) 
	\quad \text{and} \quad
	\nu_n = \sum_{y \in \sigma(\calA_{n+1})} \delta_y.
\end{equation}

Suppose that $\omega$ is a signed Borel measure on the real line bounded in the total variation norm: $\|\omega\|_{\mathrm{TV}} < \infty$. 
Then its \emph{Cauchy transform} is defined by the formula
\[
	\calC[\omega](z) = 
	\int_\RR \frac{1}{x-z} \ud \omega(x), \quad z \in \CC \setminus \RR.
\]
The above integral is well-defined because
\[
	\int_\RR \frac{1}{|x-z|} \ud |\omega|(x)
	\leq 
	\frac{1}{|\Im z|} \|\omega\|_{\mathrm{TV}}.
\]
It is well-known that $\calC[\omega]$ is a holomorphic function satisfying
$\overline{\calC[\omega](\overline{z})} = \calC[\omega](z)$. Moreover, if $\omega$ is positive then it maps $\CC_+$ into itself.
The function $\calC[\omega]$ uniquely determines the measure $\omega$, and under some hypotheses, for a sequence 
$(\omega_n : n \in \NN)$ of positive finite measures satisfying $\sup_{n \in \NN} \omega_n(\RR) < \infty$ the pointwise
convergence of $\calC[\omega_n]$ implies the weak convergence of $(\omega_n : n \in \NN)$, see \cite{Geronimo2003} for
more details. 

The following lemma discusses the case when the condition $\sup_{n \in \NN} \omega_n(\RR) < \infty$ might be violated. 
The idea of the proof is based on the proof of \cite[Theorem 3.1]{VanAssche1985}.
\begin{lemma} 
	\label{lem:3}
	Let $(\omega_n : n \in \NN)$ be a sequence of positive finite Borel measures on the real line. Suppose that there is 
	a set $X \subset \CC_+$ with an accumulation point in $\CC_+$ such that for each $z \in X$,
	\begin{equation}
		\label{eq:27}
		\text{the limit } \lim_{n \to \infty} \calC[\omega_n](z)
		\text{ exists.}
	\end{equation}
	Then there is a holomorphic function $g: \CC_+ \to \CC_+ \cup \RR$ such that
	\[
		\lim_{n \to \infty} \calC[\omega_n](z) = g(z)
	\]
	locally uniformly with respect to $z \in \CC_+$. Moreover, for any
	$z_0 = x_0 + i y_0$, where $x_0 \in \RR$ and $y_0 > 0$, there is a Borel measure $\tilde{\omega}_{z_0}$ on the real line 
	such that
	\begin{equation}
		\label{eq:28}
		\lim_{n \to \infty} 
		\int_{\RR} f(x) \frac{\ud \omega_n(x)}{|x-z_0|^2} =
		\int_{\RR} f(x) \ud \tilde{\omega}_{z_0}(x), 
		\quad f \in \calC_0(\RR).
	\end{equation}
	Furthermore, the measure $\tilde{\omega}_{z_0}$ satisfies $\tilde{\omega}_{z_0}(\RR) \leq \frac{1}{y_0} \Im \big( g(z_0) \big)$, and its Cauchy
	transform is equal to
	\begin{equation}
		\label{eq:29}
		\calC[\tilde{\omega}_{z_0}](z)
		=
		\frac{1}{(z-x_0)^2 + y_0^2}
		\Big( g(z) -\Re \big( g(z_0) \big) - \frac{(z-x_0)}{y_0} \Im \big( g(z_0) \big)  \Big), \quad
		z \in \CC_+ \setminus \{ x_0 + i \sqrt{y_0} \}.
	\end{equation}
	Finally, if $\tilde{\omega}_{z_0}(\RR) = \frac{1}{y_0} \Im \big( g(z_0) \big)$, then the convergence in \eqref{eq:28} holds for any
	$f \in \calC_b(\RR)$.
\end{lemma}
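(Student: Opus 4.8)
The plan is to prove the three assertions in order: locally uniform convergence of the Cauchy transforms to a Herglotz limit $g$; existence and identification of $\tilde\omega_{z_0}$ together with its Cauchy transform; and the passage from vague to $\calC_0(\RR)$- and $\calC_b(\RR)$-convergence. \emph{Convergence to $g$.} Each $\calC[\omega_n]$ is holomorphic and maps $\CC_+$ into $\CC_+\cup\RR$. Since the masses $\omega_n(\RR)$ need not be uniformly bounded, the family $(\calC[\omega_n])$ need not be locally bounded, so Vitali's theorem cannot be applied to it directly. To remedy this I would compose with the Cayley transform $w\mapsto(w-i)/(w+i)$, obtaining holomorphic maps $\psi_n\colon\CC_+\to\overline{\mathbb D}$ into the closed unit disk, uniformly bounded by $1$. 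By hypothesis $\calC[\omega_n]$ converges on $X$, hence so does $\psi_n$; as $X$ has an accumulation point in $\CC_+$ and $(\psi_n)$ is locally bounded, Vitali's theorem yields locally uniform convergence of $\psi_n$ on $\CC_+$ to a holomorphic $\psi\colon\CC_+\to\overline{\mathbb D}$. The finiteness of the limits on $X$ forces $\psi\not\equiv 1$, so by the maximum principle $\psi$ either maps $\CC_+$ into the open disk or is a unimodular constant different from $1$; in either case $g:=i(1+\psi)/(1-\psi)$ is a well-defined holomorphic function $\CC_+\to\CC_+\cup\RR$, and $\calC[\omega_n]\to g$ locally uniformly.

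\emph{The weighted measures.} Fix $z_0=x_0+iy_0$ and set $\ud\sigma_n(x)=|x-z_0|^{-2}\,\ud\omega_n(x)$. The key point is that these positive measures have uniformly bounded mass: from $\Im\frac{1}{x-z_0}=y_0/|x-z_0|^2$ one gets $\sigma_n(\RR)=y_0^{-1}\Im\calC[\omega_n](z_0)$, which converges to $y_0^{-1}\Im g(z_0)$. Hence $(\sigma_n)$ is vaguely precompact. To determine its limit I would compute $\calC[\sigma_n]$ by expanding $\frac{1}{(x-z)(x-z_0)(x-\overline{z_0})}$ into partial fractions in $x$ (using $|x-z_0|^2=(x-z_0)(x-\overline{z_0})$ for real $x$); integrating against $\omega_n$ expresses $\calC[\sigma_n](z)$ as a fixed linear combination of $\calC[\omega_n](z)$, $\calC[\omega_n](z_0)$ and $\calC[\omega_n](\overline{z_0})=\overline{\calC[\omega_n](z_0)}$. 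Letting $n\to\infty$ and invoking the first stage, $\calC[\sigma_n](z)$ converges for every $z\in\CC_+$ to the function $G$ obtained by replacing each Cauchy transform by its $g$-limit.

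\emph{Identification and modes of convergence.} I would use the elementary fact that for positive measures of uniformly bounded mass vague convergence automatically upgrades to convergence against $\calC_0(\RR)$: writing $f=f\chi+f(1-\chi)$ with a compactly supported cutoff $\chi$, the first term is controlled by vague convergence and the second is bounded by $\sup_{|x|>R}|f|$ times $\sup_n\sigma_n(\RR)$, uniformly in $n$. Applying this along any vaguely convergent subsequence $\sigma_{n_k}\to\sigma$ and testing the kernel $\frac{1}{x-z}\in\calC_0(\RR)$ gives $\calC[\sigma](z)=G(z)$ on $\CC_+$; since the Cauchy transform determines a finite measure, all vague limit points coincide, the full sequence converges vaguely to a single $\tilde\omega_{z_0}$, and the upgrade then yields \eqref{eq:28}. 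A direct simplification of $G$ (after cancelling the apparent singularity at $z_0$) produces the closed form \eqref{eq:29}, while lower semicontinuity of mass under vague convergence gives $\tilde\omega_{z_0}(\RR)\le y_0^{-1}\Im g(z_0)$. Finally, in the equality case no mass escapes to infinity, so vague convergence together with convergence of the total masses forces tightness and hence weak convergence against every $f\in\calC_b(\RR)$.

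\emph{Main obstacle.} The crux is the first stage: because $\omega_n(\RR)$ may blow up one cannot work with $\calC[\omega_n]$ directly, and the Cayley transform is the device that restores a uniformly bounded normal family while preserving the convergence on $X$. The second decisive point is the mass identity $\sigma_n(\RR)=y_0^{-1}\Im\calC[\omega_n](z_0)$, which converts the merely pointwise control of the Cauchy transforms into genuine compactness of the weighted measures; everything after that is routine partial fractions and standard measure theory.
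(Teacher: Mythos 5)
Your proposal is correct and follows essentially the same route as the paper: the paper invokes the normality of the Herglotz family (which your Cayley-transform/Vitali argument simply reproves), then uses the same mass identity $\int|x-z_0|^{-2}\,\ud\omega_n = y_0^{-1}\Im\calC[\omega_n](z_0)$ and the same partial-fraction decomposition (written there in terms of $\Re g(z_0)$ and $\Im g(z_0)$ rather than $\calC[\omega_n](z_0)$ and its conjugate, which is equivalent), followed by compactness, uniqueness of the Cauchy transform, and the standard portmanteau arguments for the mass inequality and the equality case. No gaps.
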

\begin{proof}
	For $n \in \NN$, we set
	\[
		g_n(z) = \calC[\omega_n](z), \quad z \in \CC_+.
	\]
	Since
	\[
		\Im \big( g_n(z) \big) =
		\Im(z) \int_{\RR} \frac{1}{|x-z|^2} \ud \omega_n(x),
	\]
	the function $g_n$ is a Herglotz function. Let us recall that the set of Herglotz functions forms a normal family,
	see e.g. \cite[pp. 237]{SimonComplexA}. By \eqref{eq:27}, the sequence $(g_n)$ is not divergent thus by its normality 
	every subsequence $(g_{n_k} : k \in \NN)$ contains a further subsequence $(g_{n_{k_j}} : j \in \NN)$ convergent locally
	uniformly on $\CC_+$ to certain holomorphic function $g$. Let $\tilde{g}$ be another subsequential limit. In view of 
	\eqref{eq:27}, $g(z) = \tilde{g}(z)$ for all $z \in X$, and since the set $X$ has an accumulation point we conclude that 
	$g = \tilde{g}$ on $\CC_+$. Consequently, the sequence $(g_n : n \in \NN)$ converges to $g$ locally uniformly
	on $\CC_+$.

	Next, let us fix $z_0 = x_0 + i y_0 \in \CC_+$ and define
	\[
		\ud \tilde{\omega}_n(x) = \frac{\ud \omega_n(x)}{|x-z_0|^2}.
	\]
	Since
	\[
		g(z_0) = \lim_{n \to \infty} \int_\RR \frac{1}{x-z_0} \ud \omega_n(x)
	\]
	we obtain
	\begin{align} 
		\label{eq:30}
		\Re \big( g(z_0) \big) 
		&= 
		\lim_{n \to \infty} \int_{\RR} \frac{x-x_0}{|x-z_0|^2} \ud \omega_n(x), \\
		\intertext{and}
		\label{eq:31}
		\Im \big( g(z_0) \big)
		&=
		\lim_{n \to \infty} \int_{\RR} \frac{y_0}{|x-z_0|^2} \ud \omega_n(x).
	\end{align}
	Observe that
	\[
		\frac{1}{x-z} \frac{1}{|x-z_0|^2} =
		\frac{1}{(z-x_0)^2 + y_0^2}
		\bigg( 
		\frac{1}{x-z}
		-\frac{x-x_0}{|x-z_0|^2}
		-\frac{z-x_0}{|x-z_0|^2}
		\bigg).
	\]
	Hence for $z \in \CC_+ \setminus \{ x_0 + i\sqrt{y_0} \}$,
	\begin{align*}
		\int_\RR \frac{1}{x-z} \ud \tilde{\omega}_n(x) =
		\frac{1}{(z-x_0)^2 + y_0^2} 
		\bigg(  
		\int_\RR \frac{1}{x-z} \ud \omega_n(x)
		&-
		\int_\RR \frac{x-x_0}{|x-z_0|^2} \ud \omega_n(x) \\
		&-
		\frac{z-x_0}{y_0} \int_\RR \frac{y_0}{|x-z_0|^2} \ud \omega_n(x)
		\bigg).
	\end{align*}
	Now, taking $n$ approaching infinity and using \eqref{eq:27}, \eqref{eq:30} and \eqref{eq:31} we show that
	\begin{equation}
		\label{eq:32}
		\lim_{n \to \infty}
		\calC[\tilde{\omega}_n](z) =
		\frac{1}{(z-x_0)^2 + y_0^2}
		\Big( g(z) - \Re \big( g(z_0) \big) - \frac{z-x_0}{y_0} \Im \big( g(z_0) \big) \Big)
	\end{equation}
	locally uniformly with respect to $z \in \CC_+ \setminus \{ x_0 + i\sqrt{y_0} \}$. Since
	\[
		\|\tilde{\omega}_n\|_{\mathrm{TV}}
		\leq
		\int_{\RR} \frac{1}{|x-z_0|^2} \ud \omega_n(x),
	\]
	by \eqref{eq:31} the sequence $(\tilde{\omega}_n)$ is uniformly bounded in total variation norm. Hence, by Banach--Alaoglu
	and Riesz--Markov theorems, every subsequence $(\tilde{\omega}_{n_k} : k \in \NN)$ contains a further subsequence 
	$(\tilde{\omega}_{n_{k_j}} : j \in \NN)$ such that for a finite Borel measure on the real line $\tilde{\omega}_{z_0}$ 
	\[
		\lim_{j \to \infty} 
		\int_{\RR} f \ud \tilde{\omega}_{n_{k_j}} =
		\int_{\RR} f \ud \tilde{\omega}_{z_0}, \text{ for all } f \in \calC_0(\RR).
	\]
	In particular, for all $z \in \CC_+$,
	\[
		\lim_{j \to \infty} 
		\calC[\tilde{\omega}_{n_{k_j}}](z) =
		\calC[\tilde{\omega}_{z_0}](z).
	\]
	In view of \eqref{eq:32}, the measure $\tilde{\omega}_{z_0}$ satisfies \eqref{eq:29}. Since the Cauchy transform uniquely
	determines the measure, we conclude that all subsequential limits are equal, that is the sequence 
	$(\tilde{\omega}_n : n \in \NN)$ converges to the unique positive Borel measure satisfying \eqref{eq:29}, proving
	\eqref{eq:28}.

	Next, the inequality $\tilde{\omega}_{z_0}(\RR) \leq \frac{1}{y_0} \Im \big( g(z_0) \big)$ is a consequence of \eqref{eq:28}, \eqref{eq:31} 
	and \cite[Lemma 13.15]{Klenke2020}. 
	Finally, let us suppose that $\tilde{\omega}_{z_0}(\RR) = \frac{1}{y_0} \Im \big( g(z_0) \big)$. Hence, by \eqref{eq:31}
	\[
		\lim_{n \to \infty} \tilde{\omega}_n(\RR) = \tilde{\omega}_{z_0}(\RR).
	\]
	Since the measures $(\tilde{\omega}_n : n \in \NN)$ are uniformly bounded, we can consider the sequence
	\[
		\eta_n = \frac{1}{\sup_{j \in \NN} \tilde{\omega}_j(\RR)} \tilde{\omega}_n.
	\]
	Notice that for subprobability measures $(\eta_n : n \in \NN)$ the convergence \eqref{eq:28} for $f \in \calC_0(\RR) \cup \{1 \}$
	implies the convergence for $f \in \calC_b(\RR)$, (see e.g. \cite[Theorem 13.16]{Klenke2020}), which completes the proof.
\end{proof}

Our interest in Lemma~\ref{lem:3} lies in the following theorem, which is motivated by \cite[Theorem 1.5]{Simon2009}.
\begin{theorem}
	\label{thm:4}
	Let $\calA$ be a Jacobi matrix, let the sequences $(\eta_n : n \in \NN_0), (\nu_n : n \in \NN_0)$ be defined in \eqref{eq:25}
	and $(\rho_n : n \in \NN_0)$ be a positive sequence tending to infinity.
	Then
	\begin{equation} 
		\label{eq:33}
		\lim_{n \to \infty} 
		\big| 
		\calC \big[ \tfrac{1}{\rho_n} \eta_n \big](z) - 
		\calC \big[ \tfrac{1}{\rho_n} \nu_n \big](z) 
		\big| = 0 
	\end{equation}
	locally uniformly with respect to $z \in \CC \setminus \RR$. Moreover, for any $L \geq 0$
	\begin{equation} \label{eq:34}
		\lim_{n \to \infty} 
		\big| 
		\calC \big[\tfrac{1}{\rho_{n}} \eta_{n+L} \big](z) - 
		\calC \big[ \tfrac{1}{\rho_{n}} \eta_n \big](z) 
		\big| = 0
	\end{equation}
	locally uniformly with respect to $z \in \CC \setminus \RR$.
\end{theorem}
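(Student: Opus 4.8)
The plan is to prove the two assertions separately; \eqref{eq:34} is elementary, while \eqref{eq:33} reduces to a single uniform bound.

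For \eqref{eq:34} I would first record that, writing $\breve{A}$ for the chosen self-adjoint extension and using the unitary $U \colon \ell^2(\NN_0) \to L^2(\breve{\mu})$ with $U e_k = p_k$, one has $\int_\RR \frac{1}{x-z} p_k(x)^2 \ud\breve{\mu}(x) = \langle (\breve{A}-z)^{-1} e_k, e_k \rangle$, so that $\calC[\eta_n](z) = \sum_{k=0}^n \langle (\breve{A}-z)^{-1} e_k, e_k \rangle$. Hence $\calC[\eta_{n+L}](z) - \calC[\eta_n](z) = \sum_{k=n+1}^{n+L} \langle (\breve{A}-z)^{-1} e_k, e_k \rangle$, and since each summand is bounded by $\|(\breve{A}-z)^{-1}\| \leq |\Im z|^{-1}$, the whole difference is at most $L/|\Im z|$. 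Dividing by $\rho_n \to \infty$ and noting that $|\Im z|^{-1}$ is bounded on compact subsets of $\CC \setminus \RR$ gives \eqref{eq:34} at once.

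For \eqref{eq:33} the key observation is that, because $\rho_n \to \infty$ is an arbitrary sequence, it suffices to show that $\calC[\eta_n](z) - \calC[\nu_n](z)$ stays bounded locally uniformly on $\CC \setminus \RR$, uniformly in $n$. To obtain a workable formula for this difference I would bring in the Gauss quadrature measure $\mu_n$ from \eqref{eq:9}, for which $\ud\nu_n = K_n(x,x)\ud\mu_n$, exactly as $\ud\eta_n = K_n(x,x)\ud\breve{\mu}$. Since $\mu_n$ reproduces the moments of $\breve{\mu}$ up to degree $2n+1$ (the moments being finite and independent of the extension), and since $\frac{K_n(x,x) - \sum_{k=0}^n p_k(z)^2}{x-z}$ is a polynomial in $x$ of degree $2n-1$, integrating $\frac{K_n(x,x)}{x-z}$ against $\mu_n - \breve{\mu}$ annihilates all but the piece constant in $x$, giving
\[
	\calC[\eta_n](z) - \calC[\nu_n](z) = \Big( \sum_{k=0}^n p_k(z)^2 \Big)\big( m(z) - \calC[\mu_n](z) \big),
\]
where $m = \calC[\breve{\mu}]$.

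The main obstacle is then to see that this product is bounded although both factors are large: $\sum_{k=0}^n p_k(z)^2$ grows (as $(p_k(z)) \notin \ell^2$ for $z \notin \RR$), while $m(z) - \calC[\mu_n](z) \to 0$. Here I would invoke the nested Weyl-disk picture of the moment problem (see e.g. \cite{Schmudgen2017}): $m(z) = \langle (\breve{A}-z)^{-1} e_0, e_0 \rangle$ and $\calC[\mu_n](z) = \langle (\calA_{n+1}-z)^{-1} e_0, e_0 \rangle$ are both values of $\langle (\,\cdot\, - z)^{-1} e_0, e_0 \rangle$ for self-adjoint extensions agreeing on the first coefficients, hence both lie in the disk of diameter $\big( |\Im z| \sum_{k=0}^n |p_k(z)|^2 \big)^{-1} = \big( |\Im z| K_n(z,z) \big)^{-1}$. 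Combining $|m(z) - \calC[\mu_n](z)| \leq \big( |\Im z| K_n(z,z) \big)^{-1}$ with the elementary estimate $\big| \sum_{k=0}^n p_k(z)^2 \big| \leq \sum_{k=0}^n |p_k(z)|^2 = K_n(z,z)$ yields $|\calC[\eta_n](z) - \calC[\nu_n](z)| \leq |\Im z|^{-1}$, and dividing by $\rho_n \to \infty$ gives \eqref{eq:33}. This cancellation from $O(n)$ down to $O(1)$ is exactly the crux: the weaker estimate \eqref{eq:int:6} only yields $o(n)$, which is insufficient once $\rho_n$ grows more slowly than $n$ (as it does here, since $a_n \to \infty$).
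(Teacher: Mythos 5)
Your proof of \eqref{eq:34} is essentially the paper's: both reduce the difference to the $L$ extra terms $\int p_k^2(x)(x-z)^{-1}\ud\breve{\mu}(x) = \langle (\breve{A}-z)^{-1}e_k,e_k\rangle$, each bounded by $|\Im z|^{-1}$, so the difference of Cauchy transforms is at most $L/(\rho_n|\Im z|)$. For \eqref{eq:33} you arrive at the same reduction as the paper --- namely that $\calC[\eta_n](z)-\calC[\nu_n](z)$ is bounded by $C/|\Im z|$ uniformly in $n$, after which dividing by $\rho_n\to\infty$ finishes --- but by a genuinely different route. The paper writes $\calC[\nu_n](z)=\tr\big((\calA_{n+1}-z\Id_{n+1})^{-1}\big)$ and $\calC[\eta_n](z)=\tr\big(\Pi_{n+1}(\breve{A}-z\Id)^{-1}\Pi_{n+1}\big)$ and quotes the trace estimate $8/|\Im z|$ from Geronimo--Van Assche. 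You instead derive the exact identity $\calC[\eta_n](z)-\calC[\nu_n](z)=\big(\sum_{k=0}^n p_k(z)^2\big)\big(m(z)-\calC[\mu_n](z)\big)$ from Gauss quadrature (the polynomial $(K_n(x,x)-\sum_k p_k(z)^2)/(x-z)$ has degree $2n-1\le 2n+1$ in $x$, so it is annihilated by $\breve{\mu}-\mu_n$; note the moments of $\breve{\mu}$ are indeed independent of the extension), and then exploit that $\calC[\mu_n](z)=-q_{n+1}(z)/p_{n+1}(z)$ lies on the Weyl circle of radius $\big(2|\Im z|K_n(z,z)\big)^{-1}$ while $m(z)$ lies in the corresponding closed disk, so the second factor is at most $\big(|\Im z|K_n(z,z)\big)^{-1}$ and the product is at most $|\Im z|^{-1}$. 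This is correct; the only point that needs care in a write-up is the indexing of the Weyl disk, since the exact cancellation requires the disk whose radius involves $\sum_{k=0}^{n}|p_k(z)|^2$ (an off-by-one would leave an unbounded ratio $K_n(z,z)/K_{n-1}(z,z)$), and by the Christoffel--Darboux identity the disk parametrized by $(p_{n+1},p_n,q_{n+1},q_n)$ does have exactly this radius, so your indices match. Your approach buys a self-contained, slightly sharper constant ($1$ versus $8$) from classical moment-problem machinery already cited in the paper, at the cost of invoking the Weyl-circle apparatus; the paper's route is shorter given the external trace bound. Your closing observation --- that the whole point is improving the $o(n)$ of \eqref{eq:int:6} to an $n$-uniform $O(1)$, because $\rho_n$ may grow much more slowly than $n$ --- correctly identifies why the theorem is not a formal consequence of the bounded-case result.
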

\begin{proof}
	Let $z \in \CC \setminus \RR$. Let $\Pi_n$ be the orthogonal projection
	in $\ell^2(\NN_0)$ onto the subspace $\lin \{e_0, e_1, \ldots, e_{n-1} \}$.
	Set
	\[
		A_n := \Pi_n \breve{A} \Pi_n, \quad\text{and}\quad
		\Id_n := \Pi_n \Id \Pi_n.
	\]
	Notice that $A_n, \Id_n \in \Mat(n, \RR)$. By the spectral theorem for $A_{n+1}$ we obtain
	\[
		\int_{\RR} \frac{1}{x-z} \ud \nu_n(x) =
		\tr \big( (A_{n+1} - z \Id_{n+1})^{-1} \big).
	\]
	Similarly, by the spectral theorem for $\breve{A}$,
	\begin{align*}
		\int_{\RR} 
		\frac{1}{x-z} \ud \eta_n(x)
		&=
		\sum_{k=0}^n 
		\int_{\RR} 
		\frac{p_k^2(x)}{x - z} \ud \breve{\mu}(x) \\
		&=
		\sum_{k=0}^n 
		\big\langle (\breve{A} - z \Id)^{-1} e_k, e_k \big\rangle_{\ell^2(\NN_0)} \\
		&=
		\tr \big( \Pi_{n+1} (\breve{A} - z \Id)^{-1} \Pi_{n+1} \big).
	\end{align*}
	Hence, we get
	\begin{equation}
		\label{eq:35}
		\big| 
		\calC \big[\tfrac{1}{\rho_n} \nu_n \big](z) - 
		\calC \big[\tfrac{1}{\rho_n} \eta_n \big](z)
		\big|
		=
		\frac{1}{\rho_n} 
		\big| 
		\tr \big( (A_{n+1} - z \Id_{n+1})^{-1} \big) - 
		\tr \big( \Pi_{n+1} (\breve{A} - z \Id)^{-1} \Pi_{n+1} \big) 
		\big|.
	\end{equation}
	Let us recall that \cite[formula (2.3)]{Geronimo1988} reads
	\[
		\big| 
		\tr \big( (A_{n+1} - z \Id_{n+1})^{-1} \big) - 
		\tr \big( \Pi_{n+1} (\breve{A} - z \Id)^{-1} \Pi_{n+1} \big) 
		\big| 
		\leq
		\frac{8}{|\Im z|}.
	\]
	Since $\rho_n \to \infty$, by the formula \eqref{eq:35} we easily get \eqref{eq:33}.

	For the proof of \eqref{eq:34} it is enough to write
	\begin{align*}
		\| \eta_{n+L} - \eta_n \|_{\mathrm{TV}}
		&=
		\int_\RR |K_{n+L}(x,x) - K_{n}(x,x)| \ud \breve{\mu}(x) \\
		&=
		\sum_{j = n+1}^{n+L} \int_\RR \abs{p_j(x)}^2 \ud \breve{\mu}(x) \\
		&=
		L.
	\end{align*}
	Therefore,
	\begin{align*}
		\big| 
		\calC \big[ \tfrac{1}{\rho_{n}} \eta_{n+L} \big](z) - 
		\calC \big[ \tfrac{1}{\rho_{n}} \eta_n \big](z) 
		\big|
		&=
		\big|
		\calC \big[ \tfrac{1}{\rho_{n}} \eta_{n+L} - \tfrac{1}{\rho_{n}} \eta_n \big] (z) 
		\big| \\
		&\leq
		\int_\RR \frac{1}{\abs{x - z}} \ud \big|\tfrac{1}{\rho_{n}} \eta_{n+L} - \tfrac{1}{\rho_{n}} \eta_n \big| (x) \\
		&\leq
		\frac{1}{\abs{\Im z}} \frac{1}{\rho_{n}} \| \eta_{n+L} - \eta_n \|_{\mathrm{TV}} \\
		&=
		\frac{1}{\abs{\Im z}} \frac{L}{\rho_{n}},
	\end{align*}
	and the formula \eqref{eq:34} follows.
\end{proof}

Theorem~\ref{thm:4} together with Lemma~\ref{lem:3} allow us to prove the following partial converse to Corollary~\ref{cor:1}.
\begin{corollary} 
	\label{cor:3}
	Suppose that $A$ is self-adjoint, let the sequences $(\eta_n : n \in \NN_0), (\nu_n : n \in \NN_0)$ be defined in \eqref{eq:25}
	and $(\rho_n : n \in \NN_0)$ be a positive sequence tending to infinity.	
	Suppose that there is
	$g: \CC_+ \rightarrow \CC_+ \cup \RR$, such that for each $z \in \CC_+$, 
	\begin{equation} 
		\label{eq:36}
		\lim_{n \to \infty} \calC \big[ \tfrac{1}{\rho_n} \nu_n \big](z) = g(z).
	\end{equation}
	Let $z_0 = x_0 + i y_0$ for some $x_0 \in \RR$ and $y_0 > 0$ be given	
	and let $\tilde{\omega}_{z_0}$ be a positive Borel measure on the real line corresponding to the Cauchy transform
	\[
		\calC[\tilde{\omega}_{z_0}](z)
		=
		\frac{1}{(z-x_0)^2 + y_0^2}
		\Big( g(z) -\Re \big( g(z_0) \big) - \frac{(z-x_0)}{y_0} \Im \big( g(z_0) \big)  \Big), \quad
		z \in \CC_+ \setminus \{ x_0 + i \sqrt{y_0} \}.
	\]
	Set $\ud \omega = |x-z_0|^2 \ud \tilde{\omega}_{z_0}(x)$. Then for each function $f$ such that $(1+x^2)f \in \calC_0(\RR)$,
	\begin{equation} 
		\label{eq:152}
		\lim_{n \to \infty} 
		\frac{1}{\rho_n}
		\int_\RR f \ud \nu_n =
		\int_\RR f \ud \omega, 
		\quad \text{and} \quad
		\lim_{n \to \infty} 
		\frac{1}{\rho_n}
		\int_\RR f \ud \eta_n =
		\int_\RR f \ud \omega.
	\end{equation}
	In particular, $\tfrac{1}{\rho_n} \eta_n \xrightarrow{v} \omega$ and $\frac{1}{\rho_n} \nu_n \xrightarrow{v} \omega$. If 
	$\tilde{\omega}_{z_0}(\RR) = \Im \big( g(z_0) \big)$, then the convergence in \eqref{eq:152} holds for all $f$ such that  
	$(1+x^2)f \in \calC_b(\RR)$.

	Moreover, for every open subset $U \subset \RR$ such that the measure $\omega$ is absolutely continuous on $U$ with 
	a~continuous positive density $\upsilon$, if there is a function $h: U \rightarrow (0, \infty)$ so that
	\begin{equation}
		\label{eq:37}
		\lim_{n \to \infty} \frac{1}{\rho_n} K_n(x,x) = h(x)
	\end{equation}
	locally uniformly with respect to $x \in U$, then the measure $\mu$ is absolutely continuous on $U$ with density
	\[
		\mu'(x) = \frac{\upsilon(x)}{h(x)}, \quad x \in U.
	\]
\end{corollary}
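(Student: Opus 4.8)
The plan is to apply Lemma~\ref{lem:3} to both sequences $\big(\tfrac1{\rho_n}\nu_n\big)$ and $\big(\tfrac1{\rho_n}\eta_n\big)$, identify the two resulting limit measures with the single measure $\tilde\omega_{z_0}$, and then pass from this weighted convergence to the unweighted statements \eqref{eq:152} by absorbing the factor $|x-z_0|^2$ into the test functions. The density clause will then follow by comparing the vague limit with Lemma~\ref{lem:1}.

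First I would verify the hypotheses of Lemma~\ref{lem:3} for both sequences. Each $\tfrac1{\rho_n}\nu_n$ is a positive finite measure, and by \eqref{eq:36} its Cauchy transform converges pointwise on all of $\CC_+$ to $g$; thus Lemma~\ref{lem:3}, applied at the fixed $z_0$, yields the weighted convergence \eqref{eq:28} whose limit has Cauchy transform \eqref{eq:29}, i.e. exactly $\tilde\omega_{z_0}$. For $\eta_n$ I would first note that $\eta_n(\RR)=n+1$ by orthonormality of the $p_k$, so $\tfrac1{\rho_n}\eta_n$ is again positive and finite; moreover \eqref{eq:33} of Theorem~\ref{thm:4} together with \eqref{eq:36} forces $\calC[\tfrac1{\rho_n}\eta_n](z)\to g(z)$ for every $z\in\CC_+$. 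Hence Lemma~\ref{lem:3} applies to $\eta_n$ as well, and since its output measure is determined by $g$ and $z_0$ through \eqref{eq:29} and the injectivity of the Cauchy transform, it is again the same $\tilde\omega_{z_0}$.

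Next I would perform the change of test function. Setting $\tilde f(x)=|x-z_0|^2 f(x)=\big((x-x_0)^2+y_0^2\big)f(x)$, the integral $\int_\RR \tilde f\,|x-z_0|^{-2}\,\ud(\tfrac1{\rho_n}\nu_n)$ on the left of \eqref{eq:28} equals $\tfrac1{\rho_n}\int_\RR f\,\ud\nu_n$, while its right-hand side equals $\int_\RR \tilde f\,\ud\tilde\omega_{z_0}=\int_\RR f\,\ud\omega$; the same identities hold with $\eta_n$ in place of $\nu_n$. Since $|x-z_0|^2/(1+x^2)$ is continuous, positive, and tends to $1$ at $\pm\infty$, the condition $(1+x^2)f\in\calC_0(\RR)$ is equivalent to $\tilde f\in\calC_0(\RR)$, so \eqref{eq:28} applied to both sequences gives precisely the two limits in \eqref{eq:152}. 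Taking $f\in\calC_c(\RR)$, for which $\tilde f\in\calC_c(\RR)\subset\calC_0(\RR)$, yields the vague convergences $\tfrac1{\rho_n}\nu_n\xrightarrow{v}\omega$ and $\tfrac1{\rho_n}\eta_n\xrightarrow{v}\omega$. In the equality case of the bound $\tilde\omega_{z_0}(\RR)\le\tfrac1{y_0}\Im\big(g(z_0)\big)$ of Lemma~\ref{lem:3} no mass escapes, so the final clause of that lemma upgrades \eqref{eq:28} to all $\tilde f\in\calC_b(\RR)$, that is to all $f$ with $(1+x^2)f\in\calC_b(\RR)$.

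For the concluding density statement I would combine the vague limit just obtained with Lemma~\ref{lem:1}. Because $\tfrac1{\rho_n}K_n(x,x)\to h(x)$ locally uniformly on $U$ with $h>0$ non-zero, Lemma~\ref{lem:1} applied to the full sequence gives $\lim_n \tfrac1{\rho_n}\int f\,\ud\nu_n=\int_U f h\,\ud\mu$ for every $f\in\calC_c(U)$, whereas $\tfrac1{\rho_n}\nu_n\xrightarrow{v}\omega$ and $\omega|_U=\upsilon\,\ud x$ give $\lim_n \tfrac1{\rho_n}\int f\,\ud\nu_n=\int_U f\upsilon\,\ud x$. Equating the two and replacing $f$ by $f/h$, which is legitimate since $h$ is continuous and positive on $U$, yields $\int_U f\,\ud\mu=\int_U f(\upsilon/h)\,\ud x$ for all $f\in\calC_c(U)$, i.e. $\mu$ is absolutely continuous on $U$ with $\mu'=\upsilon/h$. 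The only genuinely delicate point I anticipate is the bookkeeping that matches the two applications of Lemma~\ref{lem:3} to the single measure $\tilde\omega_{z_0}$ and the check that the weight $|x-z_0|^2$ preserves both the $\calC_0$- and the $\calC_b$-classes; everything else is a direct assembly of Theorem~\ref{thm:4}, Lemma~\ref{lem:3}, and Lemma~\ref{lem:1}.
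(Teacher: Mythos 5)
Your proposal is correct and follows essentially the same route as the paper: Lemma~\ref{lem:3} applied to $\tfrac{1}{\rho_n}\nu_n$ and, via Theorem~\ref{thm:4}, to $\tfrac{1}{\rho_n}\eta_n$, followed by the test-function substitution $\tilde f(x)=|x-z_0|^2 f(x)$ and the observation that $|x-z_0|^2/(1+x^2)\to 1$ preserves the $\calC_0$- and $\calC_b$-classes. The only immaterial deviation is in the final density clause, where you obtain $\lim_n\tfrac{1}{\rho_n}\int f\,\ud\nu_n=\int fh\,\ud\mu$ through Lemma~\ref{lem:1} and match it against the vague limit of $\nu_n$, whereas the paper computes $\lim_n\tfrac{1}{\rho_n}\int f\,\ud\eta_n=\int f h\,\ud\mu$ directly from \eqref{eq:37} and the definition of $\eta_n$; both yield the identification $h\,\ud\mu=\upsilon\,\ud x$ on $U$.
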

\begin{proof}
	Let $f \in \calC_0(\RR)$. Define a new function $\tilde{f}(x) = |x-z_0|^2 f(x)$. Since
	\[
		\lim_{|x| \to \infty} \frac{|x-z_0|^2}{x^2+1} = 
		\lim_{|x| \to \infty} \frac{(x-x_0)^2 + y_0^2}{x^2+1} = 1.
	\]
	We have $\tilde{f} \in \calC_0(\RR)$ iff $(1+x^2)f \in \calC_0(\RR)$ and $\tilde{f} \in \calC_b(\RR)$ iff $(1+x^2)f \in \calC_b(\RR)$.
	Thus, by application of Lemma~\ref{lem:3} to $\big( \tfrac{1}{\rho_n} \nu_n : n \in \NN_0 \big)$
	and $\tilde{f}$, we easily get the first formula in \eqref{eq:152}. 
	In view of Theorem \ref{thm:4}, we can also apply Lemma \ref{lem:3} to $\big( \tfrac{1}{\rho_n} \eta_n : n \in \NN_0 \big)$ proving the second
	formula in \eqref{eq:152}.

	Next, let $f \in \calC_c(U)$. By \eqref{eq:37} we have
	\begin{align*}
		\lim_{n \to \infty}
		\frac{1}{\rho_n}
		\int_\RR f(x) \ud \eta_n(x)
		&=
		\lim_{n \to \infty} 
		\int_\RR f(x) \frac{1}{\rho_n} K_n(x,x) \ud \mu(x) \\
		&=
		\int_\RR f(x) h(x) \ud \mu(x).
	\end{align*}
	Since $\tfrac{1}{\rho_n} \eta_n \xrightarrow{v} \omega$, we necessarily have
	\[
		h(x) \ud \mu(x) = \ud \omega(x) = \upsilon(x) \ud x, \quad x \in U,
	\]
	(see e.g. \cite[Theorem 13.11(ii)]{Klenke2020}). Since both $\upsilon$ and $h$ are positive, the measure $\mu$
	is absolute continuous with respect to Lebesgue measure with density $\frac{\upsilon}{h}$. The proof is complete.
\end{proof}

Let us comment on Corollary \ref{cor:3}. There are examples when we can only show the existence of the limit \eqref{eq:37}, 
see e.g. Example~\ref{ex:IIa} when \eqref{eq:11} is violated, Example~\ref{ex:IIb} without the condition \eqref{eq:12}, or 
more generally, \cite[Theorem B]{jordan2}. Corollary \ref{cor:3} allows to compute the density of the measure $\mu$ under
the hypotheses \eqref{eq:36}. Moreover, it gives the convergence \eqref{eq:152} for a wider class of test functions. 

In the rest of the paper we shall be interested in the pointwise convergence of $\calC \big[ \tfrac{1}{\rho_n} \nu_n \big]$. An important observation is 
the following fact.
\begin{remark} 
	\label{rem:2}
	Let us observe that 
	\[
		\calC [\nu_{n}](z) = 
		-\frac{p_{n+1}'(z)}{p_{n+1}(z)}, \quad z \in \CC \setminus \RR.
	\]
	To see this, recall that the polynomial $p_{n+1}$ has $n+1$ simple real zeros. Hence,
	\[
		p_{n+1}(z) = c_{n+1} (z-x_{1;n+1}) (z-x_{2;n}) \ldots (z-x_{n+1;n+1}),
	\]
	for certain $c_{n+1} > 0$ and $x_{1;n+1}, \ldots, x_{n+1,n+1} \in \RR$. Therefore,
	\[
		\frac{p_{n+1}'(z)}{p_{n+1}(z)} = 
		\sum_{j=1}^{n+1} \frac{1}{z-x_{j;n+1}} =
		\int_\RR \frac{1}{z-x} \ud \nu_{n}(x)
	\]
	from which the conclusion follows.
\end{remark}

\section{Diagonalization of transfer matrices} \label{sec:6}
\subsection{Analytic tools} \label{sec:6.1}

\begin{proposition}
	\label{prop:1}
	Given $z \in \CC \setminus \{-1, 1\}$, if $\xi \in \CC$ satisfies
	\begin{equation}
		\label{eq:51}
		\xi^2 - 2 z \xi + 1 = 0,
	\end{equation}
	then
	\[
		|\xi|^2 - (|z-1| + |z+1|) |\xi| + 1 =0.
	\]
\end{proposition}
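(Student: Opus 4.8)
The plan is to exploit the Joukowsky structure hidden in \eqref{eq:51}. First I would note that $\xi = 0$ cannot solve \eqref{eq:51} (it would force $1 = 0$), so $\xi \neq 0$ and we may divide by it. Solving \eqref{eq:51} for $z$ then yields the Joukowsky relation
\[
	z = \frac{1}{2} \Big( \xi + \frac{1}{\xi} \Big).
\]

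The key observation I would make is that $z \pm 1$ become perfect squares after this substitution. Indeed,
\[
	z - 1 = \frac{\xi^2 - 2\xi + 1}{2\xi} = \frac{(\xi - 1)^2}{2\xi},
	\qquad
	z + 1 = \frac{\xi^2 + 2\xi + 1}{2\xi} = \frac{(\xi + 1)^2}{2\xi}.
\]
Taking absolute values gives $|z \mp 1| = |\xi \mp 1|^2 / (2 |\xi|)$, and hence
\[
	|z-1| + |z+1| = \frac{|\xi - 1|^2 + |\xi + 1|^2}{2 |\xi|}.
\]

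Next I would apply the parallelogram law (equivalently, expand $|\xi \pm 1|^2 = |\xi|^2 \pm 2 \Re \xi + 1$ and add), which collapses the numerator to $2(|\xi|^2 + 1)$ and in particular eliminates the cross term $\Re \xi$. This yields
\[
	|z-1| + |z+1| = \frac{|\xi|^2 + 1}{|\xi|} = |\xi| + \frac{1}{|\xi|}.
\]
Multiplying both sides by $|\xi|$ and rearranging gives precisely the claimed identity $|\xi|^2 - (|z-1|+|z+1|)|\xi| + 1 = 0$.

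I expect no serious obstacle here, as the argument is a short computation. The only point requiring a moment of care is that $\xi \neq 0$, so that both the division leading to the Joukowsky relation and the final multiplication by $|\xi|$ are legitimate. The genuine content is the recognition that $z \pm 1$ factor as squares over $\xi$, after which the parallelogram identity does all the remaining work; the hypothesis $z \notin \{-1,1\}$ plays no essential role in the identity itself and is presumably retained only to match the setting in which this proposition is applied.
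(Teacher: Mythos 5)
Your proof is correct, and it takes a genuinely different (and arguably cleaner) route than the paper's. Both arguments start from the same Joukowsky relation $z = \tfrac{1}{2}(\xi + 1/\xi)$, but the paper then proceeds geometrically: it observes that $J$ maps the circle $|w| = r$ onto an ellipse with foci $\pm 1$ and semi-major axis $\tfrac{1}{2}(r + 1/r)$, and invokes the focal property $|P - 1| + |P + 1| = 2a$ to obtain $|z-1| + |z+1| = |\xi| + 1/|\xi|$; this requires a separate remark to cover the degenerate case $|\xi| = 1$, where the ellipse collapses to the segment $[-1,1]$. You instead reach the same identity by pure algebra, factoring $z \mp 1 = (\xi \mp 1)^2/(2\xi)$ and collapsing $|\xi - 1|^2 + |\xi + 1|^2 = 2(|\xi|^2 + 1)$ via the parallelogram law, which handles all $\xi \neq 0$ uniformly with no case distinction. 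What the paper's approach buys is the geometric picture (circles mapping to confocal ellipses) that is reused conceptually elsewhere in Remark~\ref{rem:3}; what yours buys is brevity and the elimination of the $|\xi| = 1$ caveat. Your closing observation that the hypothesis $z \notin \{-1,1\}$ is inessential to the identity itself is also correct: at $z = 1$ the unique root is $\xi = 1$ and the claimed quadratic reads $1 - 2 + 1 = 0$.
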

\begin{proof}
	First, let us consider
	\[
		J(w) = \frac{1}{2}\bigg(w+\frac{1}{w}\bigg), \qquad w \in \CC \setminus \{0\}.
	\]
	If $w = r \ue^{i \theta}$, then we have
	\[
		J(w) = 
		\frac{1}{2} \Big( r + \frac{1}{r} \Big) \cos \theta + 
		i \frac{1}{2} \Big( r - \frac{1}{r} \Big) \sin \theta.
	\]
	Hence circle centered at zero and radius $r \neq 1$ is transformed into an ellipse with principle axes
	\[
		a = \frac{1}{2} \Big( r + \frac{1}{r} \Big), \qquad
		b = \frac{1}{2} \Big| r - \frac{1}{r} \Big|.
	\]
	Since for every point $P$ on the ellipse we have $2a = |P-c| + |P+c|$ where $c = \pm 1$ are its foci, we obtain
	\begin{equation}
		\label{eq:54}
		|J(w) - 1| + |J(w) + 1| = r + \frac{1}{r}, \qquad |w| \neq 1.
	\end{equation}
	In fact, a direct computation shows that \eqref{eq:54} also holds true for $|w| = 1$.

	Let $z \in \CC \setminus \{-1, 1\}$. Let $\xi$ denote a solution of \eqref{eq:51}. Observe that $z = J(\xi_+)$. 
	Hence, \eqref{eq:54} leads to
	\[
		|z - 1| + |z + 1| = |\xi| + \frac{1}{|\xi|},
	\]
	that is
	\[
		|\xi|^2 - (|z-1|+|z+1|) |\xi|  + 1 = 0,
	\]
	which completes the proof.
\end{proof}

\begin{remark}
	\label{rem:3}
	Let us observe that the function $f(z) = \frac{1}{2} \Log(z - 1) + \frac{1}{2} \Log(z+1)$ is holomorphic in 
	$\CC \setminus (-\infty, 1]$ where $\Log$ denotes the principle branch of logarithm. Using the fundamental theorem of 
	calculus one can check that the function $\exp f(z)$ extends continuously to $(-\infty, -1)$. Thus by Morera's theorem 
	$\exp f(z)$ is holomorphic in $\CC \setminus [-1, 1]$. Let us recall that
	\[
		\exp f(z) = \sqrt{z-1}\sqrt{z+1}, \quad 
		z \in \CC \setminus (-\infty,1]
	\]
	where $\sqrt{\cdot}$ is the principal branch of the square root. 
	Observe that
	\begin{equation} \label{eq:56}
		\exp f [\CC_\varepsilon] \subset \CC_\varepsilon, \quad 
		\varepsilon \in \{-1,1\}.
	\end{equation}
	Since for $z \in \CC \setminus \RR$
	\[
		\Log(-z) = \Log(z) +
		\begin{cases}
			-i \pi & z \in \CC_+ \\
			i \pi & z \in \CC_-
		\end{cases}
	\]
	we get
	\begin{equation} \label{eq:57}
		\exp f(-z) = - \exp f(z), \quad
		z \in \CC \setminus \RR.
	\end{equation}

	Given $z \in \CC$, we consider the equation
	\begin{equation} \label{eq:58}
		\xi^2 - 2 z \xi + 1 = 0.
	\end{equation}
	If $z \notin [-1, 1]$, it has two distinct solutions
	\begin{equation} \label{eq:59}
		\xi_+(z) = z + \exp f(z), \quad\text{and}\quad
		\xi_-(z) = z - \exp f(z).
	\end{equation}
	The functions $\xi_+$ and $\xi_-$ are holomorphic in $\CC \setminus [-1, 1]$. Next, the implicit function theorem applied 
	to \eqref{eq:58} implies
	\begin{equation} \label{eq:60}
		\frac{\ud \xi_+}{\ud z} = \frac{\xi_+}{\xi_+ - z}, \quad z \in \CC \setminus \RR.
	\end{equation}
	Finally, by \eqref{eq:56} and \eqref{eq:59} we get
	\begin{equation} \label{eq:61}
		\xi_+ [\CC_\varepsilon] \subset \CC_\varepsilon, \quad
		\varepsilon \in \{-1,1\}.
	\end{equation}
	
	We extend the functions $\xi_+$ and $\xi_-$ to $\CC$ as follows:
	Let us consider a sequence $(z_n : n \in \NN) \subset \CC_+$ convergent to $x \in (-1, 1)$. Since 
	$|\xi_+(z_n)| > 1$, there is $C > 0$ such that
	\[
		\bigg|\frac{{\rm d} \xi_+}{{\rm d} z}(z_n) \bigg| = \bigg|\frac{\xi_+(z_n)}{\xi_+(z_n) - z_n} \bigg|\leq C.
	\]
	Hence, by the mean value theorem
	\[
		|\xi_+(z_n) - \xi_+(z_m)| \leq C |z_n - z_m|,
	\]
	thus $(\xi_+(z_n) : n \in \NN)$ is Cauchy sequence. If $(w_n : n \in \NN)$ is a sequence approaching $x$, then
	\begin{align*}
		|\xi_+(z_n) - \xi_+(w_m) | 
		&\leq C |z_n - w_m| \\
		&\leq C (|z_n - x| + |x - w_m|),
	\end{align*}
	hence $(\xi_+(w_n) : n \in \NN)$ converges to the same limit. We set
	\[
		\xi_+(x) = \lim_{n \to \infty} \xi_+(z_n),
	\]
	and $\xi_-(x) = 2 x - \xi_+(x)$. We claim that
	\[
		|\xi_+(z)| > |\xi_-(z)|, \qquad\text{for all } z \in \CC \setminus [-1, 1].
	\]
	Indeed, the inequality holds true for $z \in (1, \infty)$. Let us suppose, contrary to our claim, that
	$|\xi_+(z)| \leq |\xi_-(z)|$ for certain $z \in \CC \setminus [-1, 1]$. By continuity we deduce that there is 
	$z \in \CC \setminus [-1, 1]$ such that $|\xi_+(z)| = |\xi_-(z)|$. Hence, by Proposition \ref{prop:1}, $|z-1|+|z+1| = 2$ 
	which implies that $z \in [-1, 1]$, a contradiction.

	Lastly, we show that $\xi_+$ and $\xi_-$ are continuous at $-1$ and $1$. Let us consider a sequence
	$(z_n : n \in \NN) \subset \CC$ convergent to $x \in \{-1, 1\}$. In view of Proposition \ref{prop:1}, 
	$(\xi_+(z_n) : n \in \NN)$ and $(\xi_-(z_n) : n \in \NN)$ are bounded. Let $(z_{n_j} : j \in \NN)$ be subsequence such
	that $(\xi_+(z_{n_j}) : j \in \NN)$ and $(\xi_-(z_{n_j}) : j \in \NN)$ both converge. Their limits satisfy
	\[
		g^2 - 2 x g + 1 = 0.
	\]
	Consequently, $(\xi_+(z_n) : n \in \NN)$ and $(\xi_-(z_n) : n \in \NN)$ both converge to $x$.

	Finally, let us observe that for $z \in \CC_-$ we have
	\[
		\xi_+(z) = \overline{\xi_+(\overline{z})}.
	\]
	Therefore, if for a sequence $(z_n : n \in \NN_0) \subset \CC_-$ convergent to $x \in (-1, 1)$, we have
	\begin{align*}
		\lim_{n \to \infty} \xi_+(z_n) 
		&= \lim_{n \to \infty} \overline{\xi_+(\overline{z_n})} \\
		&= \overline{\xi_+(x)}.
	\end{align*}
	Since $x \in \RR$, we have $\overline{\xi_+(x)}=\xi_-(x)$, thus
	\[
		\lim_{n \to \infty} \xi_+(z_n) = 2x - \xi_+(x).
	\]
\end{remark}

The following proposition will be crucial in proving uniform Levinson's condition (see \eqref{eq:46}).
\begin{proposition} \label{prop:15}
For any $w \in \CC$ we have
\begin{equation} \label{eq:195}
	\abs{\xi_+(w)} = 
	\frac{1}{2}
	\Big( 
		\abs{w-1} + \abs{w+1} + \sqrt{(\abs{w-1} + \abs{w+1})^2-4}
	\Big).
\end{equation}
Moreover, for any $w \in \CC \setminus \RR$ we have
\begin{equation} \label{eq:196}
	\abs{\xi_+(w)} - 1 \geq
	\frac{\abs{\Im w}}{2\sqrt{\abs{w-1} \abs{w+1}}}.
\end{equation}
\end{proposition}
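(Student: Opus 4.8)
The plan is to read both statements off the Joukowsky parametrization set up in Remark~\ref{rem:3}. Recall that $\xi_+(w)$ is a root of \eqref{eq:51}, so $w = \tfrac12\big(\xi_+(w) + \xi_+(w)^{-1}\big)$, and that the relations $\xi_+(w)\xi_-(w) = 1$ together with $|\xi_+(w)| \ge |\xi_-(w)|$ force $r := |\xi_+(w)| \ge 1$ for every $w \in \CC$, with strict inequality whenever $w \notin [-1,1]$ (in particular whenever $w \notin \RR$). Everything below is an elementary manipulation of this relation.

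For \eqref{eq:195} I first take $w \notin \{-1,1\}$ and apply Proposition~\ref{prop:1} with $z = w$ and $\xi = \xi_+(w)$: the positive number $r$ then satisfies the quadratic $r^2 - s\,r + 1 = 0$, where $s = |w-1| + |w+1|$. Its two roots $\tfrac12\big(s \pm \sqrt{s^2-4}\big)$ have product $1$, so the larger is $\ge 1$ and the smaller $\le 1$; since $r \ge 1$, the value of $r$ must equal the larger root $\tfrac12\big(s + \sqrt{s^2-4}\big)$, which is exactly the right-hand side of \eqref{eq:195}. The two excluded points are handled by direct inspection: at $w = \pm 1$ one has $|\xi_+(w)| = 1$ by the continuity recorded in Remark~\ref{rem:3}, while the right-hand side equals $\tfrac12(2 + 0) = 1$ as well.

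For \eqref{eq:196} I write $\xi_+(w) = r e^{i\theta}$ with $r > 1$ (here I use $w \notin \RR$). Substituting into $w = \tfrac12(\xi_+ + \xi_+^{-1})$ gives $\Im w = \tfrac12\big(r - r^{-1}\big)\sin\theta$, while from the identity $w^2 - 1 = \tfrac14(\xi_+ - \xi_+^{-1})^2$ one obtains
\[
	|w-1|\,|w+1| = |w^2 - 1| = \tfrac14\big[(r - r^{-1})^2 \cos^2\theta + (r + r^{-1})^2 \sin^2\theta\big].
\]
The desired bound then factors into two elementary inequalities. First, $\tfrac12(r - r^{-1}) \le r - 1$, which is equivalent to $\big(\sqrt{r} - 1/\sqrt{r}\big)^2 \ge 0$. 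Second,
\[
	\frac{|\Im w|}{2\sqrt{|w-1|\,|w+1|}} \le \tfrac12\big(r - r^{-1}\big);
\]
after inserting the two displayed formulas and cancelling the common positive factor $\tfrac12(r - r^{-1})$, this reduces to $\sin^2\theta \le (r - r^{-1})^2\cos^2\theta + (r + r^{-1})^2\sin^2\theta$, which is immediate since $(r + r^{-1})^2 \ge 4 \ge 1$. Chaining the two inequalities yields $\frac{|\Im w|}{2\sqrt{|w-1||w+1|}} \le r - 1 = |\xi_+(w)| - 1$.

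I do not anticipate a genuine difficulty here: once $\xi_+ = r e^{i\theta}$ is in place, both parts are bookkeeping. The only points deserving care are verifying that $\xi_+(w)$ solves \eqref{eq:51} and that $r \ge 1$ on all of $\CC$, including the extension to $[-1,1]$ and $\pm 1$ made in Remark~\ref{rem:3}, and separating out $w = \pm 1$ in the first part, since Proposition~\ref{prop:1} excludes those two points.
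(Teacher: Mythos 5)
Your proof is correct. For \eqref{eq:195} you follow essentially the same route as the paper: apply Proposition~\ref{prop:1} to get the quadratic $r^2 - sr + 1 = 0$ for $r = |\xi_+(w)|$ with $s = |w-1|+|w+1|$, and use $|\xi_+|\ge|\xi_-|$ together with the product of the roots being $1$ to select the larger root; your explicit check at $w=\pm1$ is a welcome extra, since Proposition~\ref{prop:1} formally excludes those points. For \eqref{eq:196} the skeleton is also the same --- both arguments lower-bound $|\xi_+(w)|-1$ by $\tfrac12\sqrt{s^2-4}$, which in your notation is exactly $\tfrac12(r-r^{-1})$ since $s=r+r^{-1}$ --- but the key comparison with $\tfrac{|\Im w|}{2\sqrt{|w-1||w+1|}}$ is obtained by a genuinely different computation. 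The paper works in the triangle with vertices $-1,w,1$: the law of cosines at the vertex $w$ gives $\sqrt{s^2-4}=2|\cos\tfrac{\theta}{2}|\sqrt{|w-1||w+1|}$, and the area identity converts $\cos\tfrac{\theta}{2}$ into $\Im w$, after which one drops a factor $\sin\tfrac{\theta}{2}\le 1$. You instead stay entirely in the Joukowsky coordinates $\xi_+=re^{i\theta}$, computing $\Im w=\tfrac12(r-r^{-1})\sin\theta$ and $|w^2-1|=\tfrac14\big[(r-r^{-1})^2\cos^2\theta+(r+r^{-1})^2\sin^2\theta\big]$ from $w^2-1=\tfrac14(\xi_+-\xi_+^{-1})^2$, so that the estimate reduces to $\sin^2\theta\le(r-r^{-1})^2\cos^2\theta+(r+r^{-1})^2\sin^2\theta$ plus the elementary $\tfrac12(r-r^{-1})\le r-1$. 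Your version is purely algebraic and avoids introducing the angle at $w$ and the two area computations, at the modest cost of one more explicit modulus calculation; the paper's version makes the geometric meaning of $\sqrt{s^2-4}$ (the ellipse/foci picture) more visible. Both are complete and rigorous.
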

\begin{proof}
Since $\abs{\xi_+(w)} \geq \abs{\xi_-(w)}$ Proposition~\ref{prop:1} implies \eqref{eq:195}. Notice that by \eqref{eq:195} we have
\begin{equation} \label{eq:198}
	\frac{1}{2} \sqrt{(\abs{w-1} + \abs{w+1})^2-4} 
	\leq 
	\abs{\xi_+(w)} - 1.
\end{equation}
So to prove \eqref{eq:196} we need to estimate the left-hand side of this inequality. Without loss of generality suppose that $w \in \CC_+$. We shall consider the triangle $\Delta$ on the complex plane spanned by the vertices: $-1,w,1$. By law of cosines at 
the vertex $w$ we have
\[
	2^2 = \abs{w-1}^2 + \abs{w+1}^2 - 2 \abs{w-1} \abs{w+1} \cos \theta,
\]
where $\theta = \angle(-1,w,1)$. So 
\begin{align*}
	(\abs{w-1}+\abs{w+1})^2-4 
	&=
	2 \abs{w-1} \abs{w+1} (1+\cos \theta) \\
	&=
	4 \abs{w-1} \abs{w+1} \cos^2 \frac{\theta}{2}.
\end{align*}
Therefore,
\begin{equation} \label{eq:197}
	\sqrt{(\abs{w-1}+\abs{w+1})^2-4} =
	2 \Big| \cos \frac{\theta}{2} \Big|
	\sqrt{\abs{w+1} \abs{w-1}}.
\end{equation}
By computing the area of the triangle $\Delta$ in two ways we get
\[
	\frac{1}{2} \cdot 2 \cdot \Im w = 
	\frac{1}{2} \abs{w - 1} \abs{w + 1} \sin \theta.
\]
Since $\sin \theta = 2 \sin \frac{\theta}{2} \cos \frac{\theta}{2}$ it implies
\[
	\cos \frac{\theta}{2} = 
	\frac{\Im w}{\abs{w - 1} \abs{w + 1} \sin \frac{\theta}{2}}.
\]
By combining it with \eqref{eq:197} we get 
\[
	\sqrt{(\abs{w-1}+\abs{w+1})^2-4} =
	\frac{\Im w}{\sqrt{\abs{w-1} \abs{w+1}} \sin \frac{\theta}{2}},
\]
which in view of \eqref{eq:198} implies \eqref{eq:196}.
\end{proof}

\subsection{Diagonalization of matrices of special form} \label{sec:6.2}
Let us consider a sequence $(Y_n : n \in \NN_0)$ of mappings defined on an open set $U \subset \CC$ with values in
$\GL(2, \CC)$ which is convergent to $\calY$ locally uniformly on $U$, with $\det \calY > 0$. Let $K$ be a compact subset of $U$.
We assume that there is $M \geq 1$ such that for all $z \in K$ and $n \geq M$,
\[
	\discr Y_n(z) \neq 0, \qquad \text{and } \qquad
	\det Y_n(z) \notin (-\infty, 0].
\]
Given $z \in K$ and $n \geq M$, the characteristic equation of the matrix $Y_n(z)$ is
\begin{equation}
	\label{eq:62}
	\lambda^2 - (\tr Y_n(z)) \lambda + \det Y_n(z) = 0.
\end{equation}
If we denote by $\sqrt{z}$ the principle branch of the square root, the equation \eqref{eq:62} has two solutions
\begin{equation}
	\label{eq:63}
	\lambda^+_n(z) = \sqrt{\det Y_n(z)} \cdot \xi_+\bigg(\frac{\tr Y_n(z)}{2 \sqrt{\det Y_n(z)}} \bigg),
	\qquad\text{and}\qquad
	\lambda^-_n(z) = \sqrt{\det Y_n(z)} \cdot \xi_-\bigg(\frac{\tr Y_n(z)}{2 \sqrt{\det Y_n(z)}} \bigg)
\end{equation}
where the functions $\xi_+$ and $\xi_-$ are constructed in Remark \ref{rem:3}.

\begin{proposition} 
	\label{prop:2}
	Let $(Y_n : n \in \NN_0)$ be a sequence of mappings defined on an open set $U \subset \CC$ with values in $\GL(2, \CC)$,
	locally uniformly convergent to $\calY$, with $\det \calY > 0$. Let $K$ be a compact subset of $U$. We assume that there is 
	$M \geq 1$ such that for all $n \geq M$ and $z \in K$,
	\[
		\discr Y_n(z) \neq 0, \qquad \text{and } \qquad \det Y_n(z) \notin (-\infty, 0].
	\]
	If 
	\begin{equation}
		\label{eq:64}
		\sum_{n \geq M} 
		\inf_{z \in K} 
		\bigg( 
		\bigg|\xi_+ \bigg(\frac{\tr Y_n(z)}{2 \sqrt{\det Y_n(z)}}\bigg)\bigg| -1 
		\bigg)= \infty,
	\end{equation}
	then 
	\[
		\inf_{z \in K}
		\prod_{n \geq M}
		\bigg| 
		\frac{\lambda_n^+(z)}{\lambda_n^-(z)}
		\bigg|
		= \infty.
	\]
\end{proposition}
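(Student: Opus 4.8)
The plan is to reduce the product of eigenvalue ratios to a product of the quantities $|\xi_+(\cdot)|^2$, and then to turn the divergence of the sum in \eqref{eq:64} into divergence of the product by means of an elementary logarithmic inequality. Write $w_n(z) = \tfrac{\tr Y_n(z)}{2\sqrt{\det Y_n(z)}}$, so that by \eqref{eq:63} we have $\lambda_n^{\pm}(z) = \sqrt{\det Y_n(z)}\,\xi_{\pm}(w_n(z))$. Since $\xi_+(w)$ and $\xi_-(w)$ are the two roots of $\xi^2 - 2w\xi + 1 = 0$, their product equals the constant term $1$, hence $\xi_-(w_n(z)) = 1/\xi_+(w_n(z))$ and the common factor $\sqrt{\det Y_n(z)}$ cancels:
\[
	\bigg| \frac{\lambda_n^+(z)}{\lambda_n^-(z)} \bigg| = |\xi_+(w_n(z))|^2, \qquad z \in K,\ n \geq M.
\]
Thus it suffices to prove $\inf_{z \in K} \prod_{n \geq M} |\xi_+(w_n(z))|^2 = \infty$.

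First I would secure a uniform upper bound on $|\xi_+(w_n(z))|$ over $z \in K$ and $n \geq M$. Because $Y_n \to \calY$ locally uniformly with $\det \calY > 0$, and $\det Y_n(z) \notin (-\infty, 0]$, the principal branch of the square root makes each $w_n$ continuous on $K$ and forces $w_n \to w_\infty := \tr \calY / (2\sqrt{\det \calY})$ uniformly on $K$; in particular the set $\{ w_n(z) : n \geq M,\ z \in K \}$ is bounded. By the explicit formula \eqref{eq:195}, $|\xi_+(w)|$ is a continuous function of $w \in \CC$, so it is bounded on this bounded set by some constant $C \geq 1$. Since enlarging $M$ discards only finitely many factors, each of which is $\geq 1$ (by Remark~\ref{rem:3} we have $|\xi_+| \geq |\xi_-|$ and $\xi_+\xi_- = 1$, whence $|\xi_+| \geq 1$), neither the hypothesis \eqref{eq:64} nor the conclusion is affected. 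Hence I may assume $1 \leq |\xi_+(w_n(z))| \leq C$ for all $n \geq M$ and $z \in K$.

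Next I would invoke the elementary inequality that on $[1, C]$ the quotient $\log x / (x-1)$ extends continuously with value $1$ at $x = 1$ and remains positive, so there is $c_0 > 0$ with $\log x \geq c_0 (x-1)$ for all $x \in [1, C]$. Applying this with $x = |\xi_+(w_n(z))|$ and then bounding below by the infimum over $z$ gives, for every $z \in K$,
\[
	\log |\xi_+(w_n(z))| \geq c_0 \big( |\xi_+(w_n(z))| - 1 \big) \geq c_0 \inf_{z' \in K} \big( |\xi_+(w_n(z'))| - 1 \big).
\]
For any finite $N \geq M$, summing over $M \leq n \leq N$ and exponentiating produces a lower bound for $\prod_{n=M}^{N} |\xi_+(w_n(z))|^2$ that does not depend on $z$, namely $\exp\big( 2 c_0 \sum_{n=M}^{N} \inf_{z'} (|\xi_+(w_n(z'))| - 1) \big)$. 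Taking the infimum over $z \in K$ and then letting $N \to \infty$, the hypothesis \eqref{eq:64} drives this quantity to $\infty$; since the omitted tail factors are all $\geq 1$, the same lower bound persists for the full product, and the conclusion follows.

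The only genuinely delicate point is the uniform control in the second step: the divergence in \eqref{eq:64} must be converted into divergence of $\sum_n \log|\xi_+(w_n(z))|$ with a single constant $c_0$ valid for all $z \in K$ and all $n \geq M$ at once. This is precisely what the uniform bound $|\xi_+(w_n(z))| \leq C$ guarantees, permitting the linearization $\log x \geq c_0 (x-1)$ to be applied simultaneously to every term and every $z$; absent such a bound the constant would degenerate as the eigenvalue gap grows, and the argument would break down.
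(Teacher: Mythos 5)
Your proof is correct and follows essentially the same route as the paper: the identity $\bigl|\lambda_n^+(z)/\lambda_n^-(z)\bigr| = \bigl|\xi_+\bigl(\tfrac{\tr Y_n(z)}{2\sqrt{\det Y_n(z)}}\bigr)\bigr|^2$ followed by converting the divergence of the sum \eqref{eq:64} into divergence of the product. The only difference is in the final elementary step: the paper applies the generalized Bernoulli inequality $\prod_{n}(1+x_n) \geq 1 + \sum_n x_n$ for $x_n \geq 0$, which makes your uniform upper bound $|\xi_+(w_n(z))| \leq C$ (and hence the linearization $\log x \geq c_0(x-1)$ on $[1,C]$) unnecessary — though your derivation of that bound from the locally uniform convergence of $Y_n$ is itself sound.
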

\begin{proof}
	Let us observe that for all $n \geq M$, we have
	\[
		\frac{\lambda_n^+(z)}{\lambda_n^-(z)} = 
		\frac{\big( \lambda^+_n(z) \big)^2}{\det Y_n(z)} =
		\bigg( \xi_+\bigg(\frac{\tr Y_n(z)}{2 \sqrt{\det Y_n(z)}} \bigg) \bigg)^2,
	\]
	hence
	\[
		\prod_{M \leq n \leq m}
		\bigg|\frac{\lambda^+_n(z)}{\lambda^-_n(z)}\bigg|
		=
		\prod_{M \leq n \leq m}
		\bigg|\xi_+\bigg(\frac{\tr Y_n(z)}{2 \sqrt{\det Y_n}}\bigg)\bigg|^2.
	\]
	Since $|\xi_+(w)| \geq 1$ for $w \in \CC$, by generalized Bernoulli's inequality we obtain
	\begin{align*}
		1 +
		\sum_{M \leq n \leq m} \bigg(\bigg|\xi_+\bigg(\frac{\tr Y_n(z)}{2 \sqrt{\det Y_n(z)}}\bigg)\bigg| - 1\bigg)
		&\leq
		\prod_{M \leq n \leq m} \bigg|\xi_+\bigg(\frac{\tr Y_n(z)}{2 \sqrt{\det Y_n(z)}}\bigg)\bigg| \\
		&\leq
		\exp\bigg\{
		\sum_{M \leq n \leq m}
		\bigg(\bigg|\xi_+\bigg(\frac{\tr Y_n(z)}{2 \sqrt{\det Y_n(z)}}\bigg)\bigg| - 1\bigg) \bigg\},
	\end{align*}
	from which the conclusion easily follows.
\end{proof}

We next investigate the case when $\calY = \tilde{\varepsilon} \Id$ with $\tilde{\varepsilon} \in \{-1, 1\}$. We assume that for 
certain sequence $(\tilde{\gamma}_n : n \in \NN_0)$ of positive numbers tending to infinity, the sequence
\begin{equation}
	\label{eq:71}
	R_n = \tilde{\gamma}_n (\tilde{Y}_n - \tilde{\varepsilon} \Id)
\end{equation}
converges to $\calR(z)$ locally uniformly on $U$. Let $K$ be a compact subset of
\[
	\big\{z \in U : \discr \calR(z) \notin (-\infty, 0] \big\}.
\]
There is $M \geq 1$ such that for all $n \geq M$ and $z \in K$, $\discr R_n(z) \neq 0$.
Since
\[
	\discr \tilde{Y}_n(z) = \tilde{\gamma}_n^{-2} \discr R_n(z),
\] 
for each $n \geq M$ and $z \in K$ the matrix $\tilde{Y}_n(z)$ has two distinct eigenvalues. Since $\det \calY = 1$, by possible
increasing $M$, we can guarantee that for all $n \geq M$ and $z \in K$,
\[
	\det \tilde{Y}_n(z) \notin (-\infty, 0].
\]
Hence, the eigenvalues of $\tilde{Y}_n(z)$ are given by formulas \eqref{eq:63}. Moreover, the eigenvalues of the matrix $R_n(z)$
are
\begin{equation}
	\label{eq:73}
	\zeta_n^+(z) = 
		\tilde{\gamma}_n (\lambda_n^+(z) - \tilde{\varepsilon}),
	\quad\text{and}\quad
	\zeta_n^-(z) = 
		\tilde{\gamma}_n (\lambda_n^{-} (z) - \tilde{\varepsilon}).
\end{equation}
In view of Remark \ref{rem:3}, by the next proposition we conclude that the functions $\lambda_n^+$ and $\lambda_n^-$ are 
holomorphic in $\intr(K)$.

\begin{proposition}
	\label{prop:3}
	Let $U$ be a nonempty open subset of $\CC$ such that
	\[
		\text{if} \quad z \in U \quad \text{then}\quad t \Re z + (1-t) z \in U, 
		\quad\text{for all} \quad t \in [0, 1).
	\]
	Suppose that $(\tilde{Y}_n : n \geq 1)$ is a sequence of mappings defined on 
	$U \cup (\cl{U} \cap \RR)$, differentiable on $U$, with values in
	$\GL(2, \CC)$, and such that $\tilde{Y}_n(x) \in \GL(2, \RR)$ and $\det \tilde{Y}_n(x) > 0$ for all $x \in \cl{U} \cap \RR$
	and $n \in \NN$. Assume that for certain sequence $(\tilde{\gamma}_n : n \in \NN_0)$ of positive numbers tending to infinity
	and $\tilde{\varepsilon} \in \{-1, 1\}$, the sequences $(R_n : n \geq 1)$ and $(R_n' : n \geq 1)$, where
	\begin{equation}
		\label{eq:72}
		R_n = \tilde{\gamma}_n(\tilde{Y}_n - \tilde{\varepsilon} \Id),
	\end{equation}
	converges to $\calR$ and $\calR'$ locally uniformly on $U$, respectively. If 
	\begin{equation} 
		\label{eq:72a}
		\lim_{n \to \infty} \tilde{\gamma}_n^2 (\det \tilde{Y}_n)' = 0
	\end{equation}
	locally uniformly on $U$, then
	\begin{equation}
		\label{eq:74}
		\lim_{n \to \infty}
		\tilde{\gamma}_n^2
		\frac{\ud}{\ud z} 
		\bigg( \frac{\tr \tilde{Y}_n}{2 \sqrt{\det \tilde{Y}_n}} \bigg)
		=
		\frac{\tilde{\varepsilon}}{8} (\discr \calR)'
	\end{equation}
	locally uniformly on $U$. Moreover, if $\Re (\discr \calR)' \neq 0$ on $U$ and $K$ is a compact subset $U$,
	then there are $M \geq 1$ and $c > 0$ such that for all $n \geq M$ and $z \in K$,
	\[
		\bigg| \Im \bigg(
		\frac{\tr \tilde{Y}_n(z)}{2 \sqrt{\det \tilde{Y}_n(z)}}
		\bigg) \bigg| \geq c \frac{|\Im z|}{\tilde{\gamma}_n^2}.
	\]
\end{proposition}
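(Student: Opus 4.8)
\emph{The plan.} Write $\tau_n = \frac{\tr \tilde Y_n}{2\sqrt{\det \tilde Y_n}}$, the argument of $\xi_\pm$ in \eqref{eq:63}, so that \eqref{eq:74} is the statement $\tilde\gamma_n^2 \tau_n' \to \tfrac{\tilde\varepsilon}{8}(\discr \calR)'$ and the last assertion is a lower bound for $|\Im \tau_n|$. Since $\tilde Y_n \to \tilde\varepsilon \Id$ we have $\det \tilde Y_n \to 1$, and as $\det \tilde Y_n \notin (-\infty,0]$ on $K$ for $n \geq M$ the principal square root is well defined and $\tau_n \to \tilde\varepsilon$ locally uniformly. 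Everything rests on the elementary identity $\tau_n^2 - 1 = \frac{\discr \tilde Y_n}{4 \det \tilde Y_n}$, which together with $\discr \tilde Y_n = \tilde\gamma_n^{-2} \discr R_n$ yields
\begin{equation} \label{eq:sk1}
  \tilde\gamma_n^2 \bigl( \tau_n^2 - 1 \bigr) = \frac{\discr R_n}{4 \det \tilde Y_n}.
\end{equation}
To prove \eqref{eq:74} I would differentiate this exact identity and only afterwards pass to the limit. The left-hand derivative is $2 \tilde\gamma_n^2 \tau_n \tau_n'$; on the right, $(\discr R_n)' = 2 \tr R_n \, \tr R_n' - 4 (\det R_n)' \to (\discr \calR)'$ locally uniformly since $R_n \to \calR$ and $R_n' \to \calR'$, while \eqref{eq:72a} gives $(\det \tilde Y_n)' = \tilde\gamma_n^{-2}\bigl(\tilde\gamma_n^2 (\det \tilde Y_n)'\bigr) \to 0$, so with $\det \tilde Y_n \to 1$ the right-hand derivative tends locally uniformly to $\tfrac14 (\discr \calR)'$. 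Dividing by $2\tau_n \to 2\tilde\varepsilon \neq 0$ then gives \eqref{eq:74}.

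\emph{The lower bound.} For $x \in \cl{U} \cap \RR$ the matrix $\tilde Y_n(x)$ is real with positive determinant, hence $\tau_n(x) \in \RR$; thus $\tau_n$ is holomorphic on $U$ and real on $U \cap \RR$. Fix $z = x + iy \in K$. By the vertical-convexity hypothesis the segment $\{ x + is : s \text{ between } 0 \text{ and } y \}$ lies in $U$, with real endpoint in $\cl{U} \cap \RR$, so integrating $\tau_n'$ along it and using $\tau_n(x) \in \RR$ gives
\[
  \Im \tau_n(z) = \int_0^{y} \Re \bigl( \tau_n'(x+is) \bigr) \, ds = \tilde\gamma_n^{-2} \int_0^{y} \Re \bigl( \tilde\gamma_n^2 \tau_n'(x+is) \bigr) \, ds .
\]
By \eqref{eq:74} the integrand tends, uniformly along the segment, to $\tfrac{\tilde\varepsilon}{8} \Re (\discr \calR)'$. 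Since $\Re (\discr \calR)'$ is continuous and, by hypothesis, nowhere zero on $U$, on the connected compact segment it keeps a constant sign and has modulus bounded below by some $2\delta > 0$; hence for all large $n$ the quantity $\Re\bigl(\tilde\gamma_n^2 \tau_n'(x+is)\bigr)$ keeps that sign and has modulus $\geq \delta$ along the segment, and the last integral is therefore at least $\delta |y|$ in absolute value. This gives $|\Im \tau_n(z)| \geq \delta\, \tilde\gamma_n^{-2} |\Im z|$, that is, the claim with $c = \delta$.

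\emph{The main obstacle.} The one delicate point is uniformity all the way down to the real axis. As $z$ ranges over $K$ the vertical segments above may approach $\partial U$ at the point where they meet $\RR$, so I must guarantee that their union is contained in a single compact subset of $U$ on which both the convergence in \eqref{eq:74} is uniform and $|\Re (\discr \calR)'| \geq 2\delta$, and that one threshold $M$ makes $\sup | \tilde\gamma_n^2 \tau_n' - \tfrac{\tilde\varepsilon}{8}(\discr \calR)'| < \delta$ along all of them simultaneously. This is exactly what the hypotheses on $U$ are tailored to provide: vertical convexity forces the downward shadow $\bigcup_{z \in K}\{ x + is \}$ to have closure meeting $\RR$ only inside $\cl{U} \cap \RR$ and otherwise lying in $U$, and the definition of $\tilde Y_n$ (real, with positive determinant) on $\cl{U} \cap \RR$ anchors $\tau_n$ on the real axis; a routine compactness argument then produces the uniform $\delta$ and the threshold $M$.
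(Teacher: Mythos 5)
Your proposal is correct and takes essentially the same route as the paper: an elementary algebraic identity combined with \eqref{eq:72a} yields the limit \eqref{eq:74}, and the lower bound on $\bigl|\Im\bigl(\tr \tilde{Y}_n/(2\sqrt{\det \tilde{Y}_n})\bigr)\bigr|$ is obtained by running down the vertical segment from $z$ to $\Re z$ (the paper uses the mean value theorem where you use the equivalent integral form) and exploiting that the quantity is real on $\cl{U}\cap\RR$. The only differences are cosmetic: you differentiate the identity $\tilde{\gamma}_n^2\bigl(w_n^2-1\bigr)=\discr R_n/(4\det\tilde{Y}_n)$ with $w_n=\tr\tilde{Y}_n/(2\sqrt{\det\tilde{Y}_n})$ and divide by $2w_n\to 2\tilde{\varepsilon}$, whereas the paper differentiates $w_n$ directly and passes through $\det R_n=\tilde{\gamma}_n^2(\det\tilde{Y}_n-\tilde{\varepsilon}\tr\tilde{Y}_n+1)$ and $\tr\calR'=0$ (a very slight extra step your identity avoids); and the uniformity issue near the real axis that you single out as the main obstacle is present in the paper's argument in exactly the same form and is dispatched there with the same appeal to the vertical-convexity hypothesis and compactness.
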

\begin{proof}
	A direct computations leads to
	\[
		\bigg( \frac{\tr \tilde{Y}_n}{2 \sqrt{\det \tilde{Y}_n}} \bigg)' =
		\frac{1}{2} \frac{(\tr \tilde{Y}_n') (\det \tilde{Y}_n) - \tfrac{1}{2}(\tr \tilde{Y}_n) 
		(\det \tilde{Y}_n)'}{(\det \tilde{Y}_n)^{3/2}}.
	\]
	Thus, by \eqref{eq:72a} we get
	\begin{equation} \label{eq:72b}
		\lim_{n \to \infty} 
		\tilde{\gamma}_n^2 \bigg( \frac{\tr \tilde{Y}_n}{2 \sqrt{\det \tilde{Y}_n}} \bigg)' =
		\frac{1}{2} \lim_{n \to \infty} \tilde{\gamma}_n^2 \tr \tilde{Y}_n'.
	\end{equation}
	It remains to compute the last limit. By \eqref{eq:72}, we have
	\begin{align*}
		\det R_n 
		&= \tilde{\gamma}_n^2 \det \big(\tilde{Y}_n - \tilde{\varepsilon} \Id\big) \\
		&= \tilde{\gamma}_n^2 \big(\det \tilde{Y}_n - \tilde{\varepsilon} \tr \tilde{Y}_n + 1\big).
	\end{align*}
	Hence,
	\[
		\tilde{\gamma}_n^2 \tr \tilde{Y}_n' = -\tilde{\varepsilon} (\det R_n)' 
		+ \tilde{\varepsilon} \tilde{\gamma}_n^2 (\det \tilde{Y}_n)',
	\]
	and by \eqref{eq:72a},
	\begin{equation} 
		\label{eq:72c}
		\lim_{n \to \infty} \tilde{\gamma}_n^2 \tr \tilde{Y}_n' = -\tilde{\varepsilon} (\det \calR)'
	\end{equation}
	locally uniformly on $U$. Finally, by \eqref{eq:72} we have $\tr R_n' = \tilde{\gamma}_n \tr \tilde{Y}_n'$,
	and consequently, by \eqref{eq:72c}
	\[
		\tr \calR' = \lim_{n \to \infty} \frac{1}{\tilde{\gamma}_n} \tilde{\gamma}_n^2 \tr \tilde{Y}_n' = 0
	\]
	locally uniformly on $U$, which leads to $(\discr \calR)' = -4 (\det \calR)'$. Thus, by \eqref{eq:72c} and \eqref{eq:72b} 
	the formula \eqref{eq:74} follows.

	Next, let $z = x + i y \in K$. By the mean value theorem we have
	\[
		\frac{\tr \tilde{Y}_n(z)}{2 \sqrt{\det \tilde{Y}_n(z)}} - 
		\frac{\tr \tilde{Y}_n(x)}{2\sqrt{\det \tilde{Y}_n(x)}}  = iy 
		\left. \frac{{\rm d}}{{\rm d} z}
		\bigg( \frac{\tr \tilde{Y}_n}{2 \sqrt{\det \tilde{Y}_n}} \bigg) \right|_{z = x+i\xi}
	\]
	where $\xi$ depends on $n$ and belongs to the segment between $0$ and $y$. Since $\tilde{Y}_n(x)$ is real, 
	$\det \tilde{Y}_n(x) > 0$,
	we have
	\begin{equation}
		\label{eq:75}
		\Im \bigg( \frac{\tr \tilde{Y}_n(z)}{2\sqrt{\det \tilde{Y}_n(z)}} \bigg) 
		= (\Im z) \Re \left. \frac{{\rm d}}{{\rm d} z}
		\bigg( \frac{\tr \tilde{Y}_n}{2 \sqrt{\det \tilde{Y}_n}} \bigg) \right|_{z = x+i\xi}.
	\end{equation}
	To complete the proof, it is enough to invoke \eqref{eq:74}. 
\end{proof}

\begin{remark}
	\label{rem:4}
	By \eqref{eq:74} and \eqref{eq:75}, we conclude that the sign of
	\[
		\Im \bigg( \frac{\tr \tilde{Y}_n(z)}{2\sqrt{\det \tilde{Y}_n(z)}} \bigg)
	\]
	is the same as the sign of the product $\tilde{\varepsilon} (\Im z) \Re (\discr \calR)'(z)$.
\end{remark}

\begin{proposition}
	\label{prop:4}
	Suppose that $(\tilde{Y}_n : n \geq 1)$ is a sequence of continuous mappings defined on an open set $U \subset \CC$,
	with values in $\GL(2, \CC)$. Assume that for certain sequence $(\tilde{\gamma}_n : n \in \NN_0)$ of positive
	numbers tending to infinity and $\tilde{\varepsilon} \in \{-1, 1\}$, the sequence
	\[
		R_n(z) = \tilde{\gamma}_n(\tilde{Y}_n(z) - \tilde{\varepsilon} \Id),
	\]
	converges to $\calR(z)$ locally uniformly with respect to $z \in U$. Let $K$ be a compact subset of
	$
		\{z \in U :  \discr \calR(z) \notin (-\infty, 0] \}.
	$
	If there is $M \geq 1$ such that
	\begin{equation} 
		\label{eq:76}
		\bigg|\Im \bigg( \frac{\tr \tilde{Y}_n(z)}{2 \sqrt{\det \tilde{Y}_n(z)}} \bigg) \bigg| > 0, \quad z \in K, n \geq M,
	\end{equation}
	then
	\[
		\lim_{n \to \infty} \zeta_n^+ = 
		\frac{\tr \calR + \tilde{\varepsilon} \sqrt{\discr \calR}}{2}
		\quad \text{and} \quad
		\lim_{n \to \infty} \zeta_n^- = 
		\frac{\tr \calR - \tilde{\varepsilon} \sqrt{\discr \calR}}{2}
	\]
	uniformly on $K$.
\end{proposition}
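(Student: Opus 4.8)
The plan is to reduce the statement to the elementary convergences $\tr R_n\to\tr\calR$ and $\discr R_n\to\discr\calR$, the only genuine difficulty being the uniform identification of the correct branch of the square root. First I would rewrite the eigenvalues explicitly. Writing $w_n(z)=\frac{\tr\tilde Y_n(z)}{2\sqrt{\det\tilde Y_n(z)}}$, formulas \eqref{eq:63} and \eqref{eq:59} give
\[
	\lambda_n^\pm = \sqrt{\det\tilde Y_n}\,\xi_\pm(w_n) = \sqrt{\det\tilde Y_n}\,\big(w_n\pm\exp f(w_n)\big) = \tfrac12\tr\tilde Y_n \pm S_n,
\]
where $S_n:=\sqrt{\det\tilde Y_n}\,\exp f(w_n)$ and I used $\sqrt{\det\tilde Y_n}\,w_n=\tfrac12\tr\tilde Y_n$. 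Hence, by \eqref{eq:73} and $\tilde\gamma_n(\tr\tilde Y_n-2\tilde\varepsilon)=\tr R_n$,
\[
	\zeta_n^\pm = \tilde\gamma_n(\lambda_n^\pm-\tilde\varepsilon) = \tfrac12\tr R_n \pm \tilde\gamma_n S_n.
\]
Since $R_n\to\calR$ locally uniformly, $\tr R_n\to\tr\calR$ uniformly on $K$, so everything comes down to proving $\tilde\gamma_n S_n\to\frac{\tilde\varepsilon}{2}\sqrt{\discr\calR}$ uniformly on $K$.

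Next I would compute the square. From $\exp f(w_n)^2=w_n^2-1$ one obtains $S_n^2=\det\tilde Y_n(w_n^2-1)=\tfrac14\discr\tilde Y_n$, and therefore $(2\tilde\gamma_n S_n)^2=\tilde\gamma_n^2\discr\tilde Y_n=\discr R_n$. As $R_n\to\calR$ locally uniformly, $\discr R_n\to\discr\calR$ uniformly on $K$; moreover $\discr\calR(K)$ is a compact subset of $\CC\setminus(-\infty,0]$, so $\arg\discr\calR$ is bounded away from $\pm\pi$ on $K$ and the principal square root satisfies $\sqrt{\discr R_n}\to\sqrt{\discr\calR}$ uniformly. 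Thus at each point $2\tilde\gamma_n S_n$ equals one of $\pm\sqrt{\discr R_n}$, and the whole content of the statement is that the correct sign is $\tilde\varepsilon$.

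The main obstacle is fixing this branch uniformly, and here the hypotheses enter. Since $\tilde Y_n=\tilde\varepsilon\Id+\tilde\gamma_n^{-1}R_n\to\tilde\varepsilon\Id$ uniformly on $K$, we have $\tr\tilde Y_n\to2\tilde\varepsilon$ and $\det\tilde Y_n\to1$, whence $w_n\to\tilde\varepsilon$ uniformly. Set $P_n:=\tilde\varepsilon\,2\tilde\gamma_n S_n$, so $P_n^2=\discr R_n$. Using $w_n\notin\RR$ — which is precisely hypothesis \eqref{eq:76} — together with \eqref{eq:57} in the case $\tilde\varepsilon=-1$, one has the unified identity $\tilde\varepsilon\exp f(w_n)=\sqrt{\tilde\varepsilon w_n-1}\,\sqrt{\tilde\varepsilon w_n+1}$, legitimate because $\tilde\varepsilon w_n\notin(-\infty,1]$. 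Consequently
\[
	P_n = 2\tilde\gamma_n\,\sqrt{\det\tilde Y_n}\,\sqrt{\tilde\varepsilon w_n-1}\,\sqrt{\tilde\varepsilon w_n+1}.
\]
In this product $2\tilde\gamma_n>0$, while $\det\tilde Y_n\to1$ and $\tilde\varepsilon w_n+1\to2$ uniformly, so the arguments of $\sqrt{\det\tilde Y_n}$ and $\sqrt{\tilde\varepsilon w_n+1}$ tend to $0$ uniformly; and $\sqrt{\tilde\varepsilon w_n-1}$ has argument in $(-\pi/2,\pi/2)$ since $\tilde\varepsilon w_n-1\notin(-\infty,0]$. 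Hence $\arg P_n\in(-\pi/2-\eta_n,\pi/2+\eta_n)$ with $\eta_n\to0$ uniformly on $K$.

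Finally I would rule out the wrong sign. If $P_n=-\sqrt{\discr R_n}$ at some point of $K$, then $\arg P_n$ would lie within $o(1)$ of $\tfrac12\arg\discr\calR+\pi$, which, because $\arg\discr\calR$ is bounded away from $\pm\pi$ uniformly on $K$, is bounded away from $[-\pi/2,\pi/2]$ uniformly; this contradicts $\arg P_n\in(-\pi/2-\eta_n,\pi/2+\eta_n)$ once $n$ is large. Therefore $P_n=\sqrt{\discr R_n}\to\sqrt{\discr\calR}$ uniformly, that is $2\tilde\gamma_n S_n\to\tilde\varepsilon\sqrt{\discr\calR}$ uniformly on $K$. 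Substituting into $\zeta_n^\pm=\tfrac12\tr R_n\pm\tilde\gamma_n S_n$ gives $\zeta_n^\pm\to\frac{\tr\calR\pm\tilde\varepsilon\sqrt{\discr\calR}}{2}$ uniformly on $K$, as claimed. I expect the argument-counting step controlling the branch (rather than the convergence of the eigenvalues themselves) to be the delicate part, precisely because the factor $\sqrt{\det\tilde Y_n}$ is complex and could otherwise rotate $S_n$ across the imaginary axis when $\exp f(w_n)$ is nearly imaginary.
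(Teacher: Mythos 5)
Your proof is correct, and it reaches the conclusion by a route that differs from the paper's in two places. The paper expands $\lambda_n^+-\tilde\varepsilon$ into the three asymptotic pieces $(\sqrt{\det \tilde Y_n}-1)\xi_+(w_n)+(\xi_+(w_n)-w_n)+(w_n-\tilde\varepsilon)$ and computes the limit of $\tilde\gamma_n$ times each separately (the first giving $\tfrac{\tilde\varepsilon}{2}\tr\calR$, the third giving $0$, the second giving $\tilde\varepsilon\sqrt{\discr\calR}/2$ via a dedicated claim), whereas you exploit the exact identity $\sqrt{\det\tilde Y_n}\,w_n=\tfrac12\tr\tilde Y_n$ to obtain $\zeta_n^{\pm}=\tfrac12\tr R_n\pm\tilde\gamma_nS_n$ with no error terms, which is arguably cleaner and isolates the whole difficulty in the single quantity $\tilde\gamma_nS_n$. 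For the branch, the paper (in the case $\tilde\varepsilon=1$, say) writes $\tilde\gamma_n\sqrt{w_n-1}\sqrt{w_n+1}=\sqrt{\tilde\gamma_n^2(w_n-1)}\,\sqrt{w_n+1}$, absorbing the positive scalar $\tilde\gamma_n$ into the principal square root and then passing to the limit using $\tilde\gamma_n^2(w_n-1)\to\tfrac18\discr\calR\notin(-\infty,0]$ and continuity of the principal branch, so the correct sign comes out automatically; you instead square, observe $(2\tilde\gamma_nS_n)^2=\discr R_n$, and exclude the wrong sign by a uniform phase count. Both are valid; the paper's device is shorter, while yours makes explicit where \eqref{eq:76} is genuinely used (to keep $w_n$ off the real axis so that the factorization $\tilde\varepsilon\exp f(w_n)=\sqrt{\tilde\varepsilon w_n-1}\sqrt{\tilde\varepsilon w_n+1}$ and the argument bounds apply) and why compactness of $\discr\calR(K)$ in $\CC\setminus(-\infty,0]$ matters (uniform separation of the two sign choices). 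If you write this up, state explicitly that for $n$ large $\discr R_n$ lies in a fixed compact subset of $\CC\setminus(-\infty,0]$, so that $\sqrt{\discr R_n}$ denotes the unambiguous principal root and converges uniformly, before invoking the dichotomy $P_n=\pm\sqrt{\discr R_n}$; you gesture at this but it should precede that step.
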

\begin{proof}
	In view of \eqref{eq:73} it is enough to show that
	\begin{equation}
		\label{eq:125}
		\lim_{n \to \infty}
		\tilde{\gamma}_n(\lambda^+_n(z) - \tilde{\varepsilon}) = \frac{\tr \calR + \tilde{\varepsilon} \sqrt{\discr \calR}}{2}
	\end{equation}
	and
	\begin{equation}
		\label{eq:126}
		\lim_{n \to \infty}
		\tilde{\gamma}_n(\lambda^-_n(z) - \tilde{\varepsilon}) = \frac{\tr \calR - \tilde{\varepsilon} \sqrt{\discr \calR}}{2}.
	\end{equation}
	We shall prove \eqref{eq:125} only; the proof of the assertion \eqref{eq:126} is analogous. By \eqref{eq:63} we have
	\begin{equation}
		\label{eq:77}
		\tilde{\gamma}_n(\lambda^+_n(z) - \tilde{\varepsilon})
		= 
		\tilde{\gamma}_n \big( \sqrt{\det \tilde{Y}_n(z)} \xi_+(w_n) - \tilde{\varepsilon} \big)
	\end{equation}
	where
	\[
		w_n = \frac{\tr \tilde{Y}_n(z)}{2 \sqrt{\det \tilde{Y}_n(z)}}.
	\]
	Let us notice that
	\[
		\tr \tilde{Y}_n = \tilde{\gamma}_n^{-1} \tr R_n + 2 \tilde{\varepsilon},
	\]
	thus
	\[
		\lim_{n \to \infty} w_n = \tilde{\varepsilon}.
	\]
	We write
	\begin{equation} 
		\label{eq:78}
		\lambda^+_n(z) - \tilde{\varepsilon} =
		\big( \sqrt{\det \tilde{Y}_n(z)} - 1 \big) \xi_+(w_n) 
		+
		(\xi_+(w_n) - w_n)
		+ 
		(w_n - \tilde{\varepsilon}),
	\end{equation}
	we need to analyze the three terms on the right-hand side of \eqref{eq:78} separately. Let us begin with the first term.
	Observe that
	\[
		\det \tilde{Y}_n = 
		\tilde{\gamma}_n^{-2} \det R_n + \tilde{\varepsilon} \tilde{\gamma}_n^{-1} \tr R_n + 1,
	\]
	thus
	\begin{align}
		\nonumber
		\lim_{n \to \infty} \tilde{\gamma}_n \big(\sqrt{\det \tilde{Y}_n} - 1\big) 
		&=\lim_{n \to \infty} \tilde{\gamma}_n \big(\det \tilde{Y}_n - 1\big) \big(\sqrt{\det \tilde{Y}_n} + 1\big)^{-1} \\
		\label{eq:79}
		&=\frac{\tilde{\varepsilon}}{2} \tr \calR.
	\end{align}
	Next, to analyze the third term let us recall that
	\[
		\tilde{\gamma}_n^2 (w_n^2 - 1)
		=
		\tilde{\gamma}_n^2
		\frac{\discr \tilde{Y}_n}{4 \det \tilde{Y}_n} \\
		=
		\frac{\discr R_n}{4 \det \tilde{Y}_n}.
	\]
	Thus
	\begin{equation} \label{eq:207}
		\lim_{n \to \infty}
		\tilde{\gamma}_n^2 (w_n^2 - 1)
		=
		\tfrac{1}{4} \discr \calR(z),
	\end{equation}
	and consequently,
	\begin{equation} \label{eq:80}
		\lim_{n \to \infty} \tilde{\gamma}_n^2 (w_n - \tilde{\varepsilon})
		=
		\lim_{n \to \infty} \tilde{\gamma}_n^2 (w_n^2 - 1) \frac{1}{w_n + \tilde{\varepsilon}}
		=
		\frac{\tilde{\varepsilon}}{8} \discr \calR(z).
	\end{equation}
	In particular,
	\begin{equation}
		\label{eq:81}
		\lim_{n \to \infty} \tilde{\gamma}_n (w_n - \tilde{\varepsilon}) =
		\lim_{n \to \infty} \frac{1}{\tilde{\gamma}_n} \tilde{\gamma}_n^2 (w_n - \tilde{\varepsilon}) = 0.
	\end{equation}
	We claim the following holds true.
	\begin{claim}	
		\label{clm:1}
		We have 
		\[
			\lim_{n \to \infty} 
			\tilde{\gamma}_n
			\bigg( \xi_+ \Big( \frac{\tr \tilde{Y}_n}{2 \sqrt{\det \tilde{Y}_n}} \Big) 
			- \frac{\tr \tilde{Y}_n}{2 \sqrt{\det \tilde{Y}_n}} \bigg)
			=
			\tilde{\varepsilon}
			\frac{\sqrt{\discr \calR}}{2}
		\]
		uniformly on $K$.
	\end{claim}	
	First, let us see how one can use Claim \ref{clm:1} to complete the proof of the theorem. We have
	\[
		\lim_{n \to \infty} \tilde{\gamma}_n (\xi_+(w_n) - w_n) = \tilde{\varepsilon} \frac{\sqrt{\discr \calR(z)}}{2}.
	\]
	Now, by putting \eqref{eq:79}, \eqref{eq:81} and \eqref{eq:79} into \eqref{eq:78} we get
	\[
		\lim_{n \to \infty} \tilde{\gamma}_n (\lambda^+_n(z) - \tilde{\varepsilon}) =
		\frac{\tilde{\varepsilon}}{2} \tr \calR(z) \tilde{\varepsilon} + \tilde{\varepsilon} \frac{\sqrt{\discr \calR(z)}}{2}
		=
		\frac{\tr \calR(z) + \tilde{\varepsilon} \sqrt{\discr \calR(z)}}{2},
	\]
	which in view of \eqref{eq:77} implies the conclusion.

	It remains to prove our claim. Let $z \in K$ and suppose that $\tilde{\varepsilon} = 1$. Then by \eqref{eq:76}, 
	$w_n \in \CC \setminus \RR$, and
	\begin{align*}
		\tilde{\gamma}_n \big(\xi_+(w_n) - w_n\big) 
		&= \tilde{\gamma}_n \sqrt{w_n-1}\sqrt{w_n+1} \\
		&= \sqrt{\tilde{\gamma}_n^2(w_n - 1)}{ \sqrt{w_n + 1}},
	\end{align*}
	thus the claim follows from \eqref{eq:80}. If $\tilde{\varepsilon} = -1$, by \eqref{eq:57}, we get
	\begin{align*}
		\lim_{n \to \infty} \tilde{\gamma}_n \exp f(w_n)
		&= - \lim_{n \to \infty} \tilde{\gamma}_n \exp f(-w_n) \\
		&= -\sqrt{\discr \calR(z)}.
	\end{align*}
	This completes the proof of Claim \ref{clm:1} and the theorem follows.
\end{proof}

Thanks to Proposition \ref{prop:4}, for each $z \in K$ the eigenvalues of $\calR(z)$ are
\[
	\zeta^+(z) = \frac{\tr \calR(z) + \tilde{\varepsilon} \sqrt{\discr \calR(z)}}{2}
	\quad\text{and}\quad
	\zeta^-(z) = \frac{\tr \calR(z) - \tilde{\varepsilon} \sqrt{\discr \calR(z)}}{2}.
\]

\begin{proposition} 
	\label{prop:9}
	Suppose that the hypotheses of Proposition~\ref{prop:3} are satisfied. Let $K$ be a compact subset of
	\[
		\big\{z \in U : \discr \calR \in \CC \setminus (-\infty, 0] \text{ and }
		\Re (\discr \calR)'(z) \neq 0 \big\}.
	\]
	Then
	\begin{equation}
		\label{eq:112}
		\lim_{j \to \infty} \lambda_j^+(z) = \tilde{\varepsilon}
	\end{equation}
	uniformly with respect to $z \in K$. Moreover, if
	\begin{equation}
		\label{eq:67}
		\lim_{n \to \infty} \tilde{\gamma}_n^2 (\det \tilde{Y}_n)' = 0,
	\end{equation}
	uniformly on $K$, then
	\begin{equation}
		\label{eq:113}
		\lim_{j \to \infty}
		\tilde{\gamma}_j
		\frac{(\lambda_j^+)'(z)}{\lambda_j^+(z)}  =
		\frac{1}{4} \frac{(\discr \calR)'(z)}{\sqrt{\discr \calR(z)}}
	\end{equation}
	uniformly with respect to $z \in K$.
\end{proposition}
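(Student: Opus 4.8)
The plan is to work from the explicit diagonalization \eqref{eq:63}, which gives $\lambda_j^+ = \sqrt{\det \tilde{Y}_j}\,\xi_+(w_j)$ with $w_j = \tr \tilde{Y}_j / \big(2\sqrt{\det \tilde{Y}_j}\big)$. Since $\tilde{\gamma}_j \to \infty$ and $R_j \to \calR$ locally uniformly, we have $\tr \tilde{Y}_j = \tilde{\gamma}_j^{-1}\tr R_j + 2\tilde{\varepsilon} \to 2\tilde{\varepsilon}$ and $\det \tilde{Y}_j \to 1$ uniformly on $K$, whence $w_j \to \tilde{\varepsilon}$ uniformly on $K$. Because $\xi_+$ is continuous on $\CC$ by Remark~\ref{rem:3}, with $\xi_+(\tilde{\varepsilon}) = \tilde{\varepsilon}$ (as $\xi^2 - 2\tilde{\varepsilon}\xi + 1 = (\xi - \tilde{\varepsilon})^2$), and $\sqrt{\det \tilde{Y}_j} \to 1$, we obtain $\lambda_j^+ \to \tilde{\varepsilon}$ uniformly, which is \eqref{eq:112}. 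Here I would first shrink $U$ to an open neighborhood of the compact $K$ on which both $\discr \calR \notin (-\infty,0]$ and $\Re(\discr \calR)' \neq 0$ (these are open conditions satisfied on $K$), so that the conclusions of Propositions~\ref{prop:3} and~\ref{prop:4} remain available near $K$.

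For \eqref{eq:113} I would take the logarithmic derivative of $\lambda_j^+ = \sqrt{\det \tilde{Y}_j}\,\xi_+(w_j)$, which are holomorphic and non-vanishing on $\intr(K)$, to get
\[
	\tilde{\gamma}_j \frac{(\lambda_j^+)'}{\lambda_j^+} = \frac{\tilde{\gamma}_j}{2}\frac{(\det \tilde{Y}_j)'}{\det \tilde{Y}_j} + \tilde{\gamma}_j \frac{\xi_+'(w_j)}{\xi_+(w_j)}\, w_j'.
\]
By the extra hypothesis \eqref{eq:67} one has $\tilde{\gamma}_j (\det \tilde{Y}_j)' = \tilde{\gamma}_j^{-1}\cdot \tilde{\gamma}_j^2 (\det \tilde{Y}_j)' \to 0$ uniformly, and $\det \tilde{Y}_j \to 1$, so the first term vanishes uniformly on $K$. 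Using \eqref{eq:60} together with $\xi_+(w) - w = \exp f(w)$ from \eqref{eq:59}, the logarithmic derivative of $\xi_+$ simplifies to $\xi_+'(w)/\xi_+(w) = 1/(\xi_+(w) - w) = 1/\exp f(w)$, so the second term equals
\[
	\frac{\tilde{\gamma}_j^2\, w_j'}{\tilde{\gamma}_j \exp f(w_j)}.
\]

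Now I would evaluate numerator and denominator separately. The numerator $\tilde{\gamma}_j^2 w_j' \to \tfrac{\tilde{\varepsilon}}{8}(\discr \calR)'$ uniformly on $K$ is exactly \eqref{eq:74} of Proposition~\ref{prop:3}, whose hypothesis \eqref{eq:72a} is the assumed \eqref{eq:67}. For the denominator, $\big(\tilde{\gamma}_j \exp f(w_j)\big)^2 = \tilde{\gamma}_j^2(w_j^2 - 1) \to \tfrac14 \discr \calR$ uniformly by \eqref{eq:207}, and the correct branch of the square root is fixed exactly as in Claim~\ref{clm:1}, giving $\tilde{\gamma}_j \exp f(w_j) = \tilde{\gamma}_j(\xi_+(w_j) - w_j) \to \tilde{\varepsilon}\tfrac{\sqrt{\discr \calR}}{2}$; this limit is non-zero because $\discr \calR \in \CC \setminus (-\infty,0]$ on $K$. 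Dividing, the second term converges uniformly to
\[
	\frac{\tilde{\varepsilon}(\discr \calR)'/8}{\tilde{\varepsilon}\sqrt{\discr \calR}/2} = \frac14 \frac{(\discr \calR)'}{\sqrt{\discr \calR}},
\]
which is \eqref{eq:113}.

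The main obstacle is the indeterminate $0/0$ form of the second term, where $w_j' \to 0$ at rate $\tilde{\gamma}_j^{-2}$ and $\exp f(w_j) \to 0$ at rate $\tilde{\gamma}_j^{-1}$: one must extract these exact rates and, above all, identify the correct branch of $\sqrt{\discr \calR}$ uniformly over all of $K$. The sign is pinned by \eqref{eq:207} together with the mapping properties \eqref{eq:56} and \eqref{eq:61} of $\exp f$ and $\xi_+$. The one delicate point is that the hypothesis \eqref{eq:76} of Proposition~\ref{prop:4} fails at real points of $K$; there, however, $w_j$ is real, and since $\tilde{\gamma}_j^2(w_j^2 - 1) \to \tfrac14 \discr \calR > 0$ one has $|w_j| > 1$ for large $j$, so $w_j$ approaches $\tilde{\varepsilon}$ from inside $\CC \setminus [-1,1]$ and the square-root computation of Claim~\ref{clm:1} (via \eqref{eq:80}, or \eqref{eq:57} when $\tilde{\varepsilon} = -1$) applies verbatim. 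Reconciling the real and non-real points so that the convergence is genuinely uniform on $K$ is the step requiring the most care.
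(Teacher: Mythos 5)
Your argument is correct and follows essentially the same route as the paper: the same logarithmic-derivative split $\tilde{\gamma}_j(\lambda_j^+)'/\lambda_j^+ = \tfrac{\tilde{\gamma}_j}{2}(\det \tilde{Y}_j)'/\det \tilde{Y}_j + \tilde{\gamma}_j^2 w_j'/\big(\tilde{\gamma}_j(\xi_+(w_j)-w_j)\big)$, with the numerator handled by \eqref{eq:74} and the denominator by Claim~\ref{clm:1}, and the first term killed by \eqref{eq:67}. Your derivation of \eqref{eq:112} via continuity of $\xi_+$ at $\pm 1$ is a slightly more direct variant of the paper's appeal to \eqref{eq:125}, and your extra care about possible real points of $K$ is harmless (in all applications $K$ is disjoint from $\RR$).
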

\begin{proof}
	By Proposition \ref{prop:3}, the condition \eqref{eq:76} is satisfied. Hence, the equation \eqref{eq:112} immediately 
	follows from \eqref{eq:73} and \eqref{eq:125}.

	To prove the second assertion, let us recall that by \eqref{eq:63} we have
	\[
		\lambda_j^+(z) = \sqrt{\det \tilde{Y}_j(z)} \xi_+ \big( w_j(z) \big)
	\]
	where
	\[
		w_j(z) = \frac{\tr \tilde{Y}_j(z)}{2 \sqrt{\det \tilde{Y}_j(z)}}.
	\]
	Hence,
	\begin{align}
		\nonumber
		\frac{(\lambda_j^+)'(z)}{\lambda_j^+(z)} 
		&=
		\frac{(\det \tilde{Y}_j)'(z)}{2 \det \tilde{Y}_j(z)} +
		\frac{\xi_+' \big( w_j(z) \big)}{\xi_+ \big( w_j(z) \big)} w_j'(z) \\
		\label{eq:114}
		&=
		\frac{(\det \tilde{Y}_j)'(z)}{2 \det \tilde{Y}_j(z)} +
		\frac{w_j'(z)}{\xi_+ \big( w_j(z) \big) - w_j(z)}
	\end{align}
	where in the last equality we used \eqref{eq:60}. We need to analyze the two terms on the right-hand side of \eqref{eq:114}.
	The first term is easy, namely by \eqref{eq:67}
	\[
		\lim_{j \to \infty} \tilde{\gamma}_j (\det \tilde{Y}_j)' =
		\lim_{j \to \infty} \frac{1}{\tilde{\gamma}_j} \tilde{\gamma}_j^2 (\det \tilde{Y}_j)' = 0
	\]
	uniformly on $K$. Therefore, in view of \eqref{eq:114}
	\begin{equation} 
		\label{eq:115}
		\lim_{j \to \infty}
		\tilde{\gamma}_j
		\frac{(\lambda_j^+)'(z)}{\lambda_j^+(z)} 
		=
		\lim_{j \to \infty}
		\frac{\tilde{\gamma}_j^2}{\tilde{\gamma}_j}
		\frac{w_j'(z)}{\xi_+ \big( w_j(z) \big) - w_j(z)}.
	\end{equation}
	By \eqref{eq:74} we get
	\begin{equation}
		\label{eq:116}
		\lim_{j \to \infty}
		\tilde{\gamma}_j^2 w_j'(z) =
		\frac{\tilde{\varepsilon}}{8} (\discr \calR)'(z).
	\end{equation}
	By Claim~\ref{clm:1} we have
	\begin{equation} 
		\label{eq:117}
		\lim_{j \to \infty}
		\tilde{\gamma}_j
		\Big( \xi_+ \big( w_j(z) \big) - w_j(z) \Big) =
		\tilde{\varepsilon} \frac{\sqrt{\discr \calR(z)}}{2}.
	\end{equation}
	By putting \eqref{eq:116} and \eqref{eq:117} into \eqref{eq:115} leads to \eqref{eq:113}.
\end{proof}

\subsection{Applications to periodic modulations} \label{sec:6.3}
In this section we separately study diagonalization procedure for $N$-transfer matrix of $N$-periodically
modulated Jacobi parameters depending on the form of the limit matrix $\frakX_0$.
We start with the following observation.
\begin{lemma}
	\label{lem:6}
	Suppose that $(a_n : n \in \NN_0)$ and $(b_n : n \in \NN_0)$ are $N$-periodically modulated Jacobi parameters
	such that $\tr \frakX_0'(0) \neq 0$.
	Let $K$ be a compact subset of $\CC_\varsigma$ with nonempty interior where $\varsigma = \sign{\tr \frakX_0'(0)}$. 
	Then there is $M \geq 1$ such that for all $n \geq M$ and $z \in K$,
	\[
		\Im \bigg(\frac{\tr X_n(z)}{2 \sqrt{\det X_n(z)}} \bigg) 
		> 0.
	\]
\end{lemma}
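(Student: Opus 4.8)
The plan is to reduce the claim to a statement about the sign of a single real quantity and then pin that sign down through the derivative $\tr\frakX_0'(0)$. The first observation is that $\det X_n(z)$ does not depend on $z$: since $\det B_j(z) = a_{j-1}/a_j$, the product telescopes to $\det X_n = a_{n-1}/a_{n+N-1} > 0$. Hence $w_n(z) := \frac{\tr X_n(z)}{2\sqrt{\det X_n(z)}}$ is a polynomial in $z$ (in particular entire), with $\sqrt{\det X_n}$ a fixed positive real, and $w_n$ is real-valued on $\RR$. Because $w_n$ is entire and real on the line, for $z = x+iy$ the fundamental theorem of calculus along the vertical segment gives
\[
	\Im w_n(x+iy) = \Im w_n(x) + \int_0^y \Re w_n'(x+it)\ud t = \int_0^y \Re w_n'(x+it)\ud t .
\]
Since $\Re w_n'(z') = \Re \tr X_n'(z')/(2\sqrt{\det X_n})$ and $\sqrt{\det X_n}>0$, it suffices to show that $\Re \tr X_n'$ has the sign $\varsigma$ for all large $n$ and all $z'$ on the relevant segments: when $\varsigma = 1$ one has $y>0$ on $K\subset\CC_+$ and a positive integrand, while for $\varsigma=-1$ one has $y<0$ and a negative integrand, so in both cases the integral is strictly positive.

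The heart of the argument is the asymptotics of $\tr X_n'$. Differentiating $X_n = B_{n+N-1}\cdots B_n$ by the product rule and using $B_j'(z) = \frac{1}{a_j}E_{22}$, where $E_{22}$ denotes the matrix with a single nonzero entry $1$ in the lower-right corner, yields
\[
	a_n \tr X_n'(z) = \sum_{k=n}^{n+N-1} \frac{a_n}{a_k}\,
	\tr\bigl(B_{n+N-1}(z)\cdots B_{k+1}(z)\,E_{22}\,B_{k-1}(z)\cdots B_n(z)\bigr).
\]
For $n = jN+i$, the modulation hypotheses give $B_m(z)\to\frakB_m(0)$ and $a_n/a_k\to\alpha_i/\alpha_{k-jN}$, all locally uniformly in $z$ and with uniformly bounded factors, so this finite sum converges locally uniformly to $\alpha_i\tr\frakX_i'(0)$ (the limit is identified using $\frakB_j'(0)=\frac{1}{\alpha_j}E_{22}$). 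Finally, the cyclic identity $\frakX_{i+1}(z) = \frakB_i(z)\frakX_i(z)\frakB_i(z)^{-1}$, which follows from the $N$-periodicity of $\frakB$, shows $\tr\frakX_i(z) = \tr\frakX_0(z)$ for every $i$ as polynomials, whence $\tr\frakX_i'(0) = \tr\frakX_0'(0)$. Thus
\[
	\lim_{\substack{n\to\infty\\ n\equiv i \!\!\!\pmod N}} a_n \tr X_n'(z) = \alpha_i \tr\frakX_0'(0),
\]
a real number of sign $\varsigma$, locally uniformly in $z$.

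To finish, let $\tilde K = \{\Re z + it\,\Im z : z\in K,\ t\in[0,1]\}$ be the compact union of the vertical segments joining each $z\in K$ to $\Re z$; it is contained in $\cl{\CC_\varsigma}$. Applying the previous limit on $\tilde K$ over the finitely many residues $i\in\{0,\dots,N-1\}$, and using that each limit $\alpha_i\tr\frakX_0'(0)$ is real, nonzero and of sign $\varsigma$, we obtain $M\geq 1$ and $\delta>0$ such that $\Re\bigl(a_n\tr X_n'(z')\bigr)$ has sign $\varsigma$ and modulus at least $\delta$ for all $n\geq M$ and $z'\in\tilde K$. Dividing by $a_n>0$ shows $\Re\tr X_n'$, and hence $\Re w_n'$, has sign $\varsigma$ throughout $\tilde K$, so the integral computation of the first paragraph yields $\Im w_n(z) > 0$ for all $z\in K$ and $n\geq M$. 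I expect the main obstacle to be the derivative asymptotics of the middle paragraph: one must justify the locally uniform passage to the limit inside the finite product, correctly identify the limit with $\alpha_i\tr\frakX_i'(0)$, and then exploit the conjugacy so that a single sign $\varsigma$ governs all residue classes simultaneously.
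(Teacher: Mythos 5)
Your proof is correct and follows essentially the same route as the paper's: both reduce the sign of $\Im \tr X_n(z)$ to the sign of $\Re \tr X_n'$ along the vertical segment joining $z$ to $\Re z$ (you via the fundamental theorem of calculus, the paper via the mean value theorem applied to $t \mapsto \Im \tr X_n(x+it)$), and both then conclude from the locally uniform limit of the normalized derivative $a_n \tr X_n'(z) \to \alpha_i \tr \frakX_0'(0)$, a nonzero real constant. The only difference is that the paper imports this derivative asymptotic from an external corollary of an earlier work, whereas you derive it from scratch via the product rule, $B_j' = \tfrac{1}{a_j}E_{22}$, and the conjugacy $\frakX_{i+1} = \frakB_i \frakX_i \frakB_i^{-1}$ forcing $\tr \frakX_i'(0) = \tr \frakX_0'(0)$ for every residue class — a correct, self-contained substitute for the citation.
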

\begin{proof}
	Let $z = x+iy \in K$. For a fixed $k \in \{0, 1, \ldots, N-1\}$ we set
	\[
		Y_j = X_{jN+k}, \quad j \in \NN_0.
	\]
	By the mean value theorem we have
	\[
		\tr Y_n(x+iy) - \tr Y_n(x) = iy \tr Y_n'(x+i\xi)
	\]
	where $\xi$ depends on $x$ and $n$, and belongs to $(0, y)$. Since $\tr Y_n(x)$ is real, 
	\begin{equation}
		\label{eq:88}
		\Im \big( \tr Y_n(x+iy) \big) = y \Re \big( \tr Y_n'(x+i\xi) \big).
	\end{equation}
	Now, by \cite[Corollary 3.10]{ChristoffelI} we have
	\begin{equation}
		\label{eq:89}
		\lim_{n \to \infty} \frac{a_{nN}}{\alpha_0} \tr Y_n'(z) = 
		\tr \frakX'_0(0) \in \RR \setminus \{0\}
	\end{equation}
	locally uniformly with respect to $z \in \CC$. Hence there is $M \geq 1$ such that for all $n \geq M$, 
	\[
		\sign{\Re (\tr Y_n'(x+i \xi))} = \sign{\tr \frakX'_0(0)},
	\]
	and the lemma follows.
\end{proof}

\subsubsection{Case~\ref{eq:PI}: $|\tr \frakX_0(0)| < 2$}
\label{sec:6.3.1}
We set
\[
	Y_n = X_{nN}, \quad n \geq 0.
\]
Thanks to Lemma \ref{lem:6}, if $K$ is a compact subset of $\CC_\varsigma$ with nonempty interior, where
$\varsigma = \sign{\tr \frakX_0'(0)}$, then the functions $\lambda_n^+$ and $\lambda_n^-$ are continuous on $K$ and holomorphic 
on $\intr(K)$. In the following proposition we compute the limit as $n$ tends to infinity.
\begin{proposition} 
	\label{prop:6}
	Suppose that $(a_n : n \in \NN_0)$ and $(b_n : n \in \NN_0)$ are $N$-periodically modulated Jacobi parameters such that
	$\tr \frakX_0(0) \in (-2,2)$. Let $K$ be a compact subset of $\CC_\varsigma$ with nonempty interior where
	$\varsigma = \sign{\tr \frakX_0'(0)}$. Then
	\begin{equation}
		\label{eq:90}
		\lim_{n \to \infty}
		\lambda_n^+(z)
		=
		\frac{\tr \frakX_0(0) + i \sqrt{-\discr \frakX_0(0)}}{2}
	\end{equation}
	uniformly with respect to $z \in K$. Moreover,
	\begin{equation}
		\label{eq:91}
		\lim_{n \to \infty}
	   	\frac{a_{nN}}{\alpha_0} \frac{(\lambda_n^+)'(z)}{\lambda_n^+(z)}
		=
		\frac{\tr \frakX_0'(0)}{i \sqrt{-\discr \frakX_0(0)}}
	\end{equation}
	locally uniformly with respect to $z \in \intr(K)$.
\end{proposition}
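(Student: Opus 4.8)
The plan is to work directly from the explicit eigenvalue formula \eqref{eq:63} rather than through the specialized machinery of Propositions~\ref{prop:3}--\ref{prop:9}, since here the limit matrix $\frakX_0(0)$ is \emph{not} of the form $\tilde\varepsilon \Id$. First I would record the relevant limits for $Y_n = X_{nN}$. Because $\det B_j = \tfrac{a_{j-1}}{a_j}$, the determinant $\det Y_n = \prod_{j=nN}^{nN+N-1} \tfrac{a_{j-1}}{a_j} = \tfrac{a_{nN-1}}{a_{nN+N-1}}$ is a positive constant in $z$, and it tends to $\det \frakX_0(0) = \prod_{j=0}^{N-1} \tfrac{\alpha_{j-1}}{\alpha_j} = 1$ by $N$-periodicity of $(\alpha_n)$. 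Since $Y_n \to \frakX_0(0)$ locally uniformly (Section~\ref{sec:2.5}), also $\tr Y_n \to \tr \frakX_0(0)$ and $\discr Y_n \to \discr \frakX_0(0) = (\tr \frakX_0(0))^2 - 4 < 0$, so for large $n$ the discriminant is nonzero on $K$ and \eqref{eq:63} applies. Setting $w_n(z) = \tfrac{\tr Y_n(z)}{2\sqrt{\det Y_n(z)}}$, we get $w_n \to w_\infty := \tfrac{\tr \frakX_0(0)}{2} \in (-1,1)$ uniformly on $K$.

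For \eqref{eq:90}, the key point is to identify \emph{which} root the function $\xi_+$ selects in the limit. By Lemma~\ref{lem:6}, there is $M$ such that $\Im w_n(z) > 0$ for $z \in K$ and $n \geq M$, i.e. $w_n(z) \in \CC_+$. Since $\xi_+$ is continuous on $\overline{\CC_+}$ (Remark~\ref{rem:3}) and $w_n \to w_\infty$ uniformly with values in $\CC_+$, we obtain $\xi_+(w_n) \to \xi_+(w_\infty)$ uniformly on $K$, where $\xi_+(w_\infty)$ is the root of $\xi^2 - 2 w_\infty \xi + 1 = 0$ lying in $\overline{\CC_+}$, namely $w_\infty + i\sqrt{1-w_\infty^2}$. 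Using $1 - w_\infty^2 = -\tfrac14 \discr \frakX_0(0)$, this equals $\tfrac{\tr \frakX_0(0) + i\sqrt{-\discr \frakX_0(0)}}{2}$. Since $\lambda_n^+ = \sqrt{\det Y_n}\,\xi_+(w_n)$ and $\sqrt{\det Y_n} \to 1$, \eqref{eq:90} follows uniformly on $K$.

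For \eqref{eq:91}, I would use that $\det Y_n$ is constant in $z$, so $(\det Y_n)' \equiv 0$ and the logarithmic derivative collapses, by \eqref{eq:60}, to
\[
	\frac{(\lambda_n^+)'(z)}{\lambda_n^+(z)} = \frac{w_n'(z)}{\xi_+(w_n(z)) - w_n(z)}.
\]
For the numerator, $w_n' = \tfrac{(\tr Y_n)'}{2\sqrt{\det Y_n}}$, and \eqref{eq:89} gives $\tfrac{a_{nN}}{\alpha_0}(\tr Y_n)' \to \tr \frakX_0'(0)$ locally uniformly, hence $\tfrac{a_{nN}}{\alpha_0} w_n' \to \tfrac12 \tr \frakX_0'(0)$. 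For the denominator, the analysis of Part~1 gives $\xi_+(w_n) - w_n \to i\sqrt{1-w_\infty^2} = \tfrac{i}{2}\sqrt{-\discr \frakX_0(0)} \neq 0$ uniformly. Dividing yields \eqref{eq:91} locally uniformly on $\intr(K)$, matching the locally uniform convergence in \eqref{eq:89}; note that $\lambda_n^+$ is holomorphic on $\intr(K)$ because $w_n$ maps there into $\CC_+$, where $\xi_+$ is holomorphic.

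The main obstacle is the ambiguity in Part~1: because $\frakX_0(0)$ has two eigenvalues of equal modulus $1$ (as $\det = 1$ and $|\tr| < 2$), convergence of $Y_n$ alone cannot distinguish $\lambda_n^+$ from $\lambda_n^-$; one must know that $w_n$ approaches the cut $(-1,1)$ from the upper half-plane. This is exactly supplied by Lemma~\ref{lem:6} together with the jump behavior of $\xi_+$ across $(-1,1)$ recorded in Remark~\ref{rem:3}. Once the correct side is fixed, the remaining steps are direct computations with \eqref{eq:63}, \eqref{eq:60} and the rate \eqref{eq:89}; in particular the $0 \cdot \infty$ indeterminacy in \eqref{eq:91}, where $(\lambda_n^+)' \to 0$ while $a_{nN} \to \infty$, is resolved cleanly by the explicit quotient above.
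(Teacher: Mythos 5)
Your proof is correct and follows essentially the same route as the paper's: the paper likewise uses $\det Y_n = a_{nN-1}/a_{nN+N-1} \to 1$ (constant in $z$), Lemma~\ref{lem:6} together with the mapping property $\xi_+[\CC_+]\subset\CC_+$ from Remark~\ref{rem:3} to pin down which root of the characteristic equation of $\frakX_0(0)$ is selected, and then the identity $(\lambda_n^+)'/\lambda_n^+ = w_n'/(\xi_+(w_n)-w_n)$ from \eqref{eq:60} combined with \eqref{eq:89} and the denominator limit $\tfrac{i}{2}\sqrt{-\discr\frakX_0(0)}$ to get \eqref{eq:91}. The only cosmetic difference is that you identify the limiting root via continuity of the extended $\xi_+$ on $\CC_+\cup(-1,1)$, while the paper first records $\Im\lambda_n^+>0$ and then argues the limit must be the root with positive imaginary part; these are the same argument.
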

\begin{proof}
	Observe that, by Lemma \ref{lem:6}, there is $M > 0$ such that for all $n \geq M$ and $z \in K$,
	\begin{equation}
		\label{eq:92}
		\Im \big(\lambda_n^+(z)\big) > 0.
	\end{equation}
	Indeed, by \eqref{eq:63} we have
	\begin{equation}
		\label{eq:93}
		\lambda_n^+(z) = \sqrt{\det Y_n(z)} \xi_+ \big( w_n(z) \big),
	\end{equation}
	where
	\[
		w_n(z) = \frac{\tr Y_n(z)}{2 \sqrt{\det Y_n(z)}}.
	\]
	Since
	\begin{equation}
		\label{eq:94}
		\det Y_n = \frac{a_{nN-1}}{a_{nN+N-1}} > 0,
	\end{equation}
	we get
	\[
		\Im \big( \lambda_n^+(z) \big) = 
		\sqrt{\det Y_n(z)} 
		\Im 
		\Big( 
			\xi_+ \big( w_n(z) \big) 
		\Big).
	\]
	Hence, by Lemma \ref{lem:6} and \eqref{eq:61} we conclude \eqref{eq:92}.
	
	Next, let us notice that the limit
	\begin{equation}
		\label{eq:95}
		\lim_{n \to \infty} \lambda_n^+(z) = 
		\lim_{n \to \infty}
		\xi_+ \big( w_n(z) \big) 
	\end{equation}
	exists and satisfies 
	\[
		\lambda^2 - \big( \tr \frakX_0(0) \big) \lambda + 1 = 0
	\]
	which is the characteristic equation of $\frakX_0(0)$. The equation has two solutions
	\[
		\frac{\tr \frakX_0(0) - i \sqrt{-\discr \frakX_0(0)}}{2}
		\quad\text{and}\quad
		\frac{\tr \frakX_0(0) + i \sqrt{-\discr \frakX_0(0)}}{2}.
	\]
	In view of \eqref{eq:92}, we must have
	\[
		\lim_{n \to \infty} \lambda_n^+(z) = 
		\frac{\tr \frakX_0(0) + i \sqrt{-\discr \frakX_0(0)}}{2}
	\]
	as claimed in \eqref{eq:90}.
	
	Let us proceed to the proof of \eqref{eq:91}. By \eqref{eq:93}, \eqref{eq:94} and \eqref{eq:60} we get
	\begin{equation}
		\label{eq:96}
		\frac{(\lambda_n^+)'(z)}{\lambda_n^+(z)} =
		\frac{\xi_+' \big( w_n(z) \big)}{\xi_+ \big( w_n(z) \big)} w_n'(z) =
		\frac{w_n'(z)}{\xi_+ \big( w_n(z) \big) - w_n(z)}.
	\end{equation}
	Now, by \eqref{eq:95} and \eqref{eq:90} we have
	\begin{equation}
		\label{eq:97}
		\lim_{n \to \infty} 
		\Big( \xi_+ \big( w_n(z) \big) - w_n(z) \Big) =
		\frac{i}{2} \sqrt{-\discr \frakX_0(0)}.
	\end{equation}
	Next, by \cite[Corollary 3.10]{ChristoffelI}
	\[
		\lim_{n \to \infty} \frac{a_{nN}}{\alpha_0} w_n'(z) =
		\frac{\tr \frakX'_0(0)}{2}.
	\]
	Thus, by putting it and \eqref{eq:97} into \eqref{eq:96} we obtain \eqref{eq:91}.
\end{proof}

\begin{proposition} 
	\label{prop:7}
	Suppose that $(a_n : n \in \NN_0)$ and $(b_n : n \in \NN_0)$ are $N$-periodically modulated Jacobi parameters such that 
	$\tr \frakX_0(0) \in (-2,2)$. If the Carleman's condition is satisfied, then for any compact $K \subset \CC_\varsigma$
	where $\varsigma = \sign{\tr \frakX_0'(0)}$, there exists $M \geq 1$ such that
	\[
		\inf_{z \in K}
		\prod_{j \geq M}		
		\bigg| \frac{\lambda_{j}^+(z)}{\lambda_{j}^-(z)}\bigg| = \infty.
	\]
\end{proposition}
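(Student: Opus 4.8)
The plan is to reduce the claim to the divergence hypothesis \eqref{eq:64} of Proposition~\ref{prop:2} and then to verify that divergence with the help of the quantitative estimate \eqref{eq:196} and Carleman's condition \eqref{eq:1}. Throughout set $Y_n = X_{nN}$ and
\[
	w_n(z) = \frac{\tr Y_n(z)}{2 \sqrt{\det Y_n(z)}}.
\]
Since $a_n \to \infty$, the matrices $Y_n$ converge to $\frakX_0(0)$ locally uniformly, with $\det \frakX_0(0) = 1 > 0$ and $\discr \frakX_0(0) = (\tr \frakX_0(0))^2 - 4 < 0$ because $\tr \frakX_0(0) \in (-2,2)$. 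Hence on the compact set $K$ one has $\discr Y_n(z) \neq 0$ and $\det Y_n(z) \notin (-\infty, 0]$ for all $n$ large, so the hypotheses of Proposition~\ref{prop:2} are met. By that proposition it suffices to prove
\[
	\sum_{n \geq M} \inf_{z \in K} \Big( \big|\xi_+\big(w_n(z)\big)\big| - 1 \Big) = \infty.
\]

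First I would bound each summand below by invoking \eqref{eq:196} of Proposition~\ref{prop:15}, which gives $|\xi_+(w_n(z))| - 1 \geq |\Im w_n(z)| \big/ \big(2 \sqrt{|w_n(z) - 1|\,|w_n(z)+1|}\big)$; this is legitimate because $\tr \frakX_0'(0) \neq 0$ (so that $\varsigma = \sign{\tr \frakX_0'(0)}$ is defined) and Lemma~\ref{lem:6} guarantees $w_n(z) \notin \RR$ on $K$ for $n$ large. The denominator is harmless: since $w_n \to \tfrac12 \tr \frakX_0(0) \in (-1,1)$ uniformly on $K$, the factor $\sqrt{|w_n - 1|\,|w_n+1|}$ is bounded above by a constant $D$, uniformly on $K$, for all large $n$. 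The crux is therefore a uniform lower bound on $|\Im w_n(z)|$.

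For that I would follow the mean value argument of Lemma~\ref{lem:6}. As $\det Y_n = a_{nN-1}/a_{nN+N-1} > 0$ is real by \eqref{eq:94}, one has $\Im w_n(z) = \Im(\tr Y_n(z))\big/\big(2\sqrt{\det Y_n(z)}\big)$, and for $z = x + iy$ the mean value theorem yields $\Im(\tr Y_n(x+iy)) = y\,\Re(\tr Y_n'(x + i \xi))$ for some $\xi$ between $0$ and $y$. By \cite[Corollary 3.10]{ChristoffelI}, recorded in \eqref{eq:89}, we have $\tfrac{a_{nN}}{\alpha_0}\tr Y_n'(z) \to \tr \frakX_0'(0) \in \RR \setminus \{0\}$ locally uniformly, so for $n$ large $|\Re(\tr Y_n'(x+i\xi))| \geq \tfrac{\alpha_0}{2 a_{nN}}|\tr \frakX_0'(0)|$ uniformly for the relevant $x + i\xi$. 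Because $K$ is a compact subset of the open half-plane $\CC_\varsigma$, we have $|\Im z| \geq \delta > 0$ on $K$, and $\det Y_n \to 1$; combining these gives a constant $c > 0$ with
\[
	\inf_{z \in K} |\Im w_n(z)| \geq \frac{c}{a_{nN}}
\]
for all large $n$, whence $\inf_{z \in K}(|\xi_+(w_n(z))| - 1) \geq c/(2D\, a_{nN})$.

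It remains to see that $\sum_n a_{nN}^{-1} = \infty$. The periodic modulation \eqref{eq:int:13} forces $a_{nN+i}/a_{nN} \to \alpha_i/\alpha_0 > 0$ for each $i \in \{0,\ldots,N-1\}$, so there is $c' > 0$ with $a_{nN+i} \geq c' a_{nN}$ for all large $n$ and all such $i$; consequently $a_{nN}^{-1} \geq \tfrac{c'}{N}\sum_{i=0}^{N-1} a_{nN+i}^{-1}$, and summing gives $\sum_n a_{nN}^{-1} \geq \tfrac{c'}{N}\sum_m a_m^{-1} = \infty$ by Carleman's condition \eqref{eq:1}. The displayed divergence follows, and Proposition~\ref{prop:2} yields the conclusion. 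I expect the main obstacle to be the uniform $1/a_{nN}$ lower bound on $|\Im w_n|$: one must couple the mean value theorem with the sharp rate in \eqref{eq:89} while keeping every estimate uniform in $z \in K$, which is exactly where compactness of $K$ inside the open half-plane (providing $|\Im z| \geq \delta$) is essential.
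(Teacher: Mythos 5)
Your proposal is correct and follows essentially the same route as the paper's proof: both reduce to the divergence hypothesis \eqref{eq:64} of Proposition~\ref{prop:2}, bound $|\xi_+(w_j)|-1$ from below via \eqref{eq:196}, obtain $|\Im \tr Y_j(z)| \geq c/a_{jN}$ uniformly on $K$ from the mean value identity \eqref{eq:88} and the derivative asymptotics \eqref{eq:89}, and conclude with Carleman's condition combined with \eqref{eq:int:13}. The extra details you supply (invoking Lemma~\ref{lem:6} to ensure $w_j \notin \RR$, and the telescoping argument showing $\sum_n a_{nN}^{-1} = \infty$) are exactly the steps the paper leaves implicit.
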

\begin{proof}
	The conclusion follows by Proposition \ref{prop:2}, provided we can justify \eqref{eq:64}. To do so, let us define
	\[
		w_j(z) = \frac{\tr Y_j(z)}{2 \sqrt{\det Y_j(z)}}, \quad z \in K.
	\]
	Notice that
	\[
		\lim_{j \to \infty} w_j(z) = \frac{\tr \frakX_0(0)}{2} \in (-1,1)
	\]
	uniformly with respect to $z \in K$. Thus, in view of \eqref{eq:196}, and $\det Y_j = \frac{a_{jN-1}}{a_{jN+N-1}} \to 1$, 
	it is enough to prove that for some $M \geq 1$
	\begin{equation} \label{eq:200}
		\sum_{j \geq M} \inf_{z \in K} |\Im \tr Y_j(z)| = \infty.
	\end{equation}
	To do so, we use \eqref{eq:88} and \eqref{eq:89} to conclude that there
	are $M \geq 1$ and $c > 0$ such that for all $z \in K$ and $j \geq M$,
	\[
		 | \Im \tr Y_j(z)| \geq c \frac{1}{a_{jN}}.
	\]
	By \eqref{eq:int:13}, Carleman's condition implies \eqref{eq:200}, what we needed to show.
\end{proof}

We are now in the position to introduce diagonalization procedure of $Y_n$. Since orthogonal polynomials
have only real zeros, by \eqref{eq:4} we get
\[
	[Y_n(z)]_{12} \neq 0
\]
for all $z \in \CC$ and $n \geq 1$. Let $K$ be a compact subset of $\CC_\varsigma$ with nonempty interior
where $\varsigma = \sign{\tr \frakX_0'(0)}$. Since $\discr \frakX_0(0) < 0$, there are $M \geq 1$ and $\delta > 0$ such 
that for all $n \geq M$ and $z \in K$,
\begin{equation}
	\label{eq:99}
	|\discr Y_n(z)| > \delta.
\end{equation}
Then for each $z \in K$ and $n \geq M$ the matrix $Y_n(z)$ can be diagonalized
\[
	Y_n(z) = C_n(z) D_n(z) C_n^{-1}(z)
\]
where
\begin{equation}
	\label{eq:101}
	C_n(z) =
	\begin{pmatrix}
		1 & 1 \\
		\frac{\lambda^+_n(z) - [Y_n(z)]_{11}}{[Y_n(z)]_{12}} & \frac{\lambda^-_n(z) - [Y_n(z)]_{11}}{[Y_n(z)]_{12}}
	\end{pmatrix}, \qquad
	D_n(z) =
	\begin{pmatrix}
		\lambda^+_n(z) & 0 \\
		0 & \lambda^-_n(z)
	\end{pmatrix}.
\end{equation}
To see this, we notice that \eqref{eq:99} implies that $|\lambda_n^+(z)| > |\lambda_n^-(z)|$, thus $C_n(z)$ is 
invertible. Moreover, if $\lambda$ satisfies \eqref{eq:62}, then
\[
	\big( Y_n(z) - \lambda \Id \big)
	\begin{pmatrix}
		1 \\
		\frac{\lambda - [Y_n(z)]_{11}}{[Y_n(z)]_{12}}
	\end{pmatrix}
	= 
	\begin{pmatrix}
		0 \\
		\frac{-\lambda^2 + \tr Y_n(z) \lambda - \det Y_n(z)}{[Y_n(z)]_{12}}
	\end{pmatrix}
	= 
	\begin{pmatrix}
		0 \\
		0
	\end{pmatrix}.
\]
Lastly, the mappings $C_n(z)$ and $D_n(z)$ are continuous on $K$ and holomorphic on $\intr(K)$.
\begin{lemma} 
	\label{lem:7}
	Suppose that $(a_n : n \in \NN_0)$ and $(b_n : n \in \NN_0)$ are $N$-periodically modulated Jacobi parameters such that 
	$\tr \frakX_0(0) \in (-2,2)$. Let $K$ be a compact subset of $\CC_\varsigma$, where $\varsigma = \sign{\tr \frakX_0'(0)}$,
	with nonempty interior. If
	\begin{equation} 
		\label{eq:102}
		\bigg( \frac{a_{n-1}}{a_n} : n \in \NN \bigg),
		\bigg( \frac{b_n}{a_n} : n \in \NN_0 \bigg),
		\bigg( \frac{1}{a_n} : n \in \NN_0 \bigg) \in \calD_1^N,
	\end{equation}
	then there is $M \geq 1$ such that
	\[
		(C_n : n \geq M) \in \calD_1 \big( K, \GL(2, \CC) \big).
	\]
\end{lemma}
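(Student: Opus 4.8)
The plan is to realise $C_n$ as an algebraic combination of simpler $\calD_1$-sequences and to invoke that, by Section~\ref{sec:2.1}, $\calD_1(K,\Mat(2,\CC))$ is a normed algebra; hence finite sums and products of $\calD_1$-sequences are again $\calD_1$, and the reciprocal of a $\calD_1$-sequence in $C(K,\CC)$ that is bounded below (uniformly in $n$ and $z\in K$) stays in $\calD_1$, since $|x_{n+1}^{-1}-x_n^{-1}|\le c^{-2}|x_{n+1}-x_n|$. First I would record the building blocks. For each fixed $i\in\{0,\dots,N-1\}$ the sequence $(B_{nN+i}:n\in\NN)$ lies in $\calD_1(K,\Mat(2,\CC))$: its nonconstant entries are $-a_{nN+i-1}/a_{nN+i}$, $-b_{nN+i}/a_{nN+i}$ and $z/a_{nN+i}$, where the first two are subsequences along the progression $nN+i$ of the $\calD_1^N$-sequences in \eqref{eq:102}, and $z\cdot(1/a_{nN+i})$ is of bounded variation in $C(K)$ because $K$ is compact and $(1/a_{nN+i})_n\in\calD_1$. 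As a function of $n$, $Y_n=X_{nN}=B_{nN+N-1}\cdots B_{nN}$ is a fixed-length product of such sequences, so the algebra property yields $(Y_n)\in\calD_1(K,\Mat(2,\CC))$; in particular $[Y_n]_{11},[Y_n]_{12}$, $\tr Y_n$ and $\det Y_n$ all lie in $\calD_1(K,\CC)$.

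Next I would pass to the eigenvalues. By \eqref{eq:94} one has $\det Y_n=a_{nN-1}/a_{nN+N-1}\to 1$, so $\det Y_n$ is $\calD_1$ and bounded below by a positive constant for $n$ large; thus $\sqrt{\det Y_n}$ and $(\det Y_n)^{-1/2}$ are $\calD_1$, whence $w_n:=\tr Y_n/(2\sqrt{\det Y_n})\in\calD_1(K,\CC)$, with $w_n\to \tr\frakX_0(0)/2=:t_0\in(-1,1)$ uniformly on $K$. By \eqref{eq:63}, $\lambda_n^\pm=\sqrt{\det Y_n}\,\xi_\pm(w_n)$, so the crux is to prove $\xi_\pm(w_n)\in\calD_1(K,\CC)$.

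This last step is the main obstacle, since $w_n$ converges to $t_0$, a point on the branch cut $[-1,1]$ where $\xi_\pm$ are not holomorphic and $\xi_\pm'$ blow up, so one cannot bound increments by $\sup|\xi_\pm'|$. The resolution rests on two facts: $\Im w_n>0$ for $n$ large (Lemma~\ref{lem:6}, cf.\ \eqref{eq:92}), and $t_0$ is bounded away from the branch points $\pm1$ because $\tr\frakX_0(0)\in(-2,2)$ strictly. Writing $\xi_+(w)=w+\exp f(w)$ as in \eqref{eq:59}, the branch points of $\exp f$ are precisely $\pm1$, and by Remark~\ref{rem:3} the map $\exp f$ extends continuously to $\overline{\CC_+}$ and is smooth with locally bounded derivative on $\overline{\CC_+}\setminus\{-1,1\}$ (in the interior $(\exp f)'=\exp f\cdot\tfrac12(\tfrac1{w-1}+\tfrac1{w+1})$ is bounded away from $\pm1$). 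Since $w_n\in\CC_+$ eventually and $w_n\to t_0$ with $t_0\neq\pm1$, all $w_n$ for $n$ large lie in a fixed compact subset of $\overline{\CC_+}\setminus\{-1,1\}$, on which $\exp f$ is Lipschitz with some constant $L$; hence $|\exp f(w_{n+1})-\exp f(w_n)|\le L\,|w_{n+1}-w_n|$, which is summable because $w_n\in\calD_1$. Therefore $\xi_+(w_n)=w_n+\exp f(w_n)\in\calD_1$, and likewise $\xi_-(w_n)=w_n-\exp f(w_n)\in\calD_1$; multiplying by $\sqrt{\det Y_n}$ gives $\lambda_n^\pm\in\calD_1(K,\CC)$.

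Finally I would assemble $C_n$ from \eqref{eq:101}. The only missing ingredient is $1/[Y_n]_{12}$: since $[Y_n]_{12}\to[\frakX_0(0)]_{12}$ uniformly on $K$ and $\frakX_0(0)$ is a real matrix with non-real eigenvalues (as $\tr\frakX_0(0)\in(-2,2)$ and $\det\frakX_0(0)=1$ force $\discr\frakX_0(0)<0$), it cannot be lower triangular, so $[\frakX_0(0)]_{12}\neq0$; thus $|[Y_n]_{12}|$ is bounded below on $K$ for $n$ large and $1/[Y_n]_{12}\in\calD_1$. Consequently each nonconstant entry $(\lambda_n^\pm-[Y_n]_{11})/[Y_n]_{12}$ of $C_n$ is a sum and product of $\calD_1$-sequences, so $(C_n:n\ge M)\in\calD_1(K,\Mat(2,\CC))$; as $|\lambda_n^+|>|\lambda_n^-|$ the matrices $C_n$ are invertible, giving $(C_n:n\ge M)\in\calD_1\big(K,\GL(2,\CC)\big)$, as required.
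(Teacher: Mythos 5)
Your proof is correct and follows essentially the same route as the paper: entries of $B_n$ are $\calD_1^N$ by \eqref{eq:102}, fixed-length products and the algebra property give $(Y_n)\in\calD_1$, then $\det Y_n\to 1$ and $\tr\frakX_0(0)\in(-2,2)$ let you push $\calD_1$ through $w_n\mapsto\xi_\pm(w_n)$ and through $1/[Y_n]_{12}$ via $[\frakX_0(0)]_{12}\neq 0$; the paper simply outsources the product and composition steps to \cite[Corollary 1(i), Lemma 2]{SwiderskiTrojan2019}, which you instead prove by hand. The only cosmetic point is that your Lipschitz bound for $\exp f$ should be stated on a \emph{convex} compact neighbourhood of $t_0$ in $\overline{\CC_+}$ (e.g.\ a closed half-disk of radius less than $1-|t_0|$), so that the mean value inequality applies along segments; since $w_n\to t_0$ uniformly on $K$, this is immediate.
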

\begin{proof}
	By \eqref{eq:102} we have
	\[
		(B_n : n \geq 0) \in \calD_1^N \big( K, \GL(2, \CC) \big).
	\]
	Hence, by \cite[Corollary 1(i)]{SwiderskiTrojan2019}
	\begin{equation}
		\label{eq:103}
		(X_n : n \geq 0) \in \calD_1^N \big( K, \GL(2, \CC) \big).
	\end{equation}
	In particular, 
	\[
		(Y_n : n \geq 0) \in \calD_1 \big( K, \GL(2, \CC) \big),
	\]
	and by \cite[Lemma 2]{SwiderskiTrojan2019},
	\[
		(\tr Y_n : n \geq 0), (\det Y_n : n \geq 0) \in \calD_1(K, \CC).
	\]
	Since $\det Y_n$ locally uniformly approaches $1$, we have
	\[
		\bigg( \frac{\tr Y_n}{2 \sqrt{\det Y_n}} : n \geq 0 \bigg) \in \calD_1(K, \CC).
	\]
	Now, by Remark \ref{rem:3} and  \cite[Lemma 2]{SwiderskiTrojan2019} we obtain
	\begin{equation} 
		\label{eq:104}
		\big( \lambda_{n}^+ : n \geq M \big), \big( \lambda_{n}^- : n \geq M \big) \in \calD_1(K, \CC).
	\end{equation}
	Let us recall that for any $i \in \{0, 1, \ldots, N-1 \}$
	\[
		\lim_{n \to \infty} Y_{n}(z) = \frakX_0(0)
	\]
	uniformly with respect to $z \in K$. Since $\discr \frakX_0(0) < 0$ we get $[\frakX_0(0)]_{12} \neq 0$. Hence, in view of 
	\eqref{eq:101}, \eqref{eq:103}, and \eqref{eq:104} the conclusion follows.
\end{proof}

\subsubsection{Case~\ref{eq:PIIa}: $\frakX_0(0) = \varepsilon \Id$ for some $\varepsilon \in \{-1,1\}$}
\label{sec:6.3.2}
Let
\[
	Y_j = X_{jN}, \quad j \geq 0.
\]
Suppose that the sequence $(R_j : j \in \NN_0)$ where
\begin{equation}
	\label{eq:105}
	R_j = a_{jN+N-1}(Y_j - \varepsilon \Id)
\end{equation}
converges to $\calR_{0}$ locally uniformly on $\CC$. Observe that we are in the setup \eqref{eq:71} with
$\tilde{\gamma}_j = a_{jN+N-1}$, $\tilde{\varepsilon} = \varepsilon$ and $\tilde{Y}_j = Y_j$. By \cite[Corollary 1]{PeriodicIII}, 
$\discr \calR_0$ is a concave quadratic polynomial with real coefficients. 
The proof of the next result is based on the proof of \cite[Lemma 2]{Geronimo1986}.
\begin{proposition}
	\label{prop:11}
	$\discr \calR_0(z)$ is a quadratic polynomial with real roots.
\end{proposition}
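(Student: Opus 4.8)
The plan is to combine the already-established fact that $h = \discr \calR_0$ is a concave quadratic with real coefficients (\cite[Corollary 1]{PeriodicIII}) with the elementary observation that such a polynomial has real roots as soon as it attains a single non-negative value on $\RR$. Indeed, the leading coefficient of $h$ is negative, so $h(x) \to -\infty$ as $|x| \to \infty$, and therefore $h$ has real roots if and only if $\max_{x \in \RR} h(x) \geq 0$. Hence it suffices to exhibit one $x_0 \in \RR$ with $h(x_0) \geq 0$. I first record that, since every $R_j(z)$ has polynomial entries with real coefficients (the Jacobi parameters being real) and $R_j \to \calR_0$ locally uniformly on $\CC$, the entries of $\calR_0$ are polynomials with real coefficients; in particular $\calR_0(x)$ is a real matrix for every $x \in \RR$.

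The first step is the determinant identity $\discr M = ([M]_{11} - [M]_{22})^2 + 4 [M]_{12}[M]_{21}$, valid for any $M \in \Mat(2,\CC)$. Applied to $\calR_0(z)$ it yields, as an identity between real polynomials, $h(z) = ([\calR_0(z)]_{11} - [\calR_0(z)]_{22})^2 + 4 [\calR_0(z)]_{12} [\calR_0(z)]_{21}$. At a real point the first summand is the square of a real number, hence non-negative. Thus it is enough to find a real zero of one of the off-diagonal entries $[\calR_0]_{12}$ or $[\calR_0]_{21}$: at such a point $x_0$ the corresponding product vanishes and $h(x_0) = ([\calR_0(x_0)]_{11} - [\calR_0(x_0)]_{22})^2 \geq 0$, as required. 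That at least one off-diagonal entry is non-constant is forced by concavity: since $h$ has negative leading coefficient while the square $([\calR_0]_{11}-[\calR_0]_{22})^2$ has non-negative leading coefficient, the product $[\calR_0]_{12}[\calR_0]_{21}$ cannot be constant, so at least one of the two factors is a non-constant polynomial.

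It remains to produce the real zero, and this is where the structure of orthogonal polynomials enters, following the proof of \cite[Lemma 2]{Geronimo1986}. By \eqref{eq:4} and \eqref{eq:105} we have $[\calR_0(z)]_{12} = \lim_{j\to\infty} a_{jN+N-1}\, p^{[jN]}_{N-1}(z)$ and $[\calR_0(z)]_{21} = -\lim_{j\to\infty} \tfrac{a_{jN-1}}{a_{jN}}\, a_{jN+N-1}\, p^{[jN+1]}_{N-1}(z)$, locally uniformly on $\CC$. For each $j$ the associated polynomials $p^{[jN]}_{N-1}$ and $p^{[jN+1]}_{N-1}$ are orthonormal polynomials of degree $N-1$, hence have only real zeros; multiplying by the positive scalars $a_{jN+N-1}$ and $\tfrac{a_{jN-1}}{a_{jN}}a_{jN+N-1}$ and by an overall sign does not change this. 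Consequently, whichever of $[\calR_0]_{12}$, $[\calR_0]_{21}$ is non-constant is a locally uniform limit of real-rooted polynomials, and by Hurwitz's theorem it is itself real-rooted (a non-real zero of the limit would be approximated by non-real zeros of the approximants, a contradiction). A non-constant real-rooted polynomial has a real zero $x_0$, and then $h(x_0) \geq 0$, finishing the argument. I expect the main obstacle to be this last step: one must carefully rule out that the relevant off-diagonal limit degenerates to a nonzero constant (handled by the degree bookkeeping above) and then transfer real-rootedness to the limit via Hurwitz's theorem, which is precisely the ingredient borrowed from \cite[Lemma 2]{Geronimo1986}.
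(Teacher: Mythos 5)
Your proof is correct, but it takes a genuinely different route from the paper's. The paper factors $\discr X_{n} = Q_n^- Q_n^+$ with $Q_n^{\pm} = p_N^{[n]} - \tfrac{a_{n-1}}{a_n}p_{N-2}^{[n+1]} \pm 2\sqrt{a_{n-1}/a_{n+N-1}}$, shows by a sign-change argument at the (real, simple) zeros of $p_{N-1}^{[n]}$ that each factor has $N$ real zeros, and then applies Hurwitz to the full discriminant $\discr X_{jN} \to \discr\calR_0$ (after the same normalization). You instead apply Hurwitz only to a single off-diagonal entry of $R_j$, which by \eqref{eq:4} is a positive multiple of an associated orthogonal polynomial $p_{N-1}^{[\cdot]}$ and hence real-rooted; the identity $\discr M = (m_{11}-m_{22})^2+4m_{12}m_{21}$ then yields a real point $x_0$ with $\discr\calR_0(x_0)\geq 0$, and concavity of the quadratic (from \cite[Corollary 1]{PeriodicIII}, which both arguments invoke) finishes the job. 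Your degree bookkeeping ruling out that both off-diagonal limits degenerate to constants is the one step with no counterpart in the paper, and it is handled correctly: a constant product $[\calR_0]_{12}[\calR_0]_{21}$ would force the concave quadratic minus a constant to equal a square of a real polynomial, which is impossible. The trade-off: your argument is shorter and avoids the interlacing analysis of $Q_n^{\pm}$, but it leans essentially on knowing in advance that $\discr\calR_0$ has degree exactly two with negative leading coefficient, whereas the paper's factorization shows directly that all $2N$ zeros of each $\discr X_{jN}$ are real and would conclude real-rootedness of the limit without any sign information on the leading coefficient.
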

\begin{proof}
	Indeed, by \cite[Corollary 1]{PeriodicIII}, $\discr \calR_0(z)$ is a quadratic polynomial. Moreover, by \eqref{eq:105}, 
	we have $\discr R_{jN} = a_{(j+1)N-1}^{-2} \discr X_{jN}$, thus
	\[
		\discr \calR_0(z) = \lim_{j \to \infty} a_{(j+1)N-1}^{-2} \discr X_{jN}(z).
	\]
	locally uniformly with respect to $z \in \CC$. Observe that $\discr X_{jN}(z)$ is a real polynomial of degree $2N$.
	In view of Hurwitz' theorem, to conclude the proof it is enough to show that $\discr X_{jN}$ has only real zeros. 
	By \eqref{eq:122},
	\[
		\det X_n = \frac{a_{n-1}}{a_{n+N-1}},
	\]
	thus by \eqref{eq:4} we get
	\begin{align*}
		\discr X_n 
		&= \Big(p_N^{[n]} - \frac{a_{n-1}}{a_n} p_{N-2}^{[n+1]} \Big)^2
		- 4 \frac{a_{n-1}}{a_{n+N-1}},
	\end{align*}
	Let 
	\[
		Q^-_n(z) = p_N^{[n]}(z) - \frac{a_{n-1}}{a_n} p_{N-2}^{[n+1]}(z) - 2 \sqrt{\frac{a_{n-1}}{a_{n+N-1}}}
	\]
	and
	\[
		Q^+_n(z) = p_N^{[n]}(z) - \frac{a_{n-1}}{a_n} p_{N-2}^{[n+1]}(z) + 2 \sqrt{\frac{a_{n-1}}{a_{n+N-1}}}.
	\]
	Then the set of zeros of $\discr X_n$ is contained in the union 
	\[	
		\big\{z \in \CC : Q^-_n(z) = 0\big\} \cup \big\{z \in \CC  : Q^+_n(z) = 0\big\}.
	\]
	Let us consider the polynomial $Q^-_n$. Suppose that $p_{N-1}^{[n]}(x_0) = 0$ for certain $x_0 \in \RR$. Then by \eqref{eq:4},
	\[
		\frac{a_{n-1}}{a_{n+N-1}} 
		=
		\det X_n(x_0)
		= -\frac{a_{n-1}}{a_n} p_{N-2}^{[n+1]}(x_0) p_N^{[n]} (x_0),
	\]
	that is
	\[
		\frac{a_{n-1}}{a_n} p_{N-2}^{[n+1]}(x_0) = - \frac{a_{n-1}}{a_{n+N-1}} \frac{1}{p_N^{[n]}(x_0)}
	\]
	and so
	\begin{align*}
		Q^-_n(x_0) 
		&= p_N^{[n]}(x_0)  + \frac{a_{n-1}}{a_{n+N-1}} \frac{1}{p_N^{[n]}(x_0)} - 2 \sqrt{\frac{a_{n-1}}{a_{n+N-1}}}  \\
		&= \frac{1}{p_N^{[n]}(x_0)}\Big( p_N^{[n]}(x_0) - \sqrt{\frac{a_{n-1}}{a_{n+N-1}}} \Big)^2.
	\end{align*}
	Let us recall that for a fixed $n \in \NN$, the polynomial $p_{N-1}^{[n]}$ has $N-1$ real and simple zeros, say
	$x_{1}, \ldots, x_{N-1}$. Moreover, for each $j = 1, \ldots, N-1$ we have $\sign{p_{N}^{[n]}(x_j)} = (-1)^{j+N}$. 
	Hence, $\sign{Q^-_n(x_j)} = (-1)^{j+N}$, for each $j = 1, \ldots, N-1$. Because $Q^-_n$ has the leading coefficient positive
	and $\sign{ Q^-_n(x_{N-1})} < 0$, we conclude that $Q^-_n$ has $N-1$ real zeros. Since the coefficients of $Q^-_n$ are real,
	it must have $N$ real zeros. Analogously, we prove that $Q^+_n$ has $N$ real zeros. Consequently, all zeros
	of $\discr X_n$ are real, and the proposition follows.
\end{proof}

Next, by Remark \ref{rem:4}, for each $K$ a compact subset of 
\[
	\big\{z \in \CC \setminus \RR : \Re (\discr \calR_0)'(z) \neq 0 \big\},
\]
there is $M \geq 1$ such that for all $j \geq M$ and $z \in K$,
\[
	\mathrm{sign} \bigg( \Im\bigg(\frac{\tr Y_j(z)}{\sqrt{\det Y_j(z)}} \bigg) \bigg) = 
	\mathrm{sign}\Big( \varepsilon (\Im z) \Re (\discr \calR_0)'(z) \Big).
\]
Hence, if $K$ is a compact subset of
\[
	U = \big\{z \in \CC_+ : \discr \calR_{0}(z) \in \CC \setminus (-\infty, 0] 
	\text{ and }  \varepsilon \Re (\discr \calR_0)'(z) > 0 \big\},
\]
then for all $j \geq M$ and $z \in K$,
\[
	\frac{\tr Y_j(z)}{\sqrt{\det Y_j(z)}} \in \CC_+.
\]
By Proposition \ref{prop:3}, the eigenvalues of $Y_j$ are given by formulas \eqref{eq:63} and 
both are continuous on $K$ and holomorphic in $\intr(K)$ provided that $j \geq M$. Moreover, the eigenvalues of $R_j$ 
are given by \eqref{eq:73} and both are continuous on $K$ and holomorphic in $\intr(K)$, if $j \geq M$.

Since $[Y_n]_{12}$ is a polynomial having only real zeros, we must have $[R_n]_{12} \neq 0$ on $K$. Hence, for each 
$n \geq M$ and $z \in K$, we can write
\[
	R_n(z) = C_n(z) \tilde{D}_n(z) C_n^{-1}(z)
\]
where
\[
	C_n(z) = 
	\begin{pmatrix}
		1  & 1 \\
		\frac{\zeta_n^+(z) - [R_n(z)]_{11}}{[R_n(z)]_{12}} &
		\frac{\zeta_n^-(z) - [R_n(z)]_{22}}{[R_n(z)]_{12}}
	\end{pmatrix}
	,\quad\text{and}\quad
	\tilde{D}_n(z) =
	\begin{pmatrix}
		\zeta^+_n(z) & 0\\
		0 & \zeta^-_n(z)
	\end{pmatrix}.
\]
Thus setting
\[
	D_n = \begin{pmatrix}
		\lambda^+_n & 0 \\
		0 & \lambda^-_n
	\end{pmatrix},
\]
we obtain
\[
	Y_n(z) = C_n(z) D_n(z) C_n^{-1}(z).
\]
Let us observe that $C_n$ and $D_n$ are continuous on $K$ and holomorphic mappings on $\intr(K)$.

Next, from \cite[Proposition 9]{PeriodicIII} we easily deduce the following lemma.
\begin{lemma}
	\label{lem:8}
	Suppose that $(a_n : n \in \NN_0)$ and $(b_n : n \in \NN_0)$ are $N$-periodically modulated Jacobi parameters such that	
	$\frakX_0(0) = \varepsilon \Id$ with $\varepsilon \in \{-1, 1\}$. Assume that
	\[
		\bigg( a_n \Big( \frac{\alpha_{n-1}}{\alpha_n} - \frac{a_{n-1}}{a_n} \Big) : n \in \NN \bigg),
		\bigg( a_n \Big( \frac{\beta_n}{\alpha_n} - \frac{b_n}{a_n} \Big): n \in \NN_0 \bigg),
		\bigg( \frac{1}{a_n} : n \in \NN_0 \bigg) \in \calD_1^N.
	\]
	Then the sequence $(R_{n} : n \in \NN)$ where
	\[
		R_n(z) = a_{nN+N-1}(Y_n(z) - \varepsilon \Id),
	\]
	converges locally uniformly with respect to $z \in \CC$. Moreover, for each $K$ a compact subset of
	\[
		U = \big\{z \in \CC_+ : \discr \calR_{0}(z) \in \CC \setminus (-\infty, 0] 
		\text{ and }  \varepsilon \Re (\discr \calR_0)'(z) > 0 \big\},
	\]
	there is $M \geq 1$ such that
	\[
		(C_n : n \geq M) \in \calD_1 \big( K, \Mat(2, \CC) \big).
	\]
\end{lemma}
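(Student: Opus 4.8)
The plan is to mirror the proof of Lemma~\ref{lem:7}, separating the statement into (i) the convergence of $(R_n)$ and (ii) the membership $(C_n : n \geq M) \in \calD_1(K, \Mat(2,\CC))$. For (i) I would invoke \cite[Proposition 9]{PeriodicIII}: its hypotheses are precisely $\big(a_n(\frac{\alpha_{n-1}}{\alpha_n} - \frac{a_{n-1}}{a_n})\big), \big(a_n(\frac{\beta_n}{\alpha_n} - \frac{b_n}{a_n})\big), \big(\frac{1}{a_n}\big) \in \calD_1^N$, so it delivers the locally uniform convergence of $R_n = a_{nN+N-1}(Y_n - \varepsilon\Id)$ to $\calR_0$, together with the fact, via \cite[Corollary 1]{PeriodicIII}, that $\discr\calR_0$ is a concave quadratic (sharpened by Proposition~\ref{prop:11} to one with real roots). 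From the same hypotheses I would also read off that $(R_n : n \geq M) \in \calD_1(K, \Mat(2,\CC))$ for every compact $K \subset \CC$; this is the rescaled analogue of the step in Lemma~\ref{lem:7} where \cite[Corollary 1(i)]{SwiderskiTrojan2019} promotes $(B_n) \in \calD_1^N$ to $(X_n) \in \calD_1^N$.

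Granting $(R_n) \in \calD_1(K, \Mat(2,\CC))$, the remainder is algebraic and rests on $\calD_1(K,\CC)$ being an algebra. The entries $[R_n]_{ij}$, the trace $\tr R_n$ and the determinant $\det R_n$ then all lie in $\calD_1(K,\CC)$, hence so does $\discr R_n = (\tr R_n)^2 - 4\det R_n$. Since $K \subset U$, the sequence $\discr R_n$ converges uniformly on $K$ to $\discr\calR_0 \in \CC \setminus (-\infty,0]$, so for large $n$ it takes values in a fixed compact subset of $\CC \setminus (-\infty,0]$, where the principal square root is holomorphic with bounded derivatives; \cite[Lemma 2]{SwiderskiTrojan2019} then gives $\sqrt{\discr R_n} \in \calD_1(K,\CC)$. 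Consequently the eigenvalues $\zeta_n^\pm = \tfrac{1}{2}\big(\tr R_n \pm \sqrt{\discr R_n}\big)$ of $R_n$ belong to $\calD_1(K,\CC)$, whichever branch is labelled $\zeta_n^+$.

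The delicate point, which I expect to be the main obstacle, is the uniform non-vanishing of the denominator $[R_n]_{12}$ on $K$, needed to place $1/[R_n]_{12}$ in $\calD_1(K,\CC)$. Here I would use that $[R_n]_{12} = a_{nN+N-1}\, p_{N-1}^{[nN]}$ is a nonzero multiple of an orthonormal polynomial, hence zero-free on $\CC_+ \supset K$; its locally uniform limit $[\calR_0]_{12}$ is then, by Hurwitz's theorem, either identically zero or zero-free on the domain $\CC_+$. It cannot vanish identically, since otherwise $\calR_0$ would be lower triangular and $\discr\calR_0 = ([\calR_0]_{11} - [\calR_0]_{22})^2$ a perfect square of a real polynomial, contradicting that $\discr\calR_0$ is a concave quadratic. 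Thus $\inf_{z \in K} \abs{[\calR_0]_{12}(z)} > 0$, and after enlarging $M$ the uniform convergence yields $\inf_{n \geq M}\inf_{z \in K} \abs{[R_n]_{12}(z)} > 0$, so $1/[R_n]_{12} \in \calD_1(K,\CC)$. Since the two nonconstant entries of $C_n$ are $(\zeta_n^+ - [R_n]_{11})/[R_n]_{12}$ and $(\zeta_n^- - [R_n]_{22})/[R_n]_{12}$ — sums, products and quotients (with denominator bounded below) of sequences in $\calD_1(K,\CC)$ — the algebra property completes the verification that $(C_n : n \geq M) \in \calD_1(K, \Mat(2,\CC))$.
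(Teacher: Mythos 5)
Your argument is correct and follows the route the paper intends: the paper gives no written proof of Lemma~\ref{lem:8} (it is declared to follow easily from \cite[Proposition 9]{PeriodicIII}), and your treatment mirrors the paper's proof of the analogous Lemma~\ref{lem:7} --- locally uniform convergence and $\calD_1$-membership of the rescaled transfer-matrix data from the cited proposition, followed by the algebra property of $\calD_1(K,\CC)$ applied to the traces, determinants, square roots of discriminants and the entries of $C_n$. Your Hurwitz argument for $\inf_{z \in K}\abs{[\calR_0(z)]_{12}}>0$ (ruling out $[\calR_0]_{12}\equiv 0$ because $\discr\calR_0$ would then be the square of a real polynomial rather than a concave quadratic) correctly supplies the one detail the paper leaves implicit.
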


\subsubsection{Case~\ref{eq:PIIb}: $\frakX_0(0)$ is a non-trivial parabolic element}
\label{sec:6.3.3}
Let $\gamma = (\gamma_n : n \in \NN_0)$ be an auxiliary positive sequence tending to infinity satisfying \eqref{eq:83a}
and \eqref{eq:83b}. Recall that Jacobi parameters $(a_n : n \in \NN_0)$ and $(b_n : n \in \NN_0)$ are $\gamma$-tempered 
$N$-periodically modulated such that $\frakX_0(0)$ is a non-trivial parabolic element if
\begin{equation}
	\label{eq:108a}
	\bigg( \sqrt{\gamma_n} \Big(\frac{\alpha_{n-1}}{\alpha_n} - \frac{a_{n-1}}{a_n}\Big) : n \in \NN \bigg),
	\bigg( \sqrt{\gamma_n} \Big(\frac{\beta_n}{\alpha_n} - \frac{b_n}{a_n}\Big) : n \in \NN_0 \bigg),
	\bigg( \frac{\gamma_n}{a_n} : n \in \NN_0 \bigg) \in \calD_1^N.
\end{equation}
We additionally assume that
\begin{equation}
	\label{eq:108b}
	\bigg(
	\gamma_n\big(1- \varepsilon [\frakX_n(0)]_{11}\big)\Big(\frac{\alpha_{n-1}}{\alpha_n} - \frac{a_{n-1}}{a_n}\Big)
	-
	\gamma_n \varepsilon \Big(\frac{\beta_n}{\alpha_n} - \frac{b_n}{a_n}\Big)
	:  n \in \NN \bigg) \in \calD_1^N
\end{equation}
where
\[
	\varepsilon = \sign{\tr\frakX_0(0)}.
\]
By \eqref{eq:108a}, there are two $N$-periodic sequences $(\fraks_n : n \in \ZZ)$ and $(\frakr_n : n \in \ZZ)$ such that
\[
	\lim_{n \to \infty} \bigg|\sqrt{\alpha_n \gamma_n} \Big(\frac{\alpha_{n-1}}{\alpha_n} - \frac{a_{n-1}}{a_n}\Big) -
	\fraks_n \bigg| = 0,
	\qquad\text{and}\qquad
	\lim_{n \to \infty} \bigg|\sqrt{\alpha_n \gamma_n} \Big(\frac{\beta_n}{\alpha_n} - \frac{b_n}{a_n}\Big) - \frakr_n\bigg| = 0.
\]
Moreover, there is $\mathfrak{t} \in \{0, 1\}$ such that
\[
	\lim_{n \to \infty} \frac{\gamma_n}{a_n} = \mathfrak{t}.
\]
Lastly, by \eqref{eq:108b} there is $N$-periodic sequence $(\fraku_n : n \in \ZZ)$ such that
\begin{equation}
	\lim_{n \to \infty}
	\Big|
	\gamma_n\big(1- \varepsilon [\frakX_n(0)]_{11}\big)\Big(\frac{\alpha_{n-1}}{\alpha_n} - \frac{a_{n-1}}{a_n}\Big)
	-
	\gamma_n \varepsilon \Big(\frac{\beta_n}{\alpha_n} - \frac{b_n}{a_n}\Big) - \fraku_n
	\Big| = 0.
\end{equation}
Let us define
\begin{equation}
	\label{eq:189}
	\tau(z) = \frac{1}{4} \mathfrak{S}^2 - \upsilon(z)
\end{equation}
where
\[
	\upsilon(z) = \mathfrak{t} \varepsilon z (\tr \frakX_0'(0)) - \mathfrak{U},
\]
and
\[
	\mathfrak{U} = \sum_{i' = 0}^{N-1} \frac{\fraku_{i'}}{\alpha_{i'-1}},
	\qquad
	\mathfrak{S} = \sum_{i' = 0}^{N-1} \frac{\fraks_{i'}}{\alpha_{i'-1}}.
\]
Since $\frakX_0(0)$ in a nontrivial parabolic element of $\SL(2, \RR)$, we have $\tr \frakX_0'(0) \neq 0$, see 
\cite[Proposition 2.1]{jordan2}. 

If $\mathfrak{t} = 1$, then 
\[
	x_0 = - \varepsilon \frac{\mathfrak{U} + \frac{1}{4} \mathfrak{S}^2}{\tr \frakX_0'(0)}
\]
is the only solution to $\tau(z) = 0$. Moreover, with no lose of generality we can assume that $\gamma_n \equiv a_n$.
We set
\begin{equation} \label{eq:204}
	U = \big\{z \in \CC : \Re \tau(z) < 0 \big\}
\end{equation}
and
\[
	\vartheta_j(z) = 
	\sqrt{\frac{\alpha_{N-1}}{\gamma_{jN+N-1}}} \sqrt{-\tau(z)}, \quad \text{ for } z \in U.
\]
If $\mathfrak{t} = 0$, then $\tau(z) \neq 0$ for all $z \in \CC$. Let $U = \CC$,
\[
	\vartheta_j(z) = \sqrt{\frac{\alpha_{N-1}}{\gamma_{jN+N-1}} |\tau(0)|}, \quad \text{ for } z \in U.
\]
In both cases we set
\[
	Z_j = 
	T_{0}
	\begin{pmatrix}
		1 & 1 \\
		\ue^{\vartheta_j} & \ue^{-\vartheta_j}
	\end{pmatrix}
\]
where $T_0$ is an invertible matrix such that
\[
	\frakX_0(0) = \varepsilon T_0 
	\begin{pmatrix}
		0 & 1 \\
		-1 & 2
	\end{pmatrix}
	T_0^{-1}.
\]
Finally, for $z \in U$, we define
\[
	Y_j(z) = Z_{j+1}^{-1}(z) X_{jN}(z) Z_j(z), \quad j \geq 1.
\]
Given $K$ a compact subset of $U$, we follow the proof of \cite[Theorem 3.1]{jordan2} to show that
\begin{equation}
	\label{eq:26}
	Z_j^{-1} Z_{j+1} = 
	\Id 
	+ 
	\sqrt{\frac{\alpha_{N-1}}{\gamma_{jN+N-1}}} Q_j
\end{equation}
where $(Q_j)$ is a sequence of holomorphic mappings from $\calD_1(K, \Mat(2, \CC))$ convergent uniformly on $K$ to zero.
Moreover, by following the proof of \cite[Theorem 3.2]{jordan2}, one can prove that
\begin{equation} \label{eq:138}
	Y_j = 
	Z_{j+1}^{-1} X_{jN} Z_j = 
	\varepsilon \bigg(\Id +  \sqrt{\frac{\alpha_{N-1}}{\gamma_{jN+N-1}}} R_j\bigg)
\end{equation}
where $(R_j)$ is a sequence of holomorphic mappings from $\calD_1(K, \Mat(2, \CC))$ convergent uniformly on $K$ to
\begin{equation}
	\label{eq:48}
	\calR_{0}(z) =
	\frac{\sqrt{-\tau(z)}}{2}
	\begin{pmatrix}
		1 & -1 \\
		1 & -1
	\end{pmatrix}
	-
	\frac{\upsilon(z)}{2 \sqrt{-\tau(z)}}
	\begin{pmatrix}
		1 & 1 \\
		-1 & -1
	\end{pmatrix}
	-
	\frac{\mathfrak{S}}{2}
	\begin{pmatrix}
		1 & -1\\
		-1 & 1
	\end{pmatrix}.
\end{equation}
In particular, $\discr \calR_{0}(z) = 4 \tau(z)$ and $\det \calR_{0}(z) = \upsilon(z)$ for $z \in K$. We shall restrict our attention to the case when $\Lambda_- = \big\{x \in \RR : \tau(x) < 0 \big\} \neq \emptyset$. Notice that if $\Lambda_- = \emptyset$, but $\Lambda_+ = \{ x \in \RR : \tau(x) > 0 \} \neq \emptyset$, then according to \cite[Theorem A]{jordan2} one has that either $A$ is self-adjoint and $\sigmaEss(A) = \emptyset$ or $A$ is not self-adjoint. So by Proposition~\ref{prop:12} and Remark~\ref{rem:5} we have $\tfrac{1}{\rho_n} \nu_n \xrightarrow{v} 0$ for any sequence $\rho_n \to \infty$.

Our next step is to define the eigenvalues $\lambda_j^-(z)$ and $\lambda_j^+(z)$ of $Y_j(z)$ for $z \in K$, and to compute
the limit
\[
	\lim_{j \to \infty} \frac{a_{jN+N-1}}{\sqrt{\alpha_{N-1} \gamma_{jN+N-1}}} \frac{\big(\lambda_j^+(z)\big)'}{\lambda_j^+(z)}.
\]
At this stage we split the analysis according to whether $\mathfrak{t} = 0$ or $\mathfrak{t} = 1$. 

\vspace{1ex}
\noindent
{Case \bf 1. } Suppose that $\mathfrak{t} = 0$, thus $\tau$ is a constant. As we explained already, for the purposes of the present article 
the only interesting case is when $\tau < 0$. In view
of \cite[Corollary 3.10]{PeriodicIII}, 
\[
	\lim_{j \to \infty} \frac{a_{jN}}{\alpha_0} X_{jN}' = \frakX_0'(0),
\]
with $\tr \frakX_0'(0) \neq 0$. Since $Z_j(z)$ is independent of $z$, we have $(\det Y_j)' \equiv 0$. Moreover, by \eqref{eq:26}
\begin{align}
	\nonumber
	\lim_{j \to \infty} \frac{a_{jN}}{\alpha_0} \tr Y_j'(z) 
	&= 
	\lim_{j \to \infty} \frac{a_{jN}}{\alpha_0} \tr \big(X_{jN}'(z) Z_j Z_{j+1}^{-1}\big) \\
	\nonumber
	&=
	\lim_{j \to \infty} \frac{a_{jN}}{\alpha_0} \tr X_{jN}'(z) 
	+
	\lim_{j \to \infty}  \frac{a_{jN}}{\alpha_0} \sqrt{\frac{\alpha_{N-1}}{\gamma_{jN+N-1}}}
	\tr \big(X_{jN}'(z) Q_j(z) \big) \\
	\label{eq:38}
	&=
	\tr \frakX_0'(0).
\end{align}
Consequently, we can repeat the proof of Lemma \ref{lem:6} to justify the following statement.
\begin{lemma}
	\label{lem:4}
	Let $N \geq 1$. Let $(\gamma_n : n \geq 0)$ be a sequence of positive numbers tending to infinity and satisfying 
	\eqref{eq:83a} and \eqref{eq:83b}. Let $(a_n : n \geq 0)$ and $(b_n : n \geq 0)$ be $\gamma$-tempered $N$-periodically
	modulated Jacobi parameters such that $\frakX_0(0)$ is non-trivial parabolic element. Assume that $\mathfrak{t} = 0$.
	Let $K$ be a compact subset of $\CC_\varsigma$ with nonempty interior where $\varsigma = \sign{\tr \frakX_0'(0)}$. 
	Then there is $M \geq 1$ such that for all $j \geq M$, and $z \in K$,
	\[
		\Im \bigg(\frac{\tr Y_j(z)}{2 \sqrt{\det Y_j(z)}} \bigg) > 0.
	\]
\end{lemma}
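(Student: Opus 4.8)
The plan is to imitate the proof of Lemma~\ref{lem:6} essentially line by line, the only new ingredient being that the role played there by the asymptotics \eqref{eq:89} of $\tr X_n'$ is now taken over by \eqref{eq:38}, which has already been established under the present hypotheses. Two structural features make this transfer painless. First, because $\mathfrak{t} = 0$ the matrices $Z_j$ are independent of $z$ and real (the matrix $T_0$ is real and $\vartheta_j$ is a positive real scalar), so $Y_j(x) = Z_{j+1}^{-1} X_{jN}(x) Z_j$ is a real matrix whenever $x \in \RR$; in particular $\tr Y_j(x) \in \RR$. Second, $\det Y_j = \tfrac{\det Z_j}{\det Z_{j+1}} \det X_{jN}$ is a strictly positive real number independent of $z$ (equivalently $(\det Y_j)' \equiv 0$), so $\sqrt{\det Y_j}$ is a positive real scalar and
\[
	\Im \bigg( \frac{\tr Y_j(z)}{2 \sqrt{\det Y_j(z)}} \bigg) = \frac{1}{2\sqrt{\det Y_j}} \, \Im \big( \tr Y_j(z) \big).
\]
Thus it suffices to control the sign of $\Im \big( \tr Y_j(z) \big)$.

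Concretely, I would fix $z = x + iy \in K$ and apply the mean value theorem to the holomorphic function $\tr Y_j$, writing $\tr Y_j(x+iy) - \tr Y_j(x) = iy\, \tr Y_j'(x+i\xi)$ for some $\xi$ lying strictly between $0$ and $y$. Since $\tr Y_j(x) \in \RR$, taking imaginary parts gives $\Im \big( \tr Y_j(x+iy) \big) = y \, \Re \big( \tr Y_j'(x+i\xi) \big)$. By \eqref{eq:38} the rescaled derivative $\tfrac{a_{jN}}{\alpha_0} \tr Y_j'$ converges locally uniformly on $\CC$ to the nonzero real number $\tr \frakX_0'(0)$; since $\tfrac{a_{jN}}{\alpha_0} > 0$, for all sufficiently large $j$ the quantity $\Re \big( \tr Y_j'(w) \big)$ carries the constant sign $\varsigma = \sign{\tr \frakX_0'(0)}$ throughout any fixed compact set.

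Finally I would combine signs: for $z \in K \subset \CC_\varsigma$ the imaginary part $y$ has sign $\varsigma$, and by the above $\Re \big( \tr Y_j'(x+i\xi) \big)$ also has sign $\varsigma$ for $j \ge M$, so their product $\Im \big( \tr Y_j(z) \big) = y\,\Re \big( \tr Y_j'(x+i\xi) \big)$ is strictly positive. Dividing by the positive scalar $2\sqrt{\det Y_j}$ yields the claim. The only point demanding care — and the main (mild) obstacle — is the uniform sign control at the intermediate points $x+i\xi$, which need not belong to $K$ itself. This is handled exactly as in Lemma~\ref{lem:6}: each such point has imaginary part $\xi$ of the same sign as $y$, hence lies in $\CC_\varsigma$ and in a bounded region determined by $K$, so the locally uniform convergence in \eqref{eq:38} applied on a slightly enlarged compact set enclosing all the segments $[x, x+iy]$ produces a single $M$ valid simultaneously for every $j \ge M$ and every $z \in K$.
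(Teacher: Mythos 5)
Your proof is correct and follows the same route as the paper, which itself simply says the statement is obtained by repeating the proof of Lemma~\ref{lem:6} with the asymptotics \eqref{eq:38} playing the role of \eqref{eq:89}. You supply exactly the right supporting observations for the $\mathfrak{t}=0$ case ($Z_j$ real and $z$-independent, hence $\tr Y_j(x)\in\RR$ for $x\in\RR$ and $\det Y_j$ a positive constant) and handle the uniformity at the intermediate points $x+i\xi$ as the paper does.
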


Let $K$ be a compact subset of $\CC_\varsigma$ with nonempty interior where $\varsigma = \sign{\tr \frakX_0'(0)}$.
By Lemma \ref{lem:4} for $K$, the eigenvalues of $Y_j$ given 
by formulas \eqref{eq:63} are continuous on $K$ and holomorphic in $\intr(K)$, provided that $j \geq M$.

In view of \eqref{eq:138} we are in the setup \eqref{eq:71} with $\tilde{\gamma}_j = \sqrt{\gamma_{jN+N-1}/\alpha_{N-1}}$,
$\tilde{\varepsilon} = 1$, and $\tilde{Y}_j = \varepsilon Y_j$. Hence, the eigenvalues of $R_j$ are 
\[
	\zeta_j^+(z) = \sqrt{\frac{\gamma_{jN+N-1}}{\alpha_{N-1}}} \big(\varepsilon \lambda_j^+(z) - 1\big), 
	\quad\text{and}\quad
	\zeta_j^-(z) = \sqrt{\frac{\gamma_{jN+N-1}}{\alpha_{N-1}}} \big(\varepsilon \lambda_j^-(z) - 1\big),
\]
and both are continuous on $K$ and holomorphic in $\intr(K)$, if $j \geq M$.

Next, let us recall that by \eqref{eq:63},
\[
	\lambda_j^+(z) =  \sqrt{\det Y_j(z)} \xi_+\big(w_j(z)\big)
\]
where
\[
	w_j(z) = \frac{\tr Y_j(z)}{2 \sqrt{\det Y_j(z)}}.
\]
Since $(\det Y_j)' \equiv 0$, we get
\begin{align}
	\nonumber
	\frac{(\lambda_j^+)'(z)}{\lambda_j^+(z)} 
	&= \frac{(\det Y_j)'(z)}{2 \det Y_j(z)} + \frac{\xi_+'\big(w_j(z)\big)}{\xi_+\big(w_j(z)\big)} w_j'(z) \\
	\label{eq:41}
	&= \frac{w_j'(z)}{\xi_+\big(w_j(z)\big) - w_j(z)}.
\end{align}
In view of \eqref{eq:38} we obtain
\begin{equation} \label{eq:205}
	\lim_{j \to \infty} \frac{a_{jN}}{\alpha_0} w_j'(z) = \frac{\tr \frakX_0'(0)}{2}.
\end{equation}
Since
\[
	\frac{\gamma_{jN+N-1}}{\alpha_{N-1}} (w_j^2 - 1) 
	= 
	\frac{\gamma_{jN+N-1}}{\alpha_{N-1}} \frac{\discr Y_j}{4 \det Y_j} 
	= \frac{\discr R_{jN}}{4 \det Y_j}
\]
we have
\begin{equation} \label{eq:206}
	\lim_{j \to \infty} \frac{\gamma_{jN+N-1}}{\alpha_{N-1}} (w_j^2 - 1)
	=
	\frac{\discr \calR_0}{4}.
\end{equation}
By \eqref{eq:138}, we get
\[
	\lim_{j \to \infty} w_j = \varepsilon,
\]
therefore,
\[
	\lim_{j \to \infty} \frac{\gamma_{jN+N-1}}{\alpha_{N-1}} (w_j - \varepsilon)
	=
	\lim_{j \to \infty} \frac{\gamma_{jN+N-1}}{\alpha_{N-1}} (w_j^2 - 1) \frac{1}{w_j + \varepsilon} 
	=
	\frac{\discr \calR_0}{8 \varepsilon}.
\]
By \eqref{eq:88} and \eqref{eq:38}, we get
\[
	\lim_{j \to \infty} a_{jN} \Im w_j = 0,
\]
thus
\begin{align*}
	\lim_{j \to \infty} \frac{\Im w_j}{\Re (w_j - \varepsilon)}
	&=
	\lim_{j \to \infty} 
	\frac{\gamma_{jN+N-1}}{a_{jN}} 
	\frac{a_{jN} \Im w_j}{\gamma_{jN+N-1} (\Re w_j - \varepsilon)} = 0.
\end{align*}
Since $\sign{\Re(w_j - \varepsilon)} = -\varepsilon$, we obtain
\begin{align*}
	\lim_{j \to \infty} 
	\sqrt{\frac{\gamma_{jN+N-1}}{\alpha_{N-1}}} \sqrt{w_j - \varepsilon}
	&= 
	\lim_{j \to \infty}
	\sqrt{\frac{\gamma_{jN+N-1}}{\alpha_{N-1}} \abs{w_j - \varepsilon}}
	\exp\bigg\{\frac{i}{2} \arctan\bigg(\frac{\Im w_j}{\Re w_j - \varepsilon}\bigg)
	+i \pi \frac{\varepsilon+1}{4}\bigg\} \\
	&=
	\sqrt{-\discr \calR_0} \exp\bigg\{ i \pi \frac{\varepsilon+1}{4}\bigg\}.
\end{align*}
On the other hand, $\sign{\Re(w_j + \varepsilon)} = \varepsilon$ and so
\begin{align*}
	\lim_{j \to +\infty} \sqrt{w_j + \varepsilon} 
	&=
	\lim_{j \to +\infty} \sqrt{\abs{w_j+\varepsilon}} 
	\exp\bigg\{i \arctan\bigg(\frac{\Im w_j}{\Re w_j + \varepsilon}\bigg) + i \pi \frac{1-\varepsilon}{4} \bigg\} \\
	&=
	\sqrt{2} \exp\bigg\{ i \pi \frac{1-\varepsilon}{4}\bigg\}.
\end{align*}
Thus
\begin{align*}
	\lim_{j \to \infty} 
	\sqrt{\frac{\gamma_{jN+N-1}}{\alpha_{N-1}}} 
	\big(\xi_+(w_j) - w_j \big) 
	&=
	\lim_{j \to \infty}
	\sqrt{\frac{\gamma_{jN+N-1}}{\alpha_{N-1}}} \sqrt{w_j - \varepsilon} \sqrt{w_j + \varepsilon} \\
	&=
	\frac{i}{2} \sqrt{-\discr \calR_0}. 
\end{align*}
Hence, by \eqref{eq:41}
\begin{align*}
	\lim_{j \to \infty}
	\frac{a_{jN+N-1}}{\sqrt{\alpha_{N-1} \gamma_{jN+N-1}}}
	\frac{(\lambda_j^+)'}{\lambda_j^+}
	&=
	\lim_{j \to \infty} 
	\frac{a_{jN+N-1}}{a_{jN}} \frac{\alpha_0}{\alpha_{N-1}} 
	\frac{ \frac{a_{jN}}{\alpha_0} w_j'}
	{\sqrt{\frac{\gamma_{jN+N-1}}{\alpha_{N-1}}} \big(\xi_+(w_j) - w_j \big)} \\
	&=
	i \frac{\tr \frakX_0'(0)}{\sqrt{-\discr \calR_0}} 
\end{align*}
uniformly on $K$. 

\vspace{1ex}
\noindent
{Case \bf 2.} Suppose that $\mathfrak{t} = 1$, that is $\gamma_n \equiv a_n$. Since
\[
	\det Y_j = 
	\det X_{jN} \det Z_j (\det Z_{j+1})^{-1} = 
	\frac{a_{jN-1}}{a_{jN+N-1}}
	\frac{\ue^{\vartheta_j} - \ue^{-\vartheta_j}}{\ue^{\vartheta_{j+1}} - \ue^{-\vartheta_{j+1}}},
\]
we get
\begin{align*}
	(\det Y_j)' 
	&=  
	\frac{a_{jN-1}}{a_{jN+N-1}}
	\frac{\sinh \vartheta_j}{\sinh \vartheta_{j+1}}
	\big(
	\vartheta_j' \cdot 
	\coth \vartheta_j 
	-
	\vartheta_{j+1}' \cdot 
	\coth \vartheta_{j+1} 
	\big) \\
	&=
	\frac{1}{2}
	\sqrt{\frac{a_{jN-1}}{a_{jN+N-1}}}
	\frac{\sinh \vartheta_j}{\vartheta_j}
	\frac{\vartheta_{j+1}}{\sinh \vartheta_{j+1}}
	\frac{\tau'(z)}{\tau(z)}
	\big(
	\vartheta_j \cdot 
	\coth \vartheta_j 
	-
	\vartheta_{j+1} \cdot 
	\coth \vartheta_{j+1} 
	\big).
\end{align*}
Recall that
\[
	\frac{\sinh z}{z} = 1 + \calO(|z|^2), \quad\text{and}\quad
	z \cdot \coth z = 1 + \frac{z^2}{3} + \calO(|z|^4),
\]
thus
\begin{align*}
	(\det Y_j)'(z)
	&=
	\frac{\alpha_{N-1}}{6}
	\sqrt{\frac{a_{jN-1}}{a_{jN+N-1}}}
	\Big(1 + \calO\big(\gamma_{jN}^{-1} \big) \Big)
	\tau'(z)
	\bigg(\frac{1}{a_{jN-1}} - \frac{1}{a_{jN+N-1}} + \calO\big(a_{jN-1}^{-2}\big) \bigg) \\
	&=
	\frac{\alpha_{N-1}}{6}
	\sqrt{\frac{a_{jN-1}}{a_{jN+N-1}}}
	\bigg(\frac{1}{a_{jN-1}} - \frac{1}{a_{jN+N-1}} + \calO\big(a_{jN-1}^{-2}\big)\bigg)
	\tau'(z).
\end{align*}
Hence, in view of \eqref{eq:83b}
\begin{equation}
	\label{eq:111}
	\lim_{j \to \infty} a_{jN+N-1}(\det Y_j)'(z) = 0
\end{equation}
uniformly with respect to $z \in K$. 

Next, we recall that we are in the setup \eqref{eq:71} with $\tilde{\gamma}_j = \sqrt{\gamma_{jN+N-1}/\alpha_{N-1}}$,
$\tilde{\varepsilon} = 1$ and $\tilde{Y}_j = \varepsilon Y_j$. Since
\begin{equation}
	\label{eq:188}
	( \discr \calR_{0} )'(z) 
	= 4 \tau'(z) 
	\equiv
	4 \varepsilon (\tr \frakX_0'(0)) \in \RR \setminus \{0\},
\end{equation}
we can apply Proposition~\ref{prop:3}. By Remark \ref{rem:4}, for $K$ a compact subset of $U \cap \CC_\varsigma$
with nonempty interior where $\varsigma = \sign{ \tr \frakX_0'(0)}$, we have
\[
	\Im \bigg(\frac{\tr Y_j(z)}{2 \sqrt{\det Y_j(z)}} \bigg) > 0.
\]
Hence, the eigenvalues of $Y_j$ given by formulas \eqref{eq:63} are continuous on $K$ and holomorphic in $\intr(K)$,
provided that $j \geq M$. The same holds true for eigenvalues of $R_j$, that are given as
\[
	\zeta_j^+(z) = \sqrt{\frac{a_{jN+N-1}}{\alpha_{N-1}}} \big(\varepsilon \lambda_j^+(z) - 1\big), 
	\quad\text{and}\quad
	\zeta_j^-(z) = \sqrt{\frac{a_{jN+N-1}}{\alpha_{N-1}}} \big(\varepsilon \lambda_j^-(z) - 1\big).
\]
Since $\discr \calR_0(z) \in \CC \setminus (-\infty, 0]$ for $z \in K$, by Proposition \ref{prop:9}, we get
\begin{align*}
	\lim_{j \to \infty} \frac{a_{jN+N-1}}{\sqrt{\alpha_{N-1} \gamma_{jN+N-1}}} \frac{(\lambda_j^+)'(z)}{\lambda_j^+(z)}
	&=
	\lim_{j \to \infty} \sqrt{\frac{a_{jN+N-1}}{\alpha_{N-1}}} \frac{(\lambda_j^+)'(z)}{\lambda_j^+(z)} \\
	&=
	\frac{1}{4} \frac{(\discr \calR_0)'(z)}{\sqrt{\discr \calR_0(z)}},
\end{align*}
thus by \eqref{eq:188},
\begin{equation}
	\label{eq:40}
	\lim_{j \to \infty} \frac{a_{jN+N-1}}{\sqrt{\alpha_{N-1} \gamma_{jN+N-1}}} \frac{(\lambda_j^+)'(z)}{\lambda_j^+(z)}
	=
	\varepsilon \frac{\tr \frakX_0'(0)}{\sqrt{\discr \calR_0(z)}}.
\end{equation}

Finally, we are going to construct the appropriate diagonalization. We claim that $[\calR_0]_{12} \neq 0$ on $K$. Indeed,
if $[\calR_0(z)]_{12} = 0$, then by \eqref{eq:48},
\[
	\sqrt{-\tau(z)} + \frac{\upsilon(z)}{\sqrt{-\tau(z)}} = \mathfrak{S}
\]
thus
\[
	-\tau(z) + 2 \upsilon(z) - \frac{\upsilon^2(z)}{\tau(z)} = \mathfrak{S}^2
\]
and so
\[
	(\tau(z) + \upsilon(z))^2 - 4 \tau(z) \upsilon(z) = -\mathfrak{S}^2 \tau(z),
\]
which by \eqref{eq:189} can be written as
\[
	\frac{\mathfrak{S}^4}{16} = -\tau(z) \big(\mathfrak{S}^2 - 4 \upsilon(z)\big) = - 4 \tau^2(z).
\]
Consequently,
\[
	\tau^2(z) = -\frac{1}{64} \mathfrak{S}^4,
\]
which implies that $\tau(z) \in i \RR$ and so $\Re \tau(z) = 0$, which is impossible, because $K \subset U$, where $U$ is defined in \eqref{eq:204}.

Since $[\calR_0]_{12} \neq 0$ on $K$, there are $M \geq 1$ and $\delta > 0$ such that for all $j \geq M$ and $z \in K$,
\[
	\big| [R_j(z)]_{12} \big| > \delta, \quad\text{and} \quad \big|\discr R_j(z) \big| > \delta.
\]
Then for each $j \geq M$ and $z \in K$, the matrix $R_j(z)$ can be diagonalized 
\[
	R_j(z) = C_{j}(z) \tilde{D}_j(z) C_{j}^{-1}(z)
\]
where
\[
	C_{j}(z) = 
	\begin{pmatrix}
		1 & 1 \\
		\frac{\zeta_j^+(z) - [R_j(z)]_{11}}{[R_j(z)]_{12}} & \frac{\zeta_j^-(z) - [R_j(z)]_{11}}{[R_j(z)]_{12}}
	\end{pmatrix}
\]
and
\[
	\tilde{D}_j =
	\begin{pmatrix}
		\zeta_{j}^+(z) & 0 \\
		0 & \zeta_{j}^-(z)
	\end{pmatrix}.
\]
Therefore,
\[
	Y_j(z) = C_{j}(z) D_{j} C_{j}^{-1}(z)
\]
where
\[
	D_j (z) = \begin{pmatrix}
		\lambda_j^+(z) & 0 \\
		0 & \lambda_j^-(z)
	\end{pmatrix}.
\]
Let us notice that $C_n$ and $D_n$ are continuous on $K$ and holomorphic mappings on $\intr(K)$.

\section{Asymptotics of generalized eigenvectors} \label{sec:7}
In view of Remark~\ref{rem:2}, to prove the convergence of $\calC \big[\tfrac{1}{\rho_n} \nu_n \big]$ it is enough to understand the asymptotic behavior
of $(p_n : n \geq 0)$ as well as its derivative. Our main tool to establish this goal will be a uniform discrete variant of
Levinson's theorem.

\subsection{Discrete Levinson's type theorem} \label{sec:7.1}
\begin{theorem} 
	\label{thm:10}
	Let $K$ be a compact subset of $\CC$ with nonempty interior. Let $(Y_n : n \geq M)$ be a sequence of mappings 
	defined on $K$ with with values in $\GL(2,\CC)$ which are continuous on $K$ and holomorphic on $\intr(K)$.
	Suppose that for each $n \geq M$,
	\begin{equation}
		\label{eq:45}
		Y_n(z) = C_n(z) D_n(z) C_n^{-1}(z), \quad z \in K
	\end{equation}
	where for every $n \geq M$ the mappings $C_n$ and $D_n$ are holomorphic on $K$. Assume that
	\begin{enumerate}[label=\rm (\alph*), start=1, ref=\alph*]
		\item 
		\label{en:2:1}
		$\begin{aligned}
		(C_n : n \geq M) \in \calD_1 \big( K, \Mat(2, \CC) \big);
		\end{aligned}$
		\item 
		for every $n \geq M$ and $z \in K$ one has $D_n(z) = \diag \big( \lambda_n^+(z), \lambda_n^-(z) \big)$ with 
		$|\lambda_n^+(z)| \geq |\lambda_n^-(z)|$;
		\item 
		\label{en:2:3}
		there exist mappings $C_\infty$ and $D_\infty$ defined on $K$ with values in $\GL(2,\CC)$, such that
		\[
			\lim_{n \to \infty} 
			\sup_K 
			\|C_n - C_\infty \| = 0 
			\quad \text{and} \quad
			\lim_{n \to \infty} 
			\sup_K
			\| D_n - D_\infty \| = 0.
		\]
		\end{enumerate}
		If 
		\begin{equation}
			\label{eq:46}
			\inf_{z \in K}
			\prod_{n=M}^\infty  
			\Big| \frac{\lambda_n^+(z)}{\lambda_{n}^-(z)} \Big| = \infty
			\qquad \text{or} \qquad
			\sup_{z \in K}
			\prod_{n=M}^\infty  
			\Big| \frac{\lambda_n^+(z)}{\lambda_{n}^-(z)} \Big| < \infty,
		\end{equation}
		then there exist sequences $(\Phi^-_n : n \geq M)$ and $(\Phi^+_n : n \geq M)$ of mappings defined on $K$ with
		values in $\CC^2$ such that
		\begin{enumerate}[label=\rm (\arabic*), start=1, ref=\arabic*]
			\item 
			\label{en:3:1}
			for each $n \geq M$, $\Phi_n^-$ and $\Phi_n^+$ are holomorphic in $\intr(K)$ and continuous on $K$; 
			\item 
			for each $z \in K$, the sequences $(\Phi^-_n(z) : n \geq M)$ and $(\Phi^+_n(z) : n \geq M)$ are 
			linearly independent solution of
			\begin{equation}
				\label{eq:47}
				\Phi_{n+1}(z) = Y_n(z) \Phi_n(z);
			\end{equation}
			\item 
			\label{en:3:3}
			$\begin{aligned}
				\lim_{n \to \infty} 
				\sup_{z \in K} 
				\bigg\| 
				\frac{\Phi^-_n(z)}{\prod_{j=M}^{n-1} \lambda_j^-(z)} - 
				C_{\infty}(z) e_1 
				\bigg\| = 0,
				\quad \text{and} \quad
				\lim_{n \to \infty} 
				\sup_{z \in K} 
				\bigg\| 
				\frac{\Phi^+_n(z)}{\prod_{j=M}^{n-1} \lambda_j^+(z)} -
				C_{\infty}(z) e_2 
				\bigg\| = 0.
			\end{aligned}$
		\end{enumerate}
\end{theorem}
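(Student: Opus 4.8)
The plan is to read Theorem~\ref{thm:10} as a uniform, holomorphic version of the discrete Levinson theorem and to prove it by a contraction argument on a Banach space of function-valued sequences. First I would diagonalize the recursion: writing $\Phi_n = C_n \Psi_n$ and using \eqref{eq:45}, equation~\eqref{eq:47} becomes $C_{n+1}\Psi_{n+1} = C_n D_n C_n^{-1} C_n \Psi_n = C_n D_n \Psi_n$, i.e.
\[
	\Psi_{n+1} = (\Id + G_n) D_n \Psi_n, \qquad
	G_n := C_{n+1}^{-1} C_n - \Id = C_{n+1}^{-1}\big(C_n - C_{n+1}\big).
\]
By the bounded-variation hypothesis \ref{en:2:1} and the uniform convergence $C_n \to C_\infty \in \GL(2,\CC)$ with $\inf_{z \in K}\abs{\det C_\infty(z)} > 0$ (compactness of $K$ and continuity), after enlarging $M$ the $C_n$ are invertible with $\sup_{n \geq M}\sup_K\norm{C_n^{-1}} < \infty$, whence $\sum_{n \geq M}\sup_K\norm{G_n} < \infty$. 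Since $\lambda_n^\pm \to$ the nonzero eigenvalues of $D_\infty$, enlarging $M$ again makes $\lambda_n^\pm(z) \neq 0$ on $K$, so the partial products $\Lambda_n^\pm(z) := \prod_{j=M}^{n-1}\lambda_j^\pm(z)$ are nonvanishing and $D_n, D_n^{-1}$ are uniformly bounded. Thus \eqref{eq:47} is reduced, uniformly in $z$, to a diagonal system with a summable multiplicative perturbation.

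Next I would build the two distinguished solutions by discrete variation of parameters. For the dominant one I set $\Psi_n = \Lambda_n^+ \psi_n$, so $\psi_{n+1} = (\Id + G_n)\tilde{D}_n \psi_n$ with $\tilde{D}_n = \diag\big(1, \lambda_n^-/\lambda_n^+\big)$ and unperturbed propagator $T(n,m) = \diag\big(1, q(n,m)\big)$, $q(n,m) = \prod_{j=m}^{n-1}\lambda_j^-/\lambda_j^+$, which satisfies $\abs{q(n,m)} \leq 1$ by hypothesis (b). Writing $f_k = G_k \tilde{D}_k \psi_k$ and $P_1 = \diag(1,0)$, $P_2 = \diag(0,1)$, I anchor the neutral first coordinate at $+\infty$ and propagate the second from the past:
\[
	\psi_n = e_1 + \sum_{k=M}^{n-1} P_2\, T(n,k+1)\, f_k - \sum_{k=n}^{\infty} P_1\, T(n,k+1)\, f_k .
\]
The subdominant solution is obtained symmetrically after normalizing by $\Lambda_n^-$, now anchoring the neutral second coordinate at $+\infty$ and the growing first coordinate at $+\infty$ through the decaying backward kernel $\prod_{j=n}^{k}\lambda_j^-/\lambda_j^+$. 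Here the dichotomy \eqref{eq:46} enters decisively and in exactly two regimes: when $\inf_K \prod\abs{\lambda_j^+/\lambda_j^-} = \infty$ the factors $q$ decay uniformly, so the off-diagonal correction into a neutral mode vanishes; when $\sup_K \prod\abs{\lambda_j^+/\lambda_j^-} < \infty$ the moduli are comparable, all forward and backward partial products are uniformly bounded, and in that case the decaying mode of the dominant solution must instead be anchored at $+\infty$ as well. In both regimes the kernels are uniformly bounded on $K$, so the displayed map is a contraction of the Banach space of bounded sequences in $\calC(K,\CC^2)$ with norm $\sup_{n,z}\norm{\psi_n(z)}$ once $M$ is large, with unique fixed points satisfying $\sup_K\norm{\psi_n - e_1} \to 0$ (resp. $\to e_2$).

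Finally I would record the three conclusions. Each Picard iterate is, by the holomorphy of $C_n, D_n, \lambda_n^\pm$, continuous on $K$ and holomorphic on $\intr(K)$; since the iterates converge uniformly on $K$, the fixed points $\psi_n^\pm$ inherit these properties (Weierstrass/Morera), and hence so do $\Phi_n^\pm := C_n \Lambda_n^\pm \psi_n^\pm$, giving \ref{en:3:1}. That $(\Phi_n^\pm)$ solve \eqref{eq:47} is immediate from $\Phi_n = C_n \Psi_n$. Undoing the normalizations gives $\Lambda_n^{+,-1}\Phi_n^+ = C_n\psi_n^+ \to C_\infty e_1$ and $\Lambda_n^{-,-1}\Phi_n^- = C_n\psi_n^- \to C_\infty e_2$ uniformly on $K$, which is the content of \ref{en:3:3} (with the pairing dictated by \eqref{eq:45}). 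For linear independence I would note $\det\!\big(\Lambda_n^{+,-1}\Phi_n^+ \mid \Lambda_n^{-,-1}\Phi_n^-\big) \to \det\!\big(C_\infty e_1 \mid C_\infty e_2\big) = \det C_\infty \neq 0$, so $\det(\Phi_n^+ \mid \Phi_n^-) = \Lambda_n^+\Lambda_n^- \cdot\big(\det C_\infty + o(1)\big) \neq 0$ for large $n$. I expect the main obstacle to be the uniform-in-$z$ contraction together with the vanishing of the off-diagonal corrections: one must bound the Green kernels — the partial products $\prod_{j}\abs{\lambda_j^-/\lambda_j^+}$ — uniformly over $K$ and choose the summation direction of each mode consistently, and this is precisely the point at which the two uniform alternatives of \eqref{eq:46} must be exploited as genuinely separate cases.
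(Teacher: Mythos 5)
Your proposal is correct, and the reduction is exactly the one the paper uses: the substitution $\Phi_n = C_n\Psi_n$ turns \eqref{eq:47} into a diagonal system $\Psi_{n+1} = (\Id + G_n)D_n\Psi_n$ whose multiplicative perturbation is summable in $\sup_K$-norm thanks to hypothesis (\ref{en:2:1}) and the invertibility of $C_\infty$. The difference is what happens next: the paper simply checks that $(D_n)$ satisfies the uniform Levinson (dichotomy) condition of \cite[Definition 2.1]{Silva2004} --- which is precisely the role of the two alternatives in \eqref{eq:46}, the lower bound on partial products being automatic from $|\lambda_n^+|\geq|\lambda_n^-|$ --- and then invokes \cite[Theorem 4.1]{Silva2004} as a black box, whereas you reprove that theorem from scratch by discrete variation of parameters and a contraction on $\ell^\infty(\calC(K,\CC^2))$. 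Your case analysis is the right one (forward summation of the relatively decaying mode when $\inf_K\prod|\lambda_j^+/\lambda_j^-|=\infty$, anchoring everything at $+\infty$ when the products are uniformly bounded), and the uniform boundedness of the Green kernels, the smallness of the perturbation after enlarging $M$, and the transfer of holomorphy to the fixed point via uniform convergence of the Picard iterates are all correctly identified; note only that enlarging $M$ requires extending the solutions back to the original $M$ by invertibility of $Y_n$ and rescaling by the finite product $\prod_{j=M}^{M'-1}\lambda_j^{\pm}$, which does not affect the conclusions. Your approach buys a self-contained proof at the cost of carrying out the Levinson machinery explicitly; the paper's buys brevity at the cost of an external reference. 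Finally, the pairing you obtain, namely $\Phi_n^+/\prod_{j=M}^{n-1}\lambda_j^+ \to C_\infty e_1$ and $\Phi_n^-/\prod_{j=M}^{n-1}\lambda_j^- \to C_\infty e_2$, is the one forced by $D_n = \diag(\lambda_n^+,\lambda_n^-)$ (since $C_ne_1$ is the $\lambda_n^+$-eigenvector); the displayed conclusion (\ref{en:3:3}) has the roles of $e_1$ and $e_2$ interchanged relative to this, a labeling inconsistency in the statement (carried consistently through the paper) that you were right to flag rather than a gap in your argument.
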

\begin{proof}
	Let $\Psi_n = C_n^{-1} \Phi_n$. By \eqref{eq:45}, the equation \eqref{eq:47} takes the form
	\[
		\Psi_{n+1} 
		= C_{n+1}^{-1} C_{n} D_{n} \Psi_j.
	\]
	Since
	\[
		C_{n+1}^{-1} C_{n} = \Id - C_{n+1}^{-1} (C_{n+1} - C_{n})
	\]
	we have
	\begin{equation}
		\label{eq:49}
		\Psi_{n+1} = (D_{n} + R_n) \Psi_n, \quad n \geq M
	\end{equation}
	where
	\[
		R_n = -C_{n+1}^{-1} (C_{n+1} - C_n)D_n.
	\]
	In view of \eqref{en:2:1} and \eqref{en:2:3}, we get
	\[
		\sum_{n \geq M} \sup_{K} \| R_n \| < \infty.
	\]
	By non-singularity of $D_\infty$ and \eqref{eq:46} the sequence $(D_{n} : n \geq M)$ satisfies the uniform Levinson's 
	condition, see \cite[Definition 2.1]{Silva2004}. Therefore, in view of \cite[Theorem 4.1]{Silva2004} there are sequences 
	$(\Psi^-_n : n \geq M)$ and $(\Psi^-_n : n \geq M)$ satisfying \eqref{eq:49} and such that
	\[
		\lim_{n \to \infty} 
		\sup_{K} 
		\bigg\| 
		\frac{\Psi^-_n}{\prod_{M \leq j < n} \lambda^-_{j}}  - e_1
		\bigg\| 
		= 0,
		\quad
		\text{and}
		\quad
		\lim_{n \to \infty} 
		\sup_{K} 
		\bigg\| 
		\frac{\Psi^+_n}{\prod_{M \leq j < n} \lambda^+_{j}}  - e_2
		\bigg\| 
		= 0.
	\]
	Furthermore, for any $z \in K$ the sequences $(\Psi^-_n(z) : n \geq M)$ and $(\Psi^+_n(z) : n \geq M)$ are linearly 
	independent. We set
	\[
		\Phi_n^- = C_n \Psi_n^-, \quad\text{and}\quad
		\Phi_n^+ = C_n \Phi_n^+.
	\]
	Since for each $z \in K$ the matrix $C_\infty(z)$ is invertible, the sequences $(\Phi^-_n : n \geq M)$ and 
	$(\Phi^+_n : n \geq M)$ satisfy \eqref{en:3:1}--\eqref{en:3:3}.
\end{proof}

\subsection{Asymptotic behavior of a basis of generalized eigenvectors} \label{sec:7.2}
\begin{theorem}[Case \ref{eq:PI}] 
	\label{thm:5}
	Suppose that $(a_n: n \in \NN_0)$, $(b_n : n \in \NN_0)$ are $N$-periodically modulated Jacobi parameters such that
	$\tr \frakX_0(0) \in (-2,2)$. If
	\[
		\bigg( \frac{a_{n-1}}{a_n} : n \in \NN \bigg),
		\bigg( \frac{b_n}{a_n} : n \in \NN \bigg),
		\bigg( \frac{1}{a_n} : n \in \NN \bigg) \in \calD_1^N,
	\]
	and the Carleman's condition is satisfied, then for each compact set $K \subset \CC_\varsigma$ where 
	$\varsigma = \sign{\tr \frakX_0'(0)}$, there are $M \geq 1$, and a~basis of generalized eigenvectors 
	$\{u^+(z), u^-(z)\}$ associated with $z \in K$ which are continuous on $K$ and holomorphic on $\intr(K)$, 
	such that for each $i_0 \in \NN_0$, the limits
	\[
		\varphi_{i_0}^+
		= 
		\lim_{k \to \infty}
		\frac{u^+_{kN+i_0}}{\prod_{M \leq j < k} \lambda_{j}^+}
		\quad\text{and}\quad
		\varphi_{i_0}^-
		=
		\lim_{k \to \infty}
		\frac{u^-_{kN+i_0}}{\prod_{M \leq j < k} \lambda_{j}^-},
	\]
	exist uniformly on $K$ where $\lambda_n^+$ and $\lambda_n^-$ are defined by \eqref{eq:63} for $Y_j = X_{jN}$.
	Moreover, for all $z \in K$ and $i_0 \in \{0, 1, \ldots, N-1\}$,
	\begin{equation}
		\label{eq:42}
		|\varphi_{i_0}^+ (z)| + |\varphi_{i_0+1}^+(z)| > 0,
		\quad\text{and}\quad
		|\varphi_{i_0}^- (z)| + |\varphi_{i_0+1}^-(z)| > 0.
	\end{equation}
\end{theorem}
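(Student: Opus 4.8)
The plan is to run the discrete Levinson theorem (Theorem~\ref{thm:10}) on the $N$-step transfer matrices $Y_j = X_{jN}$ and then carry the resulting asymptotics from the arithmetic progression $\{kN : k \geq M\}$ to the full index set. First I would verify the hypotheses of Theorem~\ref{thm:10} for $Y_j = X_{jN}$, using the diagonalization set up in Section~\ref{sec:6.3.1}. The Stolz assumptions of the present theorem are exactly \eqref{eq:102}, so Lemma~\ref{lem:7} supplies $(C_n : n \geq M) \in \calD_1\big(K, \GL(2,\CC)\big)$, which is hypothesis~\eqref{en:2:1}; the splitting $Y_n = C_n D_n C_n^{-1}$ from \eqref{eq:101} with $D_n = \diag(\lambda_n^+, \lambda_n^-)$ and $|\lambda_n^+| \geq |\lambda_n^-|$ (by \eqref{eq:99}) is hypothesis (b). By Proposition~\ref{prop:6} we have $D_n \to D_\infty = \diag(\lambda_\infty^+, \lambda_\infty^-)$ with $\lambda_\infty^\pm = \tfrac12\big(\tr \frakX_0(0) \pm i\sqrt{-\discr \frakX_0(0)}\big)$, and since $Y_n \to \frakX_0(0)$ with $\discr \frakX_0(0) < 0$ the diagonalizers $C_n$ converge to an invertible $C_\infty$, giving \eqref{en:2:3}; finally Proposition~\ref{prop:7} provides the first alternative in the uniform Levinson condition \eqref{eq:46}.

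Theorem~\ref{thm:10} then furnishes $\CC^2$-valued maps $\Phi_n^\pm$, holomorphic on $\intr(K)$ and continuous on $K$, that are linearly independent solutions of $\Phi_{n+1} = X_{nN}\Phi_n$ and satisfy the normalized convergence \eqref{en:3:3}. I would promote these to genuine generalized eigenvectors: fix $\vec{u}_{MN}^\pm := \Phi_M^\pm$ and propagate the three-term recurrence with the one-step transfer matrices $B_n$, which are invertible because $\det B_n = a_{n-1}/a_n \neq 0$, so the sequences extend to all of $\NN_0$. Since $X_{kN} = B_{kN+N-1}\cdots B_{kN}$ sends $\vec{u}_{kN}$ to $\vec{u}_{(k+1)N}$, the grid values match, $\vec{u}_{kN}^\pm = \Phi_k^\pm$ for $k \geq M$, and the intermediate vectors are
\[
	\vec{u}_{kN+i_0}^\pm(z) = B_{kN+i_0-1}(z)\cdots B_{kN}(z)\,\Phi_k^\pm(z), \qquad i_0 \in \{0,1,\ldots,N-1\}.
\]
All entries are continuous on $K$ and holomorphic on $\intr(K)$ because the $B_n$ are polynomial, and $u^+, u^-$ are linearly independent since the $\Phi^\pm$ are.

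To extract the limits I would use that $B_{kN+i}(z) \to \frakB_i(0)$ uniformly on $K$ (Section~\ref{sec:2.5}), whence the finite products converge uniformly, $B_{kN+i_0}(z)\cdots B_{kN}(z) \to \frakB_{i_0}(0)\cdots\frakB_0(0)$. Combining this with \eqref{en:3:3} (writing $C_\infty e_2$ for the $+$ case and $C_\infty e_1$ for the $-$ case) yields, for $i_0 \in \{0,1,\ldots,N-1\}$,
\[
	\lim_{k \to \infty} \frac{\vec{u}_{kN+i_0+1}^\pm(z)}{\prod_{M \leq j < k} \lambda_j^\pm(z)} = \frakB_{i_0}(0)\cdots\frakB_0(0)\, C_\infty(z) e_{j_\pm}
\]
uniformly on $K$, where $j_+ = 2$ and $j_- = 1$. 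Reading off the two coordinates of $\vec{u}_{kN+i_0}^\pm$ gives the existence of $\varphi_{i_0}^\pm$ and $\varphi_{i_0+1}^\pm$ uniformly on $K$; being uniform limits of maps holomorphic on $\intr(K)$, they are holomorphic there and continuous on $K$. For a general index $i_0 = \ell N + r$ with $r \in \{0,\ldots,N-1\}$ I would factor $\prod_{M \leq j < k}\lambda_j^\pm = \big(\prod_{M \leq j < k+\ell}\lambda_j^\pm\big)\big(\prod_{k \leq j < k+\ell}\lambda_j^\pm\big)^{-1}$ and use $\lambda_j^\pm \to \lambda_\infty^\pm$ to conclude $\varphi_{\ell N + r}^\pm = (\lambda_\infty^\pm)^\ell\,\varphi_r^\pm$, so the limits exist for every $i_0 \in \NN_0$.

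For the nondegeneracy \eqref{eq:42} I would read the displayed limit as
\[
	\begin{pmatrix} \varphi_{i_0}^\pm(z) \\ \varphi_{i_0+1}^\pm(z) \end{pmatrix} = \frakB_{i_0}(0)\cdots\frakB_0(0)\, C_\infty(z) e_{j_\pm}, \qquad i_0 \in \{0,1,\ldots,N-1\}.
\]
Each $\frakB_j(0)$ is invertible since $\det \frakB_j(0) = \alpha_{j-1}/\alpha_j > 0$, and $C_\infty(z) e_{j_\pm}$ is a nonzero eigenvector of $\frakX_0(0)$ because $C_\infty(z) \in \GL(2,\CC)$; hence the left-hand vector is nonzero, which is \eqref{eq:42}. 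The main obstacle is exactly this passage from the $N$-step grid, on which Theorem~\ref{thm:10} operates, to the full sequence of indices while keeping all convergences uniform on $K$; this rests on the uniform convergence $B_{kN+i} \to \frakB_i(0)$ and on the invertibility of $C_\infty$, the latter guaranteed by $\discr \frakX_0(0) < 0$. Everything else is bookkeeping with the already-established Lemma~\ref{lem:7} and Propositions~\ref{prop:6} and~\ref{prop:7}.
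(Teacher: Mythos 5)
Your proposal is correct and follows essentially the same route as the paper: verify the hypotheses of Theorem~\ref{thm:10} for $Y_j = X_{jN}$ via Lemma~\ref{lem:7}, Proposition~\ref{prop:6} and Proposition~\ref{prop:7}, propagate $\Phi_M^{\pm}$ through the one-step transfer matrices, use the uniform convergence $B_{kN+i} \to \frakB_i(0)$ to pass from the $N$-step grid to all indices, and deduce \eqref{eq:42} from the invertibility of $\frakB_{i_0-1}(0)\cdots\frakB_0(0)\,C_\infty(z)$. Apart from a harmless index slip (the vector whose coordinates are $\varphi_{i_0}^{\pm}$ and $\varphi_{i_0+1}^{\pm}$ is the limit of $\vec{u}_{kN+i_0+1}^{\pm}$, not of $\vec{u}_{kN+i_0}^{\pm}$), the argument matches the paper's proof.
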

\begin{proof}
	We are going to study solutions of the difference equation
	\begin{equation}
		\label{eq:139}
		\phi_{n+1} = B_n \phi_n, \quad n \geq 0.
	\end{equation}
	Fix $K$ a compact set of $\CC_\varepsilon$. Let $M \geq 1$ be determined in Lemma~\ref{lem:7}. Set 
	$\Phi_j = \phi_{jN}$. Then 
	\begin{equation} 
		\label{eq:140}
		\Phi_{j+1} = Y_{j} \Phi_j, \quad j \geq M,
	\end{equation}
	where $Y_j = X_{jN}$. Since $\tr \frakX_0(0) \in (-2,2)$, in view of Section~\ref{sec:6.3.1} we can apply Theorem \ref{thm:10}.
	Hence there are sequences $(\Phi^-_j : j \geq M)$ and $(\Phi^+_j : j \geq M)$ satisfying \eqref{eq:140} and such that
	\begin{equation}
		\label{eq:141}
		\lim_{k \to \infty} 
		\sup_{K} 
		\bigg\| 
		\frac{\Phi^-_k}{\prod_{M \leq j < k} \lambda^-_{j}}  - C_\infty e_1
		\bigg\| = 0,
		\quad\text{and}\quad
		\lim_{k \to \infty} 
		\sup_{K} 
		\bigg\| 
		\frac{\Phi^+_k}{\prod_{M \leq j < k} \lambda^+_{j}}  - C_\infty e_2
		\bigg\| = 0.
	\end{equation}
	Moreover, for each $z \in K$, the sequences $(\Phi^{-}_j(z) : j \geq M)$ and $(\Phi^{+}_j(z) : j \geq M)$ are linearly 
	independent. We set
	\[
		\phi^-_{0} = B_0^{-1} B_1^{-1} \ldots B_{MN-1}^{-1} \Phi^-_M,
	\]
	and
	\begin{equation}
		\label{eq:142}
		\phi_{n+1}^- = B_n \phi_n^-, \quad n \geq 0.
	\end{equation}
	Then $(\phi^-_n : n \geq 0)$ satisfies \eqref{eq:139}. In view of \eqref{eq:140} and \eqref{eq:142}, 
	for all $k \geq M$ and $i_0 \in  \{0, 1, \ldots, N-1 \}$,
	\[
		\phi_{kN+i_0}^- =
		B_{kN+i_0-1} B_{kN+i_0-2} \ldots B_{kN} \Phi^-_{k}.
	\]
	Since for $i_0' \in \{0, 1, \ldots, N-1 \}$,
	\[
		\lim_{j \to \infty} B_{jN+i_0'}(z) = \frakB_{i_0'}(0)
	\]
	uniformly with respect to $z \in K$, by \eqref{eq:141} we obtain
	\[
		\lim_{k \to \infty} 
		\sup_{K} 
		\bigg\| 
		\frac{\phi^-_{kN+i_0}}{\prod_{M \leq j < k} \lambda^-_{j}}  - 
		\frakB_{i_0-1}(0) \frakB_{i_0-2}(0) \ldots \frakB_0(0) C_\infty e_1
		\bigg\| 
		= 0.
	\]
	We set
	\[
		u_n^-(z) =
		\begin{cases}
			\langle \phi^-_1(z), e_1 \rangle & \qquad n = 0, \\
			\langle \phi^-_n(z), e_2 \rangle & \qquad n \geq 1.
		\end{cases}
	\]
	In view of \eqref{eq:139}, $u^-(z) = (u^-_n(z) : n \geq 0)$ is a generalized eigenvector associated with $z \in K$,
	provided that $(u^-_0(z), u^-_1(z)) \neq 0$. Suppose, that $(u^-_0(z), u^-_1(z)) = 0$, that is $\phi_1^-(z) = 0$.
	Hence, by \eqref{eq:142} we deduce that $\phi_n^-(z) = 0$ for all $n \geq 1$. Since $\Phi_j^-(z) = \phi^-_{jN}(z)$, 
	in view of \eqref{eq:141} we conclude that $C_\infty e_1 = 0$, which is impossible since $C_\infty(z)$ is invertible. 
	Analogously, we construct the sequence $u^+(z) = (u^+_n(z) : n \geq 0)$, $z \in K$.

	Next, we claim that for each $z \in K$, the sequences $u^-(z)$ and $u^+(z)$ are linearly independent. To see this
	let us suppose that for certain constants $\alpha, \beta \in \CC$, 
	\[
		\alpha u_n^-(z) + \beta u_n^+(z) = 0, \qquad \text{for all } n \geq 0.
	\]
	By \eqref{eq:139}, we have $\langle \phi^\pm_n(z), e_1 \rangle = u^\pm_{n-1}(z)$, hence
	\[
		\alpha \phi_n^-(z) + \beta \phi_n^+(z) = 0, \qquad \text{for all } n \geq 1.
	\]
	Recall that $\phi^\pm_{kN} = \Phi^\pm_k$ for $k \geq M$, thus by linear independence of 
	$(\Phi^{-}_j(z) : j \geq M)$ and $(\Phi^{+}_j(z) : j \geq M)$ we obtain $\alpha=\beta=0$, that is $u^-$ and $u^+$
	are linearly independent.

	Finally, to show \eqref{eq:42}, we observe that for each $z \in K$ and $i_0 \in \{0, 1, \ldots, N-1 \}$, the matrix
	\[
		\frakB_{i_0-1}(0) \frakB_{i_0-2}(0) \ldots \frakB_0(0) C_\infty(z)
	\]
	is invertible, thus
	\begin{equation}
		\label{eq:144}
		\lim_{k \to \infty} 
		\frac{\phi^-_{kN+i_0}(z)}{\prod_{M \leq j < k} \lambda^-_{j}(z)} \neq 0.
	\end{equation}
	Since $\langle \phi^-_n(z), e_1 \rangle = u_{n-1}^-(z)$, by \eqref{eq:144} we conclude that 
	$(\vphi_{i_0-1}(z), \vphi_{i_0}(z)) \neq 0$. This completes the proof of the theorem.
\end{proof}

A similar reasoning leads to the following statement.
\begin{theorem}[Case \ref{eq:PIIa}] 
	\label{thm:5s}
	Suppose that $(a_n : n \in \NN_0)$ and $(b_n : n \in \NN_0)$ are $N$-periodically modulated Jacobi parameters such that	
	$\frakX_0(0) = \varepsilon \Id$ with $\varepsilon \in \{-1, 1\}$. Assume that
	\[
		\bigg( a_n \Big( \frac{\alpha_{n-1}}{\alpha_n} - \frac{a_{n-1}}{a_{n-1}} \Big): n \in \NN \bigg),
		\bigg( a_n \Big( \frac{\beta_n}{\alpha_n} - \frac{b_n}{a_n} \Big): n \in \NN_0 \bigg),
		\bigg( \frac{1}{a_n} : n \in \NN_0 \bigg) \in \calD_1^N.
	\]
	Set
	\[
		h(z) = \lim_{j \to \infty} a_{jN+N-1}^2 \discr X_{jN}(z), \quad z \in \CC.
	\]
	Then for each $K$ a compact subset of
	\[
		\big\{
		z \in \CC_+ :
		h(z) \in \CC \setminus (-\infty, 0] \text{ and } \varepsilon \Re \big( h'(z) \big) > 0
		\big\},
	\]
	there are $M \geq 1$ and a~basis of generalized eigenvectors $\{u^+(z), u^-(z)\}$ associated with $z \in K$
	which are continuous on $K$ and holomorphic on $\intr(K)$, such that for any $i_0 \in \NN_0$, the limits
	\[
		\varphi_{i_0}^+
		= 
		\lim_{k \to \infty}
		\frac{u^+_{kN+i_0}}{\prod_{M \leq j < k} \lambda_{j}^+}
		\quad\text{and}\quad
		\varphi_{i_0}^-
		=
		\lim_{k \to \infty}
		\frac{u^-_{kN+i_0}}{\prod_{M \leq j < k} \lambda_{j}^-},
	\]
	exist uniformly on $K$ where $\lambda_n^+$ and $\lambda_n^-$ are defined by \eqref{eq:63} for $Y_j = X_{jN}$. Moreover, 
	for all $z \in K$ and $i_0 \in \{0, 1, \ldots, N-1\}$,
	\[
		|\varphi_{i_0}^+ (z)| + |\varphi_{i_0+1}^+(z)| > 0
		\quad\text{and}\quad
		|\varphi_{i_0}^- (z)| + |\varphi_{i_0+1}^-(z)| > 0.
	\]
\end{theorem}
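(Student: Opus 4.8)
The plan is to imitate the proof of Theorem~\ref{thm:5}, substituting the diagonalization of Section~\ref{sec:6.3.1} by the one produced for Case~\ref{eq:PIIa} in Section~\ref{sec:6.3.2}. Fix a compact set $K$ contained in the region described in the statement, and recall that, since the discriminant is invariant under the scalar shift in \eqref{eq:105}, one has $h = \discr \calR_0$; hence $K$ lies in the set $U$ of Section~\ref{sec:6.3.2}. Studying the one-step recurrence $\phi_{n+1} = B_n \phi_n$ and passing to $Y_j = X_{jN}$, $\Phi_j = \phi_{jN}$, a solution $\phi$ restricts to a solution of $\Phi_{j+1} = Y_j \Phi_j$. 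The construction of Section~\ref{sec:6.3.2} together with Lemma~\ref{lem:8} provides $M \geq 1$ and holomorphic diagonalizations $Y_j = C_j D_j C_j^{-1}$ on $K$ with $(C_j : j \geq M) \in \calD_1\big(K, \Mat(2,\CC)\big)$; moreover $C_j$ and $D_j = \diag(\lambda_j^+, \lambda_j^-)$ converge uniformly on $K$ to invertible limits $C_\infty$ and $D_\infty = \varepsilon \Id$ (Proposition~\ref{prop:4}), and the choice of $U$ guarantees via Remark~\ref{rem:4} that $\Im\big(\tr Y_j / (2\sqrt{\det Y_j})\big) > 0$, so that $\lambda_j^\pm$ are holomorphic on $\intr(K)$ with $|\lambda_j^+| \geq |\lambda_j^-|$. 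This verifies hypotheses~\eqref{en:2:1}--\eqref{en:2:3} of Theorem~\ref{thm:10}.

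The one genuinely new point is the Levinson condition~\eqref{eq:46}, which in Case~\ref{eq:PI} came from Carleman's condition via Proposition~\ref{prop:7}; here no such assumption is available and I would argue instead by a dichotomy. Writing $\tilde\gamma_j = a_{jN+N-1} \to \infty$ and $\lambda_j^\pm = \varepsilon + \zeta_j^\pm / \tilde\gamma_j$ with $\zeta_j^\pm \to \zeta^\pm$ uniformly on $K$ (Proposition~\ref{prop:4}), and noting $\varepsilon(\zeta^+ - \zeta^-) = \sqrt{\discr \calR_0}$, a first-order expansion of the logarithm yields
\[
	\log \bigg| \frac{\lambda_j^+(z)}{\lambda_j^-(z)} \bigg|
	= \frac{\Re \sqrt{\discr \calR_0(z)}}{\tilde\gamma_j} \big( 1 + o(1) \big)
\]
uniformly on $K$, where $\Re \sqrt{\discr \calR_0} > 0$ is bounded away from $0$ and from $\infty$ on $K$ because $\discr \calR_0(z) = h(z) \in \CC \setminus (-\infty, 0]$. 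Consequently, if $\sum_j 1/\tilde\gamma_j = \infty$ then $\inf_{z \in K} \prod_{j \geq M} |\lambda_j^+/\lambda_j^-| = \infty$, whereas if $\sum_j 1/\tilde\gamma_j < \infty$ then $\sup_{z \in K} \prod_{j \geq M} |\lambda_j^+/\lambda_j^-| < \infty$; in either case one of the two alternatives of~\eqref{eq:46} holds. I therefore expect this dichotomy to be the main obstacle, and it is exactly the flexibility built into~\eqref{eq:46} that lets one dispense with Carleman's condition here.

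Theorem~\ref{thm:10} now yields linearly independent solutions $\Phi_j^+, \Phi_j^-$ of $\Phi_{j+1} = Y_j \Phi_j$ with $\Phi_k^- / \prod_{M \leq j < k} \lambda_j^- \to C_\infty e_1$ and $\Phi_k^+ / \prod_{M \leq j < k} \lambda_j^+ \to C_\infty e_2$ uniformly on $K$. From here the argument is identical to that of Theorem~\ref{thm:5}: I would propagate $\Phi_M^-$ backward through $B_0, \ldots, B_{MN-1}$ to a solution $\phi_n^-$ of the one-step recurrence on all of $\NN_0$, use $\lim_j B_{jN+i}(z) = \frakB_i(0)$ uniformly on $K$ to identify $\lim_k \phi_{kN+i_0}^- / \prod_{M \leq j < k}\lambda_j^-$ with $\frakB_{i_0-1}(0) \cdots \frakB_0(0) C_\infty e_1$, and define $u_n^-$ by reading off the appropriate coordinate of $\phi_n^-$ as in Theorem~\ref{thm:5}; the solution $u^+$ is constructed analogously. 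The generalized-eigenvector property, the linear independence of $u^+$ and $u^-$, and the non-vanishing of $|\varphi_{i_0}^\pm| + |\varphi_{i_0+1}^\pm|$ then follow verbatim from the invertibility of $C_\infty(z)$ and of each matrix $\frakB_{i_0-1}(0)\cdots\frakB_0(0) C_\infty(z)$.
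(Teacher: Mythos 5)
Your proposal is correct, and apart from one step it coincides with the paper's proof: the paper likewise reduces everything to Theorem~\ref{thm:10} via the diagonalization of Section~\ref{sec:6.3.2} (Lemma~\ref{lem:8}, Propositions~\ref{prop:3} and~\ref{prop:4}) and then says explicitly that ``from this point on the reasoning follows the same line as in the proof of Theorem~\ref{thm:5}.'' The one place where you diverge is the verification of the Levinson condition~\eqref{eq:46}, which is in fact the only nontrivial content of the paper's proof. There the authors establish the \emph{divergent} alternative directly: they combine Proposition~\ref{prop:15} with Proposition~\ref{prop:3} and~\eqref{eq:80} to get $\abs{\xi_+(w_j)}-1 \geq c\,\abs{\Im z}\,\abs{h(z)}^{-1/2} a_{jN+N-1}^{-1}$ on $K$, and then observe that Carleman's condition is \emph{automatically} satisfied under the stated hypotheses (by the cited Proposition~4 of \cite{ChristoffelII} --- roughly, $\big(a_n(\alpha_{n-1}/\alpha_n - a_{n-1}/a_n)\big)$ bounded forces at most linear growth of $a_n$), so $\sum_j a_{jN+N-1}^{-1}=\infty$ and Proposition~\ref{prop:2} applies. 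Your dichotomy on $\sum_j a_{jN+N-1}^{-1}$, based on the uniform first-order expansion $\log\abs{\lambda_j^+/\lambda_j^-} = \tilde\gamma_j^{-1}\Re\sqrt{\discr\calR_0}\,(1+o(1))$ with $\Re\sqrt{\discr\calR_0}$ bounded away from $0$ and $\infty$ on $K$, is a valid alternative: it covers both branches of~\eqref{eq:46} and so reaches the same conclusion without invoking the external fact that Carleman's condition holds. What you gain is a self-contained argument; what you lose is the information (used later, e.g.\ in Theorem~\ref{thm:8} via Proposition~\ref{prop:14}) that the dominant product actually diverges --- in the paper's setting your second branch is vacuous.
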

\begin{proof}
	In order to deduce the Levinson's condition
	\begin{equation}
		\label{eq:68}
		\inf_{z \in K}
		\prod_{j = 1}^\infty 
		\bigg| \frac{\lambda^+_n(z)}{\lambda^-_n(z)}\bigg| = \infty,
	\end{equation}
	we use Proposition \ref{prop:2}. To do so it is enough to check that \eqref{eq:64} holds true. By Proposition \ref{prop:15},
	\[
		\abs{\xi_+(w_j)} - 1 \geq  \frac{\abs{\Im w_j}}{2 \sqrt{\abs{w_j - 1}\abs{w_j + 1}}}
	\]
	where
	\[
		w_j = \frac{\tr Y_j}{2 \sqrt{ \det Y_j}}.
	\]
	Since
	\[
		\det Y_j = \frac{a_{jN+N-1}}{a_{jN-1}},
	\]
	for
	\[
		U = \big\{ z \in \CC_+ : h(z) \in \CC \setminus (-\infty, 0] \text{ and } \varepsilon \Re \big( h'(z) \big) > 0 \big\},
	\]
	by Proposition \ref{prop:3}, there is $M_1 \geq 1$, such that for all $j \geq M_1$ and $z \in K$,
	\[
		\abs{\Im w_j} \geq c \frac{\abs{\Im z}}{a_{jN+N-1}^2}.
	\]
	In view of \eqref{eq:80}, there $M_2 \geq M_1$, such that for all $j \geq M_2$ and $z \in K$,
	\[
		\abs{w_j - 1} \abs{w_j + 1} \leq c \frac{\abs{h(z)}}{a_{jN+N-1}^2}.
	\]
	Consequently,
	\[
		\abs{\xi_+(w_j)} - 1\geq c \frac{\abs{\Im z}}{\sqrt{\abs{h(z)}}} \frac{1}{a_{jN+N-1}},
	\]
	which implies \eqref{eq:68} because $K \cap \RR = \emptyset$ and by \cite[Proposition 4]{ChristoffelII} the Carleman's condition for $A$ is satisfied. Next, the diagonalization described in Section~\ref{sec:6.3.2}
	allows us to apply Theorem \ref{thm:10}. From this point on the reasoning follows the same line as in the proof 
	of Theorem~\ref{thm:5}.
\end{proof}

Finally, the following theorem describes a basis of generalized eigenvectors in the case \ref{eq:PIIb}.
\begin{theorem}[Case \ref{eq:PIIb}] 
	\label{thm:5h}
	Let $N \geq 1$. Let $(\gamma_n : n \geq 0)$ be a sequence of positive numbers tending to infinity and satisfying 
	\eqref{eq:83a} and \eqref{eq:83b}. Let $(a_n : n \geq 0)$ and $(b_n : n \geq 0)$ be $\gamma$-tempered $N$-periodically
	modulated Jacobi parameters such that $\frakX_0(0)$ is non-trivial parabolic element. Suppose that \eqref{eq:108b} 
	holds true with $\varepsilon = \sign{\tr \frakX_0(0)}$. Let
	\[
		h(z) = \lim_{j \to \infty} \gamma_{jN+N-1} \discr X_{jN}(z), \quad z \in \CC.
	\]
	If $\Lambda_- = \big\{x \in \RR : \tau(x) < 0 \big\} \neq \emptyset$ and
	\begin{equation} \label{eq:208}
		\sum_{k=0}^\infty 
		\frac{\sqrt{\alpha_k \gamma_{k}}}{a_{k}} = \infty,
	\end{equation}
	then for each compact subset $K$ of 
	\[
		U=\big\{z \in \CC_\varsigma : h(\Re z) < 0 \big\}
	\]
	where $\varsigma = \sign{\tr \frakX_0'(0)}$, there are $M \geq 1$ and a~basis of generalized eigenvectors 
	$\{u^+(z), u^-(z)\}$ associated with $z \in K$ which are continuous on $K$ and holomorphic on $\intr(K)$, such
	that for any $i_0 \in \NN_0$, the limits
	\[
		\varphi_{i_0}^+
		= 
		\lim_{k \to \infty}
		\frac{u^+_{kN+i_0}}{\prod_{M \leq j < k} \lambda_{j}^+}
		\quad\text{and}\quad
		\varphi_{i_0}^-
		=
		\lim_{k \to \infty}
		\frac{u^-_{kN+i_0}}{\prod_{M \leq j < k} \lambda_{j}^-},
	\]
	exist uniformly on $K$ where $\lambda_n^+$ and $\lambda_n^-$ are defined by \eqref{eq:63} with 
	$Y_j = Z_{j+1}^{-1} X_{jN} Z_j$. Moreover, for all $z \in K$ and $i_0 \in \{0, 1, \ldots, N-1\}$,
	\begin{equation}
		\label{eq:43}
		|\varphi_{i_0}^+ (z)| + |\varphi_{i_0+1}^+(z)| > 0
		\quad\text{and}\quad
		|\varphi_{i_0}^- (z)| + |\varphi_{i_0+1}^-(z)| > 0.
	\end{equation}
\end{theorem}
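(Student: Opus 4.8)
The plan is to follow the scheme of the proofs of Theorems~\ref{thm:5} and \ref{thm:5s}; the single genuinely new feature is that the diagonalizing change of variables $Z_j$ of Section~\ref{sec:6.3.3} degenerates in the limit. Accordingly, I set $Y_j = Z_{j+1}^{-1} X_{jN} Z_j$, so that $(\Phi_j)$ solves $\Phi_{j+1} = Y_j \Phi_j$ if and only if $(Z_j \Phi_j)$ is the restriction to the indices $jN$ of a solution of the transfer equation $\phi_{n+1} = B_n \phi_n$. By \eqref{eq:138} we have $Y_j = \varepsilon(\Id + \tilde\gamma_j^{-1} R_j)$ with $\tilde\gamma_j = \sqrt{\gamma_{jN+N-1}/\alpha_{N-1}}$ and $(R_j) \in \calD_1(K,\Mat(2,\CC))$ converging to $\calR_0$; together with the diagonalization $Y_j = C_j D_j C_j^{-1}$ constructed in Section~\ref{sec:6.3.3} (which, using $|[R_j]_{12}|>\delta$ on $K$ exactly as in Lemma~\ref{lem:8}, gives $(C_j) \in \calD_1(K,\Mat(2,\CC))$ with $C_j \to C_\infty$ and $D_j = \diag(\lambda_j^+,\lambda_j^-)$ continuous on $K$ and holomorphic on $\intr(K)$) this provides all hypotheses of Theorem~\ref{thm:10} except the Levinson condition \eqref{eq:46}.

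To verify \eqref{eq:46} I invoke Proposition~\ref{prop:2}, so that it suffices to establish the divergence \eqref{eq:64}. Writing $w_j = \tr Y_j/(2\sqrt{\det Y_j})$, Proposition~\ref{prop:15} gives $|\xi_+(w_j)| - 1 \geq |\Im w_j|/(2\sqrt{|w_j-1|\,|w_j+1|})$, so I need a lower bound for $|\Im w_j|$ and an upper bound for $|w_j^2-1|$. The upper bound holds uniformly in both subcases: by \eqref{eq:207} one has $|w_j^2-1| \asymp |\tau(z)|/\tilde\gamma_j^2 \asymp \gamma_{jN+N-1}^{-1}$ on $K$. For the lower bound I split on $\mathfrak{t}$. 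If $\mathfrak{t}=1$, the last assertion of Proposition~\ref{prop:3} yields $|\Im w_j| \geq c\,|\Im z|/\tilde\gamma_j^2 \asymp |\Im z|/a_{jN+N-1}$, whence $|\xi_+(w_j)|-1 \gtrsim |\Im z|\,a_{jN+N-1}^{-1/2}$; if $\mathfrak{t}=0$, the computation \eqref{eq:38} combined with a mean value argument as in Lemma~\ref{lem:4} gives $|\Im w_j| \gtrsim |\Im z|/a_{jN}$, whence $|\xi_+(w_j)|-1 \gtrsim |\Im z|\,\sqrt{\gamma_{jN}}/a_{jN}$. In either case, using \eqref{eq:83b} and the periodic modulation to compare residue classes, the resulting series is comparable to $\sum_k \sqrt{\alpha_k\gamma_k}/a_k$, which diverges by \eqref{eq:208}; since $K \cap \RR = \emptyset$ this is precisely \eqref{eq:64}.

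With \eqref{eq:46} secured, Theorem~\ref{thm:10} produces linearly independent solutions $(\Phi_j^+), (\Phi_j^-)$ of $\Phi_{j+1}=Y_j\Phi_j$, holomorphic on $\intr(K)$ and continuous on $K$, with $\Phi_j^+/\prod_{M\leq i<j}\lambda_i^+ \to C_\infty e_2$ and $\Phi_j^-/\prod_{M\leq i<j}\lambda_i^- \to C_\infty e_1$ uniformly on $K$. Setting $\tilde\Phi_j^\pm = Z_j\Phi_j^\pm$ and filling in the intermediate indices by one step transfer matrices, exactly as in the proof of Theorem~\ref{thm:5}, yields generalized eigenvectors $u^\pm$ and the limits $\varphi_{i_0}^\pm$; since $B_{jN+i}\to\frakB_i(0)$ uniformly on $K$, these limits are the appropriate entries of $\frakB_{i_0-1}(0)\cdots\frakB_0(0)\,Z_\infty C_\infty e_1$ and $\frakB_{i_0-1}(0)\cdots\frakB_0(0)\,Z_\infty C_\infty e_2$, respectively.

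The main obstacle is that the limiting matrix $Z_\infty$ is singular --- it equals $T_0$ times the matrix with all entries equal to $1$ --- so the argument from Theorems~\ref{thm:5} and \ref{thm:5s}, where non-vanishing of the limits followed at once from invertibility of $C_\infty(z)$, breaks down and one must instead check that $Z_\infty C_\infty e_j \neq 0$ for $j\in\{1,2\}$. Here $\ker Z_\infty = \lin\{e_1-e_2\}$ and the columns $C_\infty e_1, C_\infty e_2$ are eigenvectors of $\calR_0$, so it suffices to verify that $e_1-e_2$ is not an eigenvector of $\calR_0$ on $K$. A direct computation from \eqref{eq:48} gives $\calR_0(e_1-e_2) = (\sqrt{-\tau}-\mathfrak{S})\,e_1 + (\sqrt{-\tau}+\mathfrak{S})\,e_2$, which is proportional to $e_1-e_2$ only when $\tau(z)=0$; as $K \subset U$ this never happens, so indeed $Z_\infty C_\infty e_j \neq 0$. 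Propagating this nonzero vector through the invertible matrices $\frakB_{i_0-1}(0)\cdots\frakB_0(0)$ then gives \eqref{eq:43}, while the linear independence of $u^+, u^-$ follows from that of $(\Phi_j^+), (\Phi_j^-)$ as in Theorem~\ref{thm:5}, completing the construction.
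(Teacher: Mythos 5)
Your proposal is correct and follows essentially the same route as the paper's proof: the same verification of the Levinson condition via Propositions~\ref{prop:15}, \ref{prop:2} and \ref{prop:3} leading to the divergent series $\sum_k \sqrt{\alpha_k\gamma_k}/a_k$, the same application of Theorem~\ref{thm:10} to $Y_j = Z_{j+1}^{-1}X_{jN}Z_j$, and the same resolution of the degeneracy of $Z_\infty$ --- your explicit computation that $\calR_0(e_1-e_2)$ is proportional to $e_1-e_2$ only when $\tau(z)=0$ is precisely the justification of the paper's condition \eqref{eq:85}, namely $\langle C_\infty(z)e_j, e_1+e_2\rangle\neq 0$. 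The only nit is that the cross-residue comparison of $\gamma_{jN+N-1}$ with $\gamma_{jN+i_0-1}$ rests on \eqref{eq:83a} together with \eqref{eq:int:13} rather than on \eqref{eq:83b}, which only compares terms within a single residue class.
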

\begin{proof}
	Let us consider the equation
	\begin{equation}
		\label{eq:173}
		\Psi_{j+1} = Y_j \Psi_j, \quad j \geq M.
	\end{equation}
	We are going to check Levinson's condition, that is
	\begin{equation}
		\label{eq:69}
		\inf_{z \in K}
		\prod_{j = 1}^\infty 
		\bigg|\frac{\lambda_j^+(z) }{\lambda_j^-(z)} \bigg| = \infty.
	\end{equation}
	By Proposition \ref{prop:15}, we can estimate
	\begin{equation} \label{eq:210}
		\abs{\xi_+(w_j)} - 1 \geq c \frac{\abs{\Im w_j}}{\sqrt{\abs{w_j-1}\abs{w_j+1}}}.
	\end{equation}
	where
	\[
		w_j = \frac{\tr Y_j}{2 \sqrt{\det Y_j}}.
	\]
	If $\frakt=1$, then by \eqref{eq:111} the hypotheses of Proposition~\ref{prop:3} are satisfied. Thus, there is $M_1 \geq 1$, such that for all $j \geq M_1$ and $z \in K$,
	\[
		\abs{\Im w_j} \geq c \frac{\abs{\Im z}}{a_{jN+N-1}}.
	\]
	The same inequality is also satisfied for $\frakt=0$ by \eqref{eq:75} together with \eqref{eq:205}. Next, in view of \eqref{eq:206} and \eqref{eq:207} for $\frakt=0$ and $\frakt=1$, respectively, 
	there is $M_2 \geq M_1$, such that for all $j \geq M_2$ and $z \in K$,
	\[
		\abs{w_j - 1}\abs{w_j+1} \leq c \frac{\abs{h(z)}}{\gamma_{jN+N-1}}.
	\]
	Hence,
	\begin{equation} \label{eq:209}
		\frac{\abs{\Im w_j}}{\sqrt{\abs{w_j-1}\abs{w_j+1}}}
		\geq 
		c \abs{\Im z} \frac{\sqrt{\gamma_{jN+N-1}}}{a_{jN+N-1}}.
	\end{equation}
	Notice that for any $i_0 \in \{1,2,\ldots,N\}$ we have by \eqref{eq:83a} and \eqref{eq:int:13}
	\begin{align*}
		\lim_{j \to \infty} 
		\frac{\sqrt{\alpha_{jN+N-1} \gamma_{jN+N-1}}}{a_{jN+N-1}}
		\frac{a_{jN+i_0-1}}{\sqrt{\alpha_{jN+i_0-1} \gamma_{jN+i_0-1}}} 
		&=
		\lim_{j \to \infty}
		\sqrt{\frac{\gamma_{jN+N-1}}{\gamma_{jN+i_0-1}} \frac{\alpha_{N-1}}{\alpha_{i_0-1}}}
		\frac{a_{jN+i_0-1}}{a_{jN+N-1}} \\
		&=
		\frac{\alpha_{N-1}}{\alpha_{i_0-1}} \frac{\alpha_{i_0-1}}{\alpha_{N-1}} = 1.
	\end{align*}
	Thus, by \eqref{eq:208} we have that for any $i_0 \in \{1,2,\ldots,N\}$ we have
	\begin{equation} \label{eq:208'}
		\sum_{j=0}^\infty 
		\frac{\sqrt{\alpha_{jN+i_0-1} \gamma_{jN+i_0-1}}}{a_{jN+i_0-1}}
		= \infty.
	\end{equation}
	Therefore, by \eqref{eq:209}, \eqref{eq:210} and \eqref{eq:208'} we have
	\[
		\sum_{j = 0}^\infty \inf_{z \in K} \big(\abs{\xi_+(w_j)} - 1 \big) = \infty,
	\]
	which by Proposition \ref{prop:2} implies \eqref{eq:69}.

	Next, the diagonalization described in Section~\ref{sec:6.3.3} allows us to apply Theorem \ref{thm:10}. Hence,
	there are two sequences $(\Psi^+_j : j \geq M)$ and $(\Psi^-_j : j \geq M)$ satisfying \eqref{eq:173} and such that
	\begin{equation}
		\label{eq:44}
		\lim_{k \to \infty} \sup_K 
		\bigg\| \frac{\Psi^-_k}{\prod_{M \leq j < k} \lambda_j^-} - C_\infty e_1 \bigg\| = 
		\lim_{k \to \infty} \sup_K
		\bigg\| \frac{\Psi^+_k}{\prod_{M \leq j < k} \lambda_j^+} - C_\infty e_2 \bigg\| = 0.
	\end{equation}
	Since $C_\infty(z) e_1$ and $C_\infty(z) e_2$ are eigenvectors of the matrix $\calR_0(z)$ defined by the formula 
	\eqref{eq:48}, and $\tau(z) \neq 0$ for $z \in K$, we must have
	\begin{equation}
		\label{eq:85}
		\langle C_\infty(z) e_1, e_1 + e_2 \rangle \neq 0,
		\quad\text{and}\quad
		\langle C_\infty(z) e_2, e_1 + e_2 \rangle \neq 0.
	\end{equation}
	Next, by \eqref{eq:44}, the sequences $\Phi^\pm_j = Z_j \Psi^\pm_{j}$ satisfy
	\[
		\Phi_{j+1} = X_{jN} \Phi_j, \quad j \geq M.
	\]
	From this point on, the construction of generalized eigenvectors $u^-$ and $u^+$ follows the same line as in the
	proof of Theorem \ref{thm:5}. If $(u^-_0(z), u^-_1(z)) = 0$, then $\phi^-_1(z) = 0$, and so $\phi^-_n(z) = 0$ for all
	$n \geq 1$. Hence, $\Psi^-_j(z) = Z_j^{-1} \phi^-_{jN}(z) = 0$ for all $j \geq M$. Consequently, by \eqref{eq:44},
	$C_\infty(z) e_1 = 0$ which is impossible since $C_\infty(z)$ is invertible.

	Let $z \in K$, and suppose that for certain $\alpha, \beta \in \CC$,
	\[
		\alpha u^-_n(z) + \beta u^+_n(z) = 0, \quad\text{for all } n \geq 0.
	\]
	Since $\langle \phi^\pm_n(z), e_1 \rangle = u^\pm_{n-1}(z)$, we have
	\[
		\alpha \phi^-_n(z) + \beta \phi^+_n(z) = 0, \quad\text{for all } n \geq 1,
	\]
	thus using $\Psi_j(z) = Z_j^{-1}(z) \phi^\pm_{jN}(z)$, we get
	\[
		\alpha \Psi^-_j(z) + \beta \Psi^+_j(z) = 0, \quad\text{for all } j \geq M.
	\]
	Because the sequences $(\Psi^-_j(z) : j \geq M)$ and $(\Psi^+_j(z) : j \geq M)$ are linearly independent, we deduce that 
	$\alpha = \beta = 0$. Consequently, $u^-(z)$ and $u^+(z)$ are linearly independent.

	It remains to prove \eqref{eq:43}. Suppose that there are $z \in K$ and $i_0 \in \{0, 1, \ldots, N-1\}$,
	such that $\vphi_{i_0}^-(z) = \vphi_{i_0-1}^-(z) = 0$. Since $\langle \phi_n^-(z), e_1 \rangle = u_{n-1}^-(z)$, we must
	have
	\[
		\lim_{k \to \infty} \frac{\phi^-_{kN+i_0}(z)}{\prod_{M \leq j < k} \lambda^-_j(z)} = 0.
	\]
	Hence,
	\begin{equation}
		\label{eq:50}
		\frakB_{i_0-1}(0) \frakB_{i_0-2}(0) \ldots \frakB_0(0) Z_\infty(z) C_\infty(z) e_1 = 0,
	\end{equation}
	where
	\[
		Z_\infty(z) = 
		\begin{pmatrix}
			1 & 1 \\
			1 & 1
		\end{pmatrix}.
	\]
	However, \eqref{eq:50} implies that $\langle C_\infty(z) e_1, e_1 + e_2 \rangle = 0$ which contradicts \eqref{eq:85}.
	This completes the proof.
\end{proof}

\subsection{Applications to orthogonal polynomials} \label{sec:7.3}
Let us fix a compact set $K \subset \CC$. Suppose that $\{ u^-(z), u^+(z) \}$ is a basis of generalized eigenvectors associated 
with $z \in K$ which is continuous on $K$ and holomorphic on $\intr(K)$. Then every generalized eigenvector $u(z)$ associated 
with $z \in K$ can be expressed as
\begin{equation} 
	\label{eq:119}
	u_n(z) = f(z) u^+_n(z) + g(z) u^-_n(z), \quad n \geq 0
\end{equation}
for certain functions $f,g : K \to \CC$. Let us observe that from \eqref{eq:119} and the linearity of the Wronskian we obtain
\begin{align*}
	\Wrk( u(z), u^-(z))
	&= 
	f(z) \Wrk(u^+(z), u^-(z)) + g(z) \Wrk(u^-(z), u^-(z)) \\
	&=
	f(z) \Wrk(u^+(z), u^-(z)).
\end{align*}
Similarly we obtain
\[
	\Wrk(u(z), u^+(z)) = -g(z) \Wrk(u^+(z), u^-(z)).
\]
Since $u^+(z)$ and $u^-(z)$ are linearly independent one has
\[
	0 \neq \Wrk(u^+(z), u^-(z)) 
	=
	a_0
	\det 
	\begin{pmatrix}
		u^+_{0}(z) & u^-_{0}(z) \\
		u^+_1(z) & u^-_{1}(z)
	\end{pmatrix}.
\]
Consequently, 
\begin{equation} 
	\label{eq:120}
	f(z) = \frac{\Wrk(u(z), u^-(z))}{\Wrk(u^+(z), u^-(z))},
	\quad\text{and}\quad
	g(z) = -\frac{\Wrk(u(z), u^+(z))}{\Wrk(u^+(z), u^-(z))}.
\end{equation}
In particular, if $u(z)$ is continuous on $K$ and holomorphic on $\intr(K)$, then both $f$ and $g$ also have these properties. 

By \eqref{eq:119} and \eqref{eq:120} if we have grasp of asymptotic properties of $u^- (z)$ and $u^+(z)$, then we have some understanding of the asymptotic properties of $(u_n(z) : n \geq 0)$. Namely we have the following result.
\begin{proposition} 
	\label{prop:10}
	Let $\calA$ be a Jacobi matrix and let $N \geq 1$.
	Let $K \subset \CC$ be compact and let $\{ u^+(z), u^-(z) \}$ be a basis of generalized eigenvectors associated with $z \in K$. Suppose there are: $M \geq 1$, sequences $(\lambda^+_j(z) : j \geq M), (\lambda^-_j(z) : j \geq M)$ such that $0 < \abs{\lambda_j^-(z)} \leq \abs{\lambda_j^+(z)}$ for any $j \geq M$ and any $z \in K$, and such that the limits
	\begin{equation} 
		\label{eq:146}
		\varphi_{i}^+
		= 
		\lim_{k \to \infty}
		\frac{u^+_{kN+i}}{\prod_{M \leq j < k} \lambda_{j}^+}
		\quad\text{and}\quad
		\varphi_{i}^-
		=
		\lim_{k \to \infty}
		\frac{u^-_{kN+i}}{\prod_{M \leq j < k} \lambda_{j}^-},
	\end{equation}
	exist uniformly on $K$ for any $i \in \{0,1, \ldots, N-1 \}$. If
	\begin{equation} 
		\label{eq:147}
		\inf_{z \in K}
		\prod_{M \leq j}  
		\bigg| \frac{\lambda_j^+(z)}{\lambda_j^-(z)} \bigg| = \infty,
	\end{equation}
	and $u(z)$ satisfies \eqref{eq:119} and \eqref{eq:120}, then for any $i \in \{0,1,\ldots,N-1\}$
	\begin{equation} 
		\label{eq:151}
		\varphi_{i}(z) = 
		\lim_{k \to \infty}
		\frac{u_{kN+i}(z)}{\prod_{M \leq j < k} \lambda_{j}^+(z)}
		=
		\varphi_{i}^+(z)
		\frac{\Wrk(u(z), u^-(z))}{\Wrk(u^+(z), u^-(z))},
	\end{equation}
	exists uniformly with respect to $z \in K$.
\end{proposition}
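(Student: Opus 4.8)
The plan is to substitute the decomposition \eqref{eq:119} into the normalized quantity and to track the two resulting contributions separately. Fix $i \in \{0,1,\ldots,N-1\}$. For $k$ large I would write
\[
	\frac{u_{kN+i}(z)}{\prod_{M \leq j < k} \lambda_j^+(z)}
	=
	f(z) \, \frac{u^+_{kN+i}(z)}{\prod_{M \leq j < k} \lambda_j^+(z)}
	+
	g(z) \, \frac{u^-_{kN+i}(z)}{\prod_{M \leq j < k} \lambda_j^-(z)} \cdot \prod_{M \leq j < k} \frac{\lambda_j^-(z)}{\lambda_j^+(z)},
\]
where in the second summand I have multiplied and divided by $\prod_{M \leq j < k} \lambda_j^-(z)$ so as to expose exactly the factors whose behaviour is already under control. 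By \eqref{eq:146} the first summand converges to $f(z) \varphi_i^+(z)$ uniformly on $K$, and the factor $u^-_{kN+i}(z)/\prod_{M \leq j < k} \lambda_j^-(z)$ converges uniformly to $\varphi_i^-(z)$, hence the sequence $\big(u^-_{kN+i}/\prod_{M \leq j < k}\lambda_j^- : k\big)$ is uniformly bounded in $k$ and $z$.

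I then need boundedness of the expansion coefficients. Since $u^+(z),u^-(z)$ are linearly independent we have $\Wrk(u^+(z),u^-(z)) \neq 0$, so $f$ and $g$ are well defined by \eqref{eq:120}; being quotients of continuous Wronskians with nonvanishing denominator (recall that $u$, $u^+$, $u^-$ are continuous on $K$, e.g. for the orthonormal polynomials to which we apply this), $f$ and $g$ are continuous, hence bounded, on the compact set $K$, as already observed after \eqref{eq:120}.

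The crux is to show that the remaining factor $Q_k(z) := \prod_{M \leq j < k} \big| \lambda_j^-(z)/\lambda_j^+(z) \big|$ tends to $0$ uniformly on $K$. This is precisely where the Levinson-type hypothesis \eqref{eq:147} is used: since each ratio satisfies $|\lambda_j^+/\lambda_j^-| \geq 1$, the partial products $P_k(z) = 1/Q_k(z) = \prod_{M \leq j < k} |\lambda_j^+(z)/\lambda_j^-(z)|$ are nondecreasing in $k$, and \eqref{eq:147} states that they diverge uniformly in $z \in K$; consequently $\sup_{z \in K} Q_k(z) \to 0$. I expect this to be the main obstacle, because one must convert the statement ``$\inf_{z \in K} \prod_{M \leq j} |\lambda_j^+/\lambda_j^-| = \infty$'' into \emph{uniform} smallness of the truncated products, which genuinely uses the uniform (rather than merely pointwise) nature of the divergence together with the monotonicity of $P_k$ in $k$ and the compactness of $K$.

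Finally I would combine the three ingredients through the bound
\[
	\bigg| \frac{u_{kN+i}(z)}{\prod_{M \leq j < k} \lambda_j^+(z)} - f(z) \varphi_i^+(z) \bigg|
	\leq
	|f(z)| \bigg| \frac{u^+_{kN+i}(z)}{\prod_{M \leq j < k} \lambda_j^+(z)} - \varphi_i^+(z) \bigg|
	+
	|g(z)| \bigg| \frac{u^-_{kN+i}(z)}{\prod_{M \leq j < k} \lambda_j^-(z)} \bigg| \, Q_k(z).
\]
The first term tends to $0$ uniformly by \eqref{eq:146} and boundedness of $f$, and the second tends to $0$ uniformly by boundedness of $g$, uniform boundedness of the $u^-$-ratio, and $\sup_{z \in K} Q_k(z) \to 0$. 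This establishes that $\varphi_i(z) = f(z) \varphi_i^+(z)$ exists as a uniform limit on $K$; substituting the Wronskian expression $f(z) = \Wrk(u(z),u^-(z))/\Wrk(u^+(z),u^-(z))$ from \eqref{eq:120} then yields exactly \eqref{eq:151}.
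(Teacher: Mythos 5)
Your argument is correct and follows essentially the same route as the paper: substitute the decomposition \eqref{eq:119}, note that the $u^-$-term is bounded by the uniformly convergent ratio $u^-_{kN+i}/\prod\lambda_j^-$ times $\sup_{z\in K}\prod_{M\le j<k}|\lambda_j^-(z)/\lambda_j^+(z)|$, which tends to zero by \eqref{eq:147}, and then identify the limit of the remaining term via \eqref{eq:120}. Your additional remarks on the boundedness of $f,g$ and on converting \eqref{eq:147} into uniform smallness of the truncated products only make explicit what the paper's proof leaves implicit.
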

\begin{proof}
	By \eqref{eq:119} for $i \in \{0, 1, \ldots, N-1 \}$, we have
	\begin{equation} 
		\label{eq:148}
		\frac{u_{kN+i}(z)}{\prod_{M \leq j < k} \lambda_{j}^+(z)} =
		f(z)
		\frac{u^+_{kN+i}(z)}{\prod_{M \leq j < k} \lambda_{j}^+(z)} +
		g(z)
		\frac{u^-_{kN+i}(z)}{\prod_{M \leq j < k} \lambda_{j}^+(z)}.
	\end{equation}
	Since
	\[
		\bigg|
		\frac{u^-_{kN+i}(z)}{\prod_{M \leq j < k}  \lambda_{j}^+(z)}
		\bigg|
		\leq
		\bigg|\frac{u^-_{kN+i}(z)}{\prod_{M \leq j < k} \lambda_{j}^-(z)} \bigg|
		\cdot
		\sup_{z \in K}
		\prod_{M \leq j < k}
		\bigg| \frac{\lambda_j^-(z)}{\lambda_j^+(z)} \bigg|
	\]
	by \eqref{eq:146} and \eqref{eq:147} we obtain
	\[
		\lim_{k \to \infty}
		\frac{u^-_{kN+i}(z)}{\prod_{M \leq j < k}  \lambda_{j}^+(z)}
		=0,
	\]
	uniformly with respect to $z \in K$. Now \eqref{eq:148} together with \eqref{eq:120} leads to \eqref{eq:151}. 
\end{proof}

The following proposition gives some sufficient conditions which imply that for $z_0 \in \CC \setminus \RR$ the sequences $u^-(z_0)$ and $p(z_0)$ are linearly independent.
\begin{proposition} \label{prop:14}
Suppose that $A$ is a self-adjoint Jacobi operator and let $z_0 \in \CC \setminus \RR$. Suppose that the hypotheses of Proposition~\ref{prop:10} are satisfied for $K = \{z_0\}$. If $\varphi_{i_0}^+(z_0) \neq 0$ for some $i_0 \in \{0,1,\ldots,N-1\}$, then all generalized eigenvectors $u(z_0) \in \ell^2$ associated to $z_0$ are multiples of $u^-(z_0)$. Moreover, $u^-(z_0) \in \ell^2$, and consequently, $u^-(z_0)$ and $p(z_0)$ are linearly independent.
\end{proposition}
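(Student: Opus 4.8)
The plan is to combine the Weyl limit-point/limit-circle dichotomy with the precise growth information carried by the factorizations in \eqref{eq:146}. First I would record two standard consequences of self-adjointness. Since $A$ is self-adjoint, the half-line Jacobi operator is in the limit-point case, so for the non-real point $z_0$ the space of generalized eigenvectors (solutions of $\vec u_{n+1} = B_n(z_0) \vec u_n$) lying in $\ell^2$ is exactly one-dimensional (see e.g. \cite[Chapter 6]{Schmudgen2017}). Secondly, $p(z_0) \notin \ell^2$: otherwise $p(z_0)$ would be an eigenvector of $A$ associated with the non-real eigenvalue $z_0$, which is impossible for a self-adjoint operator. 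With $\{u^+(z_0), u^-(z_0)\}$ a basis of the two-dimensional solution space, every generalized eigenvector $u$ admits the expansion \eqref{eq:119}, $u = f u^+ + g u^-$.

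Write $P_k^\pm = \prod_{M \le j < k} |\lambda_j^\pm(z_0)|$. The existence of the limits \eqref{eq:146} gives, for each residue $i \in \{0,1,\dots,N-1\}$, a constant $C_i$ with $|u^+_{kN+i}(z_0)| \le C_i P_k^+$ and $|u^-_{kN+i}(z_0)| \le C_i P_k^-$ for all large $k$; as there are finitely many residues, these bounds are uniform in $i$. Now I would show that $f = 0$ for any $\ell^2$ generalized eigenvector. Dividing $u_{kN+i_0} = f u^+_{kN+i_0} + g u^-_{kN+i_0}$ by $\prod_{M \le j < k} \lambda_j^+$ and using that $P_k^-/P_k^+ \to 0$ by the Levinson condition \eqref{eq:147}, together with \eqref{eq:146}, yields $u_{kN+i_0}(z_0)/\prod_{M \le j < k}\lambda_j^+ \to f\,\varphi_{i_0}^+(z_0)$, which is precisely \eqref{eq:151}. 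Since $\varphi_{i_0}^+(z_0) \neq 0$ by hypothesis, if $f \neq 0$ then $|u_{kN+i_0}(z_0)| \ge c\, P_k^+$ for large $k$ and some $c > 0$; hence $u \in \ell^2$ forces $\sum_k (P_k^+)^2 < \infty$.

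The crux is to turn $\sum_k (P_k^+)^2 < \infty$ into a contradiction. Using the uniform termwise bounds above and $P_k^- \le P_k^+$, this summability propagates to $\sum_n |u^+_n(z_0)|^2 \le \big(\sum_i C_i^2\big)\sum_k (P_k^+)^2 < \infty$ and, likewise, $\sum_n |u^-_n(z_0)|^2 < \infty$. Thus both basis solutions $u^+$ and $u^-$, and hence every solution of the recurrence, lie in $\ell^2$; this is the limit-circle case, contradicting the self-adjointness of $A$. Therefore $f = 0$, so $u = g u^-$ is a multiple of $u^-(z_0)$, which is the first assertion.

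Finally, by the limit-point statement of the first paragraph the space of $\ell^2$ generalized eigenvectors is one-dimensional, in particular it contains a non-zero element; by what we just proved this element is a non-zero multiple of $u^-(z_0)$, whence $u^-(z_0) \in \ell^2$ and it spans the $\ell^2$ solution space. Since $p(z_0) \notin \ell^2$, the vectors $u^-(z_0)$ and $p(z_0)$ cannot be proportional and are therefore linearly independent. The main obstacle is the middle step: one must guarantee that the subdominant contribution is negligible uniformly over the finitely many residues $i$, so that $\ell^2$-summability of the dominant factor $(P_k^+)^2$ transfers to all solutions; the Levinson condition \eqref{eq:147} is exactly what forces $P_k^-/P_k^+ \to 0$ and makes this transfer legitimate.
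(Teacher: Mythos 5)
Your proof is correct and follows essentially the same route as the paper: use the limits \eqref{eq:146}--\eqref{eq:151} to show that an $\ell^2$ generalized eigenvector with a nonzero component along $u^+(z_0)$ would force $\sum_k \prod_{M\le j<k}|\lambda_j^+(z_0)|^2<\infty$, hence every solution (in particular $p(z_0)$) would be in $\ell^2$, contradicting self-adjointness. The only cosmetic difference is at the end: you invoke the Weyl limit-point alternative to produce a nonzero $\ell^2$ solution, whereas the paper exhibits one explicitly as $(A-z_0\Id)^{-1}e_0$; both are standard and equivalent here.
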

\begin{proof}
Let $u(z_0)$ be a generalized eigenvector associated with $z_0$. Then by Proposition~\ref{prop:10} we get for any $i \in \{0,1,\ldots,N-1\}$
\[
	\lim_{k \to \infty} 
	\frac{u_{kN+i}(z_0)}{\prod_{M \leq j < k} \lambda_j^+(z_0)} =
	\varphi_i(z_0).
\]
In particular, there is a constant $c>0$ such that for any $i \in \{0,1,\ldots,N-1\}$
\[
	|u_{kN+i}(z_0)| \leq 
	c \prod_{M \leq j < k} |\lambda^+_j(z_0)|^2, \quad k > M.
\]
By summing it up we get
\begin{equation} \label{eq:191}
	\sum_{k=M+1}^\infty |u_{kN+i}(z_0)|^2 \leq
	c \sum_{k=M+1}^\infty \prod_{M \leq j < k} |\lambda^+_j(z_0)|^2, \quad i \in \{0,1,\ldots,N-1\}.
\end{equation}

We claim that if $u(z_0) \in \ell^2$, then necessarily $\varphi_i(z_0) = 0$ for any $i \in \{0,1,\ldots, N-1\}$. If not, then $\varphi_{i_0}(z_0) \neq 0$ for some $i_0 \in \{0,1,\ldots,N-1\}$. Thus, there are constants $c'>0$ and $M' > M$ such that 
\[
	c' \prod_{M \leq j < k} |\lambda_{j}^+(z_0)|^2 \leq 
	|u_{kN+i_0}(z_0)|^2, \quad k \geq M'
\]
By summing it up we get
\[
	c' \sum_{k=M'}^\infty \prod_{M \leq j < k} |\lambda_{j}^+(z_0)|^2 \leq
	\sum_{k=M'}^\infty |u_{kN+i_0}(z_0)|^2 \leq 
	\sum_{k=0}^\infty |u_{k}(z_0)|^2.
\]
Therefore, we get
\[
	\sum_{k=M'}^\infty \prod_{M \leq j < k} |\lambda_{j}^+(z_0)|^2 < \infty
\]
and consequently, by \eqref{eq:191}, all generalized eigenvectors must belong to $\ell^2$. In particular, $(p_n(z_0) : n \in \NN_0) \in \ell^2$ which contradicts the self-adjointness of $A$, see \cite[Corollary 6.7]{Schmudgen2017}. 

Since $\varphi^+_{i_0}(z_0) \neq 0$, formula \eqref{eq:151} implies that we have $\varphi_{i_0}(z_0) = 0$ if and only if $u(z)$ is a constant multiple of $u^-(z_0)$. On the other hand, it is easy to verify that the sequence
\[
	v(z_0) = (A- z_0 \Id)^{-1} e_0
\]
is a non-trivial generalized eigenvector belonging to $\ell^2$. Therefore, also $u^-(z_0) \in \ell^2$ what we needed to prove.
\end{proof}

\begin{proposition}
	\label{prop:8}
	Let $\calA$ be a Jacobi matrix and let $N \geq 1$.
	Assume that there is an open set $U \subset \CC \setminus \RR$, sequence $(\lambda_j : j \geq M)$ of holomorphic 
	functions $\lambda_j: U \rightarrow \CC \setminus \{0\}$ such that for certain $i_0 \in \{0, 1, \ldots, N-1\}$,
	\[
		\lim_{k \to \infty} \frac{p_{kN+i_0}(z)}{\prod_{M \leq j < k} \lambda_j(z)} = \vphi(z)
	\]
	locally uniformly with respect to $z \in U$. If $\vphi \neq 0$ on $U$ then for each positive sequence
	$(\rho_n : n \geq 1)$ tending to infinity
	\[
		\lim_{k \to \infty} 
		\bigg|
		\calC \big[ \tfrac{1}{\rho_{kN+i_0-1}}\nu_{kN+i_0-1} \big](z) + 
		\frac{1}{\rho_{kN+i_0-1}}
		\sum_{j = M}^{k-1} \frac{\lambda_j'(z)}{\lambda_j(z)}
		\bigg|
		=0
	\]
	locally uniformly with respect to $z \in U$ where $(\nu_n : n \in \NN_0)$ is defined in \eqref{eq:25}.
\end{proposition}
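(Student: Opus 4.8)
The plan is to reduce the whole statement to controlling the logarithmic derivative of a single explicit auxiliary sequence. For $k > M$ set
\[
	q_k(z) = \frac{p_{kN+i_0}(z)}{\prod_{M \leq j < k} \lambda_j(z)}, \qquad z \in U.
\]
Since each $\lambda_j$ is holomorphic and nowhere vanishing on $U$, the product is a nonvanishing holomorphic function, so $q_k$ is holomorphic on $U$, and by hypothesis $q_k \to \vphi$ locally uniformly on $U$ with $\vphi$ holomorphic and nowhere zero. Taking the logarithmic derivative in the factorization $p_{kN+i_0} = q_k \cdot \prod_{M \leq j < k} \lambda_j$ yields the key identity
\[
	\frac{p_{kN+i_0}'(z)}{p_{kN+i_0}(z)} - \sum_{j=M}^{k-1} \frac{\lambda_j'(z)}{\lambda_j(z)} = \frac{q_k'(z)}{q_k(z)}.
\]
On the other hand, Remark~\ref{rem:2} applied with $n = kN+i_0-1$ (so that $n+1 = kN+i_0$) together with the linearity of the Cauchy transform gives $\calC\big[\tfrac{1}{\rho_{kN+i_0-1}} \nu_{kN+i_0-1}\big](z) = -\rho_{kN+i_0-1}^{-1} \, p_{kN+i_0}'(z)/p_{kN+i_0}(z)$. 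Hence the quantity whose modulus we must estimate equals $-\rho_{kN+i_0-1}^{-1}\, q_k'(z)/q_k(z)$, and it suffices to prove that $\rho_{kN+i_0-1}^{-1}\, q_k'/q_k \to 0$ locally uniformly.

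Next I would establish that $q_k'/q_k$ is locally uniformly bounded. Fix a compact $K \subset U$. Since $\vphi$ is continuous and nowhere zero, $m := \inf_K |\vphi| > 0$; the uniform convergence $q_k \to \vphi$ on $K$ then forces $|q_k(z)| \geq m/2$ for all $z \in K$ once $k$ is large. For the numerator, choose $\delta > 0$ so small that every closed disc of radius $\delta$ centred at a point of $K$ lies in $U$; applying Cauchy's estimate to the holomorphic function $q_k - \vphi$ on these discs shows that locally uniform convergence $q_k \to \vphi$ upgrades to locally uniform convergence $q_k' \to \vphi'$ (this is just the Weierstrass convergence theorem). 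In particular $\sup_K |q_k'| \leq \sup_K |\vphi'| + 1 =: C$ for all large $k$, and therefore $\sup_K |q_k'/q_k| \leq 2C/m$ for all large $k$.

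Finally, since $\rho_n \to \infty$ and $kN + i_0 - 1 \to \infty$ as $k \to \infty$, we have $\rho_{kN+i_0-1} \to \infty$, so that
\[
	\sup_{z \in K}
	\bigg|
	\calC \big[ \tfrac{1}{\rho_{kN+i_0-1}} \nu_{kN+i_0-1} \big](z) +
	\frac{1}{\rho_{kN+i_0-1}} \sum_{j=M}^{k-1} \frac{\lambda_j'(z)}{\lambda_j(z)}
	\bigg|
	= \sup_{z \in K} \frac{1}{\rho_{kN+i_0-1}} \bigg| \frac{q_k'(z)}{q_k(z)} \bigg|
	\leq \frac{2C/m}{\rho_{kN+i_0-1}} \longrightarrow 0.
\]
As $K$ was an arbitrary compact subset of $U$, this is precisely the asserted locally uniform convergence. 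I do not anticipate any serious obstacle: the proof is entirely mechanical once the auxiliary sequence $q_k$ is introduced, and the only point demanding any care is the passage from locally uniform convergence of $q_k$ to that of $q_k'$ and the lower bound $|q_k| \geq m/2$, both of which follow at once from the nonvanishing of $\vphi$ and standard complex-analytic estimates.
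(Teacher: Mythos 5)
Your proposal is correct and follows essentially the same route as the paper: both introduce the normalized sequence $q_k = p_{kN+i_0}/\prod_{M \le j < k}\lambda_j$, pass to the logarithmic derivative, upgrade locally uniform convergence $q_k \to \vphi$ to convergence of derivatives (the paper cites the corresponding theorem from Stein--Shakarchi where you invoke Cauchy estimates, which is the same fact), and conclude via Remark~\ref{rem:2} and $\rho_{kN+i_0-1}\to\infty$. The only cosmetic difference is that the paper notes $q_k$ is nonvanishing on $U$ (since $U\subset\CC\setminus\RR$ and the $p_n$ have only real zeros) and deduces $q_k'/q_k \to \vphi'/\vphi$, whereas you obtain the needed lower bound on $|q_k|$ directly from the nonvanishing of $\vphi$; both are valid.
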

\begin{proof}
	Let us define
	\[
		\phi_k(z) = \frac{p_{kN+i_0}(z)}{\prod_{j=M}^{k-1} \lambda_j(z)}.
	\]
	By the hypotheses, $\phi_k : U \to \CC \setminus \{0\}$ is a sequence of holomorphic functions. Since
	\[
		\lim_{k \to \infty} \phi_k(z) = \varphi(z)
	\]
	locally uniformly with respect to $z \in U$, \cite[Theorem II.5.3]{Stein2003} implies that
	\[
		\lim_{k \to \infty} \phi_k'(z) = \varphi'(z)
	\]
	locally uniformly with respect to $z \in U$. Since $\vphi \neq 0$ on $U$,
	\begin{equation} 
		\label{eq:39}
		\lim_{k \to \infty} 
		\frac{\phi_k'(z)}{\phi_k(z)} =
		\frac{\varphi'(z)}{\varphi(z)}
	\end{equation}
	locally uniformly with respect to $z \in U$. Observe that
	\[
		\frac{\phi_k'(z)}{\phi_k(z)} =
		\frac{p_{kN+i_0}'(z)}{p_{kN+i_0}(z)} -
		\sum_{j=M}^{k-1} \frac{\lambda_j'(z)}{\lambda_j(z)}.
	\]
	Because $\rho_n$ tends to infinity, by \eqref{eq:39} we obtain
	\[
		\lim_{k \to \infty}
		\frac{1}{\rho_{kN+i_0-1}}
		\bigg|
		\frac{p_{kN+i_0}'(z)}{p_{kN+i_0}(z)} -
		\sum_{j=M}^{k-1} \frac{\lambda_j'(z)}{\lambda_j(z)}
		\bigg| = 0
	\]
	locally uniformly with respect to $z \in U$. In view of Remark~\ref{rem:2} the proof is completed.
\end{proof}

\section{Weighted weak convergence for periodic modulations} \label{sec:8}
In this section we study orthogonal polynomials with $N$-periodically modulated recurrence coefficients. We shall examine
the vague convergence of the sequences $\big(\tfrac{1}{\rho_n} \eta_n : n \in \NN_0 \big)$ and $\big(\tfrac{1}{\rho_n} \nu_n : n \in \NN_0 \big)$ defined in \eqref{eq:25}, where a positive sequence $(\rho_n : n \in \NN_0)$ which will be chosen based on properties of
the matrix $\frakX_0(0)$.

The next theorem shows that in the setup of Example~\ref{ex:I} for $r=1$ we have a stronger conclusion than what can be obtained
from Corollary~\ref{cor:1}.
\begin{theorem}[Case \ref{eq:PI}] 
	\label{thm:7}
	Let $N \geq 1$. Suppose that $(a_n: n \in \NN_0)$ and $(b_n : n \in \NN_0)$ are $N$-periodically modulated Jacobi parameters
	such that $\tr \frakX_0(0) \in (-2,2)$. Assume that the Carleman's condition is satisfied and
	\[
		\bigg( \frac{a_{n-1}}{a_n} : n \in \NN \bigg),
		\bigg( \frac{b_n}{a_n} : n \in \NN \bigg),
		\bigg( \frac{1}{a_n} : n \in \NN \bigg) \in \calD_1^N.
	\]
	Let $(\eta_n : n \in \NN_0)$ and $(\nu_n : n \in \NN_0)$ be defined in \eqref{eq:25} and set
	\[
		\rho_n = \sum_{j=0}^n \frac{\alpha_j}{a_j}.
	\]
	Then for each function $f \in \calC_0(\RR)$ satisfying $\sup_{x \in \RR} (1 + x^2) |f(x)| < \infty$, we have
	\begin{equation}
		\label{eq:161}
		\lim_{n \to \infty} 
		\frac{1}{\rho_n}
		\int_\RR f \ud \nu_n =
		\lim_{n \to \infty} 
		\frac{1}{\rho_n}
		\int_\RR f \ud \eta_n = \int_\RR f \ud \omega
	\end{equation}
	where the measure $\omega$ is purely absolutely continuous with the density
	\[
		\frac{\ud \omega}{\ud x} \equiv 
		\frac{|\tr \frakX_0'(0)|}{\pi N \sqrt{-\discr \frakX_0(0)}}.
	\]
\end{theorem}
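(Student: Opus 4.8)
The plan is to obtain the pointwise limit of the Cauchy transforms $\calC\big[\tfrac{1}{\rho_n}\nu_n\big]$ on a half-plane and then feed it into Corollary~\ref{cor:3}. The Carleman's condition makes $A$ self-adjoint, so that corollary applies as soon as a Herglotz limit is produced. By Remark~\ref{rem:2} we have $\calC\big[\tfrac{1}{\rho_n}\nu_n\big] = -\tfrac{1}{\rho_n}\,p_{n+1}'/p_{n+1}$, so everything reduces to the asymptotics of $(p_n)$ and its logarithmic derivative supplied by Sections~\ref{sec:6}--\ref{sec:7}. Throughout I fix $\varsigma = \sign{\tr \frakX_0'(0)}$ and work on compact sets $K \subset \CC_\varsigma$; the opposite half-plane is recovered from the symmetry $\overline{\calC[\omega](\bar z)} = \calC[\omega](z)$.

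First I would invoke Theorem~\ref{thm:5}, which for each compact $K \subset \CC_\varsigma$ yields $M \geq 1$, a basis $\{u^+, u^-\}$ of generalized eigenvectors holomorphic on $\intr(K)$, and the limits $\varphi_i^\pm$ of $u^\pm_{kN+i}/\prod_{M\le j<k}\lambda_j^\pm$, with $|\varphi_i^+| + |\varphi_{i+1}^+| > 0$; here the diagonalization is the one of Section~\ref{sec:6.3.1} with $Y_j = X_{jN}$, and Proposition~\ref{prop:7} supplies the dominance \eqref{eq:147}. Writing $p(z) = f(z) u^+(z) + g(z) u^-(z)$ as in \eqref{eq:119}, Proposition~\ref{prop:10} gives
\[
	\lim_{k\to\infty}\frac{p_{kN+i}(z)}{\prod_{M\le j<k}\lambda_j^+(z)} = \varphi_i^+(z)\,\frac{\Wrk(p(z),u^-(z))}{\Wrk(u^+(z),u^-(z))} =: \varphi_i(z).
\]
At every $z \in \CC_\varsigma$ some $\varphi_{i_0}^+(z) \neq 0$, so Proposition~\ref{prop:14} (this is where self-adjointness is indispensable) shows that $p(z)$ and $u^-(z)$ are linearly independent, hence $\Wrk(p(z),u^-(z)) \neq 0$ and $\varphi_{i_0}\neq 0$ on a nonempty open subset of $K$.

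On that open set I would apply Proposition~\ref{prop:8} with $\lambda_j = \lambda_j^+$, obtaining
\[
	\lim_{k\to\infty}\Big(\calC\big[\tfrac{1}{\rho_{kN+i_0-1}}\nu_{kN+i_0-1}\big](z) + \tfrac{1}{\rho_{kN+i_0-1}}\sum_{j=M}^{k-1}\frac{(\lambda_j^+)'(z)}{\lambda_j^+(z)}\Big) = 0.
\]
The sum is handled by Stolz--Ces\`aro: Proposition~\ref{prop:6} gives $\tfrac{a_{jN}}{\alpha_0}(\lambda_j^+)'/\lambda_j^+ \to \tr\frakX_0'(0)/\big(i\sqrt{-\discr\frakX_0(0)}\big)$, while $\rho_{kN}-\rho_{(k-1)N}=\sum_{i=0}^{N-1}\alpha_i/a_{(k-1)N+i}$ satisfies $a_{(k-1)N}\big(\rho_{kN}-\rho_{(k-1)N}\big)\to N\alpha_0$ by \eqref{eq:int:13}, so the Ces\`aro quotient $\rho_{kN}^{-1}\sum_{j=M}^{k-1}(\lambda_j^+)'/\lambda_j^+$ tends to a constant. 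Since $a_n\to\infty$ forces $\rho_{kN+i}/\rho_{kN}\to1$ for each fixed $i$, the normalization index is immaterial; letting $i_0$ run over $\{0,\dots,N-1\}$ so that $kN+i_0-1$ exhausts all residues mod $N$, every residue subsequence of $\calC\big[\tfrac1{\rho_n}\nu_n\big](z)$ converges to the same constant, whence the whole sequence does. By Lemma~\ref{lem:3} (after the conjugation reduction to $\CC_+$ when $\varsigma=-1$) this upgrades to locally uniform convergence on all of $\CC_+$ to the purely imaginary constant $g$ with $\Im g = |\tr\frakX_0'(0)|/\big(N\sqrt{-\discr\frakX_0(0)}\big)>0$.

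Finally I would apply Corollary~\ref{cor:3} with this constant $g$. Because $\Re g = 0$, substituting $g$ into \eqref{eq:29} and simplifying gives $\calC[\tilde{\omega}_{z_0}](z) = \tfrac{\Im g}{y_0}\,(\bar z_0 - z)^{-1}$; the Stieltjes inversion formula then identifies $\tilde{\omega}_{z_0}$ as absolutely continuous with density $(\Im g/\pi)\big((x-x_0)^2+y_0^2\big)^{-1}$, so that $\ud\omega = |x-z_0|^2\,\ud\tilde{\omega}_{z_0} = \tfrac{\Im g}{\pi}\,\ud x$, i.e. the claimed density $\tfrac{|\tr\frakX_0'(0)|}{\pi N\sqrt{-\discr\frakX_0(0)}}$. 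Moreover $\tilde{\omega}_{z_0}(\RR) = \tfrac{\Im g}{\pi}\cdot\tfrac{\pi}{y_0} = \tfrac1{y_0}\Im g$, which is the equality case of Lemma~\ref{lem:3}; hence Corollary~\ref{cor:3} yields \eqref{eq:161} for every $f$ with $(1+x^2)f\in\calC_b(\RR)$ — exactly the stated test class, since $\sup_x(1+x^2)|f(x)|<\infty$ already forces $f\in\calC_0(\RR)$ — and for both $\nu_n$ and $\eta_n$. The main obstacles I anticipate are the Stolz--Ces\`aro bookkeeping together with the passage from residue-class subsequences to the whole sequence, and above all the verification of the mass (equality) condition, which is precisely what lets one pass from the vague/$\calC_0$ statement of Corollary~\ref{cor:1} to the sharper weighted convergence asserted here.
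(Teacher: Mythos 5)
Your route is the paper's route almost verbatim: Theorem~\ref{thm:5} plus Propositions~\ref{prop:7}, \ref{prop:10} and \ref{prop:14} to get a non-vanishing limit $\varphi_{i_0}$ for the polynomials, Proposition~\ref{prop:8} to convert $-p'_{n+1}/p_{n+1}$ into the sum $\sum_j (\lambda_j^+)'/\lambda_j^+$, Stolz--Ces\`aro with Proposition~\ref{prop:6} to evaluate it, conjugation symmetry for the other half-plane, and Corollary~\ref{cor:3} together with the explicit inversion (the paper's Theorem~\ref{thm:12}\eqref{thm:12:a} and Remark~\ref{rem:6}) to finish. Your Ces\`aro bookkeeping with $a_{(k-1)N}(\rho_{kN}-\rho_{(k-1)N})\to N\alpha_0$ is correct and reproduces the paper's constant, and the identification of $\tilde{\omega}_{z_0}$ and of the equality case $\tilde{\omega}_{z_0}(\RR)=\tfrac{1}{y_0}\Im g(z_0)$ matches the appendix.

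There is, however, one genuine gap: the passage from the subsequence $n=kN+i_0-1$ to the full sequence. You propose to ``let $i_0$ run over $\{0,\dots,N-1\}$'' so that every residue class is covered, but Proposition~\ref{prop:8} is applicable for a given residue only where the corresponding $\varphi_{i_0}$ does not vanish, and Theorem~\ref{thm:5} (inequality \eqref{eq:42}) only guarantees that no \emph{two consecutive} $\varphi_{i}^+$ vanish at a given $z$; it does not give you a usable $i_0$ for every residue class, and it does not even exclude that some $\varphi_{i}^+$ vanishes identically. The $\rho_{kN+i}/\rho_{kN}\to 1$ observation only makes the normalization index immaterial, not the index of the measure: $\nu_{kN+r}$ and $\nu_{kN+r'}$ are different measures and nothing in your argument compares them. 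The paper closes exactly this gap with Theorem~\ref{thm:4}: the estimate $\|\eta_{n+L}-\eta_n\|_{\mathrm{TV}}=L$ gives $\calC[\tfrac{1}{\rho_n}\eta_{n+L}]-\calC[\tfrac{1}{\rho_n}\eta_n]\to 0$, which combined with \eqref{eq:33} and $\rho_{n+L}/\rho_n\to 1$ transports the limit from the single good residue subsequence to all $n$, and only then is Corollary~\ref{cor:3} (whose hypothesis \eqref{eq:36} requires full-sequence convergence) applicable. You need to insert this step; as written, your argument establishes \eqref{eq:36} only along one arithmetic progression.
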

\begin{proof}
	Our strategy is to apply Corollary~\ref{cor:3}. In view of Proposition \ref{prop:8} we need to understand the asymptotic
	behavior of the sequence $(p_n(z) : n \geq 0)$ for $z \in \CC \setminus \RR$.
	
	Let $z_0 \in \CC_\varepsilon$, where $\varepsilon = \sign{\tr \frakX_0'(0)}$. Let $K \subset \CC_\varepsilon$ be a compact 
	set such that $z_0 \in \intr(K)$. We set $U = \intr(K)$. By Theorem~\ref{thm:5} there are linearly independent generalized 
	eigenvectors $\{ u^-(z), u^+(z) \}$ 
	associated with $z \in K$, such that for certain $M \geq 1$ and each $i \in \{0,1,\ldots,N-1\}$ the limits
	\[
		\varphi_{i}^+
		= 
		\lim_{k \to \infty}
		\frac{u^+_{kN+i_0}}{\prod_{M \leq j < k} \lambda_{j}^+}
		\quad\text{and}\quad
		\varphi_{i}^-
		=
		\lim_{k \to \infty}
		\frac{u^-_{kN+i_0}}{\prod_{M \leq j < k} \lambda_{j}^-},
	\]
	exist uniformly on $K$ where $\lambda_n^+$ and $\lambda_n^-$ are defined by \eqref{eq:63} with $Y_n = X_{nN}$. Moreover, 
	by \eqref{eq:42} there is $i_0 \in \{0,1,\ldots,N-1\}$ such that $\varphi_{i_0}^+(z_0) \neq 0$. By continuity there is 
	$V \subset U$, an open neighborhood of $z_0$, such that $\varphi_{i_0}^+(z) \neq 0$ for all $z \in V$.
	In view of Proposition~\ref{prop:7} we have
	\[
		\inf_{z \in K}
		\prod_{j \geq M}  
		\bigg| \frac{\lambda^+_j(z)}{\lambda_j^-(z)} \bigg| = \infty.
	\]
	Thus, by Proposition~\ref{prop:10} we have that for any $i \in \{0,1,\ldots, N-1\}$
	\begin{equation} \label{eq:201}
		\varphi_{i}(z) = 
		\lim_{k \to \infty} \frac{p_{kN+i}(z)}{\prod_{M \leq j < k} \lambda_j^+(z)}
		= \varphi_i^+(z) \frac{\Wrk(p(z), u^-(z))}{\Wrk(u^+(z), u^-(z))}.
	\end{equation}
	Since Carleman's condition implies that $A$ is self-adjoint, 
	by Proposition~\ref{prop:14}, we have that
	\[
		\Wrk(p(z), u^-(z)) \neq 0, \quad z \in V.
	\]
	Thus, by \eqref{eq:201} we have that $\varphi_{i_0}(z) \neq 0$ for any $z \in V$.
	In view of Proposition \ref{prop:8}, we obtain
	\[
		\lim_{k \to \infty} 
		\calC \big[ \tfrac{1}{\rho_{kN+i_0-1}}\nu_{kN+i_0-1} \big](z_0) = 
		-\lim_{k \to \infty} 
		\frac{1}{\rho_{kN+i_0-1}} 
		\sum_{j=M}^{k-1} \frac{(\lambda_j^+)'(z_0)}{\lambda_j^+(z_0)}.
	\]
	For $k \geq 0$, we set
	\[
		\rho_{0;k} = \sum_{j=0}^k \frac{1}{a_{jN}}.
	\]
	By \cite[formula (4.12)]{ChristoffelI}, for all $n \in \NN_0$, we have
	\begin{equation} 
		\label{eq:158}
		\lim_{k \to \infty} 
		\alpha_0
		\frac{\rho_{0;k}}{\rho_{kN+n}} = 
		\frac{1}{N},
	\end{equation}
	thus
	\[
		\lim_{k \to \infty} 
		\frac{1}{\rho_{kN+i_0-1}} 
		\sum_{j=M}^{k-1} \frac{(\lambda_j^+)'(z_0)}{\lambda_j^+(z_0)}
		=
		\frac{1}{N} 
		\lim_{k \to \infty} 
		\frac{1}{\alpha_{0} \rho_{0;k}} 
		\sum_{j=M}^{k-1} \frac{(\lambda_j^+)'(z_0)}{\lambda_j^+(z_0)}.
	\]
	Now, using the Stolz--Ces\`aro theorem, we obtain
	\begin{equation}
		\label{eq:123}
		\lim_{k \to \infty}
		\frac{1}{\alpha_0 \rho_{0;k}}
		\sum_{j=M}^{k-1} \frac{(\lambda_j^+)'(z_0)}{\lambda_j^+(z_0)}
		=
		\frac{1}{N}
		\lim_{k \to \infty}
		\frac{a_{kN}}{\alpha_0} \frac{(\lambda_k^+)'(z_0)}{\lambda_k^+(z_0)},
	\end{equation}
	therefore
	\begin{equation}
		\label{eq:157}
		\lim_{k \to \infty} 
		\calC \big[ \tfrac{1}{\rho_{kN+i_0-1}} \nu_{kN+i_0-1} \big](z_0) = 
		-\frac{1}{N}
		\lim_{k \to \infty} 
		\frac{a_{kN}}{\alpha_0} \frac{(\lambda_k^+)'(z_0)}{\lambda_k^+(z_0)}.
	\end{equation}
	Our next task is to compute the right-hand side of \eqref{eq:157}. 	By Proposition~\ref{prop:6}, we get
	\[
		\lim_{k \to \infty} 
		\frac{a_{kN}}{\alpha_0} \frac{(\lambda_k^+)'(z_0)}{\lambda_k^+(z_0)}
		=
		\frac{\abs{\tr \frakX_0'(0)}}{i \sqrt{-\discr \frakX_0(0)}},
	\]
	thus by \eqref{eq:157} we get
	\[
		\lim_{k \to \infty} 
		\calC \big[ \tfrac{1}{\rho_{kN+i_0-1}} \nu_{kN+i_0-1} \big](z_0) 
		= 
		-\frac{\tr \frakX_0'(0)}{i N \sqrt{-\discr \frakX_0(0)}}, 
		\quad\text{for all } z_0 \in \CC_{\varepsilon}.
	\]
	If $\varepsilon = -1$, we use the property $\overline{\calC[\eta](\overline{z})} = \calC[\eta](z)$ valid for any positive finite measure $\eta$ and any $z \in \CC \setminus \RR$ to finally get
	\[
		\lim_{k \to \infty} 
		\calC \big[\tfrac{1}{\rho_{kN+i_0-1}} \nu_{kN+i_0-1} \big](z) 
		= 
		-\frac{|\tr \frakX_0'(0)|}{i N \sqrt{-\discr \frakX_0(0)}}, \quad\text{for all } z \in \CC_+.
	\]
	In view of \eqref{eq:158} for $L \geq 0$ we have
	\begin{equation}
		\label{eq:121}
		\lim_{k \to \infty}
		\frac{\rho_{kN+i_0-1}}{\rho_{kN+i_0+L}}
		=
		\lim_{k \to \infty} 
		\frac{\rho_{0; k}}{\rho_{kN+i_0+L}} 
		\frac{\rho_{kN+i_0-1}}{\rho_{0; k}}
		= 1,
	\end{equation}
	thus, by Theorem~\ref{thm:4} we get
	\[
		\lim_{n \to \infty} 
		\calC \big[ \tfrac{1}{\rho_n} \nu_{n} \big](z) =
		-\frac{|\tr \frakX_0'(0)|}{i N \sqrt{-\discr \frakX_0(0)}}, \quad\text{for all } z \in \CC_+.
	\]
	Now, in view of Remark~\ref{rem:6} we can apply Corollary~\ref{cor:3} with any $z_0 \in \CC_+$ (see \eqref{eq:180}). Thus, by Theorem~\ref{thm:12}\eqref{thm:12:a}, we obtain \eqref{eq:161}. The proof is complete.
\end{proof}

The following theorem shows that in the setup of Example~\ref{ex:IIa} we have stronger conclusion compared to what we could 
obtain by applying Corollary~\ref{cor:1}. Moreover, we can drop the hypothesis \eqref{eq:11}.
\begin{theorem}[Case \ref{eq:PIIa}]
	\label{thm:8}
	Suppose that $(a_n : n \in \NN_0)$ and $(b_n : n \in \NN_0)$ are $N$-periodically modulated Jacobi parameters such that	
	$\frakX_0(0) = \varepsilon \Id$ with $\varepsilon \in \{-1, 1\}$. Assume that
	\[
		\bigg( a_n \Big( \frac{\alpha_{n-1}}{\alpha_n} - \frac{a_{n-1}}{a_{n-1}} \Big): n \in \NN \bigg),
		\bigg( a_n \Big( \frac{\beta_n}{\alpha_n} - \frac{b_n}{a_n} \Big): n \in \NN_0 \bigg),
		\bigg( \frac{1}{a_n} : n \in \NN_0 \bigg) \in \calD_1^N.
	\]
	Set $\Lambda_- = \{ x \in \RR : h(x) < 0 \}$ where
	\[
		h(z) = \lim_{j \to \infty} a_{jN+N-1}^2 \discr X_{jN}(z), \quad z \in \CC
	\]
	Let $(\eta_n : n \in \NN_0)$ and $(\nu_n : n \in \NN_0)$ be defined in \eqref{eq:25} and set
	\[
		\rho_n = \sum_{j=0}^n \frac{\alpha_j}{a_j}.
	\]
	Then for each function $f \in \calC_0(\RR)$ satisfying $\sup_{x \in \RR} (1+x^2) |f(x)| < \infty$ we have
	\[
		\lim_{n \to \infty} 
		\frac{1}{\rho_n} \int_\RR f \ud \nu_n =  
		\lim_{n \to \infty} 
		\frac{1}{\rho_n}
		\int_\RR f \ud \eta_n = \int_\RR f \ud \omega
	\]
	where the measure $\omega$ is purely absolutely continuous with the density
	\[
		\frac{\ud \omega}{\ud x}
		= 
		\frac{1}{4 \pi N \alpha_{N-1}} \frac{|h'(x)|}{\sqrt{-h(x)}} \mathds{1}_{\Lambda_-}(x).
	\]
\end{theorem}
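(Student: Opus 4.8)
The plan is to follow the proof of Theorem~\ref{thm:7}, replacing each ingredient tailored to $\tr\frakX_0(0)\in(-2,2)$ by its analogue for the degenerate case $\frakX_0(0)=\varepsilon\Id$. The target is Corollary~\ref{cor:3}, which reduces the whole statement to producing the pointwise limit $g$ of $\calC\big[\tfrac1{\rho_n}\nu_n\big]$ on $\CC_+$ and identifying the associated measure. By Remark~\ref{rem:2} this Cauchy transform equals $-\tfrac1{\rho_n}\,p_{n+1}'/p_{n+1}$, so, as in Proposition~\ref{prop:8}, everything rests on the ratio asymptotics of $(p_n)$. First I would invoke Theorem~\ref{thm:5s} to obtain, on a compact set $K$ inside
\[
	U=\big\{z\in\CC_+: h(z)\in\CC\setminus(-\infty,0]\ \text{and}\ \varepsilon\Re\big(h'(z)\big)>0\big\},
\]
a basis $\{u^+,u^-\}$ of generalized eigenvectors whose limits $\varphi_i^\pm$ exist uniformly and satisfy the non-degeneracy $|\varphi_{i_0}^+|+|\varphi_{i_0+1}^+|>0$; here $\lambda_n^\pm$ are the eigenvalues of $Y_j=X_{jN}$ diagonalized through Lemma~\ref{lem:8} in the setup of Section~\ref{sec:6.3.2}. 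Since \cite[Proposition 4]{ChristoffelII} gives that $A$ is self-adjoint, Proposition~\ref{prop:14} shows $\Wrk(p(z),u^-(z))\neq0$, so Proposition~\ref{prop:10} yields $\varphi_{i_0}(z)\neq0$ for a suitable $i_0$ on an open neighbourhood $V\subset U$ of a chosen base point; the Levinson condition needed for Proposition~\ref{prop:10} is established inside Theorem~\ref{thm:5s} via Propositions~\ref{prop:15} and~\ref{prop:2}.

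With the ratio asymptotics available, Proposition~\ref{prop:8} gives
\[
	\lim_{k\to\infty}\calC\big[\tfrac1{\rho_{kN+i_0-1}}\nu_{kN+i_0-1}\big](z)
	=-\lim_{k\to\infty}\frac1{\rho_{kN+i_0-1}}\sum_{j=M}^{k-1}\frac{(\lambda_j^+)'(z)}{\lambda_j^+(z)},\quad z\in V.
\]
To evaluate the right-hand side I would use the comparison $\rho_{kN+n}\sim N\alpha_0\rho_{0;k}$ with $\rho_{0;k}=\sum_{j=0}^k a_{jN}^{-1}$, i.e. the relation \eqref{eq:158}, together with the Stolz--Ces\`aro theorem, which reduces the Ces\`aro mean to $\lim_k a_{(k+1)N}(\lambda_k^+)'/\lambda_k^+$. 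The decisive input is Proposition~\ref{prop:9} applied with $\tilde\gamma_j=a_{jN+N-1}$, $\tilde\varepsilon=\varepsilon$ and $\discr\calR_0=h$; note that $\det Y_j=a_{jN-1}/a_{jN+N-1}$ is $z$-independent, so $(\det Y_j)'\equiv0$ and hypothesis~\eqref{eq:67} holds automatically. This yields $a_{jN+N-1}(\lambda_j^+)'/\lambda_j^+\to\tfrac14\,h'/\sqrt h$, and tracking the factor $a_{(k+1)N}/a_{kN+N-1}\to\alpha_0/\alpha_{N-1}$ gives
\[
	g(z):=\lim_{n\to\infty}\calC\big[\tfrac1{\rho_n}\nu_n\big](z)=-\frac1{4N\alpha_{N-1}}\frac{h'(z)}{\sqrt{h(z)}},\quad z\in V,
\]
where the passage from the single residue $i_0$ to the full sequence uses, exactly as in Theorem~\ref{thm:7}, the ratio $\rho_{kN+i_0-1}/\rho_{kN+i_0+L}\to1$ and Theorem~\ref{thm:4}.

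To conclude, since $V$ is open and nonempty it has an accumulation point in $\CC_+$, so Lemma~\ref{lem:3} upgrades the convergence of $\calC\big[\tfrac1{\rho_n}\nu_n\big]$ to a locally uniform limit on all of $\CC_+$, and by analytic continuation this limit equals the explicit $g$ wherever $h(z)\notin(-\infty,0]$. Corollary~\ref{cor:3} then delivers both vague convergences in \eqref{eq:152} and, after verifying the mass condition $\tilde\omega_{z_0}(\RR)=\tfrac1{y_0}\Im\big(g(z_0)\big)$ as in the proof of Theorem~\ref{thm:7}, the convergence for the weighted test functions in the statement. Finally I would read off the density by Stieltjes inversion: for $x\in\RR$ it equals $\tfrac1\pi\Im\big(g(x+i0)\big)$, and since the principal branch gives $\sqrt{h(x+i0)}=\pm i\sqrt{-h(x)}$ on $\Lambda_-$ according to $\sign{h'(x)}$, in both cases one gets $\tfrac1\pi\Im\big(g(x+i0)\big)=\frac{|h'(x)|}{4\pi N\alpha_{N-1}\sqrt{-h(x)}}$, while $g(x+i0)\in\RR$ where $h(x)>0$; this produces the claimed purely absolutely continuous density $\frac{1}{4\pi N\alpha_{N-1}}\frac{|h'(x)|}{\sqrt{-h(x)}}\mathds{1}_{\Lambda_-}(x)$.

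The main obstacle I anticipate is the bookkeeping in the Stolz--Ces\`aro step: one must correctly combine the period-$N$ comparison of $\rho_n$ with $\rho_{0;k}$, the normalization $a_{jN+N-1}$ coming from Proposition~\ref{prop:9} (rather than the $a_{jN}$ of the nondegenerate case), and the asymptotic ratio of consecutive $a$'s, in order to land on the precise constant $\tfrac1{4N\alpha_{N-1}}$. A secondary subtlety is the branch/sign analysis of $\sqrt{h}$ that converts $h'$ into $|h'|$ and restricts the support to $\Lambda_-$ in the Stieltjes inversion, together with checking that the explicit formula for $g$ persists across all of $\CC_+\setminus\{h\in(-\infty,0]\}$ after the normal-family extension.
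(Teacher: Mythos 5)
Your proposal is correct and follows essentially the same route as the paper: Theorem~\ref{thm:5s} combined with Propositions~\ref{prop:14}, \ref{prop:10} and \ref{prop:8} to reduce to the Ces\`aro mean of $(\lambda_j^+)'/\lambda_j^+$, Proposition~\ref{prop:9} (with $\tilde\gamma_j=a_{jN+N-1}$ and $\discr\calR_0=h$) plus Stolz--Ces\`aro for the constant $\tfrac{1}{4N\alpha_{N-1}}$, then Theorem~\ref{thm:4}, Lemma~\ref{lem:3}, Corollary~\ref{cor:3} and the Stieltjes inversion of Theorem~\ref{thm:12}\eqref{thm:12:c}. The only ingredient you leave implicit that the paper makes explicit is Proposition~\ref{prop:11} (that $h$ is a real quadratic with real roots), which underlies both the branch analysis in the inversion step and the extension of the identity for $g$ beyond the region $\scrD$.
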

\begin{proof}
	Thanks to Proposition~\ref{prop:11}, $h$ is a quadratic real polynomial with real roots. Hence, for $z \in \CC \setminus \RR$,
	$\Im h(z) = 0$ if and only if $h'(\Re z) \neq 0$. Let $z_0 \in \scrD$ where
	\[
		\scrD
		=
		\big\{z \in \CC _+ : h(z) \in \CC \setminus (-\infty, 0] \text{ and } \varepsilon h'(\Re z) > 0 \big\}.
	\]
	Let $K \subset \scrD$ be a compact set such that $z_0 \in \intr(K)$. 
	Using Theorem~\ref{thm:5s} in place of Theorem~\ref{thm:5}, 
	we can repeat the line of reasoning used in the proof of Theorem~\ref{thm:7} to conclude that there is 
	$i_0 \in \{0, 1, \ldots, N-1\}$ such that
	\[
		\lim_{k \to \infty} 
		\calC \big[ \tfrac{1}{\rho_{kN+i_0-1}} \nu_{kN+i_0-1} \big](z_0) =
		-
		\lim_{k \to \infty}
		\frac{1}{\rho_{kN+i_0-1}} \sum_{j = M}^{k-1} 
		\frac{(\lambda_k^+)'(z_0)}{\lambda_k^+(z_0)}.
	\]
	Next, by Proposition \ref{prop:9}
	\[
		\lim_{k \to \infty} a_{kN+N-1} \frac{(\lambda_k^+)'(z_0)}{\lambda_k^+(z_0)} = 
		\frac{1}{4} \frac{h'(z_0)}{\sqrt{h(z_0)}},
	\]
	and since
	\[
		\lim_{k \to \infty} \frac{a_{kN}}{a_{kN+N-1}} = \frac{\alpha_0}{\alpha_{N-1}},
	\]
	by \eqref{eq:123} we obtain
	\begin{align*}
		\lim_{k \to \infty} 
		\calC \big[ \tfrac{1}{\rho_{kN+i_0-1}} \nu_{kN+i_0-1} \big](z_0) 
		&=
		-\frac{1}{N \alpha_0} \lim_{k \to \infty} \frac{a_{kN}}{a_{kN+N-1}} a_{kN +N - 1 }
		\frac{(\lambda_k^+)'(z_0)}{\lambda_k^+(z_0)} \\
		&=
		-\frac{1}{4 N \alpha_{N-1}} \frac{h'(z_0)}{\sqrt{h(z_0)}}.
	\end{align*}
	Now, using \eqref{eq:121} and Theorem~\ref{thm:4}, we conclude that
	\[
		\lim_{n \to \infty} 
		\calC \big[\tfrac{1}{\rho_n} \nu_n \big](z) 
		=
		-\frac{1}{4 N \alpha_{N-1}} \frac{h'(z)}{\sqrt{h(z)}},
		\quad\text{for } z\in \scrD.
	\]
	Since $\overline{\calC[\eta](\overline{z})} = \calC[\eta](z)$ for any positive finite measure~$\eta$ 
	and any $z \in \CC \setminus \RR$, we get
	\[
		\lim_{n \to \infty} 
		\calC \big[\tfrac{1}{\rho_n} \nu_n \big](z) 
		=
		-\frac{1}{4 N \alpha_{N-1}} \frac{h'(z)}{\sqrt{h(z)}},
		\quad\text{for } z\in \scrD \cup \overline{\scrD}.
	\]
	In view of Lemma \ref{lem:3}, there is an analytic function $g: \CC_+ \rightarrow
	\CC_+ \cup \RR$ 
	such that
	\[
		g(z) = 
		\lim_{n \to \infty} 
		\calC \big[\tfrac{1}{\rho_n} \nu_n \big](z), \quad z \in \CC_+
	\]
	and
	\begin{equation} \label{eq:212}
		g(z) = -\frac{1}{4 N \alpha_{N-1}} \frac{h'(z)}{\sqrt{h(z)}},
		\quad\text{for } z \in \CC_+ \cap (\scrD \cup \overline{\scrD}).
	\end{equation}
	Notice that the right-hand side of \eqref{eq:212} depends analytically on $z \in \CC_+ \setminus h^{-1}((-\infty,0])$. However, since $h$ is a real polynomial of degree $2$ with real roots we can verify that
	$h^{-1}(\RR) \subset \RR \cup \{ z \in \CC : h'(\Re z) = 0 \}$. So the right-hand side of \eqref{eq:212} is actually analytic on $\{ z \in \CC_+ : h'(\Re z) \neq 0 \}$, so the equality \eqref{eq:212} holds on the set $\{z \in \CC_+ : h'(\Re z) \neq 0 \}$.
	Now, in view of Remark~\ref{rem:6} we can apply Corollary~\ref{cor:3} with some explicit $z_0 \in \CC_+$ 
	(see \eqref{eq:186}). Thus, by Theorem~\ref{thm:12}\eqref{thm:12:c}, we complete the proof.
\end{proof}

The following theorem covers a more general setup than Example~\ref{ex:IIb}. 
\begin{theorem}[Case \ref{eq:PIIb}] 
	\label{thm:9}
	Let $N \geq 1$. Let $(\gamma_n : n \geq 0)$ be a sequence of positive numbers tending to infinity and satisfying 
	\eqref{eq:83a} and \eqref{eq:83b}. Let $(a_n : n \geq 0)$ and $(b_n : n \geq 0)$ be $\gamma$-tempered $N$-periodically
	modulated Jacobi parameters such that $\frakX_0(0)$ is non-trivial parabolic element. Suppose that \eqref{eq:108b} 
	holds true with $\varepsilon = \sign{\tr \frakX_0(0)}$. Then the following limit exists
	\[
		h(z) = \lim_{j \to \infty} \gamma_{jN+N-1} \discr X_{jN}(z), \quad z \in \CC
	\]
	and defines a polynomial of degree at most $1$. 
	Let $(\eta_n : n \in \NN_0)$ and $(\nu_n : n \in \NN_0)$ be defined in \eqref{eq:25} and set	
	\[
		\rho_n = \sum_{j=0}^n \frac{\sqrt{\alpha_j \gamma_j}}{a_j}.
	\]
	If $\Lambda_- = \{ x \in \RR : h(x) < 0 \} \neq \emptyset$ and $\rho_n \to \infty$, 
	then for each $f \in \calC_0(\RR)$ satisfying $\sup_{x \in \RR} (1+x^2)|f(x)| < \infty$, we have
	\[
		\lim_{n \to \infty} 
		\frac{1}{\rho_n}
		\int_\RR f \ud \nu_n =  
		\lim_{n \to \infty} 
		\frac{1}{\rho_n}
		\int_\RR f \ud \eta_n = \int_\RR f \ud \omega
	\]
	where the measure $\omega$ is purely absolutely continuous with the density
	\[
		\frac{\ud \omega}{\ud x}
		=
		\frac{\sqrt{\alpha_{N-1}}}{\pi N} \frac{|\tr \frakX_0'(0)|}{\sqrt{-h(x)}} \ind{\Lambda_-}(x).
	\]
\end{theorem}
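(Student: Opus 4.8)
The plan is to follow the same route as in the proofs of Theorems~\ref{thm:7} and \ref{thm:8}, replacing the diagonalization input by the one constructed in Section~\ref{sec:6.3.3}. First I would record that the hypotheses of Theorem~\ref{thm:5h} are met: the assumption $\rho_n \to \infty$ is exactly \eqref{eq:208}, and by \cite[Theorem B]{jordan2} it forces $A$ to be self-adjoint. Since $\frakX_0(0)$ is a non-trivial parabolic element, $\tr \frakX_0'(0) \neq 0$, and the expansion \eqref{eq:189} shows $\tau$ is affine in $z$. Combining the identity $\discr \calR_0 = 4\tau$ from Section~\ref{sec:6.3.3} with \eqref{eq:206}, passing $\gamma_{jN+N-1}$ through $\discr Y_j$, using $\det Y_j \to 1$ and the near-conjugation $Z_{j+1}^{-1} Z_j = \Id + \calO(\gamma^{-1/2})$ from \eqref{eq:26}, I obtain that $h(z) = \lim_{j} \gamma_{jN+N-1} \discr X_{jN}(z)$ exists, equals $4\alpha_{N-1} \tau(z)$, hence is a polynomial of degree at most one, and $\Lambda_- = \{x : \tau(x) < 0\}$.

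The core is Corollary~\ref{cor:3}, and to feed it I first compute the pointwise limit of $\calC[\tfrac{1}{\rho_n}\nu_n]$ on the half-plane $U = \{z \in \CC_\varsigma : h(\Re z) < 0\}$ with $\varsigma = \sign{\tr \frakX_0'(0)}$. Fix $z_0 \in U$ and a compact $K \subset U$ with $z_0 \in \intr(K)$. Theorem~\ref{thm:5h} supplies a basis $\{u^+(z), u^-(z)\}$ of generalized eigenvectors and the limits $\varphi_i^\pm$; by \eqref{eq:43} there is $i_0$ with $\varphi_{i_0}^+(z_0) \neq 0$. Self-adjointness lets me invoke Proposition~\ref{prop:14}, so $\Wrk(p(z), u^-(z)) \neq 0$ near $z_0$, and then Proposition~\ref{prop:10} gives $\varphi_{i_0}(z_0) \neq 0$ for $p$ itself. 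Proposition~\ref{prop:8} now yields
\[
	\lim_{k \to \infty} \calC\big[\tfrac{1}{\rho_{kN+i_0-1}} \nu_{kN+i_0-1}\big](z_0) = -\lim_{k \to \infty} \frac{1}{\rho_{kN+i_0-1}} \sum_{j=M}^{k-1} \frac{(\lambda_j^+)'(z_0)}{\lambda_j^+(z_0)}.
\]

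Next I evaluate this by Stolz--Ces\`aro. The summand is controlled by Section~\ref{sec:6.3.3}: formula \eqref{eq:40} when $\frakt = 1$, and the explicit computation carried out there when $\frakt = 0$, both giving
\[
	\frac{a_{jN+N-1}}{\sqrt{\alpha_{N-1} \gamma_{jN+N-1}}} \frac{(\lambda_j^+)'(z_0)}{\lambda_j^+(z_0)} \longrightarrow \varepsilon \frac{\tr \frakX_0'(0)}{\sqrt{\discr \calR_0(z_0)}}.
\]
Writing $\rho_{(k+1)N+i_0-1} - \rho_{kN+i_0-1}$ as a block of $N$ terms $\sqrt{\alpha_j \gamma_j}/a_j$ and using \eqref{eq:83b} together with \eqref{eq:int:13} (as in the derivation of \eqref{eq:208'}) to see that all $N$ terms are asymptotically equal to $\sqrt{\alpha_{N-1}\gamma_{kN+N-1}}/a_{kN+N-1}$, the Stolz--Ces\`aro ratio produces the factor $1/N$; substituting $\discr \calR_0 = h/\alpha_{N-1}$ gives
\[
	\lim_{k \to \infty} \calC\big[\tfrac{1}{\rho_{kN+i_0-1}} \nu_{kN+i_0-1}\big](z_0) = -\frac{\varepsilon \sqrt{\alpha_{N-1}}}{N} \frac{\tr \frakX_0'(0)}{\sqrt{h(z_0)}}.
\]
Since the increments $\sqrt{\alpha_n\gamma_n}/a_n$ tend to zero (because $\gamma_n/a_n$ is bounded and $a_n \to \infty$), consecutive ratios of $\rho$ tend to $1$, yielding the analogue of \eqref{eq:121}; combined with the conjugate symmetry $\overline{\calC[\eta](\overline z)} = \calC[\eta](z)$ and Theorem~\ref{thm:4} this upgrades the limit to all $z \in \CC_+$ for the full sequence $\tfrac{1}{\rho_n}\nu_n$, with $|\tr \frakX_0'(0)|$ replacing $\varepsilon \tr \frakX_0'(0)$. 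Finally, Lemma~\ref{lem:3} provides a Herglotz limit $g$ agreeing with this expression on $U \cup \overline{U}$, which extends analytically to $\{z \in \CC_+ : h'(\Re z) \neq 0\}$ since $h$ is affine; Corollary~\ref{cor:3} then identifies $\omega$ by Stieltjes inversion, giving $\tfrac{\ud \omega}{\ud x} = \tfrac{1}{\pi}\Im g(x+i0) = \tfrac{\sqrt{\alpha_{N-1}}}{\pi N}\tfrac{|\tr \frakX_0'(0)|}{\sqrt{-h(x)}}\ind{\Lambda_-}(x)$ together with the stated convergence for both $\nu_n$ and $\eta_n$. The \emph{main obstacle} I expect is the bookkeeping in the Stolz--Ces\`aro step: one must check uniformly in the subcases $\frakt \in \{0,1\}$ that the $N$-term blocks of $\rho$ are asymptotically equidistributed and that $h = 4\alpha_{N-1}\tau$ holds exactly, since the entire normalization and the branch of $\sqrt{h}$ depend on pinning down these constants.
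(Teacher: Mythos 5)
Your proposal follows essentially the same route as the paper's proof: Theorem~\ref{thm:5h} combined with Propositions~\ref{prop:14}, \ref{prop:10} and \ref{prop:8} to reduce the Cauchy transform of $\tfrac{1}{\rho_n}\nu_n$ to the Ces\`aro mean of $(\lambda_j^+)'/\lambda_j^+$, the limits from Section~\ref{sec:6.3.3} (in particular \eqref{eq:40}) evaluated via Stolz--Ces\`aro, then Theorem~\ref{thm:4}, Lemma~\ref{lem:3}, Corollary~\ref{cor:3} and the inversion in Theorem~\ref{thm:12} to identify $\omega$. The only substantive deviations are cosmetic --- you establish $\rho_{n+L}/\rho_n \to 1$ directly from the increments tending to zero rather than via the comparison sequence $\rho_{0;k}$, and you make explicit the appeal to \cite[Theorem B]{jordan2} for self-adjointness that the paper leaves implicit --- and your own flag about the $\frakt\in\{0,1\}$ bookkeeping is exactly where the remaining care is needed.
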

\begin{proof}
	The proof follows the same line of reasoning as Theorem \ref{thm:8}. Since $h$ is a linear map with real coefficients,
	$\Im h(z) = 0$ if only if $\Im z = 0$. Let $z_0 \in \scrD$ where
	\[
		\scrD = \big\{z \in \CC_\varsigma : h(\Re z) < 0\big\}
	\]
	with $\varsigma = \sign{\tr \frakX_0'(0)}$.

	Let $K$ be a compact subset of $\scrD$ such that $z_0 \in \intr(K)$. Using Theorem~\ref{thm:5h}
	in place of Theorem~\ref{thm:5}, we can repeat the line of reasoning used in the proof of 
	Theorem~\ref{thm:7} to conclude that there is 
	$i_0 \in \{0, 1, \ldots, N-1\}$ such that
	\[
		\lim_{k \to \infty} 
		\calC \big[\tfrac{1}{\rho_{kN+i_0-1}} \nu_{kN+i_0-1} \big](z_0) =
		-\lim_{k \to \infty}
		\frac{1}{\rho_{kN+i_0-1}} \sum_{j = M}^{k-1} 
		\frac{(\lambda_k^+)'(z_0)}{\lambda_k^+(z_0)}
	\]
	where $\lambda_k^+$ are eigenvalues of $Y_k = Z_{k+1}^{-1} X_{kN} Z_k$. 

	At this stage we have two cases depending on the value of $\mathfrak{t} \in \{0, 1\}$. We treat $\mathfrak{t} = 1$
	only. By \eqref{eq:40}, we get
	\[
		\lim_{k \to \infty} 
		\frac{a_{kN+N-1}}{\sqrt{\alpha_{N-1} \gamma_{kN+N-1}}}
		\frac{(\lambda_k^+)'(z_0)}{\lambda_k^+(z_0)}
		=
		\varepsilon \sqrt{\alpha_{N-1}}
		\frac{\tr \frakX_0'(0)}{\sqrt{h(z_0)}}.
	\]
	For
	\[
		\rho_{0;k} = \sum_{j = 0}^k \frac{\sqrt{\gamma_{jN}}}{a_{jN}}
	\]
	by the Stolz--Ces\`aro theorem, 
	\begin{align*}
		\lim_{k \to +\infty} 
		\frac{\rho_{0; k}}{\rho_{kN+i_0-1}} 
		&=\lim_{k \to +\infty} 
		\frac{\frac{\sqrt{\gamma_{jN}}}{a_{jN}}}
		{\sum_{j = 0}^{N-1}
		\frac{\sqrt{\alpha_{i_0+j} \gamma_{kN+i_0+j}}}{a_{kN+i_0+j}}} \\
		&=\frac{1}{N \sqrt{\alpha_0}}
	\end{align*}
	where we have also used
	\[
		\lim_{k \to \infty} 
		\frac{\gamma_{kN}}{\gamma_{kN+i_0+j}}
		=
		\lim_{k \to \infty}
		\frac{a_{kN}}{a_{kN+i_0+j}}
		=
		\frac{\alpha_0}{\alpha_{i_0+j}}, \quad j=0,1,\ldots,N-1,
	\]
	Next, another application of the Stolz--Ces\`aro theorem leads to
	\[
		\lim_{k \to \infty}
		\frac{1}{\sqrt{\alpha_0} \rho_{0;k}}
		\sum_{j=M}^{k-1} \frac{(\lambda_j^+)'(z_0)}{\lambda_j^+(z_0)}
		=
		\lim_{k \to \infty}
		\frac{a_{kN}}{\sqrt{\alpha_0 \gamma_{kN}}}
		\frac{(\lambda_k^+)'(z_0)}{\lambda_k^+(z_0)}.
	\]
	Hence,
	\begin{align*}
		\lim_{k \to \infty} 
		\calC \big[\tfrac{1}{\rho_{kN+i_0-1}} \nu_{kN+i_0-1} \big](z_0) 
		&=
		-\sqrt{\alpha_0} \lim_{k \to +\infty}
		\frac{\rho_{0; k}}{\rho_{kN+i_0-1}} 
		\frac{1}{\sqrt{\alpha_0} \rho_{0; k}} \sum_{j=M}^{k-1} \frac{(\lambda_j^+)'(z_0)}{\lambda_j^+(z_0)} \\
		&=
		-\frac{1}{N}
		\lim_{k \to \infty} 
		\frac{a_{kN}}{a_{kN+N-1}}
		\sqrt{ \frac{\gamma_{kN+N-1}}{\gamma_{kN}} }
		\sqrt{\frac{\alpha_{N-1}}{\alpha_0}}
		\frac{a_{kN+N-1}}{\sqrt{\alpha_{N-1} \gamma_{kN+N-1}}}
		\frac{(\lambda_k^+)'(z_0)}{\lambda_k^+(z_0)} \\
		&=
		-\varepsilon \frac{\sqrt{\alpha_{N-1}}}{N}
		\frac{\tr \frakX_0'(0)}{\sqrt{h(z_0)}}.	
	\end{align*}
	By \cite[Corollary 3.3]{jordan2} we have $h(x) = 4 \alpha_{N-1} \tau(x)$. Thus, by \eqref{eq:188} we have 
	$h'(x) = 4 \varepsilon \alpha_{N-1} (\tr \frakX_0'(0))$. Now, using \eqref{eq:121} and Theorem \ref{thm:4}, 
	we conclude that
	\[
		\lim_{n \to \infty} 
		\calC \big[\tfrac{1}{\rho_n} \nu_n \big](z) 
		=
		-\frac{1}{4 N \sqrt{\alpha_{N-1}}} \frac{h'(z)}{\sqrt{h(z)}},
		\quad\text{for all } z\in \scrD.
	\]
	Since $\overline{\calC[\eta](\overline{z})} = \calC[\eta](z)$ for any positive finite measure~$\eta$ 
	and any $z \in \CC \setminus \RR$, we get
	\[
		\lim_{n \to \infty} 
		\calC \big[\tfrac{1}{\rho_n} \nu_n \big](z) 
		=
		-\frac{1}{4 N \sqrt{\alpha_{N-1}}} \frac{h'(z)}{\sqrt{h(z)}},
		\quad\text{for all } z \in \scrD \cup \overline{\scrD}.
	\]
	In view of Lemma \ref{lem:3}, there is an analytic function $g: \CC_+ \rightarrow \CC_+ \cup \RR$ such that
	\[
		g(z) = 
		\lim_{n \to \infty} 
		\calC \big[\tfrac{1}{\rho_n} \nu_n \big](z), \quad z \in \CC_+
	\]
	and
	\begin{equation} \label{eq:211}
		g(z) = -\frac{1}{4 N \sqrt{\alpha_{N-1}}} \frac{h'(z)}{\sqrt{h(z)}},
		\quad\text{for all } z \in \CC_+ \cap (\scrD \cup \overline{\scrD}).
	\end{equation}
	Notice that the right-hand side of \eqref{eq:211} depends analytically on $z \in \CC_+ \setminus h^{-1}((-\infty,0])$.
	However, $h^{-1}((-\infty,0]) \subset h^{-1}(\RR) \subset \RR$, so the right-hand side of \eqref{eq:211} is actually analytic on $\CC_+$,
	and consequently, the equality \eqref{eq:211} holds for any $z \in \CC_+$.
	By Remark~\ref{rem:6}, we can apply Corollary~\ref{cor:3} with some explicit $z_0 \in \CC_+$ (see \eqref{eq:187}). 
	Thus, by Theorem~\ref{thm:12}\eqref{thm:12:b}, we complete the proof. 

	Analogously one can treat the case $\mathfrak{t} = 0$.
\end{proof}

The following corollary gives a constructive proof of absolute continuity of $\mu$ on $\Lambda_-$, which has been proven 
in \cite{jordan2} by means of subordinacy theory.
\begin{corollary}
	Under the hypotheses of Theorem~\ref{thm:9} the measure $\mu$ is purely absolutely continuous on $\Lambda_-$ with
	the density
	\[
		\mu'(x) = 
		\frac{\sqrt{\alpha_{N-1}}}{\pi N} 
		\frac{\abs{\tr \frakX_0'(0)}}{\sqrt{-h(x)}} \frac{1}{g(x)}, 
		\quad x \in \Lambda_-
	\]
	where
	\[
		g(x) = 
		\lim_{n \to \infty} 
		\frac{1}{\rho_n} K_n(x,x), \quad x \in \Lambda_-.
	\]
	In particular, $\mu'$ is continuous and positive on $\Lambda_-$.
\end{corollary}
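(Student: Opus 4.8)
The plan is to obtain the density of $\mu$ on $\Lambda_-$ by feeding the conclusion of Theorem~\ref{thm:9} into the final clause of Corollary~\ref{cor:3}. Under the hypotheses of Theorem~\ref{thm:9} the operator $A$ is self-adjoint (the assumption $\rho_n \to \infty$, i.e.\ $\sum_k \sqrt{\alpha_k \gamma_k}/a_k = \infty$, excludes the non-self-adjoint alternative of \cite[Theorem B]{jordan2}), so $\mu$ and the sequences $(\eta_n)$, $(\nu_n)$ of \eqref{eq:25} are well defined; in fact Theorem~\ref{thm:9} is itself proved by verifying and applying Corollary~\ref{cor:3}, so the principal hypothesis \eqref{eq:36} of that corollary (pointwise convergence of $\calC[\tfrac{1}{\rho_n}\nu_n]$ on $\CC_+$) already holds. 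Theorem~\ref{thm:9} thus supplies $\tfrac{1}{\rho_n}\nu_n \xrightarrow{v} \omega$ and $\tfrac{1}{\rho_n}\eta_n \xrightarrow{v} \omega$, where $\omega$ is purely absolutely continuous and on the open set $\Lambda_-$ has density
\[
	\upsilon(x) = \frac{\sqrt{\alpha_{N-1}}}{\pi N}\frac{\abs{\tr \frakX_0'(0)}}{\sqrt{-h(x)}}.
\]
Since $h(x) < 0$ on $\Lambda_-$ and $\tr \frakX_0'(0) \neq 0$ (because $\frakX_0(0)$ is a non-trivial parabolic element), $\upsilon$ is continuous and strictly positive there.

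To invoke the last clause of Corollary~\ref{cor:3} with $U = \Lambda_-$ and this $\upsilon$, it remains to supply a positive function to which $\tfrac{1}{\rho_n} K_n(x,x)$ converges locally uniformly on $\Lambda_-$; I would call that limit $g$, so that $g(x) = \lim_{n\to\infty}\tfrac{1}{\rho_n} K_n(x,x)$, and I would establish its existence together with continuity and positivity on $\Lambda_-$ from the Christoffel-function asymptotics for the parabolic case (the real-line counterpart of the estimates behind Example~\ref{ex:IIb}). This is the step I expect to be the main obstacle: the diagonalization and discrete Levinson analysis of Sections~\ref{sec:6.3.3}--\ref{sec:7.2} were carried out only for $z$ in compact subsets of $\CC_\varsigma$, where the transfer matrices are effectively hyperbolic and one eigenvalue strictly dominates the other; for real $x \in \Lambda_-$ one is in the elliptic regime, the generalized eigenvectors oscillate rather than grow, and a genuinely different (turning-point type) argument is needed to control $\sum_{k=0}^n \abs{p_k(x)}^2$ and to match its growth rate with $\rho_n$.

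Granting the locally uniform convergence $\tfrac{1}{\rho_n} K_n(\cdot,\cdot) \to g$ with $g$ continuous and positive on $\Lambda_-$, the final clause of Corollary~\ref{cor:3} — applied with its auxiliary function taken to be our $g$, which must not be confused with the discriminant polynomial $h$ of Theorem~\ref{thm:9} — yields that $\mu$ is absolutely continuous on $\Lambda_-$ with $\mu'(x) = \upsilon(x)/g(x)$, that is
\[
	\mu'(x) = \frac{\sqrt{\alpha_{N-1}}}{\pi N}\frac{\abs{\tr \frakX_0'(0)}}{\sqrt{-h(x)}}\frac{1}{g(x)}, \quad x \in \Lambda_-.
\]
Continuity and strict positivity of $\mu'$ on $\Lambda_-$ are then immediate, as $\mu'$ is the quotient of the continuous positive function $\upsilon$ by the continuous positive function $g$. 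This route also delivers the advertised constructive proof of absolute continuity of $\mu$ on $\Lambda_-$, bypassing the subordinacy argument of \cite{jordan2}.
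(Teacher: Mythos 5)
Your proposal coincides with the paper's proof: the paper's entire argument reads ``It readily follows from \cite[Theorem B]{jordan2} and Corollary~\ref{cor:3}'', which is exactly the route you describe of feeding the density $\upsilon$ supplied by Theorem~\ref{thm:9} together with the locally uniform limit of $\tfrac{1}{\rho_n}K_n(x,x)$ into the last clause of Corollary~\ref{cor:3}. The one step you flag as the main obstacle --- existence, continuity and positivity of $g(x)=\lim_{n\to\infty}\tfrac{1}{\rho_n}K_n(x,x)$ on $\Lambda_-$, which indeed requires a separate real-axis (elliptic regime) analysis --- is not reproved in the paper either, but simply quoted from \cite[Theorem B]{jordan2}, the same reference invoked in the discussion following Corollary~\ref{cor:3} precisely for the existence of the limit \eqref{eq:37} without the extra hypothesis \eqref{eq:12}.
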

\begin{proof}
	It readily follows from \cite[Theorem B]{jordan2} and Corollary~\ref{cor:3}.
\end{proof}

\appendix

\section{Computation of the limiting measure from its Cauchy transform}
The following result, which is a compilation of \cite[Theorems 7.37 and 7.46]{Lukic2022} is crucial in our task of recovering a measure from its Cauchy transform.
\begin{theorem} \label{thm:11}
Let $\omega$ be a positive finite measure on $\RR$. Then the limit
\begin{equation} \label{eq:177}
	w(x) = \frac{1}{\pi} 
	\lim_{\epsilon \downarrow 0} \Im \big( \calC[\omega](x+i\epsilon) \big)
\end{equation}
exists Lebesgue a.e. and $\omega$ a.e. with value in $[0, \infty]$ and
\begin{equation} \label{eq:178}
	\omega = w \ud x + \omega_s,
\end{equation}
where $\omega_s$ is a singular measure with respect to the Lebesgue measure and satisfies $\supp(\omega_s) \subseteq w^{-1}[\{\infty\}]$. Moreover, for any $x_0 \in \RR$ and any $\delta \in (0,\pi)$
\begin{equation} \label{eq:179}
	\omega(\{x_0\}) = 
	\lim_{\substack{z \to x_0\\\delta \leq \Arg(z-x_0) \leq \pi -\delta}}
	(x_0-z) \calC[\omega](z)
\end{equation}
\end{theorem}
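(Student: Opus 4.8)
The plan is to split the statement into two essentially independent parts: the boundary-value description \eqref{eq:177} together with the decomposition \eqref{eq:178}, which is classical differentiation theory for measures, and the atomic formula \eqref{eq:179}, which reduces to a dominated-convergence argument. I would treat them separately and in that order.

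For the first part, the key observation is that for $z = x + i\epsilon$ with $\epsilon > 0$,
\[
	\frac{1}{\pi}\Im\big(\calC[\omega](x+i\epsilon)\big)
	= \frac{1}{\pi}\int_\RR \frac{\epsilon}{(t-x)^2+\epsilon^2}\ud\omega(t)
	= (P_\epsilon * \omega)(x),
\]
where $P_\epsilon$ is the Poisson kernel of the upper half-plane. Thus $w$ in \eqref{eq:177} is, by definition, the radial boundary limit of the Poisson integral of $\omega$. I would then invoke the differentiation theory of Radon measures: the symmetric derivative $\lim_{r\downarrow 0}\omega\big((x-r,x+r)\big)/(2r)$ exists in $[0,\infty]$ both Lebesgue-a.e.\ and $\omega$-a.e., it equals the Radon--Nikodym density of the absolutely continuous part of $\omega$ at Lebesgue-a.e.\ point, and it equals $+\infty$ at $\omega_s$-a.e.\ point. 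A comparison of the Poisson integral with the symmetric averages (controlling $P_\epsilon * \omega$ by the Hardy--Littlewood maximal function) shows that $(P_\epsilon * \omega)(x)$ converges to this symmetric derivative wherever the latter exists. This yields the existence of $w$ with values in $[0,\infty]$, the Lebesgue decomposition $\omega = w\,\ud x + \omega_s$, and the inclusion $\supp(\omega_s)\subseteq w^{-1}[\{\infty\}]$, since $\omega_s$ is carried by the set on which the symmetric derivative is infinite.

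For \eqref{eq:179} I would argue directly. Writing $z - x_0 = \rho\,\ue^{i\theta}$ with $\delta\le\theta\le\pi-\delta$ and $\rho = |z-x_0|$, we have for every $t\in\RR$
\[
	\bigg|\frac{x_0-z}{t-z}\bigg|
	= \frac{\rho}{\big|(t-x_0)-\rho\,\ue^{i\theta}\big|}
	\le \frac{\rho}{\rho\sin\theta}
	\le \frac{1}{\sin\delta},
\]
because the modulus in the denominator is at least its imaginary part $\rho\sin\theta\ge\rho\sin\delta$. Hence the integrand in $(x_0-z)\calC[\omega](z)=\int_\RR \frac{x_0-z}{t-z}\ud\omega(t)$ is bounded by $1/\sin\delta$ uniformly over the Stolz angle, while pointwise it tends to $\mathds{1}_{\{t=x_0\}}$ as $z\to x_0$. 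Since $\omega$ is finite, dominated convergence gives $\int_\RR \mathds{1}_{\{t=x_0\}}\ud\omega(t)=\omega(\{x_0\})$, which is \eqref{eq:179}.

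The main obstacle is the almost-everywhere convergence in the first part: identifying the radial Poisson boundary value with the symmetric derivative and characterizing the singular support both rest on the full strength of the differentiation theory of Radon measures (maximal-function estimates together with a covering argument), which is exactly the content of the cited results \cite[Theorems 7.37 and 7.46]{Lukic2022}. The atomic formula, by contrast, is entirely elementary once the uniform bound on the Stolz angle is observed.
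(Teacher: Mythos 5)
Your proposal is correct. Note, however, that the paper itself offers no proof of this theorem: it is stated explicitly as a compilation of \cite[Theorems 7.37 and 7.46]{Lukic2022}, so there is no argument in the paper to compare against. What you have written is the standard proof underlying those citations. Your reduction of \eqref{eq:177} to the Poisson integral $(P_\epsilon * \omega)(x)$ is exactly right, and your treatment of \eqref{eq:179} --- the uniform bound $\lvert (x_0-z)/(t-z)\rvert \le 1/\sin\delta$ on the Stolz angle via $\lvert t-z\rvert \ge \rho\sin\theta$, followed by dominated convergence --- is complete and elementary; this part genuinely stands on its own without the reference. The one place that deserves a word of care is the assertion that $(P_\epsilon*\omega)(x)$ converges to the symmetric derivative \emph{wherever the latter exists} in $[0,\infty]$: the clean way to see this is to integrate by parts against $F(s)=\omega\big((x-s,x+s)\big)$, writing $(P_\epsilon*\omega)(x)=\int_0^\infty \frac{F(s)}{2s}K_\epsilon(s)\,ud s$ for an approximate identity $K_\epsilon$ concentrating at scale $\epsilon$ (the maximal-function bound alone gives control but not convergence). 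Since you explicitly defer that step, together with the a.e.\ existence of the symmetric derivative and the characterization of the singular part, to the cited differentiation theory, your sketch is faithful to what the citation actually supplies and contains no gap.
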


In view of formulas \eqref{eq:177} and \eqref{eq:178} it is crucial to understand the boundary behavior of the imaginary part of the Cauchy transform of the measure $\tilde{\omega}_{z_0}$ defined by \eqref{eq:28}. The following result says that it can be expressed simply in terms of the function $g$ itself.

\begin{proposition} 
Let $z_0 = x_0 + i y_0$ for some $x_0 \in \RR$ and $y_0 > 0$ be given and let the measure $\tilde{\omega}_{z_0}$ be defined by the Cauchy transform~\eqref{eq:29}. Then
\begin{align}
	\label{eq:174}
	\lim_{\epsilon \downarrow 0} 
	\Im \big( \calC[\tilde{\omega}_{z_0}](x+i\epsilon) \big) =
	\frac{1}{|x-z_0|^2} 
	\lim_{\epsilon \downarrow 0} 
	\Im \big( g(x+i\epsilon) \big).
\end{align}
\end{proposition}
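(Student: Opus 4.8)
The plan is to derive the claim from the explicit formula~\eqref{eq:29} for $\calC[\tilde{\omega}_{z_0}]$ by clearing its denominator and taking imaginary parts of boundary values. Writing $A(z) = (z-x_0)^2 + y_0^2$ and recalling that $\Re\big(g(z_0)\big)$ and $\Im\big(g(z_0)\big)$ are real constants, formula~\eqref{eq:29} is equivalent to the identity
\[
	A(z)\,\calC[\tilde{\omega}_{z_0}](z) = g(z) - \Re\big(g(z_0)\big) - \frac{z-x_0}{y_0}\Im\big(g(z_0)\big),
\]
valid for $z \in \CC_+$ away from the finitely many interior points where $A$ vanishes. Since those points have positive imaginary part, they play no role in the boundary limit $\epsilon \downarrow 0$ taken along $z = x + i\epsilon$.

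First I would record the elementary behaviour of $A$ at $z = x+i\epsilon$: one has $\Re\big(A(x+i\epsilon)\big) = (x-x_0)^2 - \epsilon^2 + y_0^2 \to |x-z_0|^2$ and $\Im\big(A(x+i\epsilon)\big) = 2\epsilon(x-x_0) \to 0$ as $\epsilon \downarrow 0$. Taking imaginary parts in the displayed identity and solving for $\Im\big(\calC[\tilde{\omega}_{z_0}]\big)$ gives
\[
	\Im\big(\calC[\tilde{\omega}_{z_0}](x+i\epsilon)\big) = \frac{\Im\big(g(x+i\epsilon)\big) - \tfrac{\epsilon}{y_0}\Im\big(g(z_0)\big) - 2\epsilon(x-x_0)\,\Re\big(\calC[\tilde{\omega}_{z_0}](x+i\epsilon)\big)}{\Re\big(A(x+i\epsilon)\big)}.
\]
In the numerator the term $\tfrac{\epsilon}{y_0}\Im\big(g(z_0)\big)$ tends to $0$, and the denominator tends to the nonzero limit $|x-z_0|^2$; so the claim will follow once the cross term $\epsilon\,\Re\big(\calC[\tilde{\omega}_{z_0}](x+i\epsilon)\big)$ is shown to vanish in the limit.

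This vanishing is the main point, and here I would use that $\calC[\tilde{\omega}_{z_0}]$ is the Cauchy transform of the \emph{finite} positive measure $\tilde{\omega}_{z_0}$ produced in Lemma~\ref{lem:3}. Indeed, with the convention $\calC[\tilde{\omega}_{z_0}](z) = \int_\RR (t-z)^{-1}\,\ud\tilde{\omega}_{z_0}(t)$ one computes
\[
	\epsilon\,\Re\big(\calC[\tilde{\omega}_{z_0}](x+i\epsilon)\big) = \int_\RR \frac{\epsilon(t-x)}{(t-x)^2+\epsilon^2}\,\ud\tilde{\omega}_{z_0}(t).
\]
The integrand is bounded in absolute value by $\tfrac{1}{2}$ and converges pointwise to $0$ as $\epsilon \downarrow 0$ for every $t \in \RR$; since $\tilde{\omega}_{z_0}$ is finite, dominated convergence yields $\epsilon\,\Re\big(\calC[\tilde{\omega}_{z_0}](x+i\epsilon)\big) \to 0$ for \emph{every} $x$. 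Consequently $2\epsilon(x-x_0)\,\Re\big(\calC[\tilde{\omega}_{z_0}](x+i\epsilon)\big) \to 0$, and feeding this into the displayed quotient produces exactly~\eqref{eq:174}, with the understanding that wherever the right-hand limit $\lim_{\epsilon\downarrow 0}\Im\big(g(x+i\epsilon)\big)$ exists the left-hand limit exists and equals $|x-z_0|^{-2}$ times it.

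Both limits in fact exist for Lebesgue-a.e.\ $x$ (for $g$ by the Herglotz structure established in Lemma~\ref{lem:3}, and for $\calC[\tilde{\omega}_{z_0}]$ by Theorem~\ref{thm:11}), so the identity is meaningful almost everywhere. The only subtlety to watch is the interchange of limit and integral in the last display, and this is precisely where finiteness of $\tilde{\omega}_{z_0}$—guaranteed by Lemma~\ref{lem:3}—is essential: without it the dominating constant $\tfrac{1}{2}$ would not be integrable and the cross term could fail to vanish.
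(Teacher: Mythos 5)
Your argument is correct, and it reaches \eqref{eq:174} by a slightly different route than the paper. The paper keeps the denominator $A(z)=(z-x_0)^2+y_0^2$ in place, expands $\Im\big(g(z)/A(z)\big)$ into $\frac{\Re A}{|A|^2}\Im g-\frac{\Im A}{|A|^2}\Re g$, and kills the cross term $\frac{2\epsilon(x-x_0)}{|A|^2}\Re\big(g(x+i\epsilon)\big)$ by invoking the Herglotz boundary estimate $\epsilon\,\Re\big(g(x+i\epsilon)\big)\to 0$ from \cite[Theorem 2.3(iv)]{Gesztesy2000}. You instead clear the denominator and are left with the symmetric cross term $2\epsilon(x-x_0)\,\Re\big(\calC[\tilde{\omega}_{z_0}](x+i\epsilon)\big)$, which you dispose of by a direct dominated-convergence computation using only the finiteness of $\tilde{\omega}_{z_0}$ and the elementary bound $\big|\tfrac{\epsilon(t-x)}{(t-x)^2+\epsilon^2}\big|\le\tfrac12$. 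Your version is more self-contained (no external citation is needed, since $\calC[\tilde{\omega}_{z_0}]$ is by construction the Cauchy transform of a finite positive measure, whereas $g$ is only known abstractly to be Herglotz), and it yields the identity for \emph{every} $x$, with the two limits existing or failing together; the paper's version is shorter on the page because it outsources the boundary estimate. Your remarks that the singular point of \eqref{eq:29} lies in the open upper half-plane and hence does not interfere with the limit $\epsilon\downarrow 0$, and that the a.e.\ existence of the limits comes from Theorem~\ref{thm:11} and the Herglotz structure of $g$, correctly tie up the remaining loose ends.
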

\begin{proof}
Indeed, by \eqref{eq:29} we have
\[
	\Im \big( \calC[\tilde{\omega}_{z_0}](z) \big) =
	\Im \bigg( \frac{g(z)}{(z-x_0)^2 + y_0^2} \bigg) -
	\Re \big( g(z_0) \big) \Im \bigg( \frac{1}{(z-x_0)^2 + y_0^2} \bigg) -
	\frac{\Im \big( g(z_0) \big)}{y_0} \Im \bigg( \frac{z-x_0}{(z-x_0)^2 + y_0^2} \bigg).
\]
Thus, by the continuity we have
\begin{equation} 
	\label{eq:175}
	\lim_{\epsilon \downarrow 0} \Im \big( \calC[\tilde{\omega}_{z_0}](x+i\epsilon) \big) =
	\lim_{\epsilon \downarrow 0} \Im \bigg( \frac{g(x+i\epsilon)}{(x-x_0 + i\epsilon)^2 + y_0^2} \bigg)
\end{equation}
Next, 
\begin{align}
	\label{eq:176}
	\Im \bigg( \frac{g(x+i\epsilon)}{(x-x_0+i\epsilon)^2 + y_0^2} \bigg) 
	&= 
	\frac{\Re \big( (x-x_0+i\epsilon)^2 + y_0^2 \big)}{|(x-x_0+i\epsilon)^2 + y_0^2|^2} \Im \big( g(x+i\epsilon) \big) \\
	\nonumber
	&\phantom{=}-
	\frac{\Im \big( (x-x_0+i\epsilon)^2 + y_0^2 \big)}{|(x-x_0+i\epsilon)^2 + y_0^2|^2} \Re \big( g(x+i\epsilon) \big).
\end{align}
Since $g$ is Herglotz function, we have
\[
	\lim_{\epsilon \to 0^+} \epsilon \Re \big( g(x+i\epsilon) \big) = 0,
\]
see \cite[Theorem 2.3(iv)]{Gesztesy2000}. Thus, the second term on the right-hand side of \eqref{eq:176} tends to $0$. Therefore, by the continuity we get
\[
	\lim_{\epsilon \downarrow 0} \Im \bigg( \frac{g(x+i\epsilon)}{(x-x_0+i\epsilon)^2 + y_0^2} \bigg) =
	\frac{1}{(x-x_0)^2 + y_0^2} \lim_{\epsilon \downarrow 0} \Im \big( g(x+i\epsilon) \big),
\]
which in view of \eqref{eq:175} implies \eqref{eq:174}.
\end{proof}

In the following Theorem we compute the measure $\tilde{\omega}_{z_0}$ in the cases we have encountered in this article.

\begin{theorem} \label{thm:12}
Let $z_0 = x_0 + i y_0$ for some $x_0 \in \RR$ and $y_0 > 0$ be given and let the measure 
$\tilde{\omega}_{z_0}$ be defined by the Cauchy transform~\eqref{eq:29}. Define $\ud \omega = |x-z_0|^2 \ud \tilde{\omega}_{z_0}(x)$.
\begin{enumerate}[label=\rm (\arabic*), start=1, ref=\arabic*]
\item \label{thm:12:a}
If $g(z) \equiv i$, then 
\[
	\frac{\ud \omega}{\ud x} \equiv \frac{1}{\pi}.
\]

\item \label{thm:12:b}
If $g(z) = \frac{-h'(z)}{\sqrt{h(z)}}$, for $h$ being a real polynomial of degree $1$, then for $\Lambda_- = h^{-1}[(-\infty, 0)]$ we have
\[
	\frac{\ud \omega}{\ud x} = 
	\frac{1}{\pi}
	\frac{\abs{h'(x)}}{\sqrt{-h(x)}}
	\ind{\Lambda_-}(x).
\]

\item \label{thm:12:c}
If $g(z) = \frac{-h'(z)}{\sqrt{h(z)}}$, for $h$ being a real polynomial of degree $2$ with negative leading coefficient and having only real zeros, and $z \in \CC_+ \setminus \scrD$, where $\scrD = \{ z \in \CC \setminus \RR : h'(\Re z) = 0 \}$, then for $\Lambda_- = h^{-1}[(-\infty, 0)]$ we have
\[
	\frac{\ud \omega}{\ud x} =
	\frac{1}{\pi} 
	\frac{|h'(x)|}{\sqrt{-h(x)}} \mathds{1}_{\Lambda_-}(x).
\]
\end{enumerate}
\end{theorem}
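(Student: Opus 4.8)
The plan is to reduce the computation of $\omega$ to a single boundary-value calculation of $\lim_{\epsilon\downarrow 0}\Im\big(g(x+i\epsilon)\big)$, by combining the inversion theorem (Theorem~\ref{thm:11}) with the boundary formula \eqref{eq:174}. First I would note that $\omega = |x-z_0|^2\,\ud\tilde{\omega}_{z_0}(x)$ with the weight $|x-z_0|^2 = (x-x_0)^2 + y_0^2$ continuous and bounded below by $y_0^2 > 0$; multiplication by such a weight preserves the Lebesgue decomposition, so the absolutely continuous density of $\omega$ is $|x-z_0|^2$ times that of $\tilde{\omega}_{z_0}$, while the singular part of $\omega$ is the weight times the singular part of $\tilde{\omega}_{z_0}$. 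By \eqref{eq:177}--\eqref{eq:178} the a.c.\ density of $\tilde{\omega}_{z_0}$ at $x$ is $\tfrac{1}{\pi}\lim_{\epsilon\downarrow 0}\Im\big(\calC[\tilde{\omega}_{z_0}](x+i\epsilon)\big)$, which by \eqref{eq:174} equals $\tfrac{1}{\pi|x-z_0|^2}\lim_{\epsilon\downarrow 0}\Im\big(g(x+i\epsilon)\big)$. Multiplying by $|x-z_0|^2$ cancels the weight, so the a.c.\ density of $\omega$ is simply $\tfrac{1}{\pi}\lim_{\epsilon\downarrow 0}\Im\big(g(x+i\epsilon)\big)$, independent of $z_0$. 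It then remains to evaluate this limit in each case and to rule out any singular contribution.

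For the evaluation, case \eqref{thm:12:a} is immediate: $g\equiv i$ gives the constant limit $1$, hence density $\tfrac{1}{\pi}$. In cases \eqref{thm:12:b} and \eqref{thm:12:c} I would expand $h(x+i\epsilon) = h(x) + i\epsilon\, h'(x) + \calO(\epsilon^2)$ and track the principal branch of $\sqrt{\,\cdot\,}$. When $h(x) > 0$ the square-root limit is real and $\Im g \to 0$, so the density vanishes on $\Lambda_+$. When $h(x) < 0$, the sign of $\Im\big(h(x+i\epsilon)\big) = \epsilon\, h'(x) + \calO(\epsilon^2)$ forces $\sqrt{h(x+i\epsilon)} \to \sign{h'(x)}\, i\sqrt{-h(x)}$ through the principal branch, and substituting into $g = -h'/\sqrt{h}$ yields $\lim_{\epsilon\downarrow 0}\Im\big(g(x+i\epsilon)\big) = |h'(x)|/\sqrt{-h(x)}$ irrespective of the sign of $h'(x)$. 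This produces the stated density $\tfrac{1}{\pi}\,\tfrac{|h'(x)|}{\sqrt{-h(x)}}$ on $\Lambda_- = h^{-1}[(-\infty,0)]$ and $0$ elsewhere; in case \eqref{thm:12:c} the exclusion $z\in\CC_+\setminus\scrD$ guarantees $h'(\Re z)\neq 0$, keeping the expansion non-degenerate, and the two simple real roots of $h$ exhibit $\Lambda_-$ as the two unbounded components.

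To discard the singular part, I would use that by Theorem~\ref{thm:11} the singular measure of $\tilde{\omega}_{z_0}$ is supported on $w^{-1}[\{\infty\}]$, the set where the density diverges. In case \eqref{thm:12:a} the density is bounded, so the singular part vanishes trivially. In cases \eqref{thm:12:b} and \eqref{thm:12:c} the density blows up only at the finitely many real zeros of $h$; a measure supported on a finite set is purely atomic, so it suffices to show each potential atom has zero weight. For a zero $x_1$ of $h$ I would apply \eqref{eq:179}: since the roots are simple, $h(z) \sim h'(x_1)(z-x_1)$ with $h'(x_1)\neq 0$, whence $g(z) = \calO\big(|z-x_1|^{-1/2}\big)$ and $\calC[\tilde{\omega}_{z_0}](z) = \calO\big(|z-x_1|^{-1/2}\big)$, so $(x_1 - z)\calC[\tilde{\omega}_{z_0}](z) = \calO\big(|z-x_1|^{1/2}\big) \to 0$ within any Stolz angle. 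Thus $\tilde{\omega}_{z_0}(\{x_1\}) = 0$, the singular part of $\tilde{\omega}_{z_0}$ vanishes, and consequently so does that of $\omega$; the measures are purely absolutely continuous with the asserted densities.

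The step I expect to be the main obstacle is the boundary-value computation in the second paragraph: correctly tracking $\sign{h'(x)}$ through the principal branch of the square root is exactly what yields the universal factor $|h'(x)|$ and identifies the support $\Lambda_-$, and it is the only place where the analytic hypotheses (degree and real roots of $h$, and the $\scrD$-exclusion in case \eqref{thm:12:c}) are genuinely used. Everything else is a routine application of the inversion theorem and the decay estimate \eqref{eq:179}.
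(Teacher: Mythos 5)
Your proposal is correct and follows essentially the same route as the paper: combine the inversion statement of Theorem~\ref{thm:11} with the boundary identity \eqref{eq:174} so that the weight $|x-z_0|^2$ cancels, evaluate $\lim_{\epsilon\downarrow 0}\Im\big(g(x+i\epsilon)\big)$ by tracking the principal branch of $\sqrt{h(x+i\epsilon)}$ according to the sign of $\Im h(x+i\epsilon)=\epsilon h'(x)$, and kill the finitely many candidate atoms via \eqref{eq:179}. The one point to tidy up is that in case \eqref{thm:12:c} the hypothesis ``having only real zeros'' permits a double root, where your asymptotic $h(z)\sim h'(x_1)(z-x_1)$ fails; there, however, $g$ stays bounded near the root (so \eqref{eq:179} gives a zero atom even more easily), and the same observation disposes of the vertex $h'(x)=0$, which the paper treats separately by a non-tangential approach inside the Stolz angle.
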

\begin{proof} Let $z_0 = x_0 + i y_0$ for $x_0 \in \RR$ and $y_0 > 0$ be given.

\noindent
\textbf{Proof of \eqref{thm:12:a}}. Since $\Im g(z) \equiv 1$, the formula~\eqref{eq:174} gives
\[
	\lim_{\epsilon \downarrow 0} 
	\frac{1}{\pi}
	\Im \big( \calC[\tilde{\omega}_{z_0}](x + i \epsilon) \big) =
	\frac{1}{\pi} \frac{1}{(x-x_0)^2 + y_0^2}, \quad x \in \RR.
\]
Thus, by Theorem~\ref{thm:11} the measure $\tilde{\omega}_{z_0}$ is purely absolutely continuous and
\[
	\frac{\ud \tilde{\omega}_{z_0}}{\ud x} = 
	\frac{1}{\pi} \frac{1}{(x-x_0)^2 + y_0^2},
\]
which gives the requested formula for the measure $\omega$. Notice that we have
\begin{equation} \label{eq:180}
	\tilde{\omega}_{z_0}(\RR) = 
	\frac{1}{\pi} \int_\RR \frac{1}{(x-x_0)^2 + y_0^2} \ud x =
	\frac{1}{y_0} = 
	\frac{1}{y_0} \Im \big( g(z_0) \big).
\end{equation}

\noindent
\textbf{Proof of \eqref{thm:12:b}}. Again, we shall use the formula~\eqref{eq:174}. Because $h$ is linear, we can write
\[
	h(x + i \epsilon) = h(x) + i h'(0) \epsilon.
\]
Now, we have three cases.

\noindent
{\bf Case $h(x) > 0$}: Then
\[
	\Log h(x+i\epsilon) = \log \abs{h(x+i\epsilon)} + i \arctan\bigg(\frac{h'(0) \epsilon}{h(x)} \bigg),
\]
thus
\[
	\lim_{\epsilon \to 0^+}
	\Im \bigg( \frac{h'(x+i\epsilon)}{ \sqrt{h(x+i\epsilon)}} \bigg) = 0.
\]

\noindent
{\bf Case $h(x) < 0$}: Then
\[
	\Log h(x+i\epsilon) = \log \abs{h(x+i\epsilon)} + i \arctan\bigg(\frac{h'(0) \epsilon}{h(x)} \bigg) 
	+
	i
	\begin{cases}
		\pi &\text{if } h'(0) \geq 0, \\
		-\pi &\text{if } h'(0) < 0.
	\end{cases}
\]
Hence,
\[
	\lim_{\epsilon \to 0^+} 
	\Im \bigg(\frac{-h'(0)}{\sqrt{h(x+i\epsilon)}}\bigg)
	=
	\frac{\abs{h'(0)}}{\sqrt{-h(x)}}.
\]

\noindent
{\bf Case $h(x) = 0$}: Then $h(x+i\epsilon) = i h'(0) \epsilon$, and so
\[
	\big| \sqrt{h(x + i \epsilon)} \big| = \sqrt{\epsilon \abs{h'(0)}}.
\]
Hence,
\[
	\lim_{\epsilon \to 0^+} \epsilon \bigg|\frac{h'(0)}{\sqrt{h(x+i\epsilon)}}\bigg| = 0,
\]
which in view of \eqref{eq:29} and \eqref{eq:179} shows that there is no atom of $\tilde{\omega}_{z_0}$ at $h(x) = 0$. Summarizing we obtain
\[
	\frac{\ud \tilde{\omega}_{z_0}}{\ud x} = 
	\frac{1}{\pi}
	\frac{1}{(x-x_0)^2 + y_0^2}
	\frac{\abs{h'(0)}}{\sqrt{-h(x)}}
	\ind{\Lambda_-}(x),
\]
which gives the requested formula for $\omega$. Now, let us write $h$ in the form $h(x) = ax+b$ for some $a \neq 0$ and $b \in \RR$. Notice that
\[
	\frac{\ud \tilde{\omega}_{-b/a+i}}{\ud x} =
	\frac{\sqrt{|a|}}{\pi} 
	\frac{1}{(x+\tfrac{b}{a})^2 + 1} 
	\frac{1}{\sqrt{-\sign{a}(x+\tfrac{b}{a})}} \mathds{1}_{\Lambda_-}(x).
\]
Then by using the variable $s = -\sign{a}(x + \frac{b}{a})$ we get
\[
	\tilde{\omega}_{-b/a+i}(\RR) 
	=
	\frac{\sqrt{|a|}}{\pi}
	\int_{0}^{\infty}
	\frac{1}{s^2 + 1} 
	\frac{1}{\sqrt{s}} \ud s
	= 
	\sqrt{\frac{|a|}{2}}.
\]
On the other hand, 
\[
	g \big( -\tfrac{b}{a} + i \big) 
	= 
	\frac{-a}{\sqrt{a i}} 
	=
	\frac{-\sign{a} \sqrt{|a|}}{\sqrt{\sign{a} i}}
	=
	\sqrt{\frac{|a|}{2}} \cdot
	\begin{cases}
		-1 + i & a > 0 \\
		 1 + i & a < 0
	\end{cases}.
\]
Thus,
\begin{equation} \label{eq:187}
	\tilde{\omega}_{-b/a+i}(\RR) = 
	\Im \Big( g \big( -\tfrac{b}{a} + i \big) \Big).
\end{equation}

\noindent
\textbf{Proof of \eqref{thm:12:c}}. Again, we shall use the formula~\eqref{eq:174}. Since $h$ a polynomial of degree two with negative leading coefficient, we have $2 a = h''(0) < 0$, hence
\[
	h(x+i\epsilon) = h(x) + i \epsilon h'(x) - a \epsilon^2.
\]
We start with $h(x) \neq 0$ and $h'(x) \neq 0$. Then
\[
	\lim_{\epsilon \to 0^+} \epsilon \Re \sqrt{h(x+i\epsilon)} = 0.
\]
Moreover,
\[
	g(x + i \epsilon) = 
	\frac{h'(x+i\epsilon)}{\abs{h(x+i\epsilon)}} 
	\Big(\Re \sqrt{h(x+i\epsilon)} - i \Im \sqrt{h(x+i\epsilon)}\Big),
\]
thus
\[
	\Im \big( g(x + i\epsilon) \big) = 
	\frac{1}{\abs{h(x+i\epsilon)}}
	\Big( 2a\epsilon \Re \sqrt{h(x+i\epsilon)} - 
	h'(x) \Im \sqrt{h(x+i\epsilon)} \Big).
\]
Therefore, we have
\begin{equation} \label{eq:181}
	\lim_{\epsilon \downarrow 0}
	\Im \big( g(x + i\epsilon) \big) =
	\frac{-h'(x)}{\abs{h(x)}} 
	\lim_{\epsilon \downarrow 0} \Im \sqrt{h(x+i\epsilon)},
\end{equation}
provided that $h'(x) \neq 0$ and $h(x) \neq 0$. To compute the last limit, we consider two cases.

\noindent
{\bf Case $h(x) < 0$}: Then for $\epsilon$ sufficiently small we have $h(x) - a \epsilon^2 < 0$, and
\[
	\Log h(x+i\epsilon) = \log \abs{h(x+i\epsilon)} + i \arctan\bigg(\frac{\epsilon h'(x)}{h(x) - a \epsilon^2}\bigg)
	+ 
	\begin{cases}
		i \pi & \text{if } h'(x) > 0, \\
		-i\pi & \text{if } h'(x) < 0.
	\end{cases}
\]
Hence,
\[
	\sqrt{h(x+i\epsilon)} = -\sqrt{\abs{h(x+i\epsilon)}} 
	\exp\bigg(\frac{i}{2} \arctan\bigg(\frac{\epsilon h'(x)}{h(x) - a \epsilon^2}\bigg)\bigg)
	\sign{h'(x)} i,
\]
and so
\begin{equation} \label{eq:182}
	\lim_{\epsilon \downarrow 0} 
	\Im \sqrt{h(x+i\epsilon)} = 
	-\sqrt{-h(x)} \sign{ h'(x) }.
\end{equation}

\noindent
{\bf Case $h(x) > 0$ and $h'(x) \neq 0$}: Then
\[
	\Log h(x+i\epsilon) = \log \abs{h(x+i\epsilon)} + i \arctan\bigg(\frac{\epsilon h'(x)}{h(x) - \epsilon^2}\bigg),
\]
and thus
\[
	\sqrt{h(x+i\epsilon)} = \sqrt{\abs{h(x+i\epsilon)}} 
	\exp\bigg(\frac{i}{2} \arctan\bigg(\frac{\epsilon h'(x)}{h(x) - \epsilon^2}\bigg)\bigg).
\]
Therefore,
\begin{equation} \label{eq:183}
	\lim_{\epsilon \downarrow 0}  \Im \sqrt{h(x+i\epsilon)} = 0.
\end{equation}

To completely describe the measure $\tilde{\omega}_{z_0}$ it remains to show that there are no atoms at $h(x)=0$ nor $h'(x) = 0$. It is enough to prove that the right-hand side of \eqref{eq:179} is equal to zero.

\noindent
{\bf Case $h(x) = 0$ and $h'(x) \neq 0$}:
Then
\[
	h(x + i\epsilon) = i \epsilon h'(x) - a \epsilon^2.
\]
Since
\[
	\big| \sqrt{h(x+i\epsilon)} \big| = 
	\sqrt{\abs{h(x+i\epsilon)}} \geq 
	\sqrt{\epsilon \abs{h'(x)}},
\]
we get
\[
	\bigg|\frac{h'(x+i\epsilon)}{\sqrt{h(x+i\epsilon)}}\bigg| \leq
	\frac{1}{\sqrt{\epsilon}}
	\frac{|h'(x) + i 2a \epsilon|}{\sqrt{|h'(x)|}}.
\]
Thus,
\begin{equation} \label{eq:184}
	\lim_{\epsilon \downarrow 0} \epsilon |g(x+i\epsilon)| = 0.
\end{equation}

\noindent
{\bf Case $h'(x) = 0$}:
We shall prove
\begin{equation} \label{eq:185}
	\lim_{\epsilon \downarrow 0} 
	\epsilon \abs{g(x + \epsilon + i \epsilon)} = 0.
\end{equation}
Since
\[
	h(x + \epsilon + i \epsilon) = h(x) + i 2 a \epsilon^2,
\]
and
\[
	h'(x+\epsilon + i\epsilon) = 2 a \epsilon(1+i),
\]
we have
\[
	\bigg|\frac{h'(x+\epsilon + i \epsilon)}{\sqrt{h(x + \epsilon + i \epsilon)}} \bigg|
	=
	\frac{-2 a \epsilon \sqrt{2}}{\sqrt[4]{(h(x))^2 + 4 a^2 \epsilon^4}}
	\leq
	2 \sqrt{-a}.
\]
Therefore, \eqref{eq:185} follows.

In view of \eqref{eq:29} and \eqref{eq:179} together with \eqref{eq:184} and \eqref{eq:185} the measure $\tilde{\omega}_{z_0}$ has no atoms for $x$ satisfying $h(x)=0$ or $h'(x) = 0$. Next, by \eqref{eq:174} and \eqref{eq:178} together with \eqref{eq:181}, \eqref{eq:182} and \eqref{eq:183} the measure $\tilde{\omega}_{z_0}$ is purely absolutely continuous with the density given by
\[
	\frac{\ud \tilde{\omega}_{z_0}}{\ud x} =
	\frac{1}{\pi} 
	\frac{1}{(x-x_0)^2 + y_0^2}
	\frac{|h'(x)|}{\sqrt{-h(x)}} \mathds{1}_{\Lambda_-}(x),
\]
which gives the requested formula for $\omega$. Now, let us write $h$ in the form $h(x) = -a (x-x_c)^2 + a q$, where $x_c \in \RR$, $a > 0$ and $q \geq 0$. Then
\[
	\frac{\ud \tilde{\omega}_{x_c+i}}{\ud x} =
	\frac{2 \sqrt{a}}{\pi} 
	\frac{1}{(x-x_c)^2 + 1}
	\frac{|x-x_c|}{\sqrt{(x-x_c)^2 -q}} \mathds{1}_{\Lambda_-}(x).
\]
Then by the symmetry of the density around $x=x_c$ and using the variable $s=x-x_c$ and then $t=s^2-q$
\begin{align*}
	\tilde{\omega}_{x_c+i}(\RR) 
	&=
	\frac{4 \sqrt{a}}{\pi}
	\int_{\sqrt{q}}^\infty \frac{1}{s^2+1} \frac{y}{\sqrt{s^2-q}} \ud s \\
	&=
	\frac{2 \sqrt{a}}{\pi}
	\int_{0}^\infty \frac{1}{t+q+1} \frac{1}{\sqrt{t}} \ud t \\
	&=
	\frac{2\sqrt{a}}{\sqrt{q+1}}.
\end{align*}
On the other hand, since $g$ is a Herglotz function, it is continuous at $z=x_c+i \in \CC_+ \cap \scrD$, so we have
\[
	g(x_c+i) = 
	\lim_{\epsilon \to 0} 
	\frac{-h'(x_c+i+\epsilon)}{\sqrt{h(x_c+i+\epsilon)}} =
	\lim_{\epsilon \to 0} 
	\frac{2 a(i+\epsilon)}{\sqrt{a} \sqrt{q + 1 - \epsilon^2 - 2 \epsilon i}} 
	=
	\frac{2 i \sqrt{a}}{\sqrt{q+1}},
\]
where the last equality follows from the continuity of the principal square root in the right-half plane.
Thus, we have
\begin{equation} \label{eq:186}
	\tilde{\omega}_{x_c+i}(\RR) = \Im \big( g(x_c + i) \big). \qedhere
\end{equation}
\end{proof}

\begin{remark} \label{rem:6}
Notice that in the proof of Theorem~\ref{thm:12} we have shown that for all of the cases \eqref{thm:12:a}, \eqref{thm:12:b} 
and \eqref{thm:12:c} there is explicit $z_0 = x_0 + i y_0$ for some $x_0 \in \RR$ and $y_0 > 0$ such that 
$\tilde{\omega}_{z_0}(\RR) = \frac{1}{y_0} \Im \big( g(z_0) \big)$ holds true, see \eqref{eq:180}, \eqref{eq:187} and 
\eqref{eq:186}, respectively.
\end{remark}

\begin{bibliography}{jacobi}
	\bibliographystyle{amsplain}
\end{bibliography}

\end{document}